\def\E{\mathbb E}
\def\P{\mathbb P}
\def\R{\mathbb R}
\def\N{\mathbb N}
\newcommand\numberthis{\addtocounter{equation}{1}\tag{\theequation}}
\newcommand{\1}{\mathbbm{1}}
\newcommand{\cN}{\mathcal{N}}
\newcommand{\cL}{\mathcal{L}}
\newcommand{\cT}{\mathcal{T}}
\newcommand{\cE}{\mathcal{E}}
\newcommand{\cS}{\mathcal{S}}
\newcommand{\cC}{\mathcal{C}}
\newcommand{\cX}{\mathcal{X}}
\def\argmin{\qopname\relax n{argmin}}
\newcommand{\trace}{\operatorname{tr}}
\newcommand{\HT}{\operatorname{HT}}
\newcommand{\cov}{\operatorname{Cov}}
\newcommand{\Var}{\operatorname{Var}}
\newcommand{\real}{\ensuremath{\mathbb{R}}}
\newcommand{\littlesum}{\mathop{\textstyle \sum}}
\newtheorem{theorem}{Theorem}[section]
\newtheorem{lemma}[theorem]{Lemma}
\newtheorem{proposition}[theorem]{Proposition}
\newtheorem{definition}{Definition}
\newtheorem{remark}[theorem]{Remark}
\theoremstyle{definition}
\newtheorem{assumption}{Assumption}
\definecolor{Red}{rgb}{1,0,0}
\definecolor{Blue}{rgb}{0,0,1}
\definecolor{Olive}{rgb}{0.41,0.55,0.13}
\definecolor{Green}{rgb}{0,1,0}
\definecolor{MGreen}{rgb}{0,0.8,0}
\definecolor{DGreen}{rgb}{0,0.55,0}
\definecolor{Xellow}{rgb}{1,1,0}
\definecolor{Cyan}{rgb}{0,1,1}
\definecolor{Magenta}{rgb}{1,0,1}
\definecolor{Orange}{rgb}{1,.5,0}
\definecolor{Violet}{rgb}{.5,0,.5}
\definecolor{Purple}{rgb}{.75,0,.25}
\definecolor{Brown}{rgb}{.75,.5,.25}
\definecolor{Grey}{rgb}{.5,.5,.5}
\title{Robust regression with covariate filtering: \\ Heavy tails and adversarial contamination}
\author{
Ankit Pensia\\
University of Wisconsin-Madison\\
{\tt ankitp@cs.wisc.edu}\\
\and
Varun Jog\\
University of Cambridge\\
{\tt vj270@cam.ac.uk}
\and
Po-Ling Loh\\ University of Cambridge\\
{\tt pll28@cam.ac.uk}
}
\begin{document}

\maketitle

\begin{abstract}
We study the problem of linear regression where both covariates and responses are potentially (i) heavy-tailed and (ii) adversarially contaminated.
Several computationally efficient estimators have been proposed for the simpler setting where the covariates are sub-Gaussian and uncontaminated; however, these estimators may fail when the covariates are either heavy-tailed or contain outliers.
In this work, we show how to modify the Huber regression, least trimmed squares, and least absolute deviation estimators to obtain estimators which are simultaneously computationally and statistically efficient in the stronger contamination model.
Our approach is quite simple, and consists of applying a filtering algorithm to the covariates, and then applying the classical robust regression estimators to the remaining data. We show that the Huber regression estimator achieves near-optimal error rates in this setting, whereas the least trimmed squares and least absolute deviation estimators can be made to achieve near-optimal error after applying a postprocessing step.
\end{abstract}

\section{Introduction}

Robust linear regression is a well-studied topic in statistics, both from the viewpoint of theory and practice~\cite{HubRon11, HamEtal11, MarEtal19}. It has long been observed that the introduction of even a handful of outliers can massively affect the quality of a regression estimator; furthermore, high-leverage points, which are outlying in terms of their covariate values, have the potential for even more drastic consequences. Various methods have been proposed to alleviate the effect of outliers in the data, including diagnostic tests which focus on identifying and removing outliers~\cite{CooWei82}. On the other hand, such methods are mostly heuristic and few theoretical results exist in this area.

Much classical work in robust linear regression focuses on developing and analyzing estimators that are applied aggregately to an entire data set and are relatively insensitive to certain types of perturbations in the data. These estimators include different families of $M$-estimators~\cite{Hub73}, $GM$-estimators~\cite{Mal75}, $S$-estimators~\cite{RouYoh84}, and $MM$-estimators~\cite{Yoh87}, among others. Notably, most of the corresponding statistical theory has focused on analyzing i.i.d.\ data, often assumed to be drawn from a mixture distribution involving the parametric model and a (possibly heavy-tailed) contaminating distribution. Recent years have seen a flurry of activity on the somewhat different topic of adversarial contamination---spurred by advances in the theoretical computer science community and motivated by modern machine learning applications---and several approaches have subsequently been proposed for estimating the mean of a multivariate distribution~\cite{DiaKan19}. An interesting question which has remained largely unaddressed is whether simpler and seemingly more straightforward approaches such as $M$-estimation can be proven to achieve similar error guarantees as the more complicated proposals which have emerged from this line of work.

On the topic of $M$-estimation, Sasai and Fujisawa~\cite{SasFuj20} recently derived bounds for linear regression with a Huber loss when adversarial contamination may be present in the response variables. Slightly earlier analysis from Bhatia et al.~\cite{BhaJK15, BhaJKK17} provided guarantees for the popular least trimmed squares estimator~\cite{Rou84} with adversarially contaminated responses. In contrast, no analogous error bounds have been furnished for the behavior of these or other estimators when the covariates are adversarially contaminated. Rather, a series of classical results on the low breakdown point of regression estimators~\cite{Dav93} established the rather pessimistic message that adversarially contaminating even a single data point in both covariates and responses may have an unbounded effect on the accuracy of a convex $M$-estimators such as the Huber or least absolute deviation regression estimators (see, e.g., Maronna et al.~\cite{MarEtal19} and the references cited therein). Of course, the difficulty in using nonconvex loss functions is that nontrivial challenges arise in optimization.

We note, however, that the failure of simple $M$-estimation assumes that all the points are included in the estimation procedure, whereas a grossly outlying point might easily be flagged before fitting a moderately robust estimator on the remaining data. In Huber's textbook~\cite[p.\ 152]{HubRon11}, we find the following comment: ``Undoubtedly, a typical cause for breakdown in regression are gross outliers in the carrier $X$. In the robustness literature, the problem of leverage points and groups has therefore been tackled by so-called high breakdown point regression\dots. I doubt that this is the proper approach\dots. In my opinion, if there are sizable minority components, the task of the statistician is not to suppress them, but to disentangle them." However, the literature on how to perform outlier removal in a theoretically rigorous manner is fairly sparse.

Regarding heavy-tailed distributions, the ordinary least squares estimator may be shown to be highly suboptimal when the additive errors are allowed to be heavy-tailed (cf.\ Proposition~\ref{PropLowOLSMulti} in the appendix). Concretely, in a setting with $p$ parameters, $n$ data points, and noise variance $\sigma^2$, the $\ell_2$-error of the ordinary least squares estimator may increase as $\Theta\left(\sigma \sqrt{\frac{p}{n\tau}}\right)$ with probability $\tau$---in contrast to the error bound $O\left(\sigma\sqrt{\frac{p}{n}} + \sigma\sqrt{\frac{\log(1/\tau)}{n}}\right)$, which may be achieved under sub-Gaussian distributional assumptions. Starting from the seminal work of Catoni~\cite{Cat12}, the topic of heavy-tailed estimation has been an active area of research in theoretical statistics in recent years~\cite{Men15,MenZhi20,LugMen19-tour,LecLer20,Hop20,LugMen19-survey,DepLec19,HsuSab16}, and for regression, Lugosi and Mendelson~\cite{LugMen19-tour, LugMen19-survey} introduced an estimator based on a median-of-means algorithm which achieves the sub-Gaussian error rate even in heavy-tailed scenarios, provided $n = \Omega(p)$.
On the other hand, the proposed estimator has running time which is exponential in the dimension, hence is not computationally feasible for large $p$. More recently, Cherapanamjeri et al.~\cite{CheHRT20} proposed a polynomial-time estimator with the desired error rate when $n = \tilde{\Omega}\left(p \sqrt{\log(1 / \tau)}\right)$. However, the estimator requires the covariates to satisfy a stronger condition: a sum-of-squares (SOS) certifiable proof of degree $8$. The proposed algorithm uses an SOS hierarchy and involves solving a large semidefinite program which, although achievable in polynomial time, is not very practical.

\subsection{Our contributions}

In this paper, we take a cue from the literature on robust mean estimation under adversarial contamination, in which the proposed algorithms implicitly involve a filtration or screening step to identify and remove outlying data points, after which a (weighted) empirical mean is computed on the remaining data~\cite{LaiRV16, DiaKKLMS16-focs} (cf.\ Section~\ref{SubSecRobMean} below). The success of these filtering-based algorithms stems from a useful lemma which states that when the distribution of the uncontaminated data is isotropic, the empirical mean of a set of data points which have an approximately isotropic
 empirical covariance matrix will be close to the true mean. The filtering mechanism consequently operates by iteratively removing data points until the remaining set is approximately isotropic---theoretically, one can show that the proposed filters do not remove too many uncontaminated data points, while removing any adversarially introduced outliers that move the sample mean sufficiently far from the true mean. A key insight of this paper is that the condition of approximate isotropy of the empirical covariance (also known as stability) is in fact a sufficient condition for the success of classical robust regression estimators such as the Huber $M$-estimator, least trimmed squares (LTS), and least absolute deviation (LAD) estimator. Thus, an adversarially contaminated data set may first be preprocessed by applying a filter to the covariates, and then the classical estimator may be applied to the remaining data to obtain an overall estimate close to the true regression vector. A careful analysis shows that this method can be applied to data sets which possess adversarial contamination in \emph{both} the covariates and responses. Furthermore, the same method can be used to obtain error guarantees for heavy-tailed covariates and/or responses. Perhaps it is unsurprising that both adversarial contamination and heavy-tailed distributions may be treated using similar estimators, since in the latter case, ``outlying" points may be seen as occurring due to randomness naturally present in the sample rather than having been introduced adversarially.

 We will assume throughout our paper that prior to contamination, the covariates are drawn from a distribution with mean zero and identity covariance and also satisfies a property known as hypercontractivity (bounded fourth moments). We will also assume that the additive noise in the linear model is independent of the covariates and (in most cases) has finite first or second moments. Note that these assumptions are significantly less restrictive than the usual assumptions of sub-Gaussianity, and include various heavy-tailed distributions, as well. Under these assumptions, we can show that the Huber estimator after filtering achieves the optimal $\ell_2$-error rate of $O\left(\sigma \sqrt{\frac{p}{n}} + \sigma \sqrt{\frac{\log(1/\tau)}{n}}\right)$, provided the sample size satisfies $n = \Omega(p \log p)$. Furthermore, our method is computationally feasible, since we simply need to perform the iterative filtering algorithm, followed by optimization of a convex objective function. If adversarial contamination is introduced to the covariates and/or response variables, the error bound of the filtered Huber estimator becomes $O\left(\sigma \left(\sqrt{\frac{p \log p}{n}} + \sqrt{\frac{\log(1/\tau)}{n}} + \epsilon^{1-1/k}\right)\right)$, provided $n = \Omega(p \log p)$
 and the covariates satisfy an additional $k^{\text{th}}$ moment bound, for $k \ge 4$. Note that the dependence on $\epsilon$ matches the lower bound derived in Bakshi and Prasad~\cite{BakPra20}. When the covariates are drawn from a Gaussian distribution with identity covariance, the error rate of the filtered Huber estimator further improves to $O\left(\sigma \left(\sqrt{\frac{p}{n}} + \sqrt{\frac{\log(1/\tau)}{n}} + \epsilon \sqrt{\log(1/\epsilon)}\right)\right)$, provided $n = \Omega(p)$. The dependence on $p$, $n$, and $\tau$ is optimal, while the dependence on $\epsilon$ is nearly-optimal up to a $\sqrt{\log(1/\epsilon)}$ factor~\cite{CheGR16}. (This rate also shaves off the additional $\sqrt{\log(1/\epsilon)}$ factor achieved in previous works~\cite{DiaKS19,CheATJFB20}, which obtained the rate $O(\epsilon \log(1/\epsilon))$ in terms of $\epsilon$.)
Going back to the heavy-tailed setting, i.e., when the covariates are drawn from a distribution with mean zero and bounded fourth moments, we extend our analysis to the setting when the covariance matrix $\Sigma$ of the covariates is unknown but satisfies the bound $(1/2)I\preceq \Sigma \preceq 2 I$. In this setting, we show that the filtered Huber estimator achieves the error rate $O\left(\sigma \left(\sqrt{\frac{p \log p}{n}} + \sqrt{\frac{\log(1/\tau)}{n}} + \sqrt{\epsilon}\right)\right)$, provided $n = \Omega(p \log p)$. The SQ lower bound of Diakonikolas et al.~\cite{DiaKS19} suggests that such a dependence on $\epsilon$ is essentially optimal when $n = o(p^2)$.

We derive error bounds for the LTS and LAD estimators under slightly different assumptions: When the noise distribution has bounded $(k')^{\text{th}}$ moments, for some $k' \ge 2$, we  obtain an error rate of the form $O\left(\sigma \left(\frac{p \log p}{n} + \epsilon + \frac{\log(1/\tau)}{n}\right)^{1/2 - 1/k'}\right)$ for the LTS estimator, provided $n = \Omega(p \log p)$. Assuming a first moment bound of $\kappa$ on the noise distribution, we can show that the LAD estimator has $\ell_2$-error $O(\kappa)$, provided $n = \Omega(p \log p)$. Although the error bounds for the LTS and LAD estimators are somewhat weaker than the bounds we obtain for the Huber regression estimator, we note that the LTS estimator is extremely quick to compute in practice~\cite{BhaJK15,BhaJKK17},
and the LAD estimator does not involve any tuning parameters, unlike the Huber estimator (which requires a tuning parameter for the loss) and the LTS estimator (which requires a tuning parameter specifying the degree of trimming). %
Furthermore, we show that a simple postprocessing step involving applying the robust multivariate mean algorithm to a shifted data set can be used to obtain near-optimal error guarantees in terms of $\tau$ and $p$.
Lastly, we note that the LTS or LAD estimators may be practically useful for initializing a gradient descent algorithm when optimizing the Huber regression objective in order to save on computation.

\subsection{Related work}

Several recent works have highlighted significant challenges that appear in the presence of heavy-tailed responses and/or adversarial contamination in responses~\cite{LasDB09,NasTN11,NguTra13,BhaJK15,MukGJK19,SasFuj20,WanLJ07}.
In all of these works, the covariates are assumed to satisfy strong assumptions: sub-Gaussian tails and no contamination.
The preceding works can be loosely categorized into two categories: (i) regularization-based estimators and (ii) thresholding-based estimators.
In the first category, a popular choice is a penalized Lasso-type estimator that solves  the following optimization problem:
\begin{align*}
\min_{\beta,z} \left\{\frac{1}{n}\|y - X \beta - z\|_2^2 + \lambda \|z\|_1\right\},
\end{align*}
where the variable $z$ accounts for outliers in the response variables. Several works have shown that Lasso-type estimators can handle contamination or heavy-tailed noise in responses~\cite{NguTra13,SasFuj20}---indeed, Huber regression is closely related to penalized Lasso-type estimators~\cite{SheOwe11,SasFuj20}.
The idea of using the Huber loss for estimation under heavy-tailed error distributions has recently been studied in the context of mean estimation~\cite{Cat12,Min19-uni} and regression~\cite{FanLW17,SunZF20}.
Our work on Huber regression is closely related to Sun et al.~\cite{SunZF20}, and we roughly follow their proof structure. However, we establish significantly tighter results for heavy-tailed covariates (see Section~\ref{SecHuber} for more details).

Another popular convex estimator is the LAD estimator with a Lasso penalty~\cite{WanLJ07,KarPri19}.
In the dense setting, Karmalkar and Price~\cite{KarPri19} (see also Dwork et al.~\cite{DwoMT07}) studied the LAD estimator $\min_{\beta}  \|y - X \beta \|_1 $, and showed its robustness to adversarial contamination in the responses.
However, their theory imposes a deterministic condition on the covariates that can be shown to hold with high probability for sub-Gaussian distributions, but does not necessarily hold for heavy-tailed or corrupted covariates.
As opposed to convex relaxation-based estimators, several recent works have studied alternating minimization algorithms for robust regression~\cite{JaiKar17,BhaJK15,BhaJKK17,JaiTK14}. These algorithms were developed to optimize the nonconvex objective function corresponding to the LTS estimator~\cite{Rou84}.
In our paper, we critically leverage the aforementioned results on LAD~\cite{KarPri19} and LTS~\cite{BhaJK15, BhaJKK17} estimation by showing that the deterministic conditions under which the respective algorithms are guaranteed to succeed are satisfied with high probability by our preprocessed covariates.

Turning to papers which analyze the setting involving corruption in both covariates and responses, a general framework for robust convex optimization was considered in Diakonikolas et al.~\cite{DiaKKLSS19} and Prasad et al.~\cite{PraSBR20} using the robust mean estimation algorithm on gradients of the loss function.
Although these results lead to polynomial-time estimators for several tasks, the resulting rates are suboptimal for linear regression.
In the Gaussian setting, Diakonikolas et al.~\cite{DiaKS19} proposed computationally efficient estimators with near optimal-error guarantees under adversarial contamination in both covariates and responses.

In concurrent work, Zhu et al.~\cite{ZhuJS20} and Bakshi and Prasad~\cite{BakPra20} studied computationally-efficient algorithms for heavy-tailed robust regression in a more general setting, where the covariance $\Sigma$ of the covariates is unknown and the noise may not be independent,
with the goal of minimal dependence on the level of adversarial contamination $\epsilon$.
Initiated by Klivans et al.~\cite{KliKM18}, their algorithms are based in a sum-of-squares framework, and impose a \emph{certifiable} hypercontractivity assumption on covariates, which is a somewhat more restrictive than our assumption of hypercontractivity~\cite{KotSte17a, KotSte17b}.
As the goal in these works is slightly different, the resulting estimators have suboptimal dependence on sample complexity and probability of error in comparison to ours.

Recently, Cherapanamjeri et al.~\cite{CheATJFB20} and Depersin~\cite{Dep20} considered the case of covariates with bounded fourth moments, and proposed an iterative gradient based procedure for robust regression.
When $\Sigma$ is unknown and the noise is independent,
Cherapanamjeri et al.~\cite{CheATJFB20} obtained
a near-linear time estimator (when $\epsilon$ is constant) with near-optimal sample complexity, but with a constant error probability.
Depersin~\cite{Dep20} studied the case of known $\Sigma$ and possibly dependent noise, and proposed a computationally efficient estimator with a sub-Gaussian error rate and a $O(\sqrt{\epsilon})$ dependence on $\epsilon$. However, the error guarantee for the estimator does not improve when higher-order moments are bounded.

We emphasize that the focus of our work is slightly different from the aforementioned works: we seek to show that several classical estimators \emph{which are known to be robust to corruptions in the responses} can also be made robust to corruptions in the covariates after a simple outlier filtration step.
For each of the Huber, LAD, and LTS estimators, our guarantees for heavy-tailed covariates (nearly) match their corresponding known results for sub-Gaussian covariates.
In addition, we highlight the fact that our filtered Huber estimator (cf. Theorem~\ref{ThmAdvHuberReg}) is the first known polynomial-time estimator that is near-optimal in all the parameters $\epsilon$, $p$, $\tau$, and $n$ for the case of isotropic covariates and independent noise.

\subsection{Organization}
The rest of the paper is organized as follows: In Section~\ref{SecPrelim}, we explain the problem setup and connection with robust mean estimation. In Section~\ref{SecHuber}, we analyze the Huber regression estimator.
We prove our results regarding the LTS and LAD estimators in Sections~\ref{SecLTS} and~\ref{SecLAD}, respectively.
Section~\ref{SecPP} contains the details regarding a postprocessing step which can be used to improve the accuracy of the LTS and LAD estimators.
Finally, Section~\ref{SecSim} contains simulation results reporting the effect of the proposed filtering step.
 Section~\ref{sec:discussion} concludes the paper with a short discussion of open questions.

\section{Background and problem setup}
\label{SecPrelim}

We begin by listing some notation that will be used throughout the paper. For a real-valued random variable $z$,
let  $\|z\|_{\psi_2}$ denote the sub-Gaussian norm of $z$. We use $[n]$ as a shorthand for $\{1,\ldots,n\}$. For a vector $b \in \R^n$ and $m\in[n]$, we say that $b$ is $m$-sparse if at most $m$ entries of $b$ are nonzero, and we also write $\|b\|_0 = m$. For $1 \le i \le n$, we write $|b|_{(i)}$ to denote the $i^{\text{th}}$ smallest component of $b$ according to magnitude.
Let $\cS^{n-1}$ denote the unit sphere in $n$ dimensions.
For a square matrix $M$, we use $\lambda_{\max}(M)$ and $\lambda_{\min}(M)$ to denote the largest and smallest eigenvalues, respectively. We use $\|M\|_2$ to denote the spectral norm. For two matrices $M_1, M_2$, we write $M_1 \succeq M_2$ to denote the fact that $M_1 - M_2$ is positive semidefinite.

For a differentiable function $f$, we use $\nabla f$  to denote its gradient.
For a scalar $x \in \R$, we use $\text{sgn}(x)$ to denote the sign of $x$, i.e., $\text{sgn}(x) = 0$ for $x = 0$; $\text{sgn}(x) = 1$ for $x > 0$; and $\text{sgn}(x) = -1$ for $x< 0$. For two sets $A$ and $B$, let $A \setminus B$
denote the set difference and let $A \triangle B$ denote the symmetric difference.
Let $\1(A)$ denote the indicator function over a set $A$.

We use $c,C,c_1,C_1,\ldots$ to denote absolute positive constants with values that might change from line to line.
We also use the standard big-$O$ notation to simplify the expressions in two regimes:
For two nonnegative functions $f$ and $g$ with domain $D$, we say that $f = O(g)$ when one of the following is true: (i) $D = \N$, and there exists constants $C$ and $n_0$ such that $f(n) \leq C g(n)$ for all $n \geq n_0$; or (ii) $D = [0,1]$, and there exists constants $C$ and $\epsilon_0 \in (0,1)$ such that $f(\epsilon) \leq C g(\epsilon)$ for $\epsilon \leq \epsilon_0$. 
The setting will be clear from context.  
We say that $f = \Omega(g)$ if $g = O(f)$, and we say that $f = \Theta(g)$ when $f = O(g)$ and $f = \Omega(g)$. We also use $\lesssim$ and $\gtrsim$ to hide constants.

We also recall the following definitions:
\begin{definition} (Hypercontractivity)
We say that a random vector $X \in \R^p$
\emph{satisfies $(k, 2)$-hypercontractivity with parameter $\sigma_k$}
if for all unit vectors $v \in \R^p$, we have
\begin{align*}
\left(\E |v^TX|^k\right)^{1/k} \leq \sigma_k \left(\E (v^TX)^2 \right)^{1/2}. 
\end{align*}
\end{definition}

\begin{definition}(Strong convexity)
\label{DefStrongCvx}
For a convex set $\cX \subseteq \R^n$, we say that a continuously differentiable function $f: \cX \to \R$ is \emph{$\alpha$-strongly convex} if for any $x, y \in \cX$, we have 
\begin{align}
f(y) \geq f(x) + \langle \nabla f(x) , y- x\rangle + \frac{\alpha}{2}  \|y - x\|_2^2.
\end{align}
\label{EqnStrongConv}
\end{definition}

\subsection{Linear model}

Suppose we have observations drawn from the linear model
\begin{align}
\label{EqnLinModel}
y_i = x_i^T \beta^* + z_i, \qquad 1 \le i \le n,
\end{align}
where $\beta^* \in \R^p$, the $x_i$'s are sampled i.i.d.\ from a distribution over $\R^p$, and the $z_i$'s are i.i.d.\ noise. We will also use the standard statistical notation to write equation~\eqref{EqnLinModel} as $y = X\beta^* + z$, where $y, z \in \real^n$, $\beta^* \in \real^p$, and $X \in \real^{n \times p}$.
Our goal is to estimate $\beta^*$ from the data set $S= \{(x_1,y_1),\ldots,(x_n,y_n)\}$. We make the following assumption about the distribution of the covariates:

\begin{assumption}
\label{AsCov}
The covariates satisfy  $\E x_i = 0$ and $\E x_ix_i^T = I$. Moreover, the covariates satisfy $(4,2)$-hypercontractivity with parameter
$\sigma_{x,4} \leq C$, for a known constant $C$.
\end{assumption}
Note that the case of a known, non-identity covariance matrix can be reduced to the setting of identity covariance via a linear transformation. We relax the condition of an identity covariance matrix to an unknown but \emph{bounded} covariance matrix in Section~\ref{SecUnknownCov}.

We assume an identity covariance structure in Assumption~\ref{AsCov} because of the computational statistical query (SQ) lower bound from Diakonikolas et al.~\cite{DiaKS19}, stating that in the case of an unknown covariance matrix, any computationally efficient SQ algorithm requires approximately $\Omega(p^2)$ samples to achieve an error rate of $o(\sqrt{\epsilon})$ in the strong contamination model (cf.~Theorem~\ref{ThmAdvHuberReg}).
We show that the filtered Huber estimator achieves the rate $O(\sqrt{\epsilon})$ in the unknown covariance setting in Section~\ref{SecUnknownCov}.
However, even with an identity covariance matrix, the covariates could have a degenerate distribution such that, with high probability, all the sampled points have norm $0$ and all information about $\beta^*$ would be lost. As a result, we also include the hypercontractivity condition in Assumption~\ref{AsCov}, which is a standard assumption in this field. Note that under the identity covariance assumption, the hypercontractivity condition can simply be written as $(\E (v^Tx_i)^4)^{1/4} \leq C$.

\begin{remark}
Note that the assumption that an upper bound $C$ on the hypercontractivity constant $\sigma_{x,4}$ is known is necessary for running the algorithms in this paper in practice (e.g., Algorithms~\ref{AlgHubGeneralCase}, \ref{AlgLTSFiltering}, and \ref{AlgLAD_filter} below), since our theory requires the filtering parameter $\epsilon'$ to be smaller than some value which depends on $C$.
\end{remark}

We also make the following assumption about the additive noise distribution:

\begin{assumption}
\label{AsNoise}
The noise variables $\{z_i\}$ are independent of the covariates $\{x_i\}$, and $\E z_i = 0$. %
\end{assumption}
The independence assumption on the $z_i$'s and $x_i$'s is somewhat restrictive, but we leave the study of more general distributions to future work.
We will relax this assumption on noise for a subset of our results: (i) Theorems~\ref{ThmDetHuberReg} and \ref{ThmStocHuberReg} hold even if the first moment of the $z_i$'s is infinite, and (ii) Theorem~\ref{ThmLAD} holds even if the $z_i$'s are dependent on $x_i$'s and have nonzero mean.

In the sequel, we also study the robustness of our estimators when a fraction of data points are adversarially contaminated. We formally define the contamination model of the adversary below:
\begin{definition}(Strong Contamination Model)
\label{DefContModel}
We say that a set $T$ is an \emph{$\epsilon$-corrupted version of a set $S$} if $|T| = |S|$ and $|T \cap S| \geq (1 - \epsilon)|S|$.
\end{definition}
This contamination model is called the \emph{strong contamination model} in the literature, since no computational or statistical restrictions are imposed on $T$. In contrast, Huber's $\epsilon$-contamination model requires the contamination mechanism to be oblivious and additive, i.e., it can only add outliers to the uncontaminated i.i.d.\ data without looking at the inliers.

\subsection{Stability conditions}

Our technical results will rely on appropriately defined notions of stability. Recall the following stability condition from the robust mean estimation literature~\cite{DiaKKLMS16-focs,DiaKKLMSl17,SteCV18,DonHL19,DiaKan19,CheDG19,CheDGS20}:
\begin{definition}(Strong stability)
\label{DefStab}
For $\epsilon < 1/2$, we say that a multiset $S= \{x_1,\dots,x_n\}$ \emph{satisfies $(\epsilon,\delta)$-stability} for $\epsilon \leq \delta$ with respect to $\mu$ and $\sigma^2$ if for all $S' \subseteq S$ such that $|S'| \geq (1 - \epsilon) n$, we have
\begin{enumerate}
	\item $\left\| \frac{1}{|S'|} \sum_{i \in S'} x_i - \mu \right\|_2 \leq \sigma \delta$, and
	\item $\left\| \frac{1}{|S'|} \sum_{i \in S'} (x_i - \mu)(x_i - \mu)^T - \sigma^2 I \right\|_2 \leq \frac{\sigma^2 \delta^2}{\epsilon}$.
\end{enumerate}
\end{definition}

Definition~\ref{DefStab} is designed for samples from a distribution with mean $\mu$ and covariance $\Sigma \preceq \sigma^2 I$. 
Note that a set which is $(\epsilon, \delta)$-stable is also $(\epsilon', \delta')$-stable for any $\epsilon' \le \epsilon$ and $\delta' \ge \delta$.
The $(\epsilon,\delta)$-stability condition states that for every large enough subset, (i) the $\ell_2$-distance between the empirical mean and $\mu$ is at most $\sigma \delta$, and (ii) the spectral distance between the (centered) second moment matrix and $\sigma^2 I$ is at most $\frac{\sigma^2 \delta^2}{\epsilon}$.
Since our primary focus will be on distributions with $\mu = 0$ and $\sigma^2 = 1$, we will not explicitly state these parameters when they are clear from context.

Next, we mention a deterministic condition on the covariates that appeared in the analysis of least trimmed squares regression in Bhatia et al.~\cite{BhaJK15}:

\begin{definition} (Weak stability)
\label{AsDetCov}
Let $\epsilon \in (0,1)$. The set $\{x_1, \dots, x_n\}$ \emph{satisfies $(\epsilon, L, U)$-weak stability} if for every subset $S \subseteq [n]$ such that $|S| \geq (1 - \epsilon) n$, the second moment matrix of $S$ is approximately isotropic, i.e.,
	\begin{align*}
L \leq 	 \lambda_{\min}\left(\frac{1}{n}\sum_{i \in S} x_ix_i^T \right) \leq   \lambda_{\max}\left( \frac{1}{n} \sum_{i \in S} x_ix_i^T \right) \leq U.
	\end{align*}
\end{definition}
	
Bhatia et al.~\cite{BhaJK15} established the convergence of an alternating minimization algorithm under the weak stability condition for a fixed $\epsilon$, provided (i) $L = \Theta(1)$ and (ii) $U = \Theta(1)$.
We will show in Section~\ref{SecHuber} that under the same conditions, Huber regression also succeeds with high probability.
This leads to the question of whether weak stability directly holds with high probability for heavy-tailed covariates; following arguments in Koltchinskii and Mendelson~\cite{KM15}, it can be shown that condition (i) holds with high probability~\cite{DiaKP20}.
However, known concentration results suggest that condition (ii) does \emph{not} hold with high probability for heavy-tailed covariates when $S = [n]$:
The usual matrix Chernoff bounds~\cite{Tro15} would yield $U = O(1)$ with probability $1 - \tau$ if $n = \Omega(p \log(1 / \tau))$, which may be much larger than the ideal sub-Gaussian sample complexity which is \emph{additive} rather than multiplicative in $p$ and $\log(1/\tau)$.

We note the following simple lemma, which shows that strong stability implies weak stability:
\begin{lemma}
\label{LemStrongWeak}
Let $S= \{x_1,\dots,x_n\}$ be an $(\epsilon,\delta)$-stable set with respect to $\mu=0$ and $\sigma^2 = 1$, such that $\frac{\delta^2}{\epsilon} < 1$. Then $S$ is also $(\epsilon,L,U)$-weakly stable with $L = (1 - \epsilon)\left(1 - \frac{\delta^2}{\epsilon}\right)$ and $U = 1 + \frac{\delta^2}{\epsilon}$. In particular, if $\frac{\delta^2}{\epsilon} < 0.5$, we have $L = \Omega(1)$ and $U = O(1)$.
\end{lemma}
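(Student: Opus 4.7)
The plan is to start from Definition~\ref{DefStab} specialized to $\mu = 0$ and $\sigma^2 = 1$, and translate the resulting bound on the (normalized) second moment of each large subset into the eigenvalue bounds required by Definition~\ref{AsDetCov}. The only real content is bookkeeping of two different normalizations: strong stability normalizes each subset-sum by $|S'|$, while weak stability normalizes by $n$.

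Concretely, I would fix an arbitrary $S' \subseteq S$ with $m := |S'| \ge (1 - \epsilon) n$ and apply the second condition of Definition~\ref{DefStab}. Since $\mu = 0$, this yields
\begin{align*}
\left\| \tfrac{1}{m} \sum_{i \in S'} x_i x_i^T - I \right\|_2 \le \tfrac{\delta^2}{\epsilon},
\end{align*}
so as a PSD inequality we get
\begin{align*}
\bigl(1 - \tfrac{\delta^2}{\epsilon}\bigr) I \preceq \tfrac{1}{m} \sum_{i \in S'} x_i x_i^T \preceq \bigl(1 + \tfrac{\delta^2}{\epsilon}\bigr) I.
\end{align*}
Note the hypothesis $\delta^2/\epsilon < 1$ is exactly what makes the left-hand factor nonnegative, so this is a genuine (positive) lower bound on the empirical second moment.

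Next I would rescale by $m/n$ to match the normalization in Definition~\ref{AsDetCov}, obtaining
\begin{align*}
\tfrac{m}{n}\bigl(1 - \tfrac{\delta^2}{\epsilon}\bigr) I \preceq \tfrac{1}{n} \sum_{i \in S'} x_i x_i^T \preceq \tfrac{m}{n}\bigl(1 + \tfrac{\delta^2}{\epsilon}\bigr) I.
\end{align*}
Finally I would use the two crude bounds $m/n \ge 1 - \epsilon$ and $m/n \le 1$ on the outer factors. The first gives $\lambda_{\min} \ge (1-\epsilon)(1 - \delta^2/\epsilon) = L$, and the second gives $\lambda_{\max} \le 1 + \delta^2/\epsilon = U$. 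Since $S'$ was an arbitrary subset of size at least $(1-\epsilon)n$, this is precisely $(\epsilon, L, U)$-weak stability.

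For the ``in particular'' claim, I would just plug in $\delta^2/\epsilon < 1/2$: then $U < 3/2 = O(1)$, and using $\epsilon < 1/2$ (built into Definition~\ref{DefStab}) we get $L > (1/2)(1/2) = 1/4 = \Omega(1)$. I do not anticipate a real obstacle here; the only subtlety worth flagging in the write-up is that the centered and uncentered second moments coincide because $\mu = 0$, and that the two normalizations $1/|S'|$ versus $1/n$ differ by the factor $m/n \in [1-\epsilon, 1]$, which is what produces the $(1-\epsilon)$ factor appearing in $L$.
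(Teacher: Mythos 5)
Your proposal is correct and follows essentially the same route as the paper: apply the spectral condition of strong stability to each large subset, then convert between the $1/|S'|$ and $1/n$ normalizations using $|S'|/n \in [1-\epsilon, 1]$, which is exactly where the $(1-\epsilon)$ factor in $L$ comes from. The only cosmetic difference is that the paper obtains the upper bound $U$ by bounding the full-set sum and using PSD monotonicity, whereas you bound each subset directly via $|S'|/n \le 1$; both are equally valid one-line arguments.
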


\begin{proof}
By the definition of strong stability and the triangle inequality, we clearly have
\begin{equation*}
\left\|\frac{1}{n}\sum_{i \in [n]} x_ix_i^T\right\|_2 \leq 1 + \frac{\delta^2}{\epsilon},
\end{equation*}
showing that we can take $U = 1 + \frac{\delta^2}{\epsilon}$.

For the lower bound, consider a subset $S \subseteq [n]$ such that $|S| \geq (1 - \epsilon)n$.
By the stability condition, we know that for any unit vector $v$, we have
\begin{equation*}
v^T \left(I - \frac{1}{|S|} \sum_{i \in S} x_i x_i^T\right) v \le \frac{\delta^2}{\epsilon},
\end{equation*}
implying that
\begin{equation*}
\frac{n}{|S|} \cdot v^T \left(\frac{1}{n} \sum_{i \in S} x_i x_i^T\right) v \ge 1 - \frac{\delta^2}{\epsilon}.
\end{equation*}
Hence,
\begin{equation*}
\lambda_{\min}\left(\frac{1}{n} \sum_{i\in S}x_ix_i^T\right) \geq \frac{|S|}{n}\left(1 - \frac{\delta^2}{\epsilon}\right) \geq (1 - \epsilon)\left(1 - \frac{\delta^2}{\epsilon}\right),
\end{equation*}
giving the desired result. The second result follows by noting that $\epsilon<1/2$.
\end{proof}

Bhatia et al.~\cite{BhaJK15} also defined the following notions in their analysis of LTS:
\begin{definition} (SSC and SSS)
\label{DefSS}
Let $x_1,\dots,x_n$ be $n$ points in $\R^p$. 
For $m \in [n]$, we say that the $x_i$'s satisfy the \emph{Subset Strong Convexity (SSC) property at level $m$ with parameter $\lambda_m$} if 
\begin{align*}
\lambda_m \leq \min_{S \subseteq [n]:|S| = m} \lambda_{\min}\left(\sum_{i \in S} x_ix_i^T\right).
\end{align*}
We say that the $x_i$'s satisfy the \emph{Subset Strong Smoothness (SSS) property at level $m$ with parameter $\Lambda_m$} if 
\begin{align*}
\max_{S \subseteq [n]:|S| = m} \lambda_{\max}\left(\sum_{i \in S} x_ix_i^T\right) \leq \Lambda_{m}.
\end{align*}
\end{definition}
Note that if a set satisfies $(\epsilon, L, U)$-weak stability, then it satisfies the SSC and SSS properties at level $(1-\epsilon)n$ with parameters $nL$ and $nU$, respectively. However, the results of Bhatia et al.\ (cf.\ Lemma~\ref{LemAltMin} below) require finer control of the minimum and maximum eigenvalues at different levels, in addition to the assumption of weak stability.

Our final notion of stability comes from Karmalkar and Price~\cite{KarPri19}:
\begin{definition} ($\ell_1$-stability)
\label{DefL1Stable}
We say a set of data points $\{x_1,\dots,x_n \}\subseteq \R^p$ satisfies \emph{$(m,M, \epsilon, \ell_1)$-stability} if for all subsets $S \subseteq [n]$ with $|S| \geq (1 - \epsilon) n$ and all unit vectors $v \in \R^p$, the following two conditions are satisfied: 
\begin{enumerate}
	\item $\frac{1}{n} \sum_{i \in S} |x_i^Tv| \geq M$, and
\item 
$\frac{1}{n} \sum_{i \in [n] \setminus S} |x_i^Tv| \leq m$.
\end{enumerate}
\end{definition}
Note that this definition of stability controls the $\ell_1$-norm of projections, whereas weak stability (or strong stability) is a statement about $\ell_2$-norms. This notion of stability was used by Karmalkar and Price~\cite{KarPri19} in their analysis of the LAD estimator, and will also be used in our analysis of the LAD estimator in the present paper.
As shown later (cf.\ Lemma~\ref{PropStabL1Error}), the upper bound in the definition of $\ell_1$-stability can be derived directly from strong stability.

\subsection{Iterative filtering algorithm}
\label{SubSecRobMean}

A recent line of work in the robust mean estimation literature has led to various algorithms that succeed when the stability condition holds (see Diakonikolas and Kane~\cite{DiaKan19} for a recent survey).
We choose to work with the iterative filtering algorithm with independent removal~\cite{DiaKan19}:
\begin{theorem}(Diakonikolas and Kane~\cite{DiaKan19})
\label{ThmStability}
Let $\epsilon < 1/2$, and suppose $S \subseteq \R^p$ is a multiset such that there exists a subset $S' \subseteq S$ such that (i) $|S'|\geq (1 - \epsilon)|S|$ and (ii) $S'$ is $(C\epsilon,\delta)$-stable with respect to $\mu$ and $\sigma^2$ for a large enough constant $C>1$.
Let $T$ be an $\epsilon$-corrupted version of the set $S$.
Then there exists a computationally efficient algorithm that, given $T$ and $\epsilon$ as inputs, with probability at least $1 - O(\exp(-\Omega(n \epsilon)))$, outputs a multiset $T' \subseteq T$ such that (i) $|T'| \geq (1 -  c_1\epsilon)|T|$ and  (ii) $T'$ is $( c_2C\epsilon, c_3\delta)$-stable with respect to $\mu$ and $\sigma^2$.
\end{theorem}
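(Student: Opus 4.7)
The overall plan is to give a filter-based iterative algorithm: start with $T$ and at each step either certify that the current multiset is stable (and halt) or identify a direction of excess variance, then probabilistically remove points along that direction in a way that removes more corrupted points than uncorrupted ones in expectation. At termination, the surviving multiset $T'$ will still contain most of the stable core $S' \cap T$, which will force $T'$ itself to be $(O(\epsilon), O(\delta))$-stable.

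First I would set up a notion of ``good'' and ``bad'' points inside the current working set $T_t$: let $G_t = T_t \cap S'$ and $B_t = T_t \setminus S'$. The invariant to maintain is $|G_0 \setminus G_t| \leq |B_0 \setminus B_t|$, i.e., at every iteration the number of good points discarded so far is dominated by the number of bad points discarded. Combined with $|B_0| \leq 2\epsilon |S|$, this invariant immediately implies that at termination $|T' \cap S'| \geq (1 - O(\epsilon))|S'|$, so $T'$ contains a large stable subset and therefore the standard deterministic ``stable subset implies stable superset'' lemma (essentially a triangle inequality on means and a spectral-norm inequality on second moment matrices) delivers the desired $(O(\epsilon), O(\delta))$-stability of $T'$.

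The main step is the per-iteration progress lemma. Let $\hat\mu_t$ and $M_t = \frac{1}{|T_t|}\sum_{i \in T_t}(x_i-\hat\mu_t)(x_i-\hat\mu_t)^T$ be the current mean and centered second moment. I would show: if $\|M_t - \sigma^2 I\|_2 \leq C'\sigma^2 \delta^2/\epsilon$ for a suitable constant $C'$, then $T_t$ is already $(O(\epsilon), O(\delta))$-stable relative to $\mu$ (this uses only stability of $G_t$ and the boundedness of $|B_t|/|T_t|$). Otherwise, take $v$ to be the top eigenvector of $M_t - \sigma^2 I$, and define scores $\tau_i = (v^T(x_i - \hat\mu_t))^2$. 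The stability of $S'$ bounds $\sum_{i \in G_t}\tau_i$ by roughly $\sigma^2(1 + \delta^2/\epsilon)|T_t|$, while the hypothesis $\|M_t - \sigma^2 I\|_2 \gg \sigma^2\delta^2/\epsilon$ forces $\sum_{i \in T_t}\tau_i$ to be strictly larger, so
\begin{equation*}
\sum_{i \in B_t}\tau_i \;>\; \sum_{i \in G_t}\tau_i.
\end{equation*}
Then the independent-removal filter keeps each $i$ with probability $1 - \tau_i/\tau_{\max}$; the expected number of bad removals exceeds the expected number of good removals. A Bernstein/Chernoff bound on the independent $\{0,1\}$ removal indicators, together with a union bound over the $O(n)$ iterations (each removes at least one point), upgrades this to a high-probability statement of the form $1 - O(\exp(-\Omega(n\epsilon)))$, giving the invariant.

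The main obstacle I expect is the concentration step for the independent removals: the per-iteration gap between expected bad and good removals can be small when $\|M_t - \sigma^2 I\|_2$ is only barely above threshold, so one must either amortize the invariant across iterations (tracking the running deficit $|B_0 \setminus B_t| - |G_0 \setminus G_t|$ as a supermartingale and applying Azuma/Freedman) or sharpen the filter threshold so that the per-step signal is $\Omega(\epsilon |T_t|)$ in expectation. I would pursue the martingale route: define $\Phi_t = |B_0 \setminus B_t| - |G_0 \setminus G_t|$, show $\E[\Phi_{t+1} - \Phi_t \mid \mathcal F_t] \geq 0$ whenever the halting condition fails, bound the one-step increments by $1$, and conclude that $\Phi_t \geq 0$ throughout with the required exponential probability. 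Termination in at most $|T|$ iterations is automatic because each nontrivial iteration removes a positive number of points with probability $1$, and polynomial running time follows since each iteration requires only a top eigenvector computation.
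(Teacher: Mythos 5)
You have attempted to prove a statement that the paper itself does not prove: Theorem~\ref{ThmStability} is imported as a black box from Diakonikolas and Kane~\cite{DiaKan19}, and the paper only gives the informal description in Section~\ref{SubSecRobMean} (project on the top eigenvector of the empirical covariance, remove points stochastically so that outliers are more likely to go than inliers). Your sketch is exactly that standard filter-with-independent-removal analysis, so in approach it matches the cited source rather than diverging from it. That said, two steps in your outline have genuine gaps that the real proof has to close.

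First, the inequality $\sum_{i \in B_t}\tau_i > \sum_{i \in G_t}\tau_i$ is not immediate from the stability of $S'$, because your scores $\tau_i = (v^T(x_i - \hat\mu_t))^2$ are centered at the \emph{current empirical mean} $\hat\mu_t$, not at $\mu$. To bound $\sum_{i\in G_t}\tau_i$ you must first control $\|\hat\mu_t - \mu\|_2$, which itself is contaminated by the bad points still present in $T_t$; the argument only closes when the stability parameter of $S'$ is $C\epsilon$ for a sufficiently large constant $C$, so that the good points remain stable even after an additional $O(\epsilon)$-fraction has been deleted and the excess variance above the $\sigma^2\delta^2/\epsilon$ threshold is forced to come from $B_t$. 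Your sketch never invokes the ``large enough constant $C>1$'' hypothesis, which is precisely where it is needed.

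Second, the concentration step as written does not give the claimed $1 - O(\exp(-\Omega(n\epsilon)))$ probability. Under independent removal an iteration can delete many points at once, so the one-step increments of $\Phi_t$ are not bounded by $1$; and even granting unit increments, Azuma over $\Theta(n)$ iterations yields a deviation of order $\sqrt{n\log(1/\tau)}$, which at the target failure probability $\tau = \exp(-\Omega(n\epsilon))$ is $\Theta(n\sqrt{\epsilon})$ --- far exceeding the $O(\epsilon n)$ budget of good points you can afford to lose. The correct route (and the one in the cited analysis) is to observe that the \emph{cumulative} conditional expectation of good-point removals over the entire run is dominated by that of bad-point removals, hence is $O(\epsilon n)$, and then apply Freedman's inequality or a multiplicative Chernoff bound with total predictable variance $O(\epsilon n)$; this is what produces the $\exp(-\Omega(n\epsilon))$ rate. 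With those two repairs your outline becomes the standard proof.
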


\begin{remark}
\label{RemStableMean}
Note that by the definition of stability, the empirical mean of an $(\epsilon, \delta)$-stable set lies within $\sigma\delta$ of $\mu$. Thus, Theorem~\ref{ThmStability} provides a high-probability error bound on the empirical mean of the filtered data points, when the original data set is an $\epsilon$-corrupted version of a data set containing a large stable subset.
\end{remark}

Stability-based algorithms use the fact that if the empirical covariance matrix has a small spectral norm, then the empirical mean is itself a good estimate of $\mu$.
The algorithm mentioned in Theorem~\ref{ThmStability} uses this insight to obtain a subset of cardinality $(1 - O(\epsilon))n$ such that the resulting empirical covariance matrix has a small spectral norm, by iteratively removing a certain fraction of points.
At a high level, in each iteration, the algorithm uses the projection of the points along the leading eigenvector of the empirical covariance matrix (of the remaining points) to define a distribution over the (remaining) points such that the probability mass over the outliers is greater than the mass over the inliers. 
This distribution is then used to remove points stochastically, so that at each iteration, the algorithm is more likely to remove outliers than inliers.
Since the number of outliers is at most $\epsilon n$, it does not remove too many inliers.
Whereas prior work has focused on using the filtering algorithm mentioned in Theorem~\ref{ThmStability} as a subroutine to find an estimate $\widehat{\mu}$ for $\mu$ (or, more generally, to robustly estimate the gradient of a function), we emphasize that our motivation in applying the filtering algorithm is to \emph{identify a subset} $T'$ that satisfies weak stability---indeed, mean estimation is unnecessary because we already know the covariate distribution is centered around 0.

The probability of success of our preprocessing step will depend on the probability of success of Theorem~\ref{ThmStability} applied to i.i.d.\ data from a distribution satisfying Assumption~\ref{AsCov}.
We will use the following recent result from Diakonikolas et al.~\cite{DiaKP20}, which provides a useful guarantee for when the condition of Theorem~\ref{ThmStability} is satisfied with high probability:

\begin{theorem}(Diakonikolas et al.~\cite{DiaKP20})
\label{ThmStabHighProb}
Let $S$ be a set of $n$ i.i.d.\ points from a distribution in $\R^p$ with mean $\mu$ and covariance $I$.
Further assume that the distribution satisfies $(k,2)$-hypercontractivity with parameter $\sigma_k$, for some $k \geq 4$.
Let $\epsilon$ and $\tau$ be such that $\epsilon' = C\left(\epsilon + \frac{\log(1/\tau)}{n}\right) = O(1)$, for a large enough constant $C$. 
Then with probability at least $1 - \tau$,  there exists a subset $S' \subseteq S$ such that $|S'| \geq (1 - \epsilon')|S|$ and $S'$ is $(C_1\epsilon',\delta)$-stable, where $C_1 > 2$ is any large constant and $\delta = O\left(\sqrt{\frac{p\log p}{n}} + \sigma_k \epsilon^{1 - \frac{1}{k}} + \sigma_4\sqrt{\frac{\log(1/ \tau)}{n}}\right)$ with prefactor depending on $C_1$.
\end{theorem}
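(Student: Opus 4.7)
The plan is to construct the stable subset $S'$ by removing a small number of outlying points from $S$ and then to verify both items of Definition~\ref{DefStab} for $S'$. Fix a truncation threshold $\tau_0 = \Theta(\sigma_k/\epsilon^{1/k})$, and let $R = \{i \in [n] : \|x_i - \mu\|_2 > \tau_0\}$ be the set of gross outliers. By $(k,2)$-hypercontractivity and Markov's inequality, $\mathbb{E}[\|x_i - \mu\|_2^k]$ is bounded in terms of $\sigma_k$ (modulo a factor depending on $p$, which is handled by a finer truncation in each direction instead of the full norm), so a Chernoff-type tail bound gives $|R| \leq \epsilon' n$ with probability at least $1 - \tau/3$. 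Set $S' := S \setminus R$; this already gives $|S'| \geq (1-\epsilon')|S|$.

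For the mean condition, the idea is to bound $\|\tfrac{1}{|S''|}\sum_{i \in S''}(x_i - \mu)\|_2$ uniformly over subsets $S'' \subseteq S'$ of size at least $(1-C_1\epsilon')|S'|$. Decompose $\bar{x}_{S''} - \mu$ into two pieces: the deviation of the full truncated sample mean from $\mu$, controlled by a vector Bernstein bound on the bounded truncated variables to give $O\bigl(\sqrt{p/n} + \sigma_4\sqrt{\log(1/\tau)/n}\bigr)$; and the ``damage'' from removing at most $O(\epsilon') n$ points from the truncated sample. The latter is the resilience step: for any $T \subseteq [n]$ with $|T| \leq \epsilon n$ and any unit vector $v$, H\"older's inequality together with the $k$-th moment bound gives
\begin{equation*}
\left|\frac{1}{n}\sum_{i \in T} v^T(x_i - \mu)\right| \leq \left(\frac{|T|}{n}\right)^{1-1/k} \left(\frac{1}{n}\sum_{i=1}^n |v^T(x_i-\mu)|^k\right)^{1/k} = O\bigl(\sigma_k \epsilon^{1-1/k}\bigr),
\end{equation*}
once one shows that the empirical $k$-th moment concentrates around $\sigma_k^k$ uniformly in $v$. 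This uniform-in-$v$ control is obtained via a standard $\epsilon$-net argument over $\mathcal{S}^{p-1}$, using that $(v^T(x-\mu))^k$ has bounded $2$-nd moment (again by hypercontractivity), and paying the $\log(1/\tau)$ probability budget through a Bernstein-type bound followed by a union bound over the net.

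For the second-moment condition, the same template is applied to rank-one matrices. Matrix Bernstein on the truncated data $(\tilde{x}_i - \mu)(\tilde{x}_i - \mu)^T$ yields
\begin{equation*}
\left\|\frac{1}{n}\sum_{i \in S'}(x_i - \mu)(x_i - \mu)^T - I\right\|_2 = O\!\left(\sqrt{\frac{p\log p}{n}} + \sigma_4^2\sqrt{\frac{\log(1/\tau)}{n}}\right),
\end{equation*}
where the $\log p$ factor comes from the intrinsic dimension of matrix Bernstein. The resilience piece is again bounded via H\"older on the $k$-th moment of $v^T(x-\mu)$: for any subset $T$ of size at most $\epsilon n$ and any unit $v$,
\begin{equation*}
\frac{1}{n}\sum_{i \in T} (v^T(x_i - \mu))^2 \leq \left(\frac{|T|}{n}\right)^{1 - 2/k}\!\left(\frac{1}{n}\sum_{i=1}^n (v^T(x_i - \mu))^k\right)^{2/k} = O\bigl(\sigma_k^2 \epsilon^{1 - 2/k}\bigr),
\end{equation*}
which is at most $\delta^2/\epsilon'$ for the claimed $\delta$. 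Combining the concentration terms with the resilience terms, rewriting each subset mean and second moment as a suitable convex combination, and taking a union bound over the three high-probability events (outlier count, mean concentration, matrix concentration, and the two net-based uniform resilience bounds) yields the stability of $S'$ with the stated parameters.

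The main obstacle is the uniform resilience step: the bounds on $\sum_{i \in T}(v^T(x_i - \mu))^k$ must hold simultaneously over every subset $T$ and every unit vector $v$. A naive union bound over subsets is hopeless, so the argument must go through a direct $\epsilon$-net over $\mathcal{S}^{p-1}$ and a high-probability bound on the sum of the $k$-th powers, not the subset-supremum itself; once this one-sided concentration of the empirical $k$-th moment is in place, both resilience estimates fall out of H\"older. Controlling the net cost against the $\sigma_4\sqrt{\log(1/\tau)/n}$ budget, rather than inflating it by an extra $\sqrt{p}$ factor, is the delicate point.
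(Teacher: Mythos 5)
This theorem is quoted from Diakonikolas et al.\ \cite{DiaKP20}; the paper does not prove it, so your attempt can only be judged on its own merits and against the closest analogous arguments the paper does carry out (Proposition~\ref{PropStabSimplified} and Lemmas~\ref{LemTruncLin}--\ref{LemTruncLinHigherMoment}).

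Your overall template (truncate, concentrate the bulk, bound the damage from removable subsets via H\"older, union over a net) is the right shape, but the load-bearing step is wrong. Both resilience estimates reduce to the claim that the empirical directional $k$-th moment $\frac{1}{n}\sum_{i=1}^n |v^T(x_i-\mu)|^k$ is $O(\sigma_k^k)$ uniformly in $v$ with probability $1-\tau$. Under only $(k,2)$-hypercontractivity this is false: the summands $|v^T(x_i-\mu)|^k$ have only their \emph{first} moment controlled, so their empirical mean does not concentrate. Concretely, a symmetric two-point mixture with $v^Tx = \pm a$ with probability $q = \sigma_k^k/(2a^k)$ satisfies $\E(v^Tx)^2 = 1$ and $\E|v^Tx|^k \le \sigma_k^k$, yet for $q = 1/(2Cn)$ the event that some sample hits $\pm a$ has probability about $1/(2C)$ and inflates the empirical $k$-th moment to at least $C\sigma_k^k$ --- so for every constant $C$ the bound fails with constant probability, let alone probability $\tau$. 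Your justification that ``$(v^T(x-\mu))^k$ has bounded second moment by hypercontractivity'' requires a $2k$-th moment assumption, which is not available, so the Bernstein-plus-net argument cannot be run. Your initial norm-based truncation does not repair this: removing points with $\|x_i-\mu\|_2$ large (and note the threshold should carry a $\sqrt{p}$ factor, since $\E\|x-\mu\|_2^2 = p$) only yields the crude bound $|v^Tx_i| \le \|x_i\|_2$, which is far too weak per direction.

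The standard fix, and the one the paper itself uses in Proposition~\ref{PropStabSimplified}, is to never bound the untruncated empirical $k$-th moment. Instead one proves a \emph{per-direction counting} statement (Lemma~\ref{LemTruncLinHigherMoment}): with probability $1-\exp(-cn\epsilon)$, for every unit $v$ at most $\epsilon n$ points satisfy $|v^T(x_i-\mu)| \ge Q$ with $Q \asymp \sigma_k\epsilon^{-1/k} + \epsilon^{-1}\sqrt{p/n}$. One then works throughout with the truncated quantities $\min\{(v^Tx_i)^2, Q^2\}$, whose deviations are controlled by symmetrization, Rademacher contraction, Talagrand's inequality, and matrix Bernstein, and the removable-subset damage is bounded by $\epsilon Q^2$ rather than by H\"older against an uncontrolled empirical $k$-th moment. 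Until your uniform moment-concentration step is replaced by such a truncation-plus-counting argument, the proof has a genuine gap.
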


Combining the two theorems above, we see that with probability $1 - \tau$, we can identify a large subset $S' \subseteq S$, in a computationally efficient manner, such that $S'$ is $(O(\epsilon),\delta)$-stable for an appropriate choice of $\epsilon$ and $\delta$ as specified by Theorem~\ref{ThmStabHighProb}. This rather technical conclusion is the starting point of our work.

\section{Huber regression}
\label{SecHuber}

In this section, we will study Huber's loss for regression.
The Huber loss with parameter $\gamma$ is defined as follows:
\begin{align*}
\ell_\gamma(x) = \begin{cases} \frac{x^2}{2}, & \text{ if } |x| \leq \gamma, \\
			\gamma |x| - \frac{\gamma^2}{2}, & \text{ if } |x| > \gamma.
			\end{cases}
\end{align*}
This loss function has a long history in robust statistics, starting from the seminal work of Huber~\cite{Hub64,HubRon11}.
Let $\psi_\gamma(x) = \nabla \ell_\gamma(x)$ be the gradient of Huber's loss:
\begin{align*}
\psi_\gamma(x) = \begin{cases} x, & \text{ if } |x| \leq \gamma, \\
			\gamma (\text{sgn}(x))  & \text{ if } |x| > \gamma.
			\end{cases}
\end{align*}
We now define $\cL_\gamma(\beta) := \frac{1}{n} \sum_{i \in [n]} \ell_\gamma(y_i - x_i^T \beta)$ and let Huber's $M$-estimator be defined as
\begin{equation*}
\widehat{\beta}_{H,\gamma} = \argmin_{\beta} \cL_\gamma(\beta).
\end{equation*}

Note that the Huber objective function is convex, so it is possible to (approximately) obtain the minimizer $\widehat{\beta}_{H, \gamma}$ in a computationally feasible manner. Thus, we will begin by analyzing statistical properties of the Huber regression estimator and then comment only briefly on optimization (cf.\ Section~\ref{SubSecHubRunTime}). We present our statistical analysis in increasing levels of complexity: fixed design covariates satisfying weak stability and i.i.d.\ symmetric noise (Section~\ref{SecSym}), random i.i.d.\ covariates and asymmetric noise (Section~\ref{SecHuberGeneral}), and adversarially contaminated data (Section~\ref{SecHuberAdv}).

\subsection{Fixed design and symmetric noise}
\label{SecSym}

Our main result in this subsection is the following:
\begin{theorem}
\label{ThmDetHuberReg}
Suppose we have $n$ i.i.d.\ samples from the following (fixed design) model: $y_i = x_i^T \beta^* + z_i$,
where the covariates $\{x_i\}$ satisfy weak stability with some $\epsilon$, $L$, and $U$.
Suppose the errors $\{z_i\}$ are sampled independently from a \textit{symmetric} distribution.
Let $\widehat{\beta}_{H,\gamma} \in \arg\min \cL_\gamma(\beta)$. 
Let $\tau$ be such that $\frac{\log(1/ \tau)}{n} = O(\epsilon)$.
  Then setting $\gamma$ such that $\P(|z_i| \geq \gamma/2) = O(\epsilon)$, we have, with probability at least $1 - \tau$,
\begin{align*}	
	\|\widehat{\beta}_{H,\gamma} - \beta^*\|_2 \lesssim \frac{ \gamma \sqrt{U}}{ L}\left( \sqrt{\frac{p}{n}} +  \sqrt{\frac{\log (1 / \tau)}{n}} \right), \,\, \text{ as long as } n = \Omega\left(\frac{U^2(p + \log(1/\tau))}{L^2 \epsilon^2}\right).	
	\end{align*}
Furthermore, $\cL_ \gamma(\beta)$ is $L$-strongly convex in a ball of radius $\Omega(\epsilon \gamma / \sqrt{U})$ around $\widehat{\beta}_{H,\gamma}$.
\end{theorem}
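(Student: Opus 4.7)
The plan is to follow the classical $M$-estimator template under local strong convexity: I will (i) bound $\|\nabla\cL_\gamma(\beta^*)\|_2$ with high probability, (ii) establish $L$-strong convexity of $\cL_\gamma$ on a Euclidean ball $B(\beta^*,r_1)$ of an appropriate radius, and (iii) combine (i) and (ii) via a standard localization argument, using the stated sample-size condition to close a self-consistency loop. Weak stability of $\{x_i\}$ enters in two complementary roles here---as a spectral upper bound in the gradient step and as a spectral lower bound in the strong-convexity step.

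For the gradient bound, the key observation is that $\nabla\cL_\gamma(\beta^*)=-\tfrac{1}{n}\sum_i \psi_\gamma(z_i) x_i$, and symmetry of $z_i$ together with oddness of $\psi_\gamma$ makes each $\psi_\gamma(z_i)$ a symmetric, mean-zero random variable, bounded in absolute value by $\gamma$. For any fixed $v\in\cS^{p-1}$, Hoeffding's inequality applied to $\tfrac{1}{n}\sum_i\psi_\gamma(z_i)(x_i^Tv)$ gives a deviation of order $\gamma\sqrt{\log(1/\tau)/n\cdot \tfrac{1}{n}\sum_i(x_i^Tv)^2}$, and weak stability applied to $S=[n]$ bounds the second factor by $U$. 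A standard $1/4$-net argument on $\cS^{p-1}$ (whose cardinality is at most $9^p$) then yields $\|\nabla\cL_\gamma(\beta^*)\|_2\lesssim \gamma\sqrt{U(p+\log(1/\tau))/n}$ with probability at least $1-\tau$.

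For local strong convexity, writing $I(\beta):=\{i:|y_i-x_i^T\beta|\leq\gamma\}$, monotonicity of $\psi_\gamma$ yields the pointwise bound
\[
\langle\nabla\cL_\gamma(\beta_2)-\nabla\cL_\gamma(\beta_1),\beta_2-\beta_1\rangle\;\geq\;\tfrac{1}{n}\sum_{i\in I(\beta_1)\cap I(\beta_2)}(x_i^T(\beta_2-\beta_1))^2,
\]
so it suffices to show $|I(\beta_1)\cap I(\beta_2)|\geq(1-\epsilon)n$ uniformly on a suitably small ball. I will take $r_1\asymp \epsilon\gamma/\sqrt U$ and control two bad sources: (a) the choice of $\gamma$ ensures $\P(|z_i|\geq\gamma/2)=O(\epsilon)$, so a Chernoff bound on the resulting Bernoullis controls $\#\{i:|z_i|\geq\gamma/2\}$ by a small constant multiple of $\epsilon n$ with probability $1-\tau$ (this is the step that needs $\log(1/\tau)=O(\epsilon n)$); and (b) for every $\beta\in B(\beta^*,r_1)$, Markov's inequality applied with the weak-stability second-moment bound $\tfrac{1}{n}\sum_i (x_i^T(\beta-\beta^*))^2\leq U r_1^2$ gives $\#\{i:|x_i^T(\beta-\beta^*)|\geq \gamma/2\}\leq 4Ur_1^2 n/\gamma^2 = O(\epsilon^2 n)$. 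The triangle inequality $|y_i-x_i^T\beta|\leq|z_i|+|x_i^T(\beta-\beta^*)|$ puts the complement of $I(\beta)$ inside the union of these two bad sets, whose total size can be made strictly less than $\epsilon n$ by absorbing constants; weak stability applied to $I(\beta_1)\cap I(\beta_2)$ then delivers $\tfrac{1}{n}\sum_{i\in I(\beta_1)\cap I(\beta_2)}(x_i^T(\beta_2-\beta_1))^2\geq L\|\beta_2-\beta_1\|_2^2$, and integrating this monotonicity statement along line segments upgrades it to $L$-strong convexity of $\cL_\gamma$ on $B(\beta^*,r_1)$.

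Finally, first-order optimality $\nabla\cL_\gamma(\widehat\beta_{H,\gamma})=0$ combined with $L$-strong convexity on $B(\beta^*,r_1)$ and Cauchy-Schwarz produces the standard consequence $\|\widehat\beta_{H,\gamma}-\beta^*\|_2\leq 2\|\nabla\cL_\gamma(\beta^*)\|_2/L$, \emph{provided} the segment $[\beta^*,\widehat\beta_{H,\gamma}]$ lies inside $B(\beta^*,r_1)$. The sample-size hypothesis $n=\Omega(U^2(p+\log(1/\tau))/(L^2\epsilon^2))$ is precisely what forces $2\|\nabla\cL_\gamma(\beta^*)\|_2/L\leq r_1\asymp \epsilon\gamma/\sqrt U$, closing the self-consistency loop by the usual argument that a minimizer cannot lie on $\partial B(\beta^*,r_1)$ when strictly smaller values of $\cL_\gamma$ are available in the interior. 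This simultaneously delivers the stated $\ell_2$-error bound and the final claim that $\cL_\gamma$ is $L$-strongly convex on a ball of radius $\Omega(\epsilon\gamma/\sqrt U)$ about $\widehat\beta_{H,\gamma}$, since the error bound places $\widehat\beta_{H,\gamma}$ well inside $B(\beta^*,r_1)$. The main obstacle will be the careful balancing in the strong-convexity step: both bad sources must be controlled \emph{uniformly} over $B(\beta^*,r_1)$ with the total fraction staying strictly below the $\epsilon$ threshold on which weak stability is calibrated, and the fact that only a second-moment bound is available for $x_i$ (so Markov's inequality is the right tool) is what dictates the radius $r_1\asymp\epsilon\gamma/\sqrt U$ and hence the $\epsilon^{-2}$ scaling of the sample complexity.
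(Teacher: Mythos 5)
Your proposal is correct and follows essentially the same route as the paper: a gradient bound at $\beta^*$ (using symmetry of $z_i$, boundedness of $\psi_\gamma$, and the weak-stability upper bound $U$), a uniform local strong-convexity argument on a ball of radius $\asymp \epsilon\gamma/\sqrt{U}$ obtained by showing at most an $\epsilon$-fraction of residuals exceed $\gamma$ (Chernoff for the noise tail, a Markov-type bound with weak stability for the covariate shift), and the standard localization/self-consistency step. The only differences are cosmetic: you use a net plus Hoeffding where the paper invokes the sub-Gaussian vector norm bound, and a second-moment Markov bound (giving $O(\epsilon^2 n)$) where the paper uses a first-moment bound (giving $O(\epsilon n)$); both suffice.
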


Theorem~\ref{ThmDetHuberReg} provides an error bound on the Huber regression estimator under a deterministic condition on the covariates; the probabilistic nature of the theorem comes from the randomness in the additive errors, which are assumed to be drawn from a symmetric noise distribution. In Theorems~\ref{ThmStocHuberReg} and~\ref{ThmAdvHuberReg} below, we will show that the weak stability condition holds with high probability when the covariates are drawn from possibly heavy-tailed, possibly contaminated distributions and then passed through a filtering algorithm. We will also show how to relax the assumption that the distribution of $z_i$ is symmetric via an appropriate preprocessing step.

\begin{remark}
When $\Omega(1)= L \leq U = O(1)$ and $ \epsilon = \Omega(1)$, the sample complexity reduces to $n = \Omega(p)$ (by assumption, $n = \Omega(\log(1/ \tau))$).
Also, the radius of strong convexity is then $ \Omega(\gamma)$.
\end{remark}
\begin{remark}
\label{RemarkNoiseFirstMoment}
Note that Theorem~\ref{ThmDetHuberReg} does not require the additive noise to have finite moments. 
If the noise distribution has a finite $k^{\text{th}}$ moment, however, Markov's inequality implies that we can always set $\gamma = \Omega(\epsilon^{-1/k} (\E |z_i|^k)^{1/k})$. In particular, if the $z_i$'s have a finite variance $\sigma^2$, we can take $\gamma = \Omega(\sigma/ \sqrt{\epsilon})$.
\end{remark}

The assumption that $\P(|z_i| \ge \gamma/2) = O(\epsilon)$ implies that the parameter $\gamma$ used to define the Huber loss needs to be sufficiently large in order for our theory to succeed, in a sense being calibrated to the tail behavior of the error distribution. Indeed, the heavier the tails of the $z_i$'s, the larger $\gamma$ would need to be, leading to a worse error bound.
 Since it is generally unreasonable to assume that the scale of the additive noise distribution is known in practice, we will discuss methods for adaptively choosing $\gamma$ from the data in our results below.

\begin{proof}

We will follow the proof structure of Sun et al.~\cite{SunZF20}. The proof relies on the fact that $\cL_\gamma(\beta)$ is a convex function. We first show (Lemma~\ref{LemmaGradNorm}) that the gradient at $\beta^*$ is small, and then show (Lemma~\ref{LemmaHessLower}) that the loss function is strongly convex in a sufficiently large ball around $\beta^*$. Combining these two observations, we conclude that $\beta^*$ is close to the empirical minimizer, $\widehat{\beta}_{H, \gamma}$. Our rates are substantially tighter than those of Sun et al.~\cite{SunZF20} due to the improved guarantees of Lemmas~\ref{LemmaGradNorm} and~\ref{LemmaHessLower} in comparison to the results in that paper.

We now state and prove the two supporting lemmas:

\begin{lemma}
\label{LemmaGradNorm}
Consider the setting of Theorem~\ref{ThmDetHuberReg}. With probability at least $1 - \tau$, the gradient of the loss function satisfies
\begin{align*}
\|\nabla\cL_\gamma(\beta^*)\|_2 \lesssim \gamma  \sqrt{U}\left( \sqrt{\frac{p}{n}} + \sqrt{\frac{\log 1 / \tau}{n}} \right).
\end{align*}
\end{lemma}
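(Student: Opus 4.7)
Differentiating the Huber objective gives the clean formula
\[
\nabla \cL_\gamma(\beta^*) \;=\; -\frac{1}{n} \sum_{i=1}^n \psi_\gamma(y_i - x_i^T \beta^*)\, x_i \;=\; -\frac{1}{n}\sum_{i=1}^n \psi_\gamma(z_i)\, x_i .
\]
Two structural facts make this object easy to control. First, $\psi_\gamma$ is a bounded odd function with $|\psi_\gamma(t)|\le \gamma$, so by the symmetry of the noise distribution, $\psi_\gamma(z_i)$ is a bounded, symmetric, zero-mean random variable, independent across $i$ and (since this is fixed design) independent of the $x_i$. Second, the weak stability assumption applied to $S=[n]$ yields the spectral bound $\lambda_{\max}\!\bigl(\tfrac{1}{n}\sum_i x_i x_i^T\bigr)\le U$, so $\tfrac{1}{n}\sum_i (v^T x_i)^2 \le U$ for every unit vector $v$.

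The plan is then a standard covering argument. I would write
\[
\|\nabla \cL_\gamma(\beta^*)\|_2 \;=\; \sup_{v\in\cS^{p-1}} \frac{1}{n}\sum_{i=1}^n \psi_\gamma(z_i)\, (v^T x_i),
\]
and replace the supremum by a maximum over a $\tfrac{1}{2}$-net $\cN\subseteq\cS^{p-1}$ with $|\cN|\le 5^p$, at the cost of a factor of $2$. For a fixed $v\in\cN$ the summands $W_i(v) := \psi_\gamma(z_i)(v^T x_i)$ are independent, mean zero, and bounded by $\gamma|v^T x_i|$ in magnitude, with $\sum_i (v^T x_i)^2 \le nU$. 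A direct application of Hoeffding's inequality (with heterogeneous bounds $a_i = \gamma|v^T x_i|$) gives
\[
\P\!\left(\Bigl|\frac{1}{n}\sum_{i=1}^n W_i(v)\Bigr| \ge t\right) \;\le\; 2\exp\!\left(-\frac{n t^2}{2\gamma^2 U}\right).
\]
Union-bounding over $\cN$ and choosing $t = C\gamma\sqrt{U}\bigl(\sqrt{p/n}+\sqrt{\log(1/\tau)/n}\bigr)$ absorbs the $\log|\cN|\lesssim p$ term and leaves failure probability at most $\tau$. Undoing the net approximation gives the claimed bound on $\|\nabla\cL_\gamma(\beta^*)\|_2$.

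The main thing to be careful about is the concentration step: one might be tempted to use a crude uniform bound $|\psi_\gamma(z_i) v^T x_i|\le \gamma\max_i\|x_i\|_2$, which would lose a $\sqrt{p}$ factor. The point of the heterogeneous Hoeffding bound is to pay only for $\sum_i(v^T x_i)^2$, which is controlled by weak stability uniformly in $v$; this is precisely where the deterministic hypothesis on the covariates is used and is what yields the favorable $\sqrt{U}$ (rather than $U$) prefactor. No properties of $z_i$ beyond symmetry and independence are invoked, which is consistent with the remark in the statement that the noise need not have any finite moments. The probability bound on the net is $2\cdot 5^p \exp(-nt^2/(2\gamma^2 U))$, and I expect no further subtleties beyond choosing constants appropriately.
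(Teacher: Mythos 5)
Your proposal is correct and follows essentially the same route as the paper: both reduce to the identity $\nabla\cL_\gamma(\beta^*)=-\tfrac{1}{n}\sum_i\psi_\gamma(z_i)x_i$, use boundedness of $\psi_\gamma$ together with symmetry of the noise to get mean-zero increments, and use weak stability to bound $\sum_i(v^Tx_i)^2\le nU$ so that each directional projection is $\gamma\sqrt{U/n}$-sub-Gaussian. The only difference is that you carry out the $\tfrac{1}{2}$-net and union bound explicitly, whereas the paper packages that step into Vershynin's norm bound for sub-Gaussian random vectors.
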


\begin{proof}
We first note that the gradient at $\beta^*$ has a simple structure:
\begin{align*}
\nabla\cL_\gamma(\beta^*) &=  -\frac{1}{n}\sum_{i=1}^n \psi_\gamma(y_i - x_i^T \beta^*)x_i = -\frac{1}{n}\sum_{i=1}^n \psi_ \gamma(z_i)x_i .
\end{align*}
For brevity, we define $W := \nabla\cL_\gamma(\beta^*)$ and $W_i = \psi_ \gamma(z_i)$. Note that since the $z_i$'s are symmetric, the $W_i$'s are i.i.d.\ bounded random variables and $\E(W) = 0$.

We will now show that $W$ has sub-Gaussian concentration around 0.
Let $v$ be any unit vector.
Since the $W_i$'s are bounded by $\gamma$, the sub-Gaussian norm of $v^TZ$ can be bounded using Proposition 2.6.1 of Vershynin~\cite{Ver18}:
\begin{align*}
\|v^TW\|_{\psi_2} \lesssim \frac{1}{n} \sqrt{\littlesum_{i \in [n]} \gamma^2 (v^Tx_i)^2  } \leq \gamma \sqrt{\frac{U}{n}},
\end{align*}
where the last step uses weak stability. Therefore, $W$ is an $O\left(\gamma \sqrt{\frac{U}{n}}\right)$-sub-Gaussian random variable, so again using the results of Vershynin~\cite{Ver18}, we have
\begin{align*}
\|W\|_2 = \|W - \E W\|_2 &\lesssim \gamma \sqrt{\frac{U}{n}}\left(\sqrt{p} + \sqrt{\log \frac{1}{\tau}}\right),
\end{align*}
with probability at least $1-\tau$.
\end{proof}

\begin{lemma} \label{LemmaHessLower}
Consider the setting in Theorem~\ref{ThmDetHuberReg}. Let $r$, $U$, $\tau$, and $\gamma$ be such that
\begin{equation*}
C_2 \left(\frac{r \sqrt{U}}{\gamma} + \P \left(|z_i| \ge \frac{\gamma}{2}\right)  +   \frac{\log(1/\tau)}{n} \right)  \leq \epsilon,
\end{equation*}
for a constant $C_2 > 0$.
Then with probability at least $1 - \tau$, the loss function $\cL_\gamma(\beta)$ is $L$-strongly convex in the ball $\{\beta: \|\beta - \beta^*\|_2 \leq r\}$.
\end{lemma}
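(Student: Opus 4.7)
The plan is to prove strong convexity via its first-order definition (Definition~\ref{DefStrongCvx}) rather than a Hessian lower bound, since $\cL_\gamma$ is only $C^1$ (the second derivative of $\ell_\gamma$ jumps at $\pm\gamma$). For any $\beta_1,\beta_2$ in the ball of radius $r$ around $\beta^*$, write $r_i^{(j)} := y_i - x_i^T\beta_j$ and decompose
\[
\cL_\gamma(\beta_1) - \cL_\gamma(\beta_2) - \langle \nabla \cL_\gamma(\beta_2), \beta_1-\beta_2\rangle = \frac{1}{n}\sum_{i=1}^n D_i, \quad D_i := \ell_\gamma(r_i^{(1)}) - \ell_\gamma(r_i^{(2)}) - \ell_\gamma'(r_i^{(2)})(r_i^{(1)}-r_i^{(2)}).
\]
Each $D_i \geq 0$ by convexity of $\ell_\gamma$. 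Define the \emph{quadratic zone}
\[
G(\beta_1,\beta_2) := \{i : |z_i| \leq \gamma/2 \text{ and } |x_i^T(\beta_j-\beta^*)| \leq \gamma/2 \text{ for both } j=1,2\}.
\]
For $i\in G$, the residual along the whole segment $\beta_t = t\beta_1+(1-t)\beta_2$ satisfies $|y_i-x_i^T\beta_t|\leq\gamma$ (since $x_i^T(\beta_t-\beta^*)$ is a convex combination of two quantities bounded by $\gamma/2$), so $\ell_\gamma$ acts as a pure quadratic along that segment and $D_i = \tfrac{1}{2}(x_i^T(\beta_1-\beta_2))^2$. Dropping the nonnegative off-zone contributions yields
\[
\cL_\gamma(\beta_1) - \cL_\gamma(\beta_2) - \langle \nabla \cL_\gamma(\beta_2), \beta_1-\beta_2\rangle \geq \frac{1}{2n}\sum_{i\in G(\beta_1,\beta_2)}(x_i^T(\beta_1-\beta_2))^2.
\]

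The second step is to bound $|G(\beta_1,\beta_2)^c|$, where $G^c \subseteq A \cup B(\beta_1) \cup B(\beta_2)$ with $A := \{i:|z_i|>\gamma/2\}$ and $B(\beta) := \{i:|x_i^T(\beta-\beta^*)|>\gamma/2\}$. The indicators $\1(|z_i|>\gamma/2)$ are i.i.d.\ Bernoulli with parameter $p_0 := \P(|z_i|>\gamma/2)$, so a multiplicative Chernoff/Bernstein bound gives, with probability at least $1-\tau$,
\[
|A|/n \lesssim p_0 + \log(1/\tau)/n.
\]
For $B(\beta)$, Markov's inequality combined with the upper eigenvalue bound in weak stability (applied with $S=[n]$) gives, uniformly over $\|\beta-\beta^*\|\leq r$,
\[
|B(\beta)|/n \leq \frac{4}{\gamma^2 n}\sum_{i=1}^n(x_i^T(\beta-\beta^*))^2 \leq \frac{4Ur^2}{\gamma^2}.
\]
The hypothesis forces $r\sqrt{U}/\gamma \leq \epsilon/C_2 \leq 1$, so $(r\sqrt{U}/\gamma)^2 \leq r\sqrt{U}/\gamma$ and hence $|B(\beta)|/n \lesssim r\sqrt{U}/\gamma$. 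Assembling the three pieces,
\[
|G(\beta_1,\beta_2)^c|/n \lesssim p_0 + \log(1/\tau)/n + r\sqrt{U}/\gamma \leq \epsilon,
\]
uniformly over $\beta_1,\beta_2$ in the ball, after taking $C_2$ large enough.

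Since $|G(\beta_1,\beta_2)| \geq (1-\epsilon)n$, the lower eigenvalue bound in weak stability (Definition~\ref{AsDetCov}) gives $\frac{1}{n}\sum_{i\in G}x_ix_i^T \succeq L\,I$; substituting into the first-order inequality above yields
\[
\cL_\gamma(\beta_1) - \cL_\gamma(\beta_2) - \langle \nabla \cL_\gamma(\beta_2), \beta_1-\beta_2\rangle \geq \frac{L}{2}\|\beta_1-\beta_2\|_2^2,
\]
which is exactly $L$-strong convexity on the ball. The main subtlety is the non-$C^2$ regularity of $\cL_\gamma$, which I sidestep by working with the segment-adapted quadratic zone $G(\beta_1,\beta_2)$ rather than pointwise Hessians; uniformity over the ball is then essentially free because only $|A|/n$ depends on the noise sample and requires probabilistic concentration (giving the $\tau$ in the failure probability), while $|B(\beta)|/n$ is deterministically controlled by the envelope $4Ur^2/\gamma^2$ through weak stability.
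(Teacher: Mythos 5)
Your proof is correct and follows essentially the same route as the paper: you decompose the "bad" indices into those with $|z_i|>\gamma/2$ (controlled by a Chernoff bound, contributing $p_0+\log(1/\tau)/n$) and those with $|x_i^T(\beta-\beta^*)|>\gamma/2$ (controlled deterministically by weak stability, contributing $O(r\sqrt U/\gamma)$), and then invoke the lower eigenvalue bound of weak stability on the remaining $(1-\epsilon)n$ indices where the loss is quadratic. The only difference is cosmetic but welcome: you verify strong convexity directly from the first-order Bregman-divergence definition on the quadratic zone, whereas the paper routes through the surrogate Hessian $H_n(\beta)$, so your version handles the non-$C^2$ regularity of $\ell_\gamma$ a bit more explicitly.
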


\begin{proof}
First note that $\cL_\gamma(\beta)$ is a convex function. The Hessian of $\cL_\gamma$ is not defined due to the fact that the Huber loss is not twice differentiable at $\gamma$. However, if we define the matrix
\begin{align*}
H_n(\beta) := \frac{1}{n} \sum_{i=1}^n x_ix_i^T \1\left( |y_i - x_i^T \beta| < \gamma\right),
\end{align*}
it follows that the strong convexity parameter of $\cL(\beta)$ is at least $\lambda_{\min}(H_n)$ (see Lemma~\ref{PropStrongCvx}).

Let $W := \sup_{\beta: \|\beta- \beta^*\|_2 \leq r} \frac{1}{n} \sum_{i=1}^n \1\left(|y_i - x_i^T \beta| \geq \gamma\right)$ and define the event $\cE:= \{ W < \epsilon \}$. By the weak stability property, we are guaranteed that on the event $\cE$, we have $\lambda_{\min}(H_n(\beta)) \ge L$ for any $\beta$ such that $\|\beta- \beta^*\|_2 \leq r$.

In the remainder of the proof, we will show that the event $\cE$ holds with high probability.
We first note that $W$ can be bounded from above, as follows:
\begin{align*}
W &=   \sup_{\beta: \|\beta- \beta^*\|\leq r} \frac{1}{n}\sum_{i=1}^n  \1\left(|y_i - x_i^T \beta| \geq \gamma\right) \\
&\leq \sup_{\beta: \|\beta- \beta^*\|\leq r}\frac{1}{n} \sum_{i=1}^n \1\left(|x_i^T (\beta - \beta^*)| \geq \frac{\gamma}{2}\right) + \frac{1}{n}\sum_{i=1}^n \1\left(| z_i| \geq \frac{\gamma}{2} \right). 
\numberthis \label{EqnHessExp}
\end{align*}
We can deterministically bound the first term using weak stability. Using the fact that for $x \geq 0$ and $y > 0$, the inequality $\1(x \geq y ) \leq \frac{x}{y}$ holds, we obtain the following bound for all $\beta$ such that $\|\beta - \beta^*\|_2 \le r$:
\begin{align*}
 \frac{1}{n} \sum_{i=1}^n \1\left(|x_i^T (\beta - \beta^*)| \geq \frac{\gamma}{2}\right) 
&\leq  \frac{2}{\gamma}\frac{\sum_{i=1}^n |x_i^T (\beta - \beta^*)|}{n}    
\leq \frac{2}{\gamma}  \sqrt{\frac{1}{n} \sum_{i=1}^n |x_i^T (\beta - \beta^*)|^2}    \\
&\leq \frac{2}{\gamma} \sqrt{U \|\beta - \beta^*\|_2^2}  \leq  \frac{2r \sqrt{U}}{\gamma},
\end{align*}
where we also use weak stability and the Cauchy-Schwarz inequality. Altogether, we obtain
\begin{align}
\label{EqIndEventChebyshev}
W \leq \frac{2r \sqrt{U}}{\gamma} + \frac{1}{n}\sum_{i=1}^n \1\left(| z_i| \geq \frac{\gamma}{2} \right).
\end{align}
Now let $W' := \frac{1}{n} \sum_{i=1}^n \1\left(| z_i| \geq \frac{\gamma}{2}\right)$.
Note that
\begin{align*}
\E W' = \frac{1}{n}\sum_{i=1}^n \E  \1\left(| z_i| \geq \frac{\gamma}{2} \right) = \P \left(|z_i| \ge \frac{\gamma}{2}\right). 
\end{align*}
Note that $W'$ is an empirical mean of indicator random variables. Thus, applying a Chernoff bound (cf.\ Lemma~\ref{ThmChernoff}), we obtain
\begin{align*}
W' \lesssim  \E W' +\frac{\log(1 / \tau)}{n},
\end{align*}
with probability at least $1-\tau$.
Overall, we obtain the following bound on $W$: with probability at least $1 - \tau$, 
\begin{align}
W \lesssim \frac{ r \sqrt{U}}{\gamma} + \P \left(|z_1| \ge \frac{\gamma}{2}\right) + \frac{\log (1 / \tau)}{n}.
\label{EqnHessBddDiff}
\end{align}
Therefore, the event $\cE$ (and thus, the desired lower bound on $H_n$) holds with probability $1 - \tau$, as long as the right-hand side of inequality~\eqref{EqnHessBddDiff} is less than $\epsilon$.
\end{proof}

With the help of Lemmas~\ref{LemmaGradNorm} and~\ref{LemmaHessLower}, we are ready to prove the theorem. Throughout the remainder of the proof, let $\widehat{\beta} = \widehat{\beta}_{H,\gamma}$.

We first verify the conditions for Lemma~\ref{LemmaHessLower}. 
By assumption, we have $\frac{C_2 \log(1/\tau)}{n} \leq \frac{\epsilon}{3}$ and $C_2\P \left(|z_i| \ge \gamma/2\right) \leq \frac{\epsilon}{3}$.
Therefore, for all $r \leq \frac{\epsilon \gamma}{3C_2 \sqrt{U}} := r^*$, the condition of Lemma~\ref{LemmaHessLower} is satisfied, and the function $\cL_\gamma$ is $L$-strongly convex in the region $\{\beta: \|\beta- \beta^*\|_2 \leq r^*\}$.

For an $\eta \in (0,1]$, let $ \widehat{\beta}_{\eta}$ be defined as $ \widehat{\beta}_{\eta} := \beta^* + \eta (\widehat{\beta}- \beta^*)$, and let $\eta_* \in (0,1]$ be the largest $\eta$ such that  $\|\widehat{\beta}_{\eta} - \beta^*\|_2 \leq r^*$.
Using the convexity of $\cL_\gamma(\beta)$ with Lemma~\ref{LemConvexEta} and the Cauchy-Schwarz inequality, we have
\begin{align}
\langle \widehat{\beta}_{\eta^*} - \beta^*, \nabla\cL_\gamma(\widehat{\beta}_{\eta^*}) -  \nabla\cL_\gamma(\beta^*)  \rangle &\leq \eta_* \langle \widehat{\beta} - \beta^*,\nabla\cL_\gamma(\widehat{\beta}) -  \nabla\cL_\gamma(\beta^*) \rangle \nonumber\\
&\leq \eta_* \|\nabla \cL_\gamma(\beta^*)\|_2 \| \widehat{\beta} - \beta^* \|_2,\label{EqConvCaucSchw}
\end{align}
where we use the fact that $\nabla \cL_\gamma(\widehat{\beta}) = 0$. 
Using the $L$-strong convexity of $\cL_\gamma$ in the ball of radius $r^*$ (cf.\ Lemma~\ref{PropStrongCvx}) and inequality~\eqref{EqConvCaucSchw}, we obtain
\begin{align*}
\eta_* \|\nabla\cL_\gamma(\beta^*)\|_2 \| \widehat{\beta} - \beta^* \|_2 \geq \langle  \widehat{\beta}_{\eta^*} - \beta^*, \nabla\cL_\gamma(\widehat{\beta}_{\eta^*}) - \nabla \cL_\gamma(\beta^*) \rangle \geq L \|\widehat{\beta}_{\eta^*} - \beta^*\|_2^2.
\end{align*}
We now use Lemma~\ref{LemmaGradNorm} and the fact that  $\|\widehat{\beta}_{\eta^*} - \beta^*\|_2 = \eta^*  \| \widehat{\beta} - \beta^*\|_2 $ to obtain the following bound:
 \begin{align}
\label{EqUpperBdParamDis}
\|\widehat{\beta}_{\eta^*} - \beta^*\|_2 \leq \frac{1}{ L}  \|\nabla \cL_\gamma(\beta^*)\|_2 \leq C \frac{ \gamma \sqrt{U}}{  L }\left( \sqrt{\frac{p}{n}} +  \sqrt{\frac{\log (1 / \tau)}{n}} \right) := R_n.
\end{align}

Note that $\frac{R_n}{r^*} = \frac{3CC_2U}{\epsilon L}\left(\sqrt{\frac{p}{n}} +  \sqrt{\frac{\log (1 / \tau)}{n}}\right)$, so under the sample complexity assumption $n = \Omega\left(\left(p + \log\left(\frac{1}{\tau}\right)\right) \frac{U^2}{L^2 \epsilon^2}\right)$, we have $R_n \le r^*$, implying in particular that $\eta^* = 1$ and $\widehat{\beta} = \widehat{\beta}_{\eta^*}$ satisfies the stated error bound.

The statement about $L$-strong convexity follows from the triangle inequality, since for sufficiently large $n$, we have $\|\widehat{\beta} -\beta^*\|_2 \leq R_n \leq \frac{r^*}{2}$, so the function $\mathcal{L}_\gamma$ is $L$-strongly convex in a ball of radius $\frac{r^*}{2}$ around $\widehat{\beta}$.
\end{proof}

\subsection{Generalization to random design and asymmetric noise}
\label{SecHuberGeneral}

We now generalize the result of the previous section to the random design model with asymmetric noise. We proceed by reducing the case of asymmetric noise to symmetric noise: we will randomly subtract two points so that the additive noise in the new linear model has symmetric noise. Next, we will show that the iterative filtering algorithm from Diakonikolas et al.~\cite{DiaKan19,DiaKKLMS16-focs} (Theorem~\ref{ThmStability}) can be used to obtain a large subset of data points for which the covariates satisfy weak stability.
We will then use Theorem~\ref{ThmDetHuberReg} to prove the main result of this section.

\begin{algorithm}[h]  
  \caption{Huber Regression Asymmetric Noise} 
    \label{AlgHubGeneralCase}  
  \begin{algorithmic}[1]  
    \Statex  
    \Function{Huber\_Regression\_with\_Filtering}{$(x_i,y_i)_{i \in [2n]}, \gamma, \epsilon' $}  
    		\For{$i \gets 1$ to $n$}                    
        \State $(x_i',y_i')$ $\gets$ $\left(\frac{x_{i}-x_{n+i})}{\sqrt{2}}, \frac{y_i - y_{n+i}}{\sqrt{2}}\right)$
    \EndFor
            \State $S_1 \gets $ FilteredCovariates$((x'_i)_{i \in [n]},\epsilon')$    
        \State $\widehat{\beta} \gets $ HuberRegression$((x'_i,y'_i)_{i \in S_1},\gamma)$
    \State \Return $\widehat{\beta}$
    \EndFunction  
  \end{algorithmic}  
\end{algorithm}

\begin{theorem}
\label{ThmStocHuberReg}
Suppose we have $2n$ i.i.d.\ samples $\{(x_i,y_i)\}_{i=1}^{2n}$ from the following (random-design) model: $y_i = x_i^T\beta^* + z_i$, where the covariates satisfy Assumption~\ref{AsCov} and the noise distribution satisfies Assumption~\ref{AsNoise}.
Let $\tau$ be such that $\frac{\log(1/ \tau)}{n} = O(1)$. Suppose $\gamma$ is such that $\P \left(|z_1 - z_2| \ge \frac{\gamma}{\sqrt{2}}\right) \leq c^*$ for a small enough constant $c^* > 0$, and suppose $\epsilon'$ is equal to a sufficiently small constant. Then running Algorithm~\ref{AlgHubGeneralCase} with parameters $\gamma$ and $\epsilon'$ produces an estimator that, with probability at least $1 - 2\tau$, satisfies
\begin{align*}
\|  \widehat{\beta} -  \beta^*\|_2 \lesssim \gamma \left( \sqrt{\frac{p}{n}} +   \sqrt{\frac{\log(1 / \tau)}{n}} \right), \,\,\, \text{ as long as } n = \Omega(p\log p).
\end{align*}
Moreover, on the same event, the loss function is $\Omega(1)$-strongly convex in a radius of $\Omega(\gamma)$ around $\widehat{\beta}$.
\end{theorem}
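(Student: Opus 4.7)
The plan is to reduce the random-design, asymmetric-noise setting of Theorem~\ref{ThmStocHuberReg} to the fixed-design, symmetric-noise setting of Theorem~\ref{ThmDetHuberReg}. The pairing step in Algorithm~\ref{AlgHubGeneralCase} produces $x_i' = (x_i - x_{n+i})/\sqrt{2}$, $y_i' = (y_i - y_{n+i})/\sqrt{2}$, and implicitly $z_i' = (z_i - z_{n+i})/\sqrt{2}$ satisfying $y_i' = (x_i')^T \beta^* + z_i'$. Since the $z_i$'s are i.i.d., each $z_i'$ is symmetric about $0$; since the noise is independent of the covariates in the original model, $\{z_i'\}_{i \in [n]}$ remains independent of $\{x_i'\}_{i \in [n]}$. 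The $x_i'$'s have mean zero and identity covariance, and by Minkowski's inequality in $L^4$ they satisfy $(4,2)$-hypercontractivity with parameter at most $\sqrt{2}\,\sigma_{x,4}$, so Assumption~\ref{AsCov} carries over up to a constant. The tail assumption becomes $\P(|z_i'| \geq \gamma/2) = \P(|z_1 - z_2| \geq \gamma/\sqrt{2}) \leq c^*$, which is exactly the hypothesis.

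Next, I would invoke Theorem~\ref{ThmStabHighProb} on $\{x_i'\}_{i=1}^n$ with $\epsilon'$ equal to a small constant: since $\log(1/\tau)/n = O(1)$, with probability at least $1 - \tau$ there exists a subset of size $(1 - O(\epsilon'))n$ that is $(O(\epsilon'),\delta)$-stable with $\delta = O\bigl(\sqrt{p\log p/n} + (\epsilon')^{3/4} + \sqrt{\log(1/\tau)/n}\bigr)$. Under $n = \Omega(p\log p)$ and $\epsilon'$ chosen small enough, one gets $\delta^2/\epsilon' < 1/2$. Since there is no adversarial corruption in this theorem, I would apply Theorem~\ref{ThmStability} with $T = S = \{x_i'\}$, yielding a computable subset $S_1$ of size $(1 - O(\epsilon'))n$ that is $(O(\epsilon'),O(\delta))$-stable. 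Lemma~\ref{LemStrongWeak} then converts strong stability into $(O(\epsilon'), L, U)$-weak stability of $\{x_i'\}_{i \in S_1}$ with $L = \Omega(1)$ and $U = O(1)$.

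To finish, I would apply Theorem~\ref{ThmDetHuberReg} to the filtered data $\{(x_i',y_i')\}_{i \in S_1}$, treating the covariates as fixed. Because the filter depends only on $\{x_i'\}$ and the $z_i'$'s are independent of the $x_i'$'s, conditioning on the filter's output leaves $\{z_i'\}_{i \in S_1}$ i.i.d.\ and symmetric, and the tail bound above matches (for small enough $c^*$) the $O(\epsilon)$ condition of Theorem~\ref{ThmDetHuberReg}. With $|S_1| = \Theta(n)$, $L,U = \Theta(1)$, and $\epsilon = \Theta(1)$, the sample-size requirement of Theorem~\ref{ThmDetHuberReg} reduces to $n = \Omega(p + \log(1/\tau))$, which is dominated by $n = \Omega(p\log p)$, and its error bound simplifies to $\gamma\bigl(\sqrt{p/n} + \sqrt{\log(1/\tau)/n}\bigr)$; the $\Omega(1)$-strong convexity on a ball of radius $\Omega(\gamma)$ around $\widehat{\beta}$ is inherited directly from the same theorem. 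A union bound over the filtering and estimation failure events yields the $1-2\tau$ probability in the statement.

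The hard part is not any single step but keeping the chain of parameters consistent. In particular, one must verify that the $\sqrt{p\log p/n}$ and $(\epsilon')^{3/4}$ terms from Theorem~\ref{ThmStabHighProb} enter only through the weak-stability constants $L,U$ via Lemma~\ref{LemStrongWeak}, and \emph{not} into the final error rate, which is governed solely by Theorem~\ref{ThmDetHuberReg} once weak stability has been established. Secondarily, one must check that the hypercontractivity constant does not blow up after the pairing (Minkowski gives a clean factor of $\sqrt{2}$), and that the filter's measurability with respect to $\{x_i'\}$ preserves the independence and symmetry of the retained noise variables---both of which are straightforward but must be spelled out in order to legitimately reduce to the fixed-design setting.
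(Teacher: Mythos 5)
Your proposal is correct and follows essentially the same route as the paper: symmetrize the noise by pairwise differencing, use Theorem~\ref{ThmStabHighProb} together with Theorem~\ref{ThmStability} (which the paper packages as Proposition~\ref{PropStabSimpleV2}) and Lemma~\ref{LemStrongWeak} to get weak stability of the filtered covariates with $L = \Omega(1)$, $U = O(1)$, and then condition on the covariates and apply Theorem~\ref{ThmDetHuberReg}, using independence of the noise from the filtering event. Your explicit Minkowski check that the hypercontractivity constant only grows by $\sqrt{2}$ after pairing is a detail the paper leaves implicit, but otherwise the arguments coincide.
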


\begin{proof}
We first note that by taking pairwise differences, we reduce our case to the symmetric noise setting analyzed in Section~\ref{SecSym}: 
Given $2n$ data points, Algorithm~\ref{AlgHubGeneralCase} creates a data set $\{(x'_i,y'_i)\}_{i=	1}^n$ satisfying the linear model $y'_i = (x'_i)^T \beta^* + z_i'$,
where $z'_i = \frac{z_i - z_{n+i}}{\sqrt{2}}$.
Note that the new covariates still satisfy $\E x'_i= 0$ and $\E x'_i(x'_i)^T = I$.
Importantly, the errors are now drawn from a symmetric distribution.

Let $S_1$ be the set returned by the filter algorithm with cardinality $\Omega(n)$, and define the event
\begin{align*}
\cE = \{S_1  \text{ satisfies weak stability with $\epsilon = \Omega(1)$, $L = \Omega(1)$, and  $U  = O(1)$}\}.
\end{align*}
We first give the proof of the theorem statement on the event $\cE$.
Since the noise is symmetric and independent of the covariates (thus also of $\cE$), we have $\P \left(|z'_i| \ge \gamma/2\right)  = O(\epsilon)$, so Theorem~\ref{ThmDetHuberReg} applies and gives the desired result. In the rest of the proof, we will show that $\cE$ holds with probability $1 - \exp(- \Omega(n)) \geq 1 - \tau$.

Recall by Lemma~\ref{LemStrongWeak} that if $S_1$ is $(\epsilon_1,\delta_1)$-stable, then it also satisfies weak stability with $\epsilon = \epsilon_1, L = (1-\epsilon_1) \left(1 - \frac{\delta_1^2}{\epsilon_1}\right)$, and $U = 1 + \frac{\delta_1^2}{\epsilon_1}$.
Therefore, it suffices to show that $S_1$ is $(\epsilon_1,\delta_1)$-stable such that $\epsilon_1 = \Omega(1)$ and (say) $\frac{\delta_1^2}{\epsilon_1} < 0.5$.
By Proposition~\ref{PropStabSimpleV2}, we know that 
if $\epsilon' < c_*$ and $n = \Omega\left(\frac{p \log p}{\epsilon'}\right)$,
then with probability at least $1 - O(\exp(- \Omega(n \epsilon ')))$, the set
$S_1$ is $(\epsilon_1, \delta_1)$-stable with $\frac{\delta_1^2}{\epsilon_1} < 0.2$ and  $\epsilon_1 = \Omega(\epsilon')$.
Therefore, choosing $\epsilon'$ to be a small enough constant, say $\frac{c^*}{2}$, we conclude that the event $\cE$ holds with probability $1 - O(\exp(- \Omega(n)))$.
This requires that $n = \Omega\left(\frac{p \log p}{\epsilon'}\right) = \Omega(p \log p)$, completing the proof.
\end{proof}

\begin{remark}
Similar to Remark~\ref{RemarkNoiseFirstMoment}, if the $k^{\text{th}}$ moment of the noise distribution is finite, we can set $\gamma = \Omega((\E |z_1 - z_2|^k)^{1/k})$, for any positive $k$.
\end{remark}

We now briefly discuss how to estimate an appropriate tuning parameter $\gamma$ from the data. A natural approach is to estimate the scale of the noise distribution based on residuals $y_i - x_i^T \widehat{\beta}_0$ calculated from an initial estimate $\widehat{\beta}_0$ of $\beta^*$. Indeed, the estimate $\widehat{\beta}_0$ can be quite rough, since only need to estimate the scale of the noise up to a constant factor. Based on these observations, consider the following procedure:
\begin{enumerate}
\item Split the sample into two equal parts.
\item Using the first part, compute $\widehat{\beta}_0$ via the LAD estimator (cf. Section~\ref{SecLAD} below).
\item Using the second part, compute the symmetrized data points $\{(x_i', y_i')\}_{i=1}^{\lfloor n/2 \rfloor}$ defined as in the first step of Algorithm~\ref{AlgHubGeneralCase}. Then compute the residuals $w_i' = y_i' - (x_i')^T \widehat{\beta}_0$.
\item Define $\widehat{\gamma}$ to be twice the $\left(1 - \frac{c^*}{4}\right)^{\text{th}}$ empirical quantile of the $|w_i'|$'s.
\end{enumerate}
Note that by our assumptions on the original data set, the sample-splitting step yields two sets of i.i.d.\ points. Thus, we may use Theorem~\ref{ThmLAD} below to show that $\|\widehat{\beta}_0 - \beta^*\|_2 = O(\kappa)$ if we assume that $\E|z_i| = \kappa < \infty$. Altogether, we can show that our procedure yields an estimator $\widehat{\gamma}$ such that $\P \left(|Z_1 - Z_2| \ge \widehat{\gamma}/2\right) \leq c^*$ (where $Z_1$ and $Z_2$ are fresh i.i.d.\ draws from the distribution of the $z_i$'s) and $\widehat{\gamma} = O(\E |z_i|)$, with high probability. Although other methods for choosing a rough initial estimator $\widehat{\beta}_0$ would also work, we suggest using the LAD estimator for initialization since it is tuning parameter-free. See Lemma~\ref{LemEstGamma} for more details.

\subsection{Adversarial corruption}
\label{SecHuberAdv}

We will now consider the case of adversarial corruption in both covariates and responses.
Let $S$ be the set of $n$ i.i.d.\ samples and let $T$ be an $\epsilon$-corrupted version of $S$ in the sense of Definition~\ref{DefContModel}.
One might expect Algorithm~\ref{AlgHubGeneralCase} to be robust to adversarial contamination, as Huber regression has been shown to be robust against corruption in responses~\cite{SasFuj20} and the filtering step can handle corruptions in covariates.
In this section, we will crucially use the strong stability condition, and not just weak stability, to obtain tighter control on deviations.
In fact, the following result shows that Huber regression also achieves near-optimal statistical guarantees in the adversarial setting with a slightly different choice of parameters.

\begin{theorem}
\label{ThmAdvHuberReg}
Let $S = \{(x_i, y_i)\}_{i=1}^{2n}$ be a set of i.i.d.\ samples drawn according to the same distributional assumptions as in Theorem~\ref{ThmStocHuberReg}.
Further suppose that the covariates satisfy $(k,2)$-hypercontractivity with parameter $\sigma_{x,k} = O(1)$, for some $k \geq 4$.
 Let $T$ be an $\epsilon$-corrupted version of $S$. 
 Suppose $\gamma$ is such that $\P \left(|z_1 - z_2| \ge \frac{\gamma}{\sqrt{2}}\right) \leq c^*$ for a small enough constant $c^* > 0$.
Then running Algorithm~\ref{AlgHubGeneralCase} on the set $T$ with parameters $\epsilon' = \Theta\left(\epsilon + \frac{\log(1/\tau)}{n}\right)$
produces an estimator that,
with probability at least $1 - \tau$, satisfies
\begin{align*}
\|\widehat{\beta} - \beta^*\|_2 \lesssim \gamma \left( \sqrt{\frac{p\log p}{n}} +  \sqrt{\frac{\log(1/ \tau)}{n}}  + {\epsilon}^{1 - 1/k}\right),\,\, \text{ as long as } n = \Omega(p \log p + \log(1/\tau)),
\end{align*}
and $\epsilon$ is less than a sufficiently small constant. Moreover, on the same  event,  the loss function is $\Omega(1)$-strongly convex in a radius of $\Omega(\gamma)$ around $\widehat{\beta}$.
\end{theorem}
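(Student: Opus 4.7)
The plan is to follow the architecture of the proof of Theorem~\ref{ThmStocHuberReg} with two essential modifications to accommodate adversarial contamination in both covariates and responses. First, I would argue that the filtering step still yields a subset with strong stability, and then I would upgrade the gradient and strong-convexity analyses of Theorem~\ref{ThmDetHuberReg} so that they tolerate a small fraction of adversarial points inside the filter's output. The main obstacle will be the bound on the gradient contribution from the adversarial points in the filtered set: a naive Cauchy--Schwarz argument yields an $O(\gamma \sqrt{\epsilon'})$ bound, which is too weak; I will need a layer-cake decomposition that leverages the first-moment part of strong stability to recover the sharper $O(\gamma \delta) = O(\gamma \epsilon^{1-1/k})$ rate.

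I would begin by analyzing the pairing step of Algorithm~\ref{AlgHubGeneralCase}. Since $T$ is an $\epsilon$-corrupted version of the $2n$-sample set $S$, at most $2\epsilon n$ pairs $(x'_i,y'_i)$ contain a contaminated original point. Letting $G \subseteq [n]$ denote the uncontaminated pairs, we have $|G| \ge (1-2\epsilon) n$, and on $G$ the model $y'_i = (x'_i)^T\beta^* + z'_i$ holds with $z'_i$ symmetric and independent of $x'_i$, and the $x'_i$'s i.i.d.\ with identity covariance and $(k,2)$-hypercontractivity. Thus the paired dataset is an $O(\epsilon)$-corrupted version of a set satisfying Assumption~\ref{AsCov}. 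Applying Theorem~\ref{ThmStabHighProb} with $\epsilon' = \Theta(\epsilon + \log(1/\tau)/n)$, with probability at least $1-\tau$, there exists $S'' \subseteq G$ with $|S''| \ge (1-O(\epsilon'))n$ that is $(O(\epsilon'),\delta)$-stable, where $\delta = O(\sqrt{p\log p/n} + \epsilon^{1-1/k} + \sqrt{\log(1/\tau)/n})$. Then Theorem~\ref{ThmStability} guarantees that the filter output $S_1$ satisfies $|S_1| \ge (1-O(\epsilon'))n$ and is $(O(\epsilon'), O(\delta))$-stable; Lemma~\ref{LemStrongWeak} then yields $(O(\epsilon'), L, U)$-weak stability with $L = \Omega(1)$, $U = O(1)$ for sufficiently small $\epsilon'$.

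To bound $\|\nabla\cL_\gamma(\beta^*)\|_2$ on $S_1$, I would partition $S_1 = C \sqcup B$, where $C := S_1 \cap S''$ and $B := S_1 \setminus C$; by combining the size bounds on $S_1$ and $S''$, we get $|B| = O(\epsilon' |S_1|)$. For the clean contribution $\sum_{i \in C} \psi_\gamma(z'_i) x'_i$, conditional on the covariates, each term is independent, mean zero by symmetry of $z'_i$, and $v^T \psi_\gamma(z'_i) x'_i$ is sub-Gaussian with norm $\lesssim \gamma |v^T x'_i|$. Using the second-moment stability of $S_1$ (so that $\sum_{i\in S_1} (v^Tx'_i)^2 \lesssim |S_1|$) and a covering argument over the sphere, as in the proof of Lemma~\ref{LemmaGradNorm}, this contribution is $O(\gamma(\sqrt{p/n}+\sqrt{\log(1/\tau)/n}))$. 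For the bad contribution $\sum_{i \in B} \xi_i x'_i$ with $|\xi_i| \le \gamma$, write $\xi_i = \gamma(\alpha_i^+ - \alpha_i^-)$ with $\alpha_i^\pm \in [0,1]$, and use the layer-cake identity $\sum_{i\in B}\alpha_i^\pm x'_i = \int_0^1 \sum_{i \in B_t^\pm} x'_i\,dt$ where $B_t^\pm = \{i \in B : \alpha_i^\pm \ge t\}$. For each $t$, $|B_t^\pm| \le |B| = O(\epsilon'|S_1|)$, so $S_1 \setminus B_t^\pm$ is a large subset of $S_1$; the first-moment part of strong stability applied to both $S_1$ and $S_1 \setminus B_t^\pm$ gives $\|\sum_{i\in B_t^\pm}x'_i\|_2 \le O(\delta|S_1|)$. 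Integrating yields $\|\sum_{i\in B} \xi_i x'_i\|_2 \le O(\gamma \delta |S_1|)$, and hence a gradient contribution $O(\gamma \delta)$. Summing, $\|\nabla\cL_\gamma(\beta^*)\|_2 \lesssim \gamma(\sqrt{p\log p/n} + \epsilon^{1-1/k} + \sqrt{\log(1/\tau)/n})$.

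For strong convexity, I would apply Lemma~\ref{LemmaHessLower} directly on $S_1$: weak stability gives the deterministic bound on the first term in~\eqref{EqIndEventChebyshev}, while for the second term, the clean portion $C$ contributes at most $\P(|z'_i|\ge \gamma/2) \le c^*/2$ in expectation (using the assumption on $\gamma$ together with the symmetric pairing), and the bad portion $B$ contributes at most $|B|/|S_1| = O(\epsilon')$ trivially. Thus $\cL_\gamma$ is $L$-strongly convex in a ball of radius $\Omega(\gamma)$ around $\beta^*$. Finally, I would conclude exactly as in the last step of the proof of Theorem~\ref{ThmDetHuberReg}: combining the gradient bound with local strong convexity via the convexity-based argument involving $\widehat{\beta}_{\eta^*}$ in~\eqref{EqUpperBdParamDis} yields $\|\widehat{\beta}-\beta^*\|_2 \le (1/L)\|\nabla\cL_\gamma(\beta^*)\|_2$, proving the stated rate, with the local $\Omega(1)$-strong convexity around $\widehat{\beta}$ following from the triangle inequality once $n = \Omega(p\log p + \log(1/\tau))$.
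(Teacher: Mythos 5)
Your overall architecture matches the paper's (pair to symmetrize, filter, establish stability, redo Lemmas~\ref{LemmaGradNorm} and~\ref{LemmaHessLower}, conclude as in Theorem~\ref{ThmDetHuberReg}), and your layer-cake bound for the adversarial gradient contribution is a legitimate alternative to the paper's use of Proposition~\ref{PropStabL1Mean}. However, there is a genuine gap in how you obtain weak stability. From the filter's output you derive $(O(\epsilon'), L, U)$-weak stability via Lemma~\ref{LemStrongWeak}, where $\epsilon' = \Theta(\epsilon + \log(1/\tau)/n)$ may be arbitrarily small. But the strong-convexity step needs weak stability at a \emph{constant} level $\epsilon_1 = \Omega(1)$: in Lemma~\ref{LemmaHessLower} the fraction of indicators that fire is at least $\P(|z'_i|\ge\gamma/2)\approx c^*$, a fixed constant, plus the term $r\sqrt{U}/\gamma$ which must be allowed to be $\Omega(1)$ for the radius to be $\Omega(\gamma)$; so the minimum-eigenvalue lower bound must hold over \emph{all} subsets retaining a $(1-\Omega(1))$-fraction of points, not merely a $(1-O(\epsilon'))$-fraction. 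Since $(\epsilon,\delta)$-stability only transfers downward in $\epsilon$, the filter's certificate cannot supply this. The paper closes this by a separate small-ball argument (Proposition~\ref{PropStabSimplified}), showing every constant-fraction subset of the clean paired covariates has $\lambda_{\min}\ge 0.8$, and then observing that any constant-fraction subset of the filtered set intersects the clean data in a constant-fraction subset. Without this ingredient your strong-convexity claim, and hence the final radius $\Omega(\gamma)$, does not follow.

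A secondary, repairable issue is your conditioning for the ``clean'' sums over $C = S_1\cap S''$. The filter acts on the \emph{corrupted} covariates, and the adversary may choose which points to corrupt after inspecting the responses; hence the index set $S_1$, and therefore $C$, can be correlated with the noise, and the $z'_i$ restricted to $C$ need not be conditionally i.i.d.\ symmetric. The paper sidesteps this by running the probabilistic concentration on $S''$ alone (which is a function of the clean covariates only, hence independent of the noise) and handling the symmetric difference $S''\triangle T_2$ purely deterministically via stability and the bound $|\psi_\gamma|\le\gamma$; you would need to restructure your decomposition as $\sum_{S_1} = \sum_{S''} - \sum_{S''\setminus S_1} + \sum_{S_1\setminus S''}$ (your layer-cake bound applies verbatim to the two correction terms) rather than $\sum_C + \sum_B$.
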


\begin{remark}
Since the adversarial contamination mechanism might create dependencies between data points, the analysis of a sample-splitting algorithm to estimate an appropriate parameter $\gamma$ from the data, as in the previous subsection, becomes more complicated. A covering argument akin to the one employed in the proof of Theorem~\ref{PropRobMeanMain} below could be used instead, albeit at the price of a slightly worse error rate.
Another approach would be to tune the Huber parameter using Lepski's method~\cite{Lep91, Bir01}, at the expense of a slightly worse error probability due to a union bound over a grid of parameter values.
As noted in Remark~\ref{RemarkNoiseFirstMoment}, if the $(k')^{\text{th}}$ moment of the noise distribution is finite and known, Markov's inequality implies that we can set $\gamma = \Omega((\E |z_1 - z_2|^{k'})^{1/k'})$, for any positive $k'$.
\end{remark}

\begin{remark}
In order to run Algorithm~\ref{AlgHubGeneralCase} with the theoretical choice of $\epsilon'$ in Theorem~\ref{ThmAdvHuberReg}, we must assume knowledge of the level of adversarial contamination. On the other hand, note that if $T$ is an $\epsilon_1$-corrupted version of $S$, then $T$ is also an $\epsilon_2$-corrupted version of $S$, for any $\epsilon_1 \le \epsilon_2$. Thus, knowledge of an upper bound on the level of adversarial contamination is sufficient. (The same remark applies to Theorems~\ref{ThmLTS} and~\ref{ThmLAD}, and Theorems~\ref{PropPost} and~\ref{PropRobMeanMain} below.)
\end{remark}

The proof of Theorem~\ref{ThmAdvHuberReg} is rather technical and is provided in Appendix~\ref{AppThmAdvHuberReg}.
Briefly, our proof strategy is similar to the proof of Theorem~\ref{ThmDetHuberReg}:
Although the covariates and noise are not necessarily independent on the filtered set, we can establish modified versions of the structural Lemmas~\ref{LemmaGradNorm} and~\ref{LemmaHessLower}. In particular, we crucially use the stability property of the filtered set, which is stronger than the assumption of weak stability. 

\begin{remark}
We also note that Algorithm~\ref{AlgHubGeneralCase} has another favorable property when only the covariates are corrupted: Suppose $\{x_i\}_{i=1}^n$ and $\{z_i\}_{i=1}^n$ are generated from distributions satisfying Assumptions~\ref{AsCov} and~\ref{AsNoise}, respectively. Instead of observing $(X,X \beta^* + z)$, the statistician observes $(\tilde{X}, \tilde{y})$, where $\tilde{y} = \tilde{X} \beta^* + z $, and $\tilde{X}$ matches $X$ in all but $\epsilon n$ rows and is independent of $z$.
Then as long as $\epsilon$ is smaller than a fixed constant, the error guarantee of Theorem~\ref{ThmAdvHuberReg} would be of the form $O\left(\sqrt{\frac{p}{n}} + \sqrt{\frac{\log(1/\tau)}{n}}\right)$ and is independent of $\epsilon$.
Since $\tilde{X}$ and $\tilde{y}$ still follow a linear relationship and independence is maintained between the errors and covariates, the setting is essentially reduced to that of Theorem~\ref{ThmDetHuberReg}.
\end{remark}

\begin{remark}Finally, we mention a slightly stronger guarantee for Algorithm~\ref{AlgHubGeneralCase} for Gaussian covariates, i.e., $X \sim \cN(0,I)$. As can be seen in Appendix~\ref{AppLemHubAdv} in the proof of Theorem~\ref{ThmAdvHuberReg}, we could instead obtain an error bound of the form $O\left(\sqrt{\frac{p}{n}} + \sqrt{\frac{\log(1/\tau)}{n}} + \epsilon\sqrt{\log(1/\epsilon)}\right)$. This is because a set of $n$ i.i.d.\ samples from $\cN(0,I)$ is $(\epsilon,\delta)$-stable with probability $1- \tau$, where $\delta \lesssim \sqrt{\frac{p}{n}} + \sqrt{\frac{\log(1/\tau)}{n}} + \epsilon \sqrt{\log(1/\epsilon)}$~\cite{DiaKKLMS16-focs,Li18,DiaKKLMSl17}. We note that the subGaussian distributions with identity covariance and subgaussian norm $O(1)$ also achieve this rate.
\end{remark}

\subsection{Generalization to unknown covariance}
\label{SecUnknownCov}

We now discuss the case where the covariates have an unknown but bounded covariance matrix. We replace Assumption~\ref{AsCov} with the following assumption:

\begin{assumption}
\label{AsCov2}
The covariates satisfy  $\E x_i = 0$ and $\kappa_l I \preceq \E x_ix_i^T \preceq \kappa_uI$ for some $\kappa_l \in (0,1)$ and $\kappa_u \geq 1$. (For simplicity, we will assume that $\kappa_l = 1/2$ and $\kappa_u = 2$ in our arguments, but similar results hold as long as $\kappa_u = \Theta(\kappa_l)$.) Moreover, the covariates satisfy $(4,2)$-hypercontractivity with parameter
$\sigma_{x,4} \leq C$, for a known constant $C$.
\end{assumption}
We are able to generalize our result from Theorem~\ref{ThmAdvHuberReg} to the setting under Assumption~\ref{AsCov2}.
\begin{theorem}
\label{ThmAdvHubRegUnknownCov}
Suppose we have $2n$ i.i.d.\ samples $\{(x_i,y_i)\}_{i=1}^{2n}$ from the following (random-design) model: $y_i = x_i^T\beta^* + z_i$, where the covariates satisfy Assumption~\ref{AsCov2} and the noise distribution satisfies Assumption~\ref{AsNoise}.
Let $\tau$ be such that $\frac{\log(1/ \tau)}{n} = O(1)$. Suppose $\gamma$ is such that $\P \left(|z_1 - z_2| \ge \frac{\gamma}{\sqrt{2}}\right) \leq c^*$ for a small enough constant $c^* > 0$, and suppose $\epsilon'$ is equal to a sufficiently small constant.
 Let $T$ be an $\epsilon$-corrupted version of $S$. 
Then running Algorithm~\ref{AlgHubGeneralCase} on the set $T$ with parameters $\epsilon' = \Theta\left(\epsilon+ \frac{\log(1/\tau)}{n}\right)$  and $\gamma = \Omega( \sigma)$ produces an estimator that,
with probability at least $1 - \tau$, satisfies
\begin{align*}
\|\widehat{\beta} - \beta^*\|_2 \lesssim \gamma \left( \sqrt{\frac{p\log p}{n}} +  \sqrt{\frac{\log(1/ \tau)}{n}}  +  \sqrt{\epsilon}\right),\,\, \text{ as long as } n = \Omega(p \log p + \log(1/\tau)),
\end{align*}
and $\epsilon$ is less than a sufficiently small constant. Moreover, on the same  event,  the loss function is $\Omega(1)$-strongly convex in a radius of $\Omega(\gamma)$ around $\widehat{\beta}$.
\end{theorem}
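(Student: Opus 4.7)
The plan is to follow the three-stage strategy of Theorem~\ref{ThmAdvHuberReg}, with modifications needed to accommodate the unknown, bounded covariance matrix $\Sigma$. First I would perform the pairing step from Algorithm~\ref{AlgHubGeneralCase}: construct $(x_i', y_i') = ((x_i - x_{n+i})/\sqrt{2}, (y_i - y_{n+i})/\sqrt{2})$ for $i \in [n]$. The paired covariates still have mean zero, covariance $\E x_i'(x_i')^T = \Sigma$ satisfying $(1/2) I \preceq \Sigma \preceq 2I$, and $(4,2)$-hypercontractivity with a slightly inflated constant; the paired noise $z_i' = (z_i - z_{n+i})/\sqrt{2}$ is symmetric and independent of $x_i'$. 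If $T$ is an $\epsilon$-corrupted version of $S$, the paired samples are a $(2\epsilon)$-corrupted version of the paired clean samples, so up to constants we are in the same adversarial regime.

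The critical stage is filtering under unknown covariance. I would use a version of Theorems~\ref{ThmStability} and~\ref{ThmStabHighProb} adapted to Assumption~\ref{AsCov2}: with probability $1-\tau$, there exists a subset $S' \subseteq S$ of size $(1-\epsilon')|S|$ whose empirical second-moment matrix satisfies $\|\tfrac{1}{|S'|}\sum_{i \in S'} x_i'(x_i')^T - \Sigma\|_2 = O(\sqrt{\epsilon'})$. The rate is $\sqrt{\epsilon'}$ rather than $(\epsilon')^{1-1/k}$ because, unlike the identity covariance case, when $\Sigma$ is unknown we can only certify closeness through a second-moment (spectral) filter — one would mimic the standard covariance-filtering proof but centered at the unknown $\Sigma$, using hypercontractivity only to show concentration of the empirical second moment on large subsets. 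Combined with $\tfrac{1}{2}I \preceq \Sigma \preceq 2I$ and the triangle inequality, the filtered subset $T'$ returned by Algorithm~\ref{AlgHubGeneralCase} satisfies
\begin{equation*}
\tfrac{1}{2}I - O(\sqrt{\epsilon})\, I \;\preceq\; \tfrac{1}{|T'|}\sum_{i \in T'} x_i'(x_i')^T \;\preceq\; 2I + O(\sqrt{\epsilon})\, I,
\end{equation*}
which for a sufficiently small absolute constant $\epsilon$ implies weak stability with $L = \Omega(1)$ and $U = O(1)$ on $T'$ (and on every large enough subset thereof, by the same stability argument).

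Armed with weak stability, I would adapt the proofs of Lemmas~\ref{LemmaGradNorm} and~\ref{LemmaHessLower} in the style of Theorem~\ref{ThmAdvHuberReg}, splitting $T'$ into the ``kept inliers'' $T' \cap S$ and the ``adversarial outliers'' $T' \setminus S$, whose cardinalities are $(1-O(\epsilon))|T'|$ and $O(\epsilon)|T'|$ respectively. The inlier contribution to $\nabla \cL_\gamma(\beta^*)$ concentrates at the sub-Gaussian rate $\gamma(\sqrt{p/n} + \sqrt{\log(1/\tau)/n})$ using boundedness of $\psi_\gamma$ and the $U = O(1)$ bound on the empirical covariance; an additional $\gamma\sqrt{p\log p/n}$ term arises from the uniform deviation of the inlier empirical covariance for all subsets of size $(1-O(\epsilon))n$. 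The outlier contribution is bounded via Cauchy--Schwarz by $\gamma \sqrt{\epsilon \cdot \|\tfrac{1}{|T'|}\sum_{i \in T'\setminus S} x_i'(x_i')^T\|_2} \lesssim \gamma \sqrt{\epsilon}$, using $U = O(1)$ on every $\epsilon$-fraction of $T'$. For the Hessian lower bound, I would run the same argument as in Lemma~\ref{LemmaHessLower}, replacing the isotropy upper bound by $U = O(1)$ and using the condition $\P(|z_1-z_2| \geq \gamma/\sqrt{2}) \le c^*$ to ensure that only a small fraction of residuals exceed $\gamma$. Combining these via the convexity-plus-gradient argument that concludes the proof of Theorem~\ref{ThmDetHuberReg} yields the claimed bound with the $\sqrt{\epsilon}$ term replacing $\epsilon^{1-1/k}$.

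The main obstacle is Stage~2: establishing the covariance-filtering guarantee in the unknown, bounded-$\Sigma$ regime with the $O(\sqrt{\epsilon})$ spectral rate. This requires reworking the stability certificate to target $\Sigma$ rather than the identity and showing that iterative filtering drives the empirical second-moment spectral error below a constant multiple of $\sqrt{\epsilon}$. Once the resulting weak stability with $L, U = \Theta(1)$ is in hand, the rest of the argument reduces to a straightforward adaptation of the proof of Theorem~\ref{ThmAdvHuberReg}, and the SQ lower bound of~\cite{DiaKS19} confirms that the $\sqrt{\epsilon}$ dependence cannot be improved by an efficient algorithm when $n = o(p^2)$.
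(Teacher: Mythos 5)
Your overall architecture (pairing, filtering, then the gradient/strong-convexity argument of Theorem~\ref{ThmAdvHuberReg}) matches the paper's, and you correctly identify the filtering stage as the crux. However, your proposed resolution of that stage has a real gap. You claim the filtered set $T'$ satisfies a two-sided spectral sandwich $\frac{1}{2}I - O(\sqrt{\epsilon})I \preceq \frac{1}{|T'|}\sum_{i\in T'}x_i'(x_i')^T \preceq 2I + O(\sqrt{\epsilon})I$, i.e.\ that the filtered empirical second moment is within $O(\sqrt{\epsilon})$ of the \emph{unknown} $\Sigma$. This is not something the filter can certify: the algorithm can only test deviations against a known reference ($\sigma^2 I$ with $\Sigma \preceq \sigma^2 I$), and in that metric the stability parameter is $\delta = O(\sqrt{p\log p/n} + \sqrt{\epsilon} + \sqrt{\log(1/\tau)/n})$ (Theorem~\ref{ThmStabHighProbCovariance}), so the guaranteed spectral deviation is $\sigma^2\delta^2/\epsilon = O(\sigma^2)$ --- a constant, not $O(\sqrt{\epsilon})$. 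An adversary can inflate the second moment in a single direction by a constant multiple of $\sigma^2$ without triggering the filter, so the upper side of your sandwich fails at the $O(\sqrt{\epsilon})$ scale; more importantly, a constant-factor spectral bound around $\sigma^2 I$ gives no lower bound on $\lambda_{\min}$ at all, so your derivation of $L = \Omega(1)$ does not go through as written.

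The paper closes this gap differently, and more cheaply, in Appendix~\ref{AppHuberUnkCov}: the upper bound $U=O(1)$ is exactly the constant-factor stability bound relative to $\sigma^2 I$ (which is all that is needed), while the lower bound $L=\Omega(1)$ comes from a separate small-ball argument (Proposition~\ref{PropStabSimplified}), which shows that with high probability \emph{every} $(1-\epsilon_5)$-fraction of the clean covariates has $\lambda_{\min}\bigl(\frac{1}{n}\sum x_ix_i^T\bigr) \geq 0.8\kappa_l$. Since the filter removes only $O(\epsilon')n$ points and adversarial additions can only increase $\sum x_ix_i^T$ in the positive semidefinite order, the filtered set inherits this lower bound from its retained inliers. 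No spectral approximation of $\Sigma$ itself is ever needed. The remaining pieces of your argument (the $\gamma\sqrt{\epsilon}$ outlier contribution to the gradient via Cauchy--Schwarz/stability, and the Hessian lower bound via the residual-tail condition) agree with the paper's adaptation of Lemma~\ref{LemHubAdv}, and the $\sqrt{\epsilon}$ in the final rate indeed enters through the first (mean) stability condition with $\delta = O(\sqrt{\epsilon})$ rather than through any covariance estimate. I would revise Stage~2 to drop the $O(\sqrt{\epsilon})$-closeness-to-$\Sigma$ claim and substitute the small-ball lower bound plus PSD monotonicity.
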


The proof of Theorem~\ref{ThmAdvHubRegUnknownCov} is given in Appendix~\ref{AppHuberUnkCov}, and follows the same strategy as Theorem~\ref{ThmAdvHuberReg}, by noting that Huber regression primarily relies on $(\epsilon,L,U)$-weak stability, where $\epsilon= \Omega(1), L = \Omega(1)$, and $ U = O(1)$. 
The first two conditions are satisfied due to the small ball property, and the guarantee of the filter algorithm in the unknown covariance case is strong enough to ensure the third condition~\cite{DiaKP20}.
However, these algorithms do not adapt to higher moments of the data in the unknown covariance setting. This drawback is reflected in the worse dependence on $\epsilon$, i.e.,  $O(\sqrt{\epsilon})$ instead of $O(\epsilon^{3/4})$ under $(4,2)$-hypercontractivity. Note that the SQ lower bound of Diakonikolas et al.~\cite{DiaKS19} suggests that this $O(\sqrt{\epsilon})$ dependence is essentially optimal when $n = o(p^2)$ even when the covariates are Gaussian (with an unknown covariance).

\begin{remark}
In the absence of adversarial contamination, we can follow the same strategy as in Theorem~\ref{ThmStocHuberReg}: Under Assumption~\ref{AsCov2}, we can run the filter algorithm with $\epsilon'$ equal to a small enough constant (independent of $\tau$) to obtain a sub-Gaussian tail in the error guarantee.
\end{remark}

\subsection{Optimization}
\label{SubSecHubRunTime}

As noted above, the Huber objective function $\cL_\gamma(\beta)$ is convex in $\beta$, so optimization should in principle be easy. Taking a closer look, we see that as established in Theorems~\ref{ThmStocHuberReg} and \ref{ThmAdvHuberReg}, the loss function is \emph{strongly} convex in a ball of sufficiently large enough radius $\Omega(\gamma)$ around $\widehat{\beta}$. 
Therefore, running gradient descent yields linear convergence if the initialization is inside that ball~\cite{Bub15}.
Considering the case when we set the Huber parameter to be $ \gamma = \Theta(\sigma)$, our theory shows that we can guarantee such an initialization using the LAD estimator (cf.\ Theorem~\ref{ThmLAD}) or LTS estimator (cf.\ Theorem~\ref{ThmLTS}).

If we do not want to use a different robust regression estimator for a warm start, we can always directly apply the ellipsoid algorithm to the Huber loss. However, running the ellipsoid algorithm might be undesirable, as its running time, although polynomial, is practically slow~\cite{Bub15}.

\section{Least trimmed squares estimator}
\label{SecLTS}

In this section, we study the least trimmed squares (LTS) estimator~\cite{Rou84}:
\begin{align}
\label{EqnLTS}
\widehat{\beta}_{LS,m} = \argmin_{\beta} \min_{S \subseteq n: |S| = n - m} \sum_{i \in S} (y_i-x_i^T \beta)^2,
  \end{align}
where $m$ is the trimming parameter. We will establish conditions under which $\|\widehat{\beta}_{LS,m}-\beta^*\|_2$ is small, with very high probability.

\begin{algorithm}
  \caption{Alternating minimization algorithm} 
    \label{AlgLTSOrig}  
  \begin{algorithmic}[1]  
    \Statex  
    \Function{Alternating\_Minimization}{$(x_i,y_i)_{i \in [n]}, m, J $}  
            \State $b^{0} \gets 0$   
	\For{$j \gets 1$ to $J$}                    
        \State $b^{j} \gets \HT_m(P_X b^{j-1} + (I - P_X) y )$

    \EndFor
    \State $\widehat{\beta}_J \gets (X^TX)^{-1} X^T(y - b^{j})$
    \State \Return $\widehat{\beta}_J$
    \EndFunction  
  \end{algorithmic}  
\end{algorithm}

Unlike the Huber regression estimator, a significant drawback of the LTS estimator is that the objective function~\eqref{EqnLTS} is nonconvex. Nonetheless, various methods have been developed to efficiently obtain a local optimum of the LTS objective function, which have been shown to perform well empirically~\cite{RouVan06}. In recent work, Bhatia et al.~~\cite{BhaJK15,BhaJKK17} proved that under sufficiently nice assumptions on the covariates, the alternating minimization algorithm (Algorithm~\ref{AlgLTSOrig}) succeeds in finding a good candidate solution.
 Here, $P_X = X(X^TX)^{-1}X^T$ denotes the hat matrix, and the function $\HT_m$ is defined as follows:
\begin{definition}
For any $v \in \R^n$ and $m \in [n]$, let $S_{m,v} \subseteq [n]$ be the set of cardinality of $m$ such that for any $i \in S_{m,v}$ and $j \in [n] \setminus S_{m,v}$, we have $|v_{i}| \geq |v_{j}|$.
To ensure uniqueness, we choose the smaller indices if ties occur.
The \emph{$m$-hard thresholding operator} is the function $\HT_m: \R^n \to \R^n$ defined as follows: For any $v \in \R^n$, we have
\begin{align*}
  (\HT_m(v))_i = \begin{cases}v_i, & \text{ if } i \in S_{m,v},\\
  0, & \text{ otherwise. }\end{cases}
  \end{align*}
\end{definition}
In other words, the set $S_{m,v}$ identifies the indices of the $m$ coordinates of $v$ that are largest in magnitude, and the $\HT_m$ function returns a vector that preserves these top $m$ components and sets the rest to zero.
Note that Algorithm~\ref{AlgLTSOrig} is derived by recasting the optimization problem~\eqref{EqnLTS} as
\begin{equation*}
\min_{\beta \in \real^p, \|b\|_0 \le m} \|X \beta - (y-b)\|_2^2
\end{equation*}
and alternately minimizing over $\beta$ and $b$, where we explicitly solve for $\beta$ on each iteration (see Bhatia et al.~\cite{BhaJKK17} for more details).

We now state the following deterministic result, which is implicit in Bhatia et al.~\cite{BhaJKK17}. For completeness, we provide a proof in Appendix~\ref{AppLTSBhatia}. Recall the definitions of the SSC and SSS properties from Definition~\ref{DefSS}.

\begin{lemma}(Adapted from Lemma 5 of Bhatia et al.~\cite{BhaJKK17})
\label{LemAltMin}
Suppose $y = X \beta^* + z$, where the SSC and SSS parameters of the $x_i$'s, denoted by $\{\lambda_{k}\}$ and $\{\Lambda_{k}\}$, respectively, satisfy $\frac{\Lambda_{2m}}{\lambda_n} < \frac{1}{4}$ and $\Lambda_n = O( \lambda_n )$.
Suppose $z = w + b^*$, for some vector $w \in \R^n$ and an $m$-sparse vector $b^* \in \R^n$, and let $G$ and $H$ be numbers such that $G \geq  \sup_{S': |S'| \leq 2m} \sqrt{\sum_{i \in S'} w_i^2}$ and $H \geq \|\sum_{i=1}^n x_iw_i\|_2$. Then Algorithm~\ref{AlgLTSOrig}, after $J \succsim \log_2\left(\frac{\|b^*\|_2}{2G + 2H /\sqrt{\lambda_n}}\right)$ iterations, outputs an estimator $\widehat{\beta}$ such that 
\begin{align*}
\| \widehat{\beta} - \beta^* \|_2  \lesssim \frac{ G \sqrt{\Lambda_n} +  H}{\lambda_n}.
\end{align*}
\end{lemma}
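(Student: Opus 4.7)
The plan is to track the ``outlier iterate'' $b^j$ via its error $e^j := b^j - b^*$ and show it contracts to $O(G + H/\sqrt{\lambda_n})$ geometrically, then convert this to an error on $\widehat{\beta}_J$ through the explicit formula in the last line of Algorithm~\ref{AlgLTSOrig}. The first step is to simplify the update. Because $(I-P_X)X = 0$, we have $(I-P_X)y = (I-P_X)(b^* + w)$, so
\begin{equation*}
b^j = \HT_m\bigl(P_X e^{j-1} + b^* + (I-P_X) w\bigr),
\end{equation*}
using $P_X b^* = b^* - (I-P_X) b^*$ and rearranging. Since $b^*$ is $m$-sparse, the best-$m$-sparse-approximation property of $\HT_m$ yields $\|\HT_m(v) - b^*\|_2 \leq 2\|v - b^*\|_2$ restricted to the at-most-$2m$ indices in the union of the supports of $b^*$ and $\HT_m(v)$. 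Therefore, for some index set $S$ with $|S| \le 2m$,
\begin{equation*}
\|e^j\|_2 \;\leq\; 2 \bigl\|(P_X e^{j-1})_S\bigr\|_2 \;+\; 2\bigl\|((I-P_X) w)_S\bigr\|_2.
\end{equation*}

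The key technical step, which I expect to be the most delicate, is a ``restricted projection'' bound: for any vector $v$ supported on a set $T$ with $|T|\le 2m$ and any $|S|\le 2m$,
\begin{equation*}
\bigl\|(P_X v)_S\bigr\|_2 \;\leq\; \frac{\sqrt{\Lambda_{|S|}\,\Lambda_{|T|}}}{\lambda_n}\,\|v\|_2 \;\leq\; \frac{\Lambda_{2m}}{\lambda_n}\,\|v\|_2,
\end{equation*}
and similarly $\|(P_X w)_S\|_2 \le \sqrt{\Lambda_{2m}}\,\|X^\top w\|_2/\lambda_n \le \sqrt{\Lambda_{2m}}\,H/\lambda_n$. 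Both follow by writing $P_X v = X(X^\top X)^{-1}X^\top v$, bounding $\|X^\top v\|_2 \le \sqrt{\Lambda_{|T|}}\|v\|_2$ via SSS applied to $T$, using $\lambda_{\min}(X^\top X)\ge \lambda_n$, and then bounding the row restriction of the resulting vector by $\sqrt{\Lambda_{|S|}}$ via SSS applied to $S$. Since $e^{j-1}$ is supported on at most $2m$ indices and $\|w_S\|_2 \le G$ by definition of $G$, combining these gives
\begin{equation*}
\|e^j\|_2 \;\leq\; \frac{2\Lambda_{2m}}{\lambda_n}\,\|e^{j-1}\|_2 \;+\; 2G \;+\; \frac{2\sqrt{\Lambda_{2m}}\, H}{\lambda_n}.
\end{equation*}
The assumption $\Lambda_{2m}/\lambda_n < 1/4$ makes the contraction factor $\eta := 2\Lambda_{2m}/\lambda_n < 1/2$, and also yields $2\sqrt{\Lambda_{2m}}/\lambda_n \le 1/\sqrt{\lambda_n}$, so the additive noise term is at most $2G + H/\sqrt{\lambda_n}$.

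Iterating the recursion from $e^0 = -b^*$ gives $\|e^J\|_2 \le \eta^J \|b^*\|_2 + 2(2G + H/\sqrt{\lambda_n})$, so the stated choice of $J \gtrsim \log_2(\|b^*\|_2/(2G + 2H/\sqrt{\lambda_n}))$ makes the first term absorbed into the second, i.e.\ $\|e^J\|_2 \lesssim G + H/\sqrt{\lambda_n}$. For the final conversion, observe that
\begin{equation*}
\widehat{\beta}_J - \beta^* \;=\; (X^\top X)^{-1} X^\top (y - b^J) - \beta^* \;=\; (X^\top X)^{-1} X^\top (w - e^J),
\end{equation*}
so the triangle inequality together with (i) $\|(X^\top X)^{-1}X^\top w\|_2 \le H/\lambda_n$ and (ii) $\|(X^\top X)^{-1}X^\top e^J\|_2 \le \sqrt{\Lambda_{2m}}\,\|e^J\|_2/\lambda_n$ (using $\|e^J\|_0 \le 2m$) produces
\begin{equation*}
\|\widehat{\beta}_J - \beta^*\|_2 \;\lesssim\; \frac{\sqrt{\Lambda_{2m}}\,G}{\lambda_n} + \frac{\sqrt{\Lambda_{2m}}\,H}{\lambda_n^{3/2}} + \frac{H}{\lambda_n} \;\lesssim\; \frac{G\sqrt{\Lambda_n} + H}{\lambda_n},
\end{equation*}
where the final step uses $\Lambda_{2m} \le \Lambda_n$ and $\sqrt{\Lambda_{2m}}/\sqrt{\lambda_n} \le 1/2$. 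The only place where the hypothesis $\Lambda_n = O(\lambda_n)$ is actually needed is to absorb the coefficient in front of $G$ cleanly, since $\sqrt{\Lambda_{2m}} \le \sqrt{\Lambda_n}$ already follows from monotonicity. The main obstacle in carrying this out cleanly is the restricted-projection inequality above; once that is in place, the rest is a textbook contraction-plus-one-shot-least-squares argument.
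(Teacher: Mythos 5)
Your proof is correct and follows essentially the same route as the paper's (Appendix C.1): the same hard-thresholding optimality lemma yields the factor of $2$, the same restricted-projection bounds via SSC/SSS give the contraction $\|b^{j}-b^*\|_2 \le \frac{2\Lambda_{2m}}{\lambda_n}\|b^{j-1}-b^*\|_2 + 2G + O(H/\sqrt{\lambda_n})$, and the same one-shot least-squares step converts $\|b^J - b^*\|_2$ into the bound on $\widehat{\beta}_J$. The only (minor) difference is that your final step uses $\sqrt{\Lambda_{2m}} \le \sqrt{\lambda_n}/2$ throughout, so that, as you observe, $\Lambda_n = O(\lambda_n)$ is needed only to write the $G$-coefficient as $\sqrt{\Lambda_n}$, whereas the paper bounds $\|X(b^*-b^J)\|_2$ by $\sqrt{\Lambda_n}\|b^*-b^J\|_2$ and invokes $\Lambda_n = O(\lambda_n)$ to absorb the resulting $\sqrt{\Lambda_n/\lambda_n}\cdot H/\lambda_n$ term.
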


\begin{remark}
\label{RemAltMin}
The proof of Lemma~\ref{LemAltMin} actually implies that for any error level $e \gtrsim \frac{G\sqrt{\Lambda_n} + H}{\lambda_n}$, Algorithm~\ref{AlgLTSOrig} is guaranteed to output an estimator satisfying the error bound $\|\widehat{\beta} - \beta^*\|_2 \le e$ after $J \gtrsim \log_2\left(\frac{\|b^*\|_2}{e}\right)$ iterations. This form of the result is helpful in settings such as Theorem~\ref{ThmLTS} below, where we can obtain data-driven upper bounds on $G$ and $H$, and consequently also on the term $\frac{G\sqrt{\Lambda_n} + H}{\lambda_n}$, which hold with high probability. Together with a data-driven upper bound on $\|b^*\|_2$, this provides a calculable lower bound on the number of iterations required for Algorithm~\ref{AlgLTSOrig} to succeed in outputting an estimator with small error.
\end{remark}

\begin{algorithm}
  \caption{Alternating minimization algorithm} 
    \label{AlgLTSFiltering}  
  \begin{algorithmic}[1]  
    \Statex  
    \Function{Alternating\_Minimization\_with\_Filtering}{$(x_i',y_i')_{i \in [n]}, \epsilon' , m, J $}  
            \State $T_1 \gets $FilteredCovariates$((x_i)_{i \in [n]},\epsilon')$  
    \State $\widehat{\beta}_J \gets \textsc{Alternating\_Minimization}((x_i', y_i')_{i \in T_1}, m, J)$
    \State \Return $\widehat{\beta}_J$
    \EndFunction  
  \end{algorithmic}  
\end{algorithm}

Note that the statement of Lemma~\ref{LemAltMin} is deterministic: In Bhatia et al.~\cite{BhaJKK17}, it was shown that when the covariates are i.i.d.\ Gaussian, the SSC and SSS conditions hold with high probability. Our main result in this section shows that these conditions hold with high probability for possibly heavy-tailed, adversarially contaminated covariates after applying our filtering step.

\begin{theorem}
\label{ThmLTS}
Let $S = \{(x_i, y_i)\}_{i=1}^{n}$ be a set of i.i.d.\ samples drawn according to the same distributional assumptions as in Theorem~\ref{ThmAdvHuberReg}. Let $T = \{(x_i', y_i')\}_{i=1}^n$ be an $\epsilon$-corrupted version of $S$, where $\epsilon$ is less than a sufficiently small constant.
Further suppose that the errors satisfy $(k',2)$-hypercontractivity with parameter  $\sigma_{z,k'} = O(1)$, for some $k' \geq 2$.
Let $\tau$ be such that $\frac{\log(1/ \tau)}{n} = O(1)$.
With probability at least $1 - O(\tau)$, running Algorithm~\ref{AlgLTSFiltering} on the set $T$ with parameters $m = \Theta\left(p \log p + \epsilon n + \log\left(\frac{1}{\tau}\right)\right)$ and $\epsilon' = \Theta\left(\frac{m}{n}\right)$
yields an estimator $\widehat{\beta}$ satisfying
\begin{align*}
\|\widehat{\beta} - \beta^*\|_2 \lesssim \sigma \left( \sigma_{z,k'} \left(\frac{p \log p}{n} + \epsilon +  \frac{\log(1/ \tau)}{n}\right)^{1/2 - 1/k'}\right),
 \,\, \text{ as long as } n = \Omega(p \log p),
\end{align*}
provided $J \gtrsim \log_2\left(\frac{ \|y'\|_2 + \|X'\|_2 \|\beta^*\|_2}{\alpha}\right)$, where $\alpha$ is defined to be the error bound given above.

If we further suppose that the errors satisfy $(4,2)$-hypercontractivity with $\sigma_{z,4} = O(1)$, then $J \gtrsim \log_2\left( \frac{\|y'\|_2(1 + \|X'\|_2)}{\alpha}\right)$ iterations suffice. %
\end{theorem}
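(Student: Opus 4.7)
The plan is to reduce to the deterministic Lemma~\ref{LemAltMin}, applied on the filtered set $T_1$ in place of $[n]$: the filter enforces the SSC/SSS conditions that the raw corrupted covariates could violate because of heavy tails or outliers, and the at most $\epsilon n$ adversarial points surviving inside $T_1$ mean their response outliers can be absorbed into the $m$-sparse vector $b^*$ allowed by that lemma.

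\emph{Stability and SSC/SSS.} Running the filter with $\epsilon' = \Theta(m/n)$ and combining Theorems~\ref{ThmStability} and~\ref{ThmStabHighProb} (valid because $\epsilon' \gtrsim \epsilon + \log(1/\tau)/n$ and $n = \Omega(p\log p)$), with probability $1 - \tau$ the returned multiset $T_1$ has $n' := |T_1| = \Theta(n)$ and is $(O(\epsilon'),\delta)$-stable with $\delta^2/\epsilon'$ an arbitrarily small constant. Lemma~\ref{LemStrongWeak} then gives $(O(\epsilon'),L,U)$-weak stability with $L = \Omega(1)$ and $U = O(1)$, so $\lambda_{n'} = \Theta(n)$ and $\Lambda_{n'} = O(n)$. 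For the remaining condition $\Lambda_{2m}/\lambda_{n'} < 1/4$: fix $S'' \subseteq T_1$ with $|S''| = 2m$ and a unit vector $v$; since $2m \lesssim \epsilon' n$, the complement $T_1 \setminus S''$ still has size $\geq (1 - O(\epsilon'))n'$, so strong stability on $T_1 \setminus S''$ gives $\sum_{i \in T_1 \setminus S''}(v^T x_i)^2 \geq (n' - 2m)(1 - \delta^2/\epsilon')$. Subtracting from the stability upper bound on $\sum_{i \in T_1}(v^T x_i)^2$ yields $\Lambda_{2m} \leq 2m + O(n\delta^2/\epsilon')$, which is $< n'/4$ once $\epsilon$ and $\epsilon'$ are small enough.

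\emph{Decomposition and bounds on $G$ and $H$.} Let $S' = T_1 \cap S$ and $B = T_1 \setminus S$; since $|B| \leq \epsilon n \leq m$, set $w_i = z_i$, $b^*_i = 0$ on $S'$ and $w_i = 0$, $b^*_i = y_i - x_i^T \beta^*$ on $B$, so that $b^*$ is $m$-sparse and $y - X\beta^* = w + b^*$ on $T_1$. For $G$: $(k',2)$-hypercontractivity gives $\E|z_i|^{k'} \lesssim \sigma_{z,k'}^{k'}$, and a Chernoff/Bernstein bound on the i.i.d.\ average of $|z_i|^{k'}$ gives $\sum_{i \in S}|z_i|^{k'} \lesssim n\sigma_{z,k'}^{k'}$ with probability $1 - \tau$; the power-mean (H\"older) inequality then yields
\begin{equation*}
\sum_{\text{top }2m} w_i^2 \leq (2m)^{1 - 2/k'}\Bigl(\sum_{i}|z_i|^{k'}\Bigr)^{2/k'} \lesssim m(n/m)^{2/k'}\sigma_{z,k'}^2,
\end{equation*}
so $G \lesssim \sqrt{m}\,(n/m)^{1/k'}\sigma_{z,k'}$. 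For $H = \|\sum_{i \in S'} x_i z_i\|_2$: the $x_i z_i$'s are i.i.d.\ mean zero over $i \in S$ with $\E\|x_i z_i\|_2^2 = p\sigma_{z,2}^2$ (by independence and $\E x_i x_i^T = I$), and a truncation/Bernstein argument gives $\|\sum_{i \in S} x_i z_i\|_2 \lesssim \sigma_{z,2}\sqrt{n(p + \log(1/\tau))}$ with probability $1 - \tau$. Because the filter couples $T_1$ to the covariates, the preceding bound must be made uniform over $S'' \subseteq S$ with $|S''| \geq (1-c\epsilon')n$; the deficit from removing up to $O(\epsilon' n)$ indices is controlled by a second application of the top-$2m$ power-mean bound, this time to the $\R^p$-valued summands $x_i z_i$, whose norms have bounded $k'$-th moments by independence and hypercontractivity.

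\emph{Putting it together and iteration count.} Plugging into Lemma~\ref{LemAltMin} gives
\begin{equation*}
\|\widehat{\beta} - \beta^*\|_2 \lesssim \frac{G\sqrt{\Lambda_{n'}} + H}{\lambda_{n'}} \lesssim \sigma_{z,k'}\sqrt{m/n}\,(n/m)^{1/k'} + \sigma_{z,2}\sqrt{(p + \log(1/\tau))/n}.
\end{equation*}
Substituting $m/n = \Theta(p\log p/n + \epsilon + \log(1/\tau)/n) =: \epsilon_*$, the first term becomes $\sigma_{z,k'}\epsilon_*^{1/2 - 1/k'}$ and dominates the second since $1/2 - 1/k' \leq 1/2$ and $\epsilon_* \in (0,1)$, giving the stated bound. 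For the iteration count, Remark~\ref{RemAltMin} reduces the requirement to $J \gtrsim \log_2(\|b^*\|_2/\alpha)$; the crude bound $\|b^*\|_2 \leq \|y'\|_2 + \|X'\|_2 \|\beta^*\|_2$ gives the first form, and under $(4,2)$-hypercontractivity of the noise, Chebyshev gives $\|z\|_2 \lesssim \sqrt{n}$ with high probability, which via the triangle inequality $\|X'\beta^*\|_2 \leq \|y'\|_2 + \|z\|_2$ lets us eliminate $\|\beta^*\|_2$ and obtain the sharper form. The main technical obstacle I anticipate is exactly the bound on $H$: the filter's coupling of $T_1$ to the covariates forces the uniform-over-subsets argument above, and the many $1 - \tau$ events (filter output, Chernoff for $\sum|z_i|^{k'}$, Bernstein for $H$, tail control on $\|z\|_2$) must be budgeted carefully so that the union bound still yields total failure probability $O(\tau)$.
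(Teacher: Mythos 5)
There is a genuine gap, and it is concentrated in your choice of decomposition. You set $w_i = z_i$ on \emph{all} of $T_1 \cap S$ and reserve the $m$-sparse vector $b^*$ only for the $\le \epsilon n$ corrupted rows. This forces you to control $G$ and $H$ over the full heavy-tailed noise, and the two concentration claims you then invoke are false at the required confidence level. First, $\sum_{i \in S}|z_i|^{k'} \lesssim n\sigma_{z,k'}^{k'}\sigma^{k'}$ with probability $1-\tau$ cannot follow from a Chernoff/Bernstein bound: $|z_i|^{k'}$ is only guaranteed a first moment, so Markov gives a $1/\tau$ blow-up and nothing better is available when $\tau$ is small. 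Second, $\|\sum_{i\in S} x_i z_i\|_2 \lesssim \sqrt{n(p+\log(1/\tau))}$ is exactly the sub-Gaussian deviation of an empirical mean of heavy-tailed vectors, which is provably unattainable (this is Proposition~\ref{PropMultiMean} / Proposition~\ref{PropLowOLSMulti} of the paper — the whole reason the stability machinery exists). No "truncation/Bernstein argument" recovers it without discarding points, and once you discard points you must put them somewhere in the model $y = X\beta^* + w + b^*$.

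The fix is to spend the unused sparsity budget: $m = \Theta(p\log p + \epsilon n + \log(1/\tau))$ is much larger than $\epsilon n$, so $b^*$ can also absorb $O(m)$ \emph{uncorrupted} indices. The paper defines $T_2 = T_1 \cap S_2' \cap S_3'$, where $S_2$ removes the $m/4$ points with the largest $|z_i|$ (so the surviving $w_i$ are bounded by the $(1-\tfrac{m}{8n})$-quantile $F^{-1}(1-\tfrac{m}{8n}) \lesssim \sigma\sigma_{z,k'}(m/n)^{-1/k'}$, giving $G \lesssim \sqrt{m}\,F^{-1}(\cdot)$ with failure probability $e^{-\Omega(m)}$ by a Chernoff bound on indicator variables), and $S_3$ is the stable subset of the i.i.d.\ vectors $q_i = x_i z_i$ guaranteed by Theorem~\ref{ThmStabHighProb}, whose stability — not concentration of the full sum — yields $H \le n\sigma\delta_3$. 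Your SSC/SSS argument via strong stability is essentially the paper's (Proposition~\ref{PropStabSqError}) and your final rate would be correct if the concentration held, but the route to $G$ and $H$ must go through this quantile-plus-stable-subset construction. A smaller issue: your elimination of $\|\beta^*\|_2$ in the second iteration bound via $\|X'\beta^*\|_2 \le \|y'\|_2 + \|z\|_2$ is circular on the corrupted rows (where $y' - X'\beta^*$ is adversarial, not $z$); the paper instead lower-bounds $\E|y_i|$ by $\|\beta^*\|_2/\sigma_{x,4}^2$ and applies Paley--Zygmund to get $\|\beta^*\|_2 = O(\|y'\|_2)$.
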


\begin{remark}
Note that the error guarantee of the LTS estimator in Theorem~\ref{ThmLTS} is weaker than that of the Huber regression estimator in Theorem~\ref{ThmAdvHuberReg}.
It is not clear whether the suboptimality of the LTS error bound is intrinsic to the LTS estimator or an artifact of our analysis; we leave this question for future work. In the case of sub-Gaussian noise, it can be shown that the guarantee of Theorem~\ref{ThmLTS} matches the guarantee of Bhatia et al.~\cite{BhaJK15,BhaJKK17} (up to log factors) who assume, in addition, that the covariates are sub-Gaussian. 
\end{remark}

The complete proof of Theorem~\ref{ThmLTS} is provided in Appendix~\ref{AppLTSProb}, but we provide a proof sketch below.

\begin{proof}
To simplify the argument, assume for this proof sketch that no adversarial contamination is present in the data.
Recall that $T_1$ is the output of the filter algorithm with input $T$ and $\epsilon' = \Theta(m/n)$.
Let $n_1 = |T_1|$.
Note that if the covariates in $T_1$ satisfy $(\epsilon,\delta)$-stability, then
\begin{equation*}
n_1\left(1 - \frac{\delta^2}{\epsilon}\right) \leq \lambda_{n_1} \leq \Lambda_{n_1} \leq n_1\left(1  + \frac{\delta^2}{\epsilon}\right).
\end{equation*}
Furthermore, by Proposition~\ref{PropStabSqError}, we have $\Lambda_{\lfloor\epsilon n_1\rfloor} \leq \frac{3n_1\delta^2}{\epsilon}$.
Suppose $T_1$ is $(\epsilon_1,\delta_1)$-stable such that $|T_1|\epsilon_1 = \epsilon_1 n_1 \geq 2m$ and $n_1 \geq \frac{n}{2}$.
Thus, if $\frac{\delta_1^2}{\epsilon_1^2}$ is less than (say) $0.05$,
the condition $\frac{\Lambda_{2m}}{\lambda_n} < \frac{1}{4}$ of Lemma~\ref{LemAltMin} holds and the error bound is $O\left(\frac{H}{n} +  \frac{G}{\sqrt{n}}\right)$. 
Proposition~\ref{PropStabSimpleV2} shows that this holds if   $\frac{m}{n}$ is small enough.
We will now sketch how to bound the quantities $G$ and $H$.

To bound $G$, let $F$ be the cdf of the distribution of $|z_i|$, and let $F^{-1}$ be its inverse.
A Chernoff bound implies that with probability at least $1 - \exp(- \Omega(m))$, we have
\begin{align*}
\left|\left\{i : |z_i| >  F^{-1}\left(1 - \frac{m}{8n}\right)  \right\}\right| \leq \frac{m}{4}.
\end{align*}
The moment assumption on $z_i$ and Markov's inequality directly imply that $F^{-1}\left(\frac{m}{8n}\right) \lesssim \sigma_{z,k'}\sigma \left(\frac{m}{n}\right)^{-1/k'}$.
We will define $w_i$ to be zero if the corresponding value of $z_i$ does not satisfy this condition. Since $G$ is the maximum $\ell_2$-norm of any subvector of $w$ with $2m$ components, we have
$$G = O\left( \sigma \sigma_{z,k'}\sqrt{m} \left(\frac{m}{n}\right)^{-1/k'}\right).$$
Consequently, its contribution to the error is $ O\left(\frac{G}{\sqrt{n}}\right) = O \left(\sigma \sigma_{z,k'}\left(\frac{m}{n}\right)^{1/2 - 1/k'}\right)$.

Next, we bound $H$. Consider the random variable $ x_iz_i$, which has mean zero and covariance $\sigma^2 I$. By Theorem~\ref{ThmStabHighProb}, we know that with probability $1 - \exp(- \Omega(m))$, there exists a set $S'$ such that $|S'| \geq n - \frac{m}{2}$ and $S'$ is $ \left(\frac{Cm}{n}, \delta\right)$-stable with respect to $\mu$ and $\sigma^2$.
Finally, we will define $w_i$ to be zero if $i \notin S'$, as well.
Using the stability of $S'$, we can show that $H = O(n\sigma \delta)$.
Therefore, the overall bound is of the form $O\left( \sigma \sigma_{z,k} \epsilon^{1/2 - 1/k} + \sigma\delta\right)$, where $\sigma\delta$ is smaller than the first term.

The last step is to derive a high-probability upper bound on $\|b^*\|_2$. An application of the triangle inequality gives
\begin{equation*}
\|b^*\|_2 \le \|y'\|_2 + \|X'\|_2 \|\beta^*\|_2,
\end{equation*}
where $T = (X', y')$ is the corrupted data set. Finally, we show how to obtain a high-probability upper bound on $\|\beta^*\|_2$ which depends on known quantities, under the additional assumption that the $z_i$'s satisfy $(4,2)$-hypercontractivity. We can derive the inequality
\begin{equation*}
\E|y_i| = \E|x_i^T \beta^* + z_i| \ge \max \{\E | x_i^T \beta^* |, \E |z_i|\} \ge \max\left\{\frac{ \|\beta^*\|_2^2}{\sigma_{x,4}^4}, \frac{\sigma^2}{\sigma_{z,4}^4}\right\}.
\end{equation*}
Then the Paley-Zygmund inequality, together with a Chernoff bound, allow us to show that appropriately chosen quantiles of the $y_i$'s (and consequently also the corrupted responses) are larger than a multiple of $\|\beta^*\|_2$, with high probability.
\end{proof}

\begin{remark}
\label{RemLTSinit}
The two statements in Theorem~\ref{ThmLTS} differ in the number of iterations we require to guarantee that the output of the alternating minimization algorithm will have small $\ell_2$-error---in order to obtain a data-driven upper bound on $\|\beta^*\|_2$, we impose additional hypercontractivity assumptions on the noise distribution. As in the case of the Huber estimator (cf.\ Section~\ref{SubSecHubRunTime}), one might choose to use the LAD estimator to warm-start the algorithm and save on computation.
Theorem~\ref{ThmLAD} below guarantees that the LAD estimator satisfies $\|\widehat{\beta}_{\text{LAD}} - \beta^*\|_2 = O(\kappa)$ when $\E|z_i| = \kappa$;
the runtime of Algorithm~\ref{AlgLTSFiltering} on the shifted data $(X,y-X^T \widehat{\beta}_{\text{LAD}})$ would then scale with $\|\widehat{\beta}_{\text{LAD}}- \beta^* \|_2 = O(\kappa)$ rather than $\|\beta^*\|_2$.

As shown in the proof of Lemma~\ref{LemAltMin}, we can alternatively run Algorithm~\ref{AlgLTSFiltering} until $\|b^{j} - b^{j-1}\|_2 = O(\alpha\sqrt{n})$, where $\alpha$ is the error bound in Theorem~\ref{ThmLTS}, to obtain a data-dependent stopping criterion. Indeed, by inequality~\eqref{EqConvOfB} below, we have $\|b^{j+1} - b^*\| \leq e_0 + \frac{1}{2}\|b^j - b^*\|_2$, so by the triangle inequality,
\begin{equation*}
\|b^{j} - b^{j+1}\|_2 \geq \|b^{j} - b^*\|_2 - \|b^{j+1} - b^*\|_2 \geq \frac{1}{2}\|b^{j} - b^*\|_2 - e_0.
\end{equation*}
Thus, if the difference between successive iterates is sufficiently small, the error must be small, as well.

\end{remark}

Finally, we emphasize that although the LTS objective function is nonconvex~\eqref{EqnLTS}, our theoretical guarantees are for the output of a particular iterative algorithm which can be performed efficiently. Importantly, the validity of our theoretical analysis does not require us to assume that the alternating minimization algorithm converges to a global optimum of the LTS objective.

\section{Least absolute deviation}
\label{SecLAD}

In this section, we study the least absolute deviation (LAD) estimator:
\begin{align*}
\widehat{\beta}_{LAD} = \argmin_{\beta}  \sum_{i=1}^n |y_i-x_i^T \beta|.
  \end{align*}
Note that the LAD estimator is parameter-free. Although the error bounds we derive for the LAD estimator have suboptimal error rates compared to the other estimators, the LAD estimator is useful for initialization for tuning or optimizing the Huber estimator (cf.\ Sections~\ref{SecHuberGeneral} and~\ref{SubSecHubRunTime}), or initializing the alternating minimization algorithm for the LTS estimator (cf.\ Remark~\ref{RemLTSinit}).

Our main result relies on the following lemma from Karmalkar and Price~\cite{KarPri19}, who showed that if the covariates satisfy $(\epsilon,m,M,\ell_1)$-stability, then the LAD estimator is robust to corruption in responses.
We provide a proof for completeness:
\begin{lemma}(Karmalkar and Price~\cite{KarPri19})
\label{LemKarPriLAD}
 Suppose the covariates satisfy $(m,M, \epsilon, \ell_1)$-stability such that $M > m$. Then
\begin{align*}
\|\widehat{\beta}_{\text{LAD}} - \beta^*\|_2 = O\left( \frac{ \sum_{i=1}^{(1 - \epsilon) n} |z|_{(i)} }{n (M - m)} \right).
\end{align*}
\end{lemma}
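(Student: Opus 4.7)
The plan is to follow the standard basic-inequality argument for LAD-type estimators, combined with the $\ell_1$-stability definition, choosing the split set to align with the $(1-\epsilon)n$ smallest residuals $|z_i|$.

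Let $\Delta := \widehat{\beta}_{\text{LAD}} - \beta^*$. First I would invoke the optimality of $\widehat{\beta}_{\text{LAD}}$, which (since $y_i - x_i^T\widehat{\beta}_{\text{LAD}} = z_i - x_i^T\Delta$) yields the basic inequality
\begin{align*}
\sum_{i=1}^n |z_i - x_i^T\Delta| \;\le\; \sum_{i=1}^n |z_i|.
\end{align*}
Next I would choose $S \subseteq [n]$ to be the index set of the $(1-\epsilon)n$ smallest values of $|z_i|$, so that $\sum_{i \in S}|z_i| = \sum_{i=1}^{(1-\epsilon)n}|z|_{(i)}$, and split both sides over $S$ and its complement.

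The key step is to apply the triangle inequality in opposite directions on the two pieces: for $i \in S$ use $|z_i - x_i^T\Delta| \ge |x_i^T\Delta| - |z_i|$, and for $i \notin S$ use $|z_i - x_i^T\Delta| \ge |z_i| - |x_i^T\Delta|$. Substituting into the basic inequality and cancelling the $\sum_{i \notin S}|z_i|$ terms produces
\begin{align*}
\sum_{i \in S} |x_i^T\Delta| \;-\; \sum_{i \notin S} |x_i^T\Delta| \;\le\; 2\sum_{i \in S} |z_i| \;=\; 2\sum_{i=1}^{(1-\epsilon)n} |z|_{(i)}.
\end{align*}

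Now I would apply $(m,M,\epsilon,\ell_1)$-stability along the unit vector $v = \Delta/\|\Delta\|_2$ (assuming $\Delta \ne 0$; otherwise the bound is trivial). Since $|S| \ge (1-\epsilon)n$, stability gives $\sum_{i \in S}|x_i^T v| \ge nM$ and $\sum_{i \notin S}|x_i^T v| \le nm$. Multiplying through by $\|\Delta\|_2$ and combining with the previous display yields $n(M-m)\|\Delta\|_2 \le 2\sum_{i=1}^{(1-\epsilon)n}|z|_{(i)}$, which rearranges to the claimed bound. There is essentially no hard step here; the only thing one has to get right is the choice of $S$ (smallest $(1-\epsilon)n$ residuals) so that the uncontrolled $\sum_{i \notin S}|z_i|$ terms cancel and the surviving noise term is precisely the sum of the smallest $(1-\epsilon)n$ values of $|z|$, matching the stated bound.
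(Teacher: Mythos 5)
Your proposal is correct and follows essentially the same argument as the paper: the same basic inequality from optimality, the same choice of $S$ as the indices of the $(1-\epsilon)n$ smallest $|z_i|$, the same opposite-direction triangle inequalities on $S$ and $S^c$, and the same application of $\ell_1$-stability along $v = \Delta/\|\Delta\|_2$. No gaps.
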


\begin{proof}
We denote $\widehat{\beta} = \widehat{\beta}_{\text{LAD}}$ for brevity.
Let $S$ be the set of $(1-\epsilon)n$ indices with the smallest magnitudes of additive errors.
We have the following:
\begin{align*}
0 &\geq  \sum_{i \in S} |y_i - x_i^T \widehat{\beta}| 
  - \sum_{i \in S} |y_i - x_i^T \beta^*|  + \sum_{i \in S^c} |y_i - x_i^T \widehat{\beta}| - \sum_{i \in S^c} |y_i-x_i^T \beta^*|     \\
&\geq \sum_{i \in S} |x_i^T ( \widehat{\beta} - \beta^*)| - 2\sum_{i \in S} |y_i - x_i^T \beta^*| - \sum_{i\in S^c}|x_i^T (\widehat{\beta} - \beta^*)|,\\
&\geq n M \|\widehat{\beta} - \beta^*\|_2 -2 \sum_{i\in S} |z_i| - nm \|\widehat{\beta} - \beta^*\|_2, 
\end{align*}
where the first inequality follows by the optimality of $\widehat{\beta}$,
the second inequality uses the triangle inequality, and the third inequality uses the property of $(\epsilon,m, M, \ell_1)$-stability.
Rearranging the inequality and using the fact that $\sum_{i \in S} |z_i| \le \sum_{i=1}^{(1-\epsilon)n} |z|_{(i)}$, we obtain the desired result.
\end{proof}

Our main result in this section is to show that under our setting, the filtered covariates satisfy the $\ell_1$-stability condition of Definition~\ref{DefL1Stable}, from which we may derive an error bound according to Lemma~\ref{LemKarPriLAD}.

\begin{algorithm}[h]  
  \caption{LAD with filtered covariates} 
    \label{AlgLAD_filter}  
  \begin{algorithmic}[1]  
    \Statex  
    \Function{LAD\_with\_Filtering}{$(x_i',y_i')_{i \in [n]}, \epsilon'$}  
            \State $T_1 \gets $FilteredCovariates$((x_i)_{i \in [n]},\epsilon')$     
        \State $\widehat{\beta}_{\text{LAD}} \gets $ LAD$((x_i', y_i')_{i \in T_1})$
    \State \Return $\widehat{\beta}_{\text{LAD}}$
    \EndFunction  
  \end{algorithmic}  
\end{algorithm}

\begin{theorem}
\label{ThmLAD}
Let $S = \{(x_i,y_i)\}_{i=1}^n$ be i.i.d.\ samples from the linear model $y_i = x_i^T \beta^* + z_i$, where the covariates satisfy Assumption~\ref{AsCov}
and the noise satisfies $\E|z_i| = \kappa$.
For an $\epsilon < c^*$, let $T$ be an $\epsilon$-corrupted version of $S$.
Let $\widehat{\beta}$ be the output of Algorithm~\ref{AlgLAD_filter} with input $T$ and $\epsilon'$, where $\epsilon'$ is a small enough constant.
Let $\tau$ be such that $\frac{\log(1/ \tau)}{n} = O(1)$.
Then with probability at least $1 - \tau$, we have
\begin{align*}
 \|\widehat{\beta} - \beta^*\|_2 = O( \kappa), \,\,\, \text{ as long as } n = \Omega(p \log p).
 \end{align*}
\end{theorem}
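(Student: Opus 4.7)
The plan is to invoke Lemma~\ref{LemKarPriLAD} on the filtered data set $T_1$, after verifying that $T_1$ satisfies $(m, M, \epsilon_1, \ell_1)$-stability with a constant-order gap $M - m$, and then to bound the tail sum of noise magnitudes that appears on the right-hand side of the lemma. Since $\epsilon'$ is chosen to be a sufficiently small constant and $n = \Omega(p\log p)$, Theorems~\ref{ThmStability} and~\ref{ThmStabHighProb}, applied with $k=4$, imply that with probability at least $1 - \tau$ the output $T_1$ of FilteredCovariates is $(\epsilon_1, \delta_1)$-strongly stable with $\epsilon_1 = \Theta(\epsilon')$ a constant and $\delta_1^2/\epsilon_1$ smaller than any prescribed small constant. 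In particular, $|T_1| \geq (1 - O(\epsilon'))n$, and the adversarial contamination within $T_1$ is of size at most $\epsilon n \leq \epsilon_1 |T_1|/2$ (taking $c^*$ small).

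The core technical step is deriving $\ell_1$-stability from strong stability together with the hypercontractivity assumption on the covariates. The upper bound $m$ follows directly from strong stability (as promised by the forthcoming Lemma~\ref{PropStabL1Error}): for any unit $v$ and any $S \subseteq T_1$ with $|S| \geq (1 - \epsilon_1)|T_1|$, a Cauchy-Schwarz bound gives
\begin{equation*}
\frac{1}{n} \sum_{i \in T_1 \setminus S} |x_i^T v| \leq \sqrt{\epsilon_1 \cdot \lambda_{\max}\!\left(\tfrac{1}{|T_1|}\sum_{i \in T_1} x_i x_i^T\right)} = O(\sqrt{\epsilon_1}),
\end{equation*}
so $m = O(\sqrt{\epsilon_1})$. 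The lower bound $M$ is the main obstacle, and requires a small-ball/anticoncentration argument. Under Assumption~\ref{AsCov}, the Paley-Zygmund inequality applied to $(x_i^T v)^2$ yields $\P(|x_i^T v| \geq 1/\sqrt{2}) \geq c_0$ uniformly in unit $v$, where $c_0$ depends only on $\sigma_{x,4}$. A standard $\eta$-net argument over $\cS^{p-1}$ combined with a Chernoff bound shows that, with probability $1 - \exp(-\Omega(n))$ and whenever $n = \Omega(p)$, every unit $v$ satisfies $|\{i : |x_i^T v| \geq 1/2\}| \geq c_0 n / 2$ on the original i.i.d.\ sample. The filter removes at most $O(\epsilon') n$ of these inliers and the adversary controls at most $\epsilon n$ points, so after taking any sub-subset $S \subseteq T_1$ of size $(1-\epsilon_1)|T_1|$, at least $c_0 n / 4$ indices still satisfy $|x_i^T v| \geq 1/2$, giving $\tfrac{1}{n}\sum_{i \in S} |x_i^T v| \geq c_0/8 = \Omega(1)$. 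Choosing $\epsilon'$ small enough that $O(\sqrt{\epsilon_1}) < c_0/16$ yields $M - m = \Omega(1)$.

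It then remains to upper-bound $\sum_{i=1}^{(1-\epsilon_1)|T_1|} |z|_{(i)}$, where $z_i := y_i' - (x_i')^T \beta^*$ is arbitrary for the corrupted indices. Set $\gamma = C\kappa/\epsilon_1$ for a sufficiently large absolute constant $C$. Markov's inequality gives $\P(|z_i| > \gamma) \leq \kappa/\gamma = \epsilon_1/C$ for inliers, so a multiplicative Chernoff bound shows that the number of inliers in $T_1$ with $|z_i| > \gamma$ is at most $\epsilon_1 n / 4$ with probability $1 - \tau$, using $n = \Omega(\log(1/\tau))$. Combining this with the at most $\epsilon n \leq \epsilon_1 n / 4$ corrupted indices, at most $\epsilon_1 |T_1|$ entries of $T_1$ have $|z_i| > \gamma$; consequently, the smallest $(1-\epsilon_1)|T_1|$ values in the sorted order are all bounded by $\gamma$, and
\begin{equation*}
\sum_{i=1}^{(1-\epsilon_1)|T_1|} |z|_{(i)} \leq (1-\epsilon_1)|T_1| \cdot \gamma = O(n\kappa),
\end{equation*}
since $\epsilon_1$ is a constant. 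Plugging this into Lemma~\ref{LemKarPriLAD} with $M - m = \Omega(1)$ yields $\|\widehat{\beta} - \beta^*\|_2 = O(\kappa)$, as claimed. The main obstacle throughout is the uniform lower bound $M = \Omega(1)$ on the empirical $\ell_1$-projections under hypercontractive (possibly heavy-tailed) and partly corrupted covariates; the rest of the argument reduces to Markov, Chernoff, and the filter guarantees already established.
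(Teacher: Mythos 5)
Your proposal is correct and follows the same skeleton as the paper's proof: establish $(m,M,\epsilon_1,\ell_1)$-stability of the filtered set, invoke Lemma~\ref{LemKarPriLAD}, and bound the trimmed sum of residuals by combining Markov's inequality on the clean noise with a Chernoff bound on the exceedance count and a separate count of the at most $\epsilon n$ corrupted indices (this last part is essentially the paper's Lemma~\ref{LemTrimmedSumL1}, routed through the clean sample exactly as in the paper). The one place where you genuinely diverge is the uniform lower bound $M = \Omega(1)$: the paper (Lemma~\ref{LemLowL1Stab}) proves it by truncating the summands, applying Rademacher symmetrization and contraction, and invoking Talagrand's concentration inequality, in the style of Koltchinskii--Mendelson; you instead propose Paley--Zygmund for the pointwise small-ball estimate followed by an $\eta$-net and a union of Chernoff bounds. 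Your route is more elementary, but the step you label ``standard'' hides the only real difficulty with heavy-tailed covariates: transferring the event $\{|x_i^Tv|\geq 1/\sqrt{2}\}$ from a net point $v$ to a nearby $v'$ costs $\eta\|x_i\|_2$, and under a fourth-moment assumption $\max_i\|x_i\|_2$ is not $O(\sqrt{p})$. The fix is to first discard the (with high probability, at most a small constant fraction of) points with $\|x_i\|_2 > C\sqrt{p}$ via Markov plus Chernoff, after which $\eta = \Theta(1/\sqrt{p})$ suffices, the net has size $\exp(O(p\log p))$, and the union bound closes under exactly the hypothesis $n = \Omega(p\log p)$. With that truncation made explicit, your argument is complete; your cruder Cauchy--Schwarz bound $m = O(\sqrt{\epsilon_1})$ (versus the paper's $m \leq 2\delta$ from Proposition~\ref{PropStabL1Error}) is weaker but entirely sufficient since $\epsilon_1$ is a free small constant.
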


\begin{proof}
The following lemma shows that the filtered covariates satisfy $(m,M, \epsilon,\ell_1)$-stability:

\begin{lemma}
\label{LemLADL1Stab}
Let $S$ be the data set described in Theorem~\ref{ThmLAD}.
For an $\epsilon_1 < c_*$, let $T$ be an $\epsilon_1$-corrupted version of set $S$.
Let $T_1$ be the output of the filter algorithm on input $T$ and $\epsilon'$, where $\epsilon' = \Theta(1)$.
Then with probability at least $1 - O(\exp(- \Omega(n)))$, the set $T_1$ satisfies $(\epsilon_2,m,M, \ell_1)$-stability with $\epsilon_2 = \Theta(1)$, $m = \Theta(1) $, $M = \Theta(1)$, and $M \geq 2m$, and these parameters do not depend on $\epsilon_1$. Moreover, $|T_1| \geq \frac{n}{2}$.
\end{lemma}

\begin{proof}
We provide a sketch of the proof here; more details may be found in Appendix~\ref{AppLADL1Stab}. We show that the lower bound (on $M$) in Definition~\ref{DefL1Stable} is satisfied due to the small-ball property~\cite{Men15}, and that the filtering algorithm removes the ``outliers'' in the data set, leading to the upper bound (on $m$). 
The proof of the lower bound is given in Lemma~\ref{LemLowL1Stab}, which follows similar calculations from previous work~\cite{KM15,DiaKP20}.
These arguments show that if $n = \Omega(p \log p)$, the $\ell_1$-stability lower bound holds with $M \geq \frac{1}{2 \sigma_4^2}$.
For the upper bound, we use the fact that the filtered set $T_1$ is $(\epsilon,\delta)$-stable. Then Proposition~\ref{PropStabL1Error} implies that for $T' \subseteq T_1$ with $|T'| \leq \epsilon|T_1|$, and any unit vector $v$, we have  $\frac{1}{|T_1|}\sum_{i \in T'}  |x_i^Tv| \leq 2\delta$, so the stability upper bound holds with $m \leq 2 \delta$.
We choose the parameter values such that $ M \geq \frac{1}{2 \sigma_4^2} \geq  4\delta \geq 2 m  = \Omega(1)$.
\end{proof}

Lemma~\ref{LemLADL1Stab} states that, with probability at least $1 - O(\exp(- \Omega( n))$, the set $T_1$ obtained by running the filtering algorithm on $T$ satisfies $(\epsilon_2,m,M,\ell_1)$-stability, where $2m \le M = \Theta(m)$ and $\epsilon_2 = \Theta(1)$.
We assume that $\epsilon$ is small enough such that $\epsilon_2 > 4 \epsilon$. %
Applying Lemma~\ref{LemKarPriLAD}, we claim that the $\ell_2$-estimation error is bounded by a constant times $\sum_{i=1}^{n - \epsilon_2 n_1} |y'  - X' \beta^*|_{(i)}$, where we denote the corrupted data set by $T = \{(x_i',y_i')\}_{i=1}^n$ and $n_1 = |T_1| = (1-\epsilon')n$. Indeed, the bound in Lemma~\ref{LemKarPriLAD} involves a sum of the $(1-\epsilon_2)n_1$ smallest residuals in the filtered data set. Each of these terms appears in the set of residuals $\{|y_i' - x_i'^T \beta^*|\}_{i=1}^n$ for $T$, so the aforementioned sum is certainly upper-bounded by the sum of all but the $\epsilon_2 n_1$ largest residuals for $T$.
Furthermore, we have
\begin{equation*}
\sum_{i  =1}^{n - \epsilon_2 n_1} |y' - X' \beta^*|_{(i)} \le \sum_{i  =1}^{n - \epsilon_2 n/2} |y' - X' \beta^*|_{(i)} \le \sum_{i  =1}^{n - \epsilon_2 n/2 + \epsilon n} |y-X \beta^*|_{(i)} \le \sum_{i  =1}^{n - \epsilon_2 n/4} |y - X\beta^*|_{(i)},
\end{equation*}
where the first inequality uses the fact that $n_1 \geq \frac{n}{2}$, the second inequality uses the fact that $T$ differs from $S$ in at most $\epsilon n$ points, and the last inequality uses the fact that $\epsilon \leq \frac{\epsilon_2}{4}$. 
Applying Lemma~\ref{LemTrimmedSumL1}, we see that the final quantity is at most $ O\left(\frac{n \kappa}{\epsilon_2}\right) $, with probability at least $1 - O(\exp(- \Omega(n \epsilon_2)))$.
Since $\epsilon_2 = \Omega(1)$, this completes the proof.
\end{proof}

\begin{remark}
\label{RemLADGen}
Note that the guarantees of Theorem~\ref{ThmLAD} hold under very general conditions. Unlike our assumptions on the noise distribution elsewhere in the paper, our theorem does not require the noise distribution to have zero mean or be independent of the covariates; all we require is that the first moment $\E|z_i|$ is finite.
Furthermore, we can generalize this result to the case of an unknown but bounded covariance of the form $\frac{1}{2} I \preceq \E xx^T \preceq 2I$ (cf.\ Section~\ref{SecUnknownCov}), as well.
\end{remark}

\section{Postprocessing }
\label{SecPP}

We now outline a one-step estimator which, given an initial estimator $\widehat{\beta}_1 $ such that $\|\widehat{\beta}_1 - \beta^*\|_2 = O(\sigma)$, returns another estimator $\widehat{\beta}_2$ that has sub-Gaussian rates. In the analysis of this section, we will assume that Assumption~\ref{AsNoise} is satisfied and the noise variance $\E(z_i^2) = \sigma^2$ is finite.
As shown in Sections~\ref{SecLTS} and~\ref{SecLAD}, the LTS or LAD estimators will then satisfy the error bound of $O(\sigma)$ with high probability and can be used for $\widehat{\beta}_1$.
We note that a similar postprocessing construction has been leveraged in earlier works~\cite{BalDLS17,DiaKS19,PraSBR20}.

We first state a version of the result for a setting where the estimate $\widehat{\beta}_1$ does not depend on the data.
This can always be achieved by splitting the samples when either (i) there is no  contamination, or (ii) the contamination mechanism does not depend on the data, e.g., in Huber's contamination model.

We first recall the median-of-means preprocessing algorithm (see Lugosi and Mendelson~\cite{LugMen19-survey} for a recent survey): Given data points $\{x_1,\dots,x_n\}$ and a parameter $k \in [n]$, construct $\{z_1,\dots,z_k\}$, as follows: Randomly bucket $\{x_1,\dots,x_n\}$ into $k$ disjoint buckets of equal size (if $k$ does not divide $n$, then remove some samples), and let $\{z_1,\dots,z_k\}$ be the empirical means of the points in these buckets. The following result from Diakonikolas et al.~\cite{DiaKP20} shows that applying the iterative filtering algorithm to the $k$ data points obtained after running the median-of-means algorithm returns a sub-Gaussian estimate of the mean of the original sample:

\begin{theorem} (Diakonikolas et al.~\cite{DiaKP20})
\label{ThmStabSubGaussian}
Let $S$ be a set of $n$ i.i.d.\ samples from a distribution with mean $\mu$ and covariance $\Sigma$.
Let $T$ be an $\epsilon$-corrupted version of $S$.
For a probability $\tau$, let $\epsilon' = \Theta\left(\epsilon + \frac{\log(1 / \tau)}{n}\right)$, where $\epsilon'$ is less than a small constant.
Let $k = \lceil \epsilon' n\rceil$.
Let $T_k := \{z_1,\dots,z_k\}$ be the set obtained by median-of-means preprocessing on the set $T$.
Then running the filtering algorithm in Theorem~\ref{ThmStability} with inputs $T_k$ and $\epsilon' = \Theta(1)$ returns a set $T'$ such that, with probability at least $ 1 - \exp(- \Omega(k))$,	
\begin{equation*}	
\|\widehat{\mu}_{T'} - \mu\|_2  = O\left(\sqrt{\frac{\trace(\Sigma)}{n}} + \sqrt{\|\Sigma\|_2\epsilon} + \sqrt{\frac{\|\Sigma\|_2\log(1/ \tau)}{n}}\right),	
\end{equation*}	
where $\widehat{\mu}_{T'}$ is the empirical mean of the set $T'$.	
\end{theorem}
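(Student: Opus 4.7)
The plan is to reduce Theorem~\ref{ThmStabSubGaussian} to Theorem~\ref{ThmStability}, using the fact that median-of-means preprocessing shrinks the effective covariance of the input from $\Sigma$ to $\Sigma/m$ (where $m := n/k$) and produces bucket means with approximately Gaussian tails by the CLT, so that a stability guarantee requiring only second-moment information will suffice. First I would track the propagation of corruption through bucketing: since $T$ differs from $S$ in at most $\epsilon n$ entries and each such entry touches exactly one bucket, $T_k$ is an $\epsilon_0$-corrupted version of the corresponding set of \emph{clean} bucket means (those one would have computed from the same bucket assignment using the original $S$ samples), where $\epsilon_0 \le \epsilon n/k = \epsilon/\epsilon' = O(1)$. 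By choosing the hidden constant in $\epsilon' = \Theta(\epsilon + \log(1/\tau)/n)$ sufficiently large, $\epsilon_0$ is below the threshold required by Theorem~\ref{ThmStability}.

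The crux of the argument is to show that, with probability at least $1 - \exp(-\Omega(k))$, the clean bucket means contain a subset $Z^*$ of size at least $(1 - C_1\epsilon_0)k$ that is $(C_2\epsilon_0,\delta)$-stable with respect to $\mu$ and $\sigma_z^2 := \|\Sigma\|_2/m$, with
\[
\delta^2 \;=\; O\!\left(\epsilon_0 \,+\, \frac{\trace(\Sigma)}{n\,\sigma_z^2} \,+\, \frac{\log(1/\tau)}{k}\right).
\]
The mean-type stability condition would be established by applying a vector Bernstein bound to $\bar z - \mu = \tfrac{1}{k}\sum_i(z_i - \mu)$ using $\cov(z_i) = \Sigma/m$, giving $\|\bar z - \mu\|_2 \lesssim \sqrt{\trace(\Sigma)/n} + \sqrt{\|\Sigma\|_2\log(1/\tau)/n}$; the additional contribution from deleting an $\epsilon_0$-fraction of points is then bounded by $O(\sigma_z\sqrt{\epsilon_0})$ via Cauchy--Schwarz coupled to the second-moment condition. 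The covariance-type condition would be obtained by matrix Bernstein for $\tfrac{1}{k}\sum_i (z_i-\mu)(z_i-\mu)^T - \Sigma/m$, extended uniformly over large subsets via a Fuk--Nagaev tail bound on $\sup_{v \in \cS^{p-1}} \sum_i \1\{|v^T(z_i - \mu)| > t\}$. A key point is that the bucket means already exhibit CLT-driven sub-Gaussian tails at the relevant scale, so no hypercontractivity on the original distribution beyond its covariance is required.

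With this stability claim in hand, Theorem~\ref{ThmStability} applied to $T_k$ at corruption level $\epsilon_0$ returns $T' \subseteq T_k$ that is $(\Theta(\epsilon_0), \Theta(\delta))$-stable with respect to $\mu$ and $\sigma_z^2$, and Remark~\ref{RemStableMean} yields $\|\widehat{\mu}_{T'} - \mu\|_2 = O(\sigma_z\delta)$. Substituting $\sigma_z^2 = \|\Sigma\|_2\, k/n$ with $k = \Theta(\epsilon' n)$ and expanding $\epsilon'$ gives
\[
\|\widehat{\mu}_{T'} - \mu\|_2 \;=\; O\!\left(\sqrt{\tfrac{\trace(\Sigma)}{n}} \,+\, \sqrt{\|\Sigma\|_2\,\epsilon} \,+\, \sqrt{\tfrac{\|\Sigma\|_2\,\log(1/\tau)}{n}}\right),
\]
which is the desired bound.

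The hard part is the stability step with the sharp $\sqrt{\trace(\Sigma)/n}$ dependence (rather than $\sqrt{p\|\Sigma\|_2/n}$): this requires concentration inequalities sensitive to the full spectrum of $\Sigma$, not merely its operator norm, and uniform control of the second moment over all large subsets of the bucket means. The resolution is that bucket means are sub-Gaussian by a quantitative CLT at scale $\sqrt{\|\Sigma\|_2/m}$, so matrix Bernstein with carefully calibrated truncation, together with a Fuk--Nagaev-type bound on maximum one-dimensional projections, should suffice --- critically, without any hypercontractivity assumption on the underlying distribution beyond finiteness of its covariance.
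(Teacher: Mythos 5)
This theorem is imported verbatim from Diakonikolas et al.~\cite{DiaKP20}; the paper gives no proof of it, so there is nothing internal to compare against. Your reduction has the correct architecture and the correct bookkeeping: corruption dilutes to $\epsilon_0 \le \epsilon n/k = O(1)$ under the bucketing, the bucket means have covariance $\Sigma/m$ so $\sigma_z^2 = \|\Sigma\|_2 k/n$, the target stability parameter $\delta^2 = O\bigl(\epsilon_0 + \trace(\Sigma)/(k\|\Sigma\|_2)\bigr)$ is exactly what is needed, and unwinding $\sigma_z\delta$ with $k = \Theta(\epsilon' n)$ recovers the stated bound via Theorem~\ref{ThmStability} and Remark~\ref{RemStableMean}. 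This matches the actual strategy in~\cite{DiaKP20}.

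The gap is in the step you yourself flag as the hard part. The claim that the bucket means ``exhibit CLT-driven sub-Gaussian tails'' is not correct under a bare covariance assumption: an average of $m$ i.i.d.\ variables with finite covariance has variance $\|\Sigma\|_2/m$ per direction but can retain arbitrarily heavy tails (it does not even have a fourth moment if the base distribution does not), and the Berry--Esseen/CLT machinery controls neither the tail at the $\sqrt{p}$-standard-deviation scale needed for uniform control over $\cS^{p-1}$ nor the spectral deviation of the empirical second-moment matrix. Consequently, the vector and matrix Bernstein inequalities you invoke do not apply as stated (they need boundedness or sub-exponential tails), and the truncation you would introduce to repair them is precisely where the real work lies: one must show the truncation bias and the mass of truncated points are both $O(\sqrt{\epsilon_0})$ using only second moments. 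The correct route --- and the one taken in~\cite{DiaKP20} --- makes no sub-Gaussianity claim at all: it combines the Lugosi--Mendelson uniform trimming lemma (reproduced here as Lemma~\ref{LemTruncLin}, which requires only bounded covariance and already delivers the $\frac{1}{\epsilon}\sqrt{\trace(\Sigma)/n}$ threshold responsible for the sharp trace dependence) with a bounded-difference/Talagrand argument for the truncated second-moment process, exploiting that only a \emph{constant} $\epsilon_0$ and hence only $\delta = \Theta(1)$ is required after bucketing. Your outline would become a proof if you replaced the CLT appeal with this truncation argument; as written, the central stability claim is asserted rather than established.
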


Using the result of Theorem~\ref{ThmStabSubGaussian}, we can derive the following theorem:

\begin{theorem}
\label{PropPost}
Let $S$ be a set of $n$ i.i.d.\ samples from the linear model $y_i = x_i^T \beta^* + z_i$, where the covariates satisfy Assumption~\ref{AsCov} and the noise distribution satisfies Assumption~\ref{AsNoise}. Suppose $\E(z_i^2) = \sigma^2$.
Let $\widehat{\beta}_1$ be any estimator which is independent of $S$, satisfying $\|\widehat{\beta}_1 - \beta^*\|_2 = O(\sigma)$.
Let $T$ be an $\epsilon$-corrupted version of $S$, where $T$ might depend on $\widehat{\beta}_1$.
Define the set $T_1 := \left\{\widehat{\beta}_1 + (y_i' - (x_i)'^T \widehat{\beta}_1)x_i': (x_i',y_i') \in T\right\}$.
Suppose $\epsilon' = \Theta\left( \epsilon + \frac{\log(1 / \tau)}{n}\right) = O(1)$.
Then given $\epsilon$, $T_1$, and $\tau$ as inputs, the mean algorithm in Theorem~\ref{ThmStabSubGaussian} returns an output $\widehat{\beta}$ satisfying
\begin{equation*}
\|\widehat{\beta} - \beta^*\|_2 \lesssim \sigma\left(\sqrt{\frac{p}{n}} + \sqrt{\epsilon} + \sqrt{\frac{\log(1 / \tau)}{n}}\right),
\end{equation*}
with probability at least $1-\tau$.
\end{theorem}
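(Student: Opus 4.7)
The plan is to recast the postprocessing step as a robust mean estimation problem on a cleverly transformed data set, and then invoke Theorem~\ref{ThmStabSubGaussian} directly. Write $\Delta := \beta^* - \widehat{\beta}_1$, so by assumption $\|\Delta\|_2 = O(\sigma)$. For each index $i$, define
\begin{equation*}
v_i := \widehat{\beta}_1 + (y_i - x_i^T\widehat{\beta}_1)x_i = \widehat{\beta}_1 + x_i x_i^T \Delta + z_i x_i,
\end{equation*}
where $(x_i,y_i)\in S$ are the uncorrupted samples, and let $S_1 := \{v_i\}_{i=1}^n$. Since $\widehat{\beta}_1$ is independent of $S$, the $v_i$'s are i.i.d.\ conditional on $\widehat{\beta}_1$, and since $T$ agrees with $S$ on at least $(1-\epsilon)n$ coordinates, the set $T_1$ in the theorem statement is an $\epsilon$-corrupted version of $S_1$. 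So it suffices to establish a good estimate for the conditional mean of $v_i$ from the corrupted set $T_1$.

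First I would verify that $\E[v_i \mid \widehat{\beta}_1] = \beta^*$: using Assumption~\ref{AsCov} to get $\E[x_i x_i^T] = I$ and Assumption~\ref{AsNoise} to get $\E[z_i x_i] = \E z_i\cdot\E x_i = 0$, we obtain $\E[v_i\mid\widehat{\beta}_1] = \widehat{\beta}_1 + \Delta = \beta^*$. Next, I would bound the conditional covariance
\begin{equation*}
\Sigma_v := \E\bigl[(x x^T - I)\Delta \Delta^T (x x^T - I)\bigr] + \sigma^2 I,
\end{equation*}
where the cross term vanishes by the independence of $x$ and $z$ and $\E z=0$. For the operator norm, for any unit $u$,
\begin{equation*}
u^T\Sigma_v u \leq 2\,\E\bigl[(x^T u)^2 (x^T \Delta)^2\bigr] + 2(u^T\Delta)^2 + \sigma^2,
\end{equation*}
and Cauchy-Schwarz together with $(4,2)$-hypercontractivity give $\E[(x^Tu)^2(x^T\Delta)^2] \leq \sigma_{x,4}^4\|\Delta\|_2^2$, so $\|\Sigma_v\|_2 = O(\sigma^2)$. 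For the trace,
\begin{equation*}
\trace(\Sigma_v) = \E\|(xx^T-I)\Delta\|_2^2 + \sigma^2 p,
\end{equation*}
and expanding $\|(xx^T-I)\Delta\|_2^2 = \|x\|_2^2(x^T\Delta)^2 - 2(x^T\Delta)^2 + \|\Delta\|_2^2$ and applying Cauchy-Schwarz coordinate-wise together with hypercontractivity (to bound $\E\|x\|_2^4 \leq p^2\sigma_{x,4}^4$ and $\E(x^T\Delta)^4\leq\sigma_{x,4}^4\|\Delta\|_2^4$) yields $\trace(\Sigma_v) = O(p\sigma^2)$.

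Finally, I would apply Theorem~\ref{ThmStabSubGaussian} to $T_1$, which is an $\epsilon$-corruption of the i.i.d.\ set $S_1$ with mean $\beta^*$ and covariance $\Sigma_v$ satisfying the bounds above. Substituting $\trace(\Sigma_v) = O(p\sigma^2)$ and $\|\Sigma_v\|_2 = O(\sigma^2)$ into that theorem's conclusion produces the error
\begin{equation*}
O\!\left(\sqrt{\tfrac{\trace(\Sigma_v)}{n}} + \sqrt{\|\Sigma_v\|_2\,\epsilon} + \sqrt{\tfrac{\|\Sigma_v\|_2\log(1/\tau)}{n}}\right) = O\!\left(\sigma\sqrt{\tfrac{p}{n}} + \sigma\sqrt{\epsilon} + \sigma\sqrt{\tfrac{\log(1/\tau)}{n}}\right),
\end{equation*}
which is exactly the required bound. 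The main technical obstacle is the trace bound, where the term $\E[\|x\|_2^2(x^T\Delta)^2]$ must be handled by hypercontractivity without losing extra factors of $p$; everything else is linearity, independence, and a direct appeal to Theorem~\ref{ThmStabSubGaussian}.
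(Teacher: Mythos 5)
Your proposal is correct and follows essentially the same route as the paper: condition on $\widehat{\beta}_1$, observe that $T_1$ is an $\epsilon$-corruption of the conditionally i.i.d.\ set $S_1$ with conditional mean $\beta^*$, bound the covariance via Cauchy--Schwarz and $(4,2)$-hypercontractivity, and invoke Theorem~\ref{ThmStabSubGaussian}. The only (harmless) difference is that you compute $\trace(\Sigma_v)$ separately via $\E\|(xx^T-I)\Delta\|_2^2$, whereas the paper obtains it immediately from the uniform directional bound $\sup_u u^T\Sigma_v u = O(\sigma^2)$, which gives $\trace(\Sigma_v)\le p\sup_u u^T\Sigma_v u = O(p\sigma^2)$ with no extra work---so the step you flag as the main obstacle is actually free.
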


\begin{proof}
Throughout the proof, we will condition on the value of the initial estimator $\widehat{\beta}_1$.
Let $S_1 := \left\{\widehat{\beta}_1+ (y_i - x_i^T \widehat{\beta}_1)x_i: (x_i,y_i) \in S\right\}$. Since $\widehat{\beta}_1$ is independent of $S$ by assumption, the set $S_1$ consists of i.i.d.\ samples when we condition on $\widehat{\beta}_1$.
It is easy to see that $T_1$ is an $\epsilon$-corrupted version of $S_1$ and $\E\left[\widehat{\beta}_1+ (y_i - x_i^T \widehat{\beta}_1)x_i\right] = \beta^*$.
Thus, the desired result follows from Theorem~\ref{ThmStabSubGaussian} if we can show that the set $S_1$ satisfies the stated conditions.
For simplicity, set
\begin{equation*}
w_i := \widehat{\beta}_1+ (y_i - x_i^T \widehat{\beta}_1)x_i = \widehat{\beta}_1 + x_i^Tx_i(\beta^* - \widehat{\beta}_1) + x_iz_i.
\end{equation*}

We will work conditionally on $\widehat{\beta}_1$ in the remainder of the proof.
Since $\widehat{\beta}_1$ is independent of $S$, the $w_i$'s are then conditionally i.i.d.
Set $\Delta := \widehat{\beta}_1 - \beta^*$, so $\|\Delta\|_2 \leq \sigma$ by assumption, and
observe that $w_i - \beta^* = \Delta - x_i^T x_i \Delta + x_iz_i$. Therefore, for any unit vector $v$, we have 
\begin{align}
\label{EqnCovW}
v^T \Sigma_{w_i}v = \E (v^T(w_i - \beta^*))^2 &=  \E (v^T\Delta - (v^Tx_i)(\Delta^Tx_i) + v^T x_i z_i )^2 \notag \\
&\lesssim   (v^T\Delta)^2 + \E \left((v^Tx_i)^2(\Delta^Tx_i)^2\right) + \E \left((v^T x_i)^2 z_i^2\right) \notag \\
&\lesssim  \|\Delta\|_2^2 + \sqrt{\E (v^Tx_i)^4}\sqrt{\E (\Delta^Tx_i)^4} + \sigma^2 \notag \\
&\lesssim \|\Delta\|_2^2 + \sigma_{x,4}^4 \|\Delta\|_2^2 + \sigma^2 \notag \\
&\lesssim \sigma^2.
\end{align}
Therefore, $\trace(\Sigma_w) \lesssim \sigma^2 p $ and $\|\Sigma_w\|_2 \lesssim \sigma^2$.
This completes the proof.
(Observe that if $\|\widehat{\beta}_1 - \beta^*\|_2$ were much larger than $\sigma$, this argument yields an error bound which depends on $ \sqrt{ \sigma^2 + \|\widehat{\beta} - \beta^*\|_2^2} $.)
\end{proof}

We now consider the case when $\widehat{\beta}_1$ might depend on the data.
Such a situation might arise if we were to perform sample splitting on an adversarially contaminated data set, meaning we would estimate $\widehat{\beta}_1$ from the first half of the data and use it to initialize a postprocessing step on the other half.
Since the adversary is allowed to look at the whole data set, this could lead to dependence between the two halves. 
In such a case, the argument used in the proof of Theorem~\ref{PropPost} cannot be applied because we do not necessarily have an i.i.d.\ data set when we condition on $\widehat{\beta}_1$. However, we may still obtain a looser error bound by taking a union bound over a large enough cover of $\cS^{p-1}$. We have the following result, proved in Appendix~\ref{AppPP}:

\begin{theorem}
\label{PropRobMeanMain}
Consider the setting and notation in Theorem~\ref{PropPost}, where $\widehat{\beta}_1$ might depend on $S$.
Set $\epsilon' = \Theta\left(\epsilon + \frac{\log(1/ \tau)}{n} + \frac{p\log (pn)}{n}\right)$, where $\epsilon'$ is less than a small constant.
Then running the filtering algorithm in Theorem~\ref{ThmStability} with inputs $T_1 := \left\{\widehat{\beta}_1 + (y_i' - (x_i)'^T \widehat{\beta}_1)x_i': (x_i',y_i') \in T\right\}$ and $\epsilon'$ returns a set $T'$ such that, with probability at least $1-2\tau$,
\begin{align*}
\|\widehat{\beta} - \beta^*\|_2 \lesssim \sigma \left( \sqrt{ \frac{p \log (pn)}{n}} + \sqrt{\epsilon} + \sqrt{ \frac{\log (1 / \tau)}{n} }\right),
\end{align*}
where $\widehat{\beta}$ is the empirical mean of the vectors in $T'$.
\end{theorem}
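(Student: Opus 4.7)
The plan is to adapt the proof of Theorem~\ref{PropPost} via a discretization argument over the possible values of $\widehat{\beta}_1$, which is needed because $\widehat{\beta}_1$ may depend on $S$ (so conditioning on $\widehat{\beta}_1$ no longer yields i.i.d.\ samples). Without loss of generality restrict to the event $\|\widehat{\beta}_1 - \beta^*\|_2 \leq C\sigma$, so that $\Delta := \widehat{\beta}_1 - \beta^*$ lies in a Euclidean ball $B \subseteq \real^p$ of radius $O(\sigma)$. For each $\Delta_0 \in \real^p$, define
\[
S_1(\Delta_0) := \bigl\{\beta^* + (I - x_i x_i^T)\Delta_0 + z_i x_i\bigr\}_{i=1}^{n},
\]
so that for $\Delta_0 = \Delta$ this is exactly the uncontaminated version of the set $T_1$ fed into the filter. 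The key observation is that for each fixed $\Delta_0$, these vectors are i.i.d.\ with mean $\beta^*$ and covariance of operator norm $O(\sigma^2)$ and trace $O(\sigma^2 p)$ (the same computation~\eqref{EqnCovW} used in the proof of Theorem~\ref{PropPost}).

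First, I would cover $B$ with an $\eta$-net $N$ of size $|N| \leq (C\sigma/\eta)^p$, where $\eta$ is chosen to be inverse polynomial in $n$ and $p$. For each $\Delta_0 \in N$, apply the i.i.d.\ stability guarantee of Theorem~\ref{ThmStabHighProb} to $S_1(\Delta_0)$ with failure probability $\tau/|N|$ and contamination parameter $\epsilon'/2$. A union bound over $N$ then gives that, with probability at least $1 - \tau$, every such $S_1(\Delta_0)$ contains a large $(C\epsilon', O(\sqrt{\epsilon'}))$-stable subset, provided $\epsilon' \gtrsim \epsilon + \log(|N|/\tau)/n = \epsilon + \log(1/\tau)/n + p\log(pn)/n$---matching the parameter choice in the theorem statement.

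Next, I would extend this stability from $N$ to the true (data-dependent) $\Delta \in B$ by a Lipschitz/perturbation argument. Picking the closest $\Delta_0 \in N$ gives $\|\Delta - \Delta_0\|_2 \leq \eta$, and
\[
w_i(\Delta) - w_i(\Delta_0) = (I - x_i x_i^T)(\Delta - \Delta_0), \qquad \|w_i(\Delta) - w_i(\Delta_0)\|_2 \leq (1 + \|x_i\|^2)\,\eta.
\]
A Markov-plus-union-bound argument using the fourth moment of $\|x_i\|^2$ shows $\max_i \|x_i\|_2 \leq \mathrm{poly}(n,p)$ with probability at least $1 - \tau$, so taking $\eta$ to be a sufficiently small inverse polynomial in $(n,p)$ ensures that all per-sample perturbations are negligible. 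The mean-stability condition transfers from $\{w_i(\Delta_0)\}$ to $\{w_i(\Delta)\}$ using that $\|\tfrac{1}{|S'|}\sum_{i \in S'} x_i x_i^T\|_2 = O(1)$ on any large subset (an easy consequence of Lemma~\ref{LemStrongWeak}), and the covariance-stability condition transfers after handling cross and quadratic terms.

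Finally, $T_1$ is an $\epsilon$-corrupted version of $S_1 = S_1(\Delta)$, which now contains a $(C\epsilon', O(\sqrt{\epsilon'}))$-stable subset with high probability, so Theorem~\ref{ThmStability} together with Remark~\ref{RemStableMean} yields that the empirical mean $\widehat{\beta}$ of the filter output $T'$ satisfies $\|\widehat{\beta} - \beta^*\|_2 = O(\sigma\sqrt{\epsilon'})$, as required. The main obstacle is the covariance-level perturbation: bounding terms of the form $\tfrac{1}{|S'|} \sum_{i \in S'}(I - x_i x_i^T)(\Delta - \Delta_0)(\Delta - \Delta_0)^T(I - x_i x_i^T)$ naively invokes empirical fourth moments of $\|x_i\|$, and absorbing this into the budget by taking $\eta$ polynomially small in $n$ is precisely what inflates the $\log p$ factor to $\log(pn)$ in the final rate.
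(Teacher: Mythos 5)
Your overall architecture --- an $\eta$-net over the ball $\{v:\|v-\beta^*\|_2 = O(\sigma)\}$, an application of the i.i.d.\ stability guarantee at each net point with failure probability $\tau/|N|$, a union bound that forces $\epsilon' \gtrsim \epsilon + \frac{\log(1/\tau)}{n} + \frac{p\log(pn)}{n}$, and a perturbation argument transferring stability from the nearest net point to the data-dependent $\widehat{\beta}_1$ --- is exactly the paper's strategy. But there is a genuine gap in how you control the perturbation. You claim that $\max_i \|x_i\|_2 \le \mathrm{poly}(n,p)$ with probability at least $1-\tau$ via Markov plus a union bound on $\E\|x_i\|_2^4 \lesssim p^2$. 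Under only a fourth-moment (hypercontractivity) assumption, $\P(\max_i\|x_i\|_2 > t) \le Cnp^2/t^4$, so driving the failure probability down to $\tau$ requires $t \gtrsim (np^2/\tau)^{1/4}$. The theorem permits $\tau$ as small as $e^{-cn}$, in which case $t$ --- and hence $1/\eta$ --- must be exponentially large in $n$, which makes $\log|N| = p\log(C\sigma/\eta) = \Omega(pn)$ and destroys the union bound. So the step as written only yields a $1 - 1/\mathrm{poly}(n)$ guarantee, not the claimed $1-2\tau$ over the full range of $\tau$.

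The paper avoids this by \emph{truncation rather than a uniform max bound}: it defines the subset $S_2 = \{i: \|x_i\|_2 \le \sqrt{p/\epsilon'},\ |z_i| \le \sigma/\sqrt{\epsilon'}\}$, shows by Markov plus a multiplicative Chernoff bound that $|S_2| \ge (1-4\epsilon')n$ with failure probability $e^{-\Omega(n\epsilon')} \le \tau$ (which works because $\epsilon' \gtrsim \log(1/\tau)/n$), and then \emph{intersects} the stable subset found at the net point with $S_2$. Stability is preserved under deleting an extra $O(\epsilon')$ fraction of points, and on the surviving set every $\|x_i\|_2$ is at most $\sqrt{p/\epsilon'}$ --- a bound that is polynomial in $p$ and $1/\epsilon'$ and, crucially, independent of $n$ and $\tau$. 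The net resolution can then be taken as $\eta = r\sqrt{\epsilon'}/f(p/\epsilon')$ for a fixed polynomial $f$, so $\log|N| = O(p\log(pn))$ and the union bound closes. To repair your argument you should replace the global $\max_i\|x_i\|_2$ bound with this discard-the-heavy-points step. (A minor additional point: the set $\{W_i^v\}$ has covariance bounded by $O(\sigma^2) I$ but is not isotropic and not obviously hypercontractive, so you should invoke the bounded-covariance stability result, Theorem~\ref{ThmStabHighProbCovariance}, rather than Theorem~\ref{ThmStabHighProb}; your stated $(C\epsilon', O(\sqrt{\epsilon'}))$ parameters are consistent with the former.)
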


\begin{remark}
Compared to the error bound in Theorem~\ref{PropPost}, the error bound in Theorem~\ref{PropRobMeanMain} contains an extra factor of $\sqrt{\log(pn)}$ in the first term. This arises due to the covering argument we employ, since we cannot simply condition on $\widehat{\beta}_1$ and argue that we still have i.i.d.\ data.
\end{remark}

\begin{remark}
Cherapanamjeri et al.~\cite{CheATJFB20} show that when both the covariate and noise distributions are sub-Gaussian,
running the post-processing step once more to the output achieved by the procedure in Theorem~\ref{PropRobMeanMain} can improve the error dependence on $\epsilon$ from $O(\sigma\sqrt{\epsilon})$ to $O(\epsilon \log(1/\epsilon))$.
This is because when $\|\widehat{\beta}_1 - \beta^*\|_2 \lesssim \sigma\sqrt{\epsilon}$, the covariance matrix of $\widehat{\beta}_1 + (y_i' - (x_i)'^T \widehat{\beta}_1)x_i'$ is $O(\sigma^2 \epsilon)$-close to the spherical matrix $\sigma^2I$.
When covariate and noise distributions satisfy $(k,2)$-hypercontractivity, the same argument shows that the error dependence on $\epsilon$ would improve from $O(\sigma \sqrt{\epsilon})$ to $O(\sigma\epsilon^{1-1/k})$.
In comparison, the filtered Huber regression algorithm (cf.~Theorem~\ref{ThmAdvHuberReg}) provably achieves an error of the form $O(\sigma\epsilon^{1-1/k})$ under only a $k^{\text{th}}$ moment assumption on the covariate distribution.
\end{remark}

\section{Simulations}
\label{SecSim}
We now present the results of the simulations on synthetic data to validate our theoretical findings.
We demonstrate that covariate filtering improves estimation accuracy for both (i) heavy-tailed i.i.d.\ data (Section~\ref{Sec:SimHeavy}) and (ii) heavy-tailed data with adversarial corruption (Section~\ref{Sec:SimAdv}). 

For our simulations, we take $n= 200$ and $p = 40$, which roughly corresponds to the linear-data regime $n= O(p)$.
We measure the error in the usual $\ell_2$-norm, i.e., $\|\widehat{\beta}  - \beta^*\|_2$. 
For each plot, we conduct our experiments $T=50,000$ times, and report how the empirical quantiles of the $\ell_2$-error
increase with the failure probability $\tau$. 
The main goal of the plots is to demonstrate the effect of covariate filtering on Huber regression and LTS.

We first discuss the implementation details of these estimators, which were implemented on NumPy~\cite{harris2020array}.
For Huber regression, we ran gradient descent algorithm with a line-search procedure. 
For LTS, we ran our algorithm (Algorithm~\ref{AlgLTSFiltering}) for a fixed number of $100$ steps.
We found that both of these estimators converged with these choices of parameters.
In each experiment, we sample $\beta^*$ independently from a sphere of unit norm. We initialized all of our estimators at the same point, which is also sampled independently from a sphere of unit norm, and hence its $\ell_2$-distance from $\beta^*$ is at most $2$. 
We implemented the filter so that it removed a single point at every step, which corresponds to the version in Prasad et al.~\cite{PraBR19}.

We now discuss the data-generating mechanism in our experiments.
We use the family of (symmetrized) Pareto distributions for the choice of heavy-tailed distributions for both covariates $\{x_i\}$ and additive noise $\{z_i\}$.
For an $\alpha>0$, we say that a real-valued random variable $X$ follows an $\alpha$-symmetrized-Pareto distribution if the probability density function $f_X(x)$, has polynomial tails, i.e., for all $x \in \R$, $f_X(x) \propto \left(\frac{1}{|x| + 1}\right)^{1 + \alpha}$.
It can be seen that the $k^{\text{th}}$ moment of $X$ exists if and only if $k < \alpha$.
We say that a multivariate random variable $X$ follows an $\alpha $-symmetrized-Pareto distribution if each coordinate of $X$ is i.i.d.\ with an $\alpha$-symmetrized-Pareto distribution.

\subsection{Heavy-tailed regression}
\label{Sec:SimHeavy}

In this setting, we sample the data in an  i.i.d.\ fashion from a heavy-tailed distribution without any corruption. 
As mentioned earlier, we set $n=200$ and $p = 40$, $\|\beta^*\| = 1$, and ran our experiments $50,000$ times to calculate the empirical quantiles of various estimators as a function of $\tau$.
For our experiments, we sampled covariates and additive noise from symmetrized-Pareto distributions with parameter $2$.
Note that this choice of heavy-tailed distributions does not exactly satisfy our hypercontractivity assumption (Assumption~\ref{AsCov}), because the fourth moment is infinite. 

 \begin{figure}[!ht]
 \centering
    \begin{minipage}{0.5\textwidth}
        \centering
		\includegraphics[width=\textwidth]{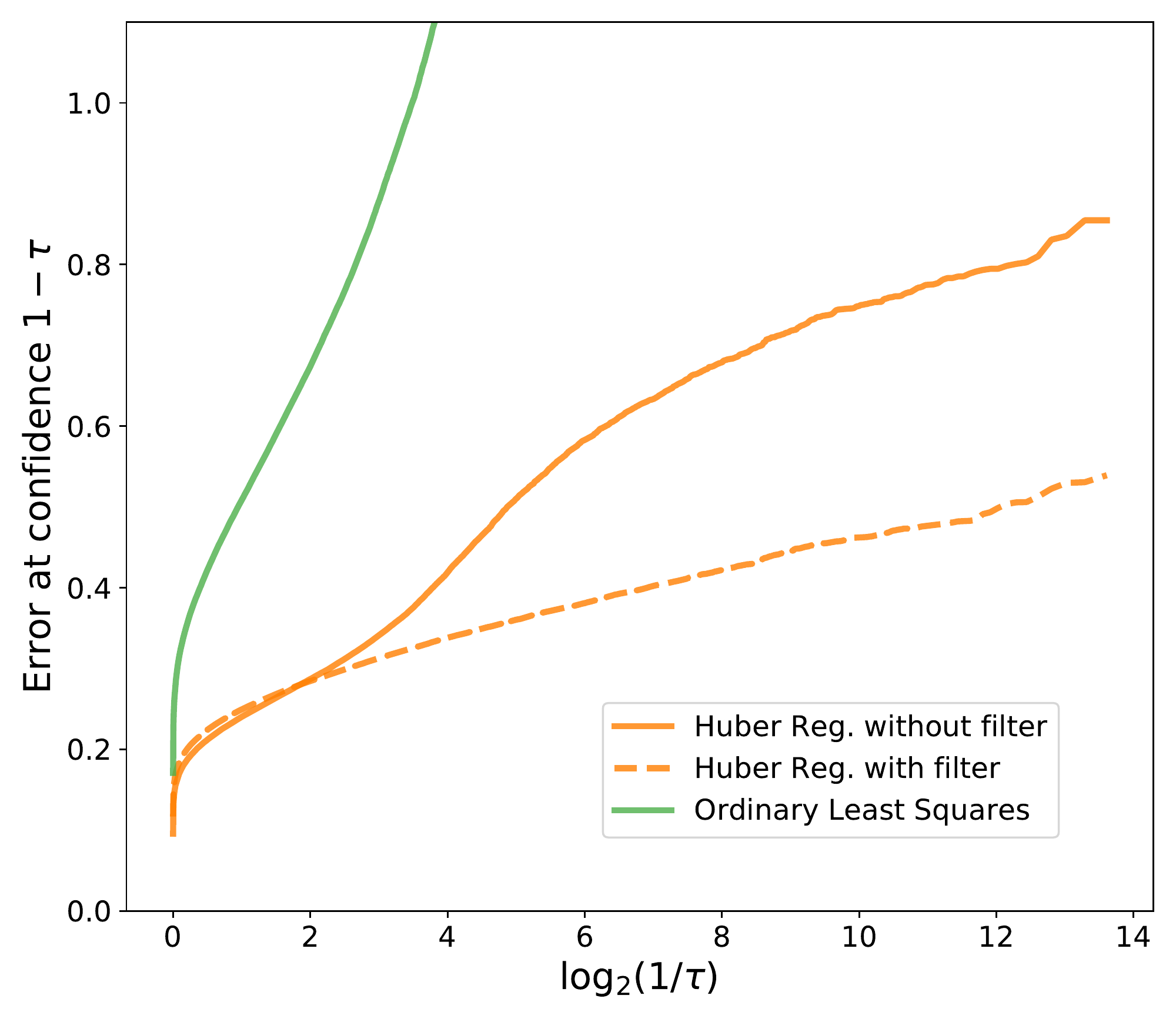}       \caption*{(a)} 
    \end{minipage}%
    \begin{minipage}{0.5\textwidth}
        \centering
		\includegraphics[width=\textwidth]{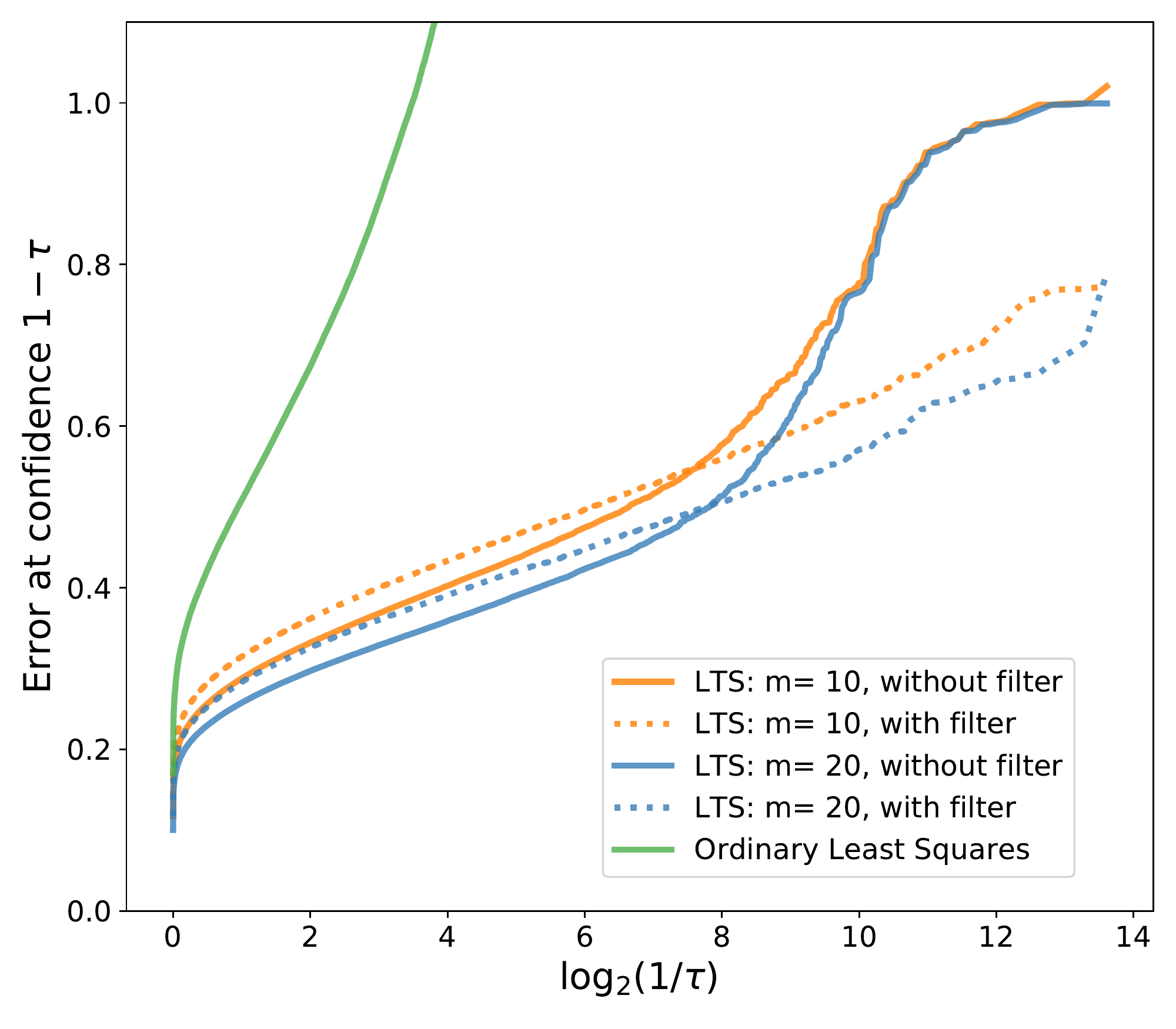}
		\caption*{(b)}        
    \end{minipage}%
\caption{Plots showing the effect of covariate filtering on (a) Huber regression and (b) LTS with heavy-tailed data $(n=200, p = 40)$.
For Huber regression, we set the Huber parameter $\gamma$ to be $0.5$. In plot (b), $m$ corresponds to the thresholding/trimming parameter in Algorithm~\ref{AlgLTSFiltering}.
The error is measured in terms of $\ell_2$-error, i.e., $\|\widehat{\beta} - \beta^*\|_2$.
Solid lines corresponds to ``vanilla'' version of the estimators (no filtering step), and dashed lines correspond to filtered versions, where the filtering step removes $10$ points out of $200$ points. 
We truncate the plots at $y=1.1$ to show the effect of filtering, but the maximum error of OLS is approximately $37$.
 }
\label{fig:heavy}
 \end{figure}

Figure~\ref{fig:heavy} shows that covariate filtering improves the performance of Huber and LTS significantly, especially in the high-confidence regime when $\tau \to 0$.
Figure~\ref{fig:heavy} demonstrates that even removing $10$ points out of $200$ points can boost the accuracy of both Huber regression and LTS, where the Huber parameter is set to be $0.5$.  Between Huber regression and LTS with filtering step, we find that Huber regression has better performance than LTS.
Additional plots showing the effect of filtering as $\gamma$ changes in Huber regression and as $m$ changes in LTS are included in Appendix~\ref{AppSims} (cf.~Figures~\ref{fig:hub_app} and~\ref{fig:lts_app}). We find that the same phenomenon as in Figure~\ref{fig:heavy} is demonstrated across a wide range of $\gamma$ and $m$.

\subsection{Adversarial corruption}
\label{Sec:SimAdv}

 \begin{figure}[!ht]
 \centering
    \begin{minipage}{0.6\textwidth}
        \centering
		\includegraphics[width=\textwidth]{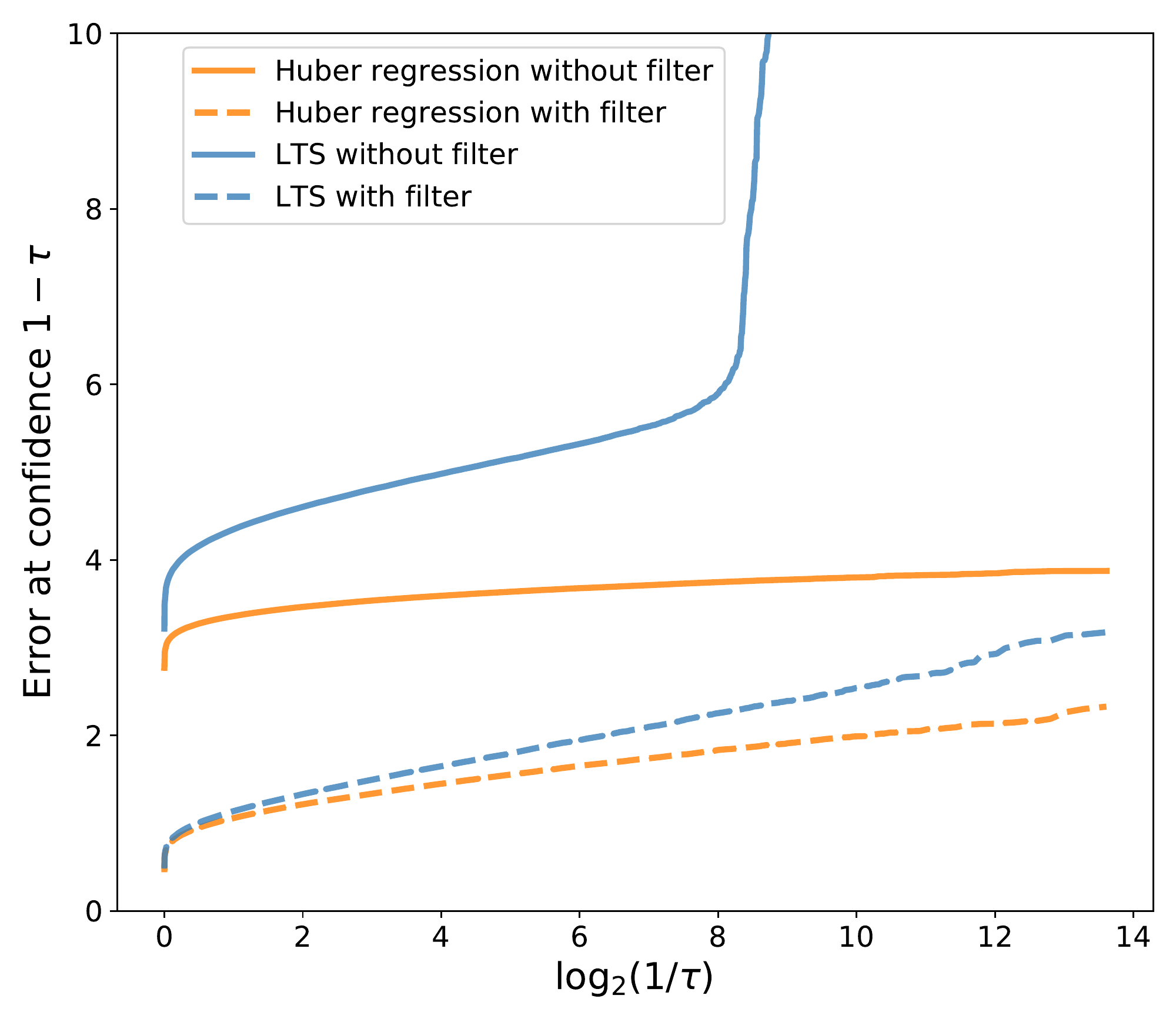}
    \end{minipage}%
\caption{ Plot showing the effect of covariate filtering on Huber regression and LTS when the data are sampled from a heavy-tailed distribution and contain adversarial corruption.
The plot corresponds to $n=200$, $p=40$, and $\epsilon=0.1$. 
The error is measured in terms of the $\ell_2$-error, i.e., $\|\widehat{\beta} - \beta^*\|_2$.
In the plot, solid lines corresponds to ``vanilla'' versions of the estimators (no filtering step), and dashed lines correspond to versions with filtering, where the filtering step removes $1.5 \epsilon n = 30$ points out of $200$ points. 
We see that the filtering step significantly improves the performance of both Huber regression and LTS.
 For ease of visualization, we do not show the error of the OLS estimator, whose minimum error is $18$ and maximum error is $150$. The maximum error of LTS without filtering is $16$.
 }
\label{fig:adv_lin}
 \end{figure}
We now explain our setup for adversarial corruption. Once again, we set $n=200$ and $p= 40$.
We sampled covariates and responses from symmetrized-Pareto distributions with parameters $4$ and $2$, respectively.
We consider the case $\epsilon = 0.1$, so $\epsilon n = 20$ points are corrupted in the following manner:
\begin{enumerate}
	\item We replace the covariates $\{x_i\}$ of 10 random points by the deterministic point $10w$, where $w$ is the vector with each coordinate equal to $1$.
	\item We replace the responses $\{y_i\}$ of 20 points, including the $10$ points selected in the previous step, by a deterministic value $200$.
\end{enumerate}

We do not corrupt the covariates of all $20$ points, because such a corruption scheme gives an advantage to the filtering step: if the filtering step perfectly removed all points with corrupted covariates, the data would effectively be clean in the responses, as well.
We run the filter so that it removes $1.5 \epsilon n = 30$ points from the data. For Huber regression, we again set the Huber parameter to be $0.5$. For LTS, we set $m=1.5 \epsilon n = 30$ to handle $\epsilon n$ corruption in responses.
Figure~\ref{fig:adv_lin} shows that the filtering step can significantly improve the performance of both Huber regression and LTS.

\section{Discussion} %
\label{sec:discussion}

In this paper, we have presented several estimators that are simultaneously robust to heavy-tailed distributions and adversarial contamination.
The main theme of our work is to show that a simple preprocessing step applied to the covariates can be used to make classical estimators such as the Huber regression, LTS, and LAD estimators robust to contamination in both covariates and responses.
Our preprocessing step leverages recent advances in algorithms for robust mean estimation, in which a filtering procedure was introduced to remove a small fraction of covariates to make the sample covariance matrix of the remaining points have a small spectral norm.
In particular, the modified Huber regression estimator achieves a near-optimal error guarantee in this setting, whereas the LTS and LAD estimators can be used for initialization and/or parameter tuning, or augmented with a preprocessing step to achieve near-optimal error rates.

Aside from the filtering method analyzed in this paper, we note that other algorithms have been proposed, which---instead of returning a subset $T'$ of the input data set $T$---return a distribution on $T$ such that the weight at any point is at most $\frac{1}{(1 - O(\epsilon))|T|}$~\cite{DiaKKLMS16-focs,SteCV18,DonHL19,CheDGS20,ZhuJS20}.
Although we have not pursued such algorithms here, one might prove analogous results for robust regression using these alternative methods for preprocessing via one of the following two approaches: (i) discretize the distribution to obtain a set $T'$ satisfying the conclusion in Theorem~\ref{ThmStability}; or (ii) study a weighted form of regression estimators (Huber regression, LAD, or LTS), where the loss at each point is weighted by the output of these algorithms. We leave a careful analysis of such algorithms to future work.

Thinking more broadly, it would be interesting to see which other common regression estimators might benefit from covariate filtering as a preprocessing step. Another important line of future work is to extend this methodology to settings where $\beta^*$ satisfies some structural assumptions, such as sparsity---this might involve proposing and analyzing a filtering step which would, with high probability, produce covariates which satisfy a restricted eigenvalue condition. Finally, we have assumed throughout the paper that the covariates and noise variables are independent, and the covariates are approximately isotropic; the question of whether our proposed algorithms could be analyzed under a more general dependency structure and unknown covariance which is not approximately isotropic remains open.

\section*{Acknowledgments}

AP and PL acknowledge support from NSF grant DMS-1749857. AP was also funded in part by the UW-Madison Institute for Foundations of Data Science (IFDS), NSF grant CCF-1740707. VJ acknowledges support from NSF grants CCF-1841190, CCF-1907786, and CCF-1942134.

\bibliography{allrefs}
\bibliographystyle{plain}

\appendix

\section{Auxiliary results}

We recall the Chernoff bound below~\cite{Ver18,BouLM13}:
\begin{lemma}
\label{ThmChernoff}
 Let $X_1,\dots,X_n$ be independent $\{0,1\}$-valued random variables.
Let $\widehat{\mu} = \frac{1}{n}\sum_{i=1}^n X_i$ be the empirical mean
and let $\mu$ denote its expectation, i.e., $\mu = \frac{1}{n} \sum_{i=1}^n \E X_i$.
Then with probability at least $1 - \tau$, we have
\begin{align*}
\widehat{\mu} \lesssim \mu + \frac{\log(1/\tau)}{n}.
\end{align*}
In particular, for $\kappa \geq 1$, we have $\widehat{\mu}  \leq 2 \kappa \mu$, with probability at least $1 - \exp(-  c \kappa n  \mu)$.
\end{lemma}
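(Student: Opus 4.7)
The plan is to prove the bound by the standard exponential moment (Chernoff) technique. First I would apply Markov's inequality in the form $\P(\sum_i X_i \ge s) \le e^{-\lambda s}\, \E \exp(\lambda \sum_i X_i)$ for arbitrary $\lambda > 0$, then exploit independence to factorize the MGF. Since each $X_i$ is $\{0,1\}$-valued with $\E X_i = p_i$, we have the pointwise bound $\E e^{\lambda X_i} = 1 + p_i(e^\lambda - 1) \le \exp(p_i(e^\lambda - 1))$, which gives
\[
\E \exp\bigl(\lambda \textstyle\sum_i X_i\bigr) \le \exp\bigl(n\mu(e^\lambda - 1)\bigr).
\]
Thus for any threshold $s > 0$ and any $\lambda > 0$, $\P(\sum_i X_i \ge s) \le \exp(-\lambda s + n\mu(e^\lambda - 1))$.

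For the first (sub-exponential-style) conclusion, I would simply take $\lambda = 1$, which yields $\P(\sum_i X_i \ge s) \le \exp(-s + (e-1)n\mu)$. Choosing $s = e\, n\mu + \log(1/\tau)$ makes the right-hand side at most $\exp(-n\mu - \log(1/\tau)) \le \tau$, so with probability at least $1-\tau$ we get $\widehat{\mu} \le e\mu + \log(1/\tau)/n$, which is exactly the stated bound $\widehat{\mu} \lesssim \mu + \log(1/\tau)/n$.

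For the multiplicative conclusion with $\kappa \ge 1$, I would instead pick $\lambda$ optimally for the tail at level $s = 2\kappa n\mu$. Setting the derivative of $-\lambda s + n\mu(e^\lambda-1)$ to zero gives $\lambda = \log(2\kappa)$, and plugging this back yields
\[
\P\bigl(\widehat{\mu} \ge 2\kappa \mu\bigr) \le \exp\bigl(-n\mu\,\varphi(\kappa)\bigr), \qquad \varphi(\kappa) := 2\kappa \log(2\kappa) - 2\kappa + 1.
\]
To finish, I need $\varphi(\kappa) \ge c\kappa$ for all $\kappa \ge 1$; this follows from $\varphi(1) = 2\log 2 - 1 > 0$ together with $\varphi'(\kappa) = 2\log(2\kappa) \ge 2\log 2 > 0$, which (by integrating) gives $\varphi(\kappa) \ge (2\log 2 - 1)\kappa$ on $[1,\infty)$.

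There is no real obstacle here: the argument is a textbook computation, and the only place calling for care is the last elementary monotonicity check on $\varphi$, which secures the linear-in-$\kappa$ lower bound needed to extract the $\exp(-c\kappa n\mu)$ form of the deviation inequality.
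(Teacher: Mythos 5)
Your proof is correct: the MGF bound, the choice $\lambda=1$ for the additive form, and the optimized $\lambda=\log(2\kappa)$ with the elementary lower bound $\varphi(\kappa)\ge(2\log 2-1)\kappa$ all check out. The paper does not prove this lemma but simply cites standard references (Vershynin; Boucheron--Lugosi--Massart), and your argument is exactly the textbook Chernoff computation those references contain, so nothing further is needed.
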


We will use the following version of Talagrand's concentration inequality regarding bounded empirical processes~\cite{Tal96}:
\begin{lemma}(Theorem 12.5 of Boucheron et al.~\cite{BouLM13}) 
\label{ThmTalagrand}
Let $X_1,\dots,X_n$ be $n$ i.i.d.\ vectors such that for each $s \in \cT$, we have $\E X_{i,s} = 0$ and $X_{i,s} \leq L$.
Define $Z := \sup_{s \in \cT} \sum_{i=1}^n X_{i,s}$, and
define $\sigma^2$ (the wimpy variance) to be $\sigma^2 := \sup_{s \in \cT} \E \sum_{i=1}^n X_{i,s}^2$.
Then with probability at least $1 - \tau$, we have
\begin{align*}
Z \lesssim \E Z + \sigma \sqrt{\log(1 / \tau)} + L \log(1 / \tau).
\end{align*}
\end{lemma}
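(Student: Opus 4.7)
The plan is to prove this as a bounded-difference empirical process inequality via the entropy method, which is the route used in the Boucheron–Lugosi–Massart textbook (Chapter 12). Set $Z = \sup_{s \in \cT} \sum_{i=1}^n X_{i,s}$ and, after recentering, work with $Z - \E Z$. The entire argument reduces to bounding the log-moment generating function $\phi(\lambda) = \log \E \exp\bigl(\lambda(Z - \E Z)\bigr)$ and then inverting to obtain the stated tail.

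First I would establish a Bennett-type bound of the form $\P(Z \geq \E Z + t) \leq \exp\bigl(-\tfrac{t^2}{2(\sigma^2 + Lt/3)}\bigr)$ and then invert, since Bennett implies $Z \leq \E Z + \sigma\sqrt{2\log(1/\tau)} + (L/3)\log(1/\tau)$, which is exactly the stated conclusion up to absolute constants. To get the Bennett bound, the key step is a differential inequality on $\phi$: specifically, showing that $\lambda \phi'(\lambda) - \phi(\lambda) \le \psi(\lambda)\bigl(\sigma^2 + L \cdot \phi'(\lambda)\bigr)$ for a suitable function $\psi$ (the form that arises from the entropy method). Integrating this Herbst-style ODE from $0$ yields the Cram\'er transform estimate, from which the Bennett tail follows by Chernoff's bound.

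The main engine is the tensorization (sub-additivity) of entropy, $\operatorname{Ent}(f) \le \sum_{i=1}^n \E \operatorname{Ent}_i(f)$, applied to $f = e^{\lambda Z}$, which reduces the $n$-dimensional entropy control to a single-variable entropy bound. Here one conditions on $(X_j)_{j\neq i}$, lets $s^*$ be a coordinate nearly attaining the supremum, and uses the key variational inequality $\operatorname{Ent}_i(e^{\lambda Z}) \le \lambda^2 \E_i\bigl[X_{i,s^*}^2 e^{\lambda Z}\bigr]/(\text{correction term})$ that holds under one-sided boundedness $X_{i,s^*} \leq L$. Summing over $i$ and taking the supremum converts $\sum_i \E X_{i,s^*}^2$ into the wimpy variance $\sigma^2$, producing the differential inequality on $\phi$.

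The main obstacle will be the single-variable entropy inequality in the last step: one needs a clean bound that is linear in the wimpy variance $\sigma^2$ rather than in the much larger quantity $\sup_s \sum_i X_{i,s}^2$. This is where Massart's trick enters, using the fact that only $X_{i,s^*}$ with $s^*$ achieving the supremum contributes, together with the one-sided bound $X_{i,s} \leq L$ to handle the $\log$-entropy correction. An alternative route is to invoke Ledoux's decoupling/isoperimetric approach, but the entropy-method derivation is cleaner and matches the constants in Theorem 12.5 of \cite{BouLM13}. Since this is a classical inequality cited verbatim from that reference, in the actual manuscript I would simply invoke the textbook statement rather than reproduce the derivation.
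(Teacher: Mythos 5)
The paper gives no proof of this lemma---it is quoted verbatim from Theorem 12.5 of Boucheron--Lugosi--Massart---and your proposal correctly identifies both the source and the standard entropy-method/Herbst derivation behind it, concluding (as the paper implicitly does) that one should simply cite the textbook. The only nitpick is that the variance proxy in the textbook's Bennett-type tail is $v = \sigma^2 + 2L\,\E Z$ rather than $\sigma^2 + Lt/3$ alone, but since $\sqrt{2L\,\E Z\,\log(1/\tau)} \le \E Z + L\log(1/\tau)$ this discrepancy is absorbed into the stated bound up to absolute constants.
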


We recall the following lemma from Lugosi and Mendelson~\cite{LugMen19-trim}:

\begin{lemma}(Lugosi and Mendelson~\cite{LugMen19-trim})
\label{LemTruncLin}
Let $X_1,\dots, X_n$ be $n$ i.i.d.\ points from a distribution over $\R^p$ with mean zero and covariance $\Sigma$.
For an $\epsilon > 0 $ such that $\epsilon = O(1)$, let $Q := C \left(\sqrt{\frac{\|\Sigma\|_{2}}{\epsilon}} + \frac{1}{\epsilon} \sqrt{\frac{\trace(\Sigma)}{n}}\right)$ for a large enough constant $C$.
For a unit vector $v$, define the set $S_v  := \left\{ i: |X_i^Tv| \geq Q\right\}$.
Let $\cE$ be the event $\cE = \{ \sup_{v} |S_v| \leq \epsilon n\} $.
Then with probability at least $ 1 - \exp(- n c\epsilon) $, the event $\cE$ holds.
\end{lemma}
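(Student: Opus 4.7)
The plan is to reduce the counting event to a bounded, Lipschitz empirical process and then invoke Talagrand's concentration inequality (Lemma~\ref{ThmTalagrand}). I would first introduce a ramp function $\phi:\R\to[0,1]$ that vanishes on $[-Q/2,Q/2]$, equals $1$ for $|x|\ge Q$, and interpolates linearly in between. Then $\phi$ is $(2/Q)$-Lipschitz with $\phi(0)=0$, and it sandwiches the indicator: $\1(|x|\ge Q) \le \phi(x) \le \1(|x|\ge Q/2)$. Hence it suffices to show $\sup_v \sum_i \phi(X_i^T v)\le \epsilon n$ with the claimed probability, and I would work with this smoother functional for the rest of the argument.

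Next I would split $\sup_v \sum_i\phi(X_i^Tv)$ into its mean and fluctuation parts and bound each piece separately. For the mean, Markov's inequality gives $\E\phi(X^Tv) \le \P(|X^T v|\ge Q/2) \le 4\|\Sigma\|_2/Q^2 \le 4\epsilon/C^2$, using the first term in $Q$. For the uniform fluctuations, standard symmetrization followed by Talagrand's contraction principle (applied to the $(2/Q)$-Lipschitz $\phi$) reduces the expected supremum to a Rademacher average over the linear class $\{v\mapsto v^T x:v\in\cS^{p-1}\}$:
\[
\E \sup_v \Bigl|\sum_i \bigl(\phi(X_i^Tv) - \E\phi(X^Tv)\bigr)\Bigr| \;\le\; \frac{4}{Q}\,\E\Bigl\|\sum_i\sigma_i X_i\Bigr\|_2 \;\le\; \frac{4}{Q}\sqrt{n\,\trace(\Sigma)}\;\le\; \frac{4\epsilon n}{C},
\]
where the middle inequality uses Jensen together with independence of the Rademacher signs $\sigma_i$ and the last inequality absorbs the second term in $Q$.

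To convert this expected-supremum bound into a high-probability one, I would apply Lemma~\ref{ThmTalagrand} to the centered summands $X_{i,v}:=\phi(X_i^Tv)-\E\phi(X^Tv)$, which satisfy $|X_{i,v}|\le L=1$ and whose wimpy variance obeys $\sigma^2 \le n\sup_v\E\phi(X^Tv)^2 \le n\sup_v\E\phi(X^Tv)\le 4n\|\Sigma\|_2/Q^2\le 4n\epsilon/C^2$. Choosing $\log(1/\tau)=c\epsilon n$ for a small constant $c$, each of the three Talagrand terms---$\E\sup$, $\sigma\sqrt{\log(1/\tau)}$, and $L\log(1/\tau)$---evaluates to a small multiple of $\epsilon n$ once $C$ is large enough. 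Combining with the mean bound then yields $\sup_v|S_v|\le\sup_v\sum_i\phi(X_i^Tv)\le \epsilon n$ with probability at least $1-\exp(-c\epsilon n)$, as required.

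The main obstacle will be careful bookkeeping of the constants, since the four contributions (mean, Rademacher, sub-Gaussian tail, bounded tail) must each be driven below a quarter of $\epsilon n$; this requires choosing the constant $C$ in the definition of $Q$ large and the constant $c$ in the exponent small in a coupled manner. A pleasant feature of the approach is that no logarithmic-in-$p$ overhead appears: the contraction step replaces the usual entropy/VC argument for indicators of halfspaces by the Rademacher complexity $\sqrt{\trace(\Sigma)/n}$ of the underlying linear functionals, which is exactly what the second term of $Q$ is calibrated to absorb.
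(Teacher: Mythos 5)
Your proof is correct and follows essentially the standard argument for this result: soften the indicator to a $(2/Q)$-Lipschitz ramp, bound its mean by Chebyshev via the first term of $Q$, control the Rademacher complexity through symmetrization and contraction via the second term of $Q$, and upgrade to a high-probability statement with Talagrand's inequality. This is the same template the paper itself uses for the closely analogous Lemma~\ref{LemLowL1Stab} and Proposition~\ref{PropStabSimplified} (the lemma in question is only cited from Lugosi and Mendelson, whose proof proceeds the same way), so there is nothing substantively different to flag.
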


We will also require the following generalization of the result above from Diakonikolas et al.~\cite[Lemma C.1]{DiaKP20}:

\begin{lemma}(Diakonikolas et al.~\cite{DiaKP20})
\label{LemTruncLinHigherMoment}
Let $X_1,\dots, X_n$ be $n$ i.i.d.\ points from a distribution over $\R^p$ with mean zero and covariance $\Sigma$. Suppose that for some $k \geq 2$, the inequality $\E\left((v^TX_i)^k\right)^{1/k} \leq \sigma_{x,k} \E\left((v^TX_i)^2\right)^{1/2}$ holds for all $v \in \cS^{p-1}$.
For some $\epsilon > 0 $ such that $\epsilon = O(1)$, define $Q := C \left(\sigma_{x,k} \sqrt{\|\Sigma\|_2} \epsilon^{-1/k} + \frac{1}{\epsilon} \sqrt{\frac{\trace(\Sigma)}{n}}\right)$ for a large enough constant $C$.
For a unit vector $v$, define the set $S_v  := \left\{ i: |X_i^Tv| \geq Q\right\}$.
Let $\cE$ be the event $\cE = \{ \sup_{v } |S_v| \leq \epsilon n\} $.
Then with probability at least $ 1 - \exp(- n c\epsilon) $, the event $\cE$ holds.
\end{lemma}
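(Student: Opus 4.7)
My plan is to generalize the Lugosi--Mendelson proof of Lemma~\ref{LemTruncLin} by leveraging the $k$th moment bound from hypercontractivity in place of the second moment. The key device is to replace the non-Lipschitz indicator $\1(|t|\geq Q)$ with the ``softened'' truncation
$$\rho(t) \;:=\; \min\!\big((|t|/Q)^k,\;1\big),$$
which dominates $\1(|t|\geq Q)$ pointwise, is globally $(k/Q)$-Lipschitz, satisfies $\rho(0)=0$, and is bounded by $1$. It therefore suffices to show that $\sup_v \sum_i \rho(X_i^T v) \leq \epsilon n$ with probability $1 - \exp(-cn\epsilon)$, and these four properties of $\rho$ make this amenable to symmetrization, Ledoux--Talagrand contraction, and Talagrand's functional concentration inequality.

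First, for any unit vector $v$, the $(k,2)$-hypercontractivity of $X_1$ gives
$$\E \rho(X_1^T v) \;\leq\; \frac{\E|X_1^T v|^k}{Q^k} \;\leq\; \frac{\sigma_{x,k}^k\,\|\Sigma\|_2^{k/2}}{Q^k}.$$
The first summand in the definition of $Q$, namely $C\sigma_{x,k}\sqrt{\|\Sigma\|_2}\,\epsilon^{-1/k}$, then makes this at most $\epsilon/C^k$, so the deterministic part of the supremum contributes at most $n\epsilon/C^k$.

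Next I would handle the stochastic fluctuation by standard symmetrization, which reduces $\E\sup_v \big|\sum_i (\rho(X_i^T v) - \E\rho(X_1^T v))\big|$ to a constant times $\E\sup_v \big|\sum_i \varepsilon_i \rho(X_i^T v)\big|$, where $\varepsilon_i$ are i.i.d.\ Rademacher. The Ledoux--Talagrand contraction principle applied to the $(k/Q)$-Lipschitz $\rho$ with $\rho(0)=0$ then bounds this by $\tfrac{k}{Q}\,\E \sup_{v\in\cS^{p-1}}\sum_i \varepsilon_i X_i^T v \;=\; \tfrac{k}{Q}\,\E\|\sum_i \varepsilon_i X_i\|_2 \;\leq\; \tfrac{k}{Q}\sqrt{n\trace(\Sigma)}$, the final step being Jensen's inequality. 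The second summand of $Q$, namely $C\epsilon^{-1}\sqrt{\trace(\Sigma)/n}$, makes this at most $k\epsilon n/C$, so for a large enough absolute constant $C$, the total expectation $\E\sup_v \sum_i \rho(X_i^T v)$ is at most $\epsilon n/2$. Finally, I would invoke Talagrand's inequality (Lemma~\ref{ThmTalagrand}) on the class $\{\rho(\cdot^T v) : v\in\cS^{p-1}\}$, with uniform bound $L=1$ (since $\rho\leq 1$) and wimpy variance $\sigma^2 \leq n\sup_v \E\rho^2(X_1^T v) \leq n\sup_v \E\rho(X_1^T v) \leq n\epsilon/C^k$ (again because $\rho\leq 1$). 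Taking deviation level $\Theta(\epsilon n)$ then yields a failure probability $\exp(-cn\epsilon)$, completing the proof.

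The main obstacle is essentially bookkeeping: one must choose $C$ large enough so that the three contributions (the $n\epsilon/C^k$ mean term, the $k\epsilon n/C$ Rademacher term, and the Talagrand deviation term, whose Bernstein form involves both $\sqrt{\sigma^2\log(1/\tau)}$ and $\log(1/\tau)$) all sit comfortably below $\epsilon n$ when $\tau = \exp(-cn\epsilon)$. A minor technical verification is the Lipschitz constant of $\rho$: for $|t|\leq Q$ we have $|\rho'(t)| = k|t|^{k-1}/Q^k \leq k/Q$, and $\rho$ is constant on $|t|\geq Q$, so $\rho$ is globally $(k/Q)$-Lipschitz despite being non-differentiable at $|t|=Q$. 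Everything else follows the same blueprint as the $k=2$ case in Lugosi--Mendelson; the generalization simply trades one power of the threshold $Q$ for a power of the hypercontractivity constant $\sigma_{x,k}$.
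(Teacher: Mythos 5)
Your proof is correct and follows essentially the same blueprint as the source: the paper cites this lemma from Diakonikolas et al.\ without reproving it, but the argument you give (dominate the indicator by a bounded Lipschitz surrogate, bound its mean via Markov and the $k^{\text{th}}$ moment, then symmetrization, Ledoux--Talagrand contraction, and Talagrand's concentration inequality) is exactly the strategy used both in that reference and in the paper's own proofs of the analogous statements, Lemma~\ref{LemLowL1Stab} and Proposition~\ref{PropStabSimplified}. The only cosmetic difference is your choice of surrogate $\min((|t|/Q)^k,1)$ in place of the usual piecewise-linear ramp supported on $[Q/2,Q]$; both work, and your bookkeeping (Lipschitz constant $k/Q$, wimpy variance via $\rho^2\le\rho$, and the constant $C$ absorbing the fixed $k$) is sound.
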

We also need the following version of the matrix Bernstein inequality:
\begin{lemma} (Corollary 7.3.2 of Tropp~\cite{Tro15})
\label{LemMatrixBernstein}
 Let $S_1,\dots,S_n$ be $n$ independent symmetric matrices such that $\E[S_i]=0$ and $\|S_i\|_2 \leq L$ a.s., for each index $i$.
Let $Z = \sum_{i=1}^nS_i$, and let $V$ be any positive semidefinite matrix such that $\sum_{i=1}^n\E[S_iS_i^T] \preceq V$. Let $\nu = \|V\|_2$ and $r= \mathrm{rank}(V)$. Then
\begin{align*}
\E [\|Z\|_2] \lesssim \sqrt{\nu \log r} + L \log r.
\end{align*}
In particular, if $S_i = \xi_i x_ix_i^T$, where $\xi_i$ is a Rademacher random variable and $x_i$ is sampled independently from a distribution with $\E[x_ix_i^T] = \Sigma$ and bounded support $\sqrt{L}$, i.e., $\|x_i\|_2 \leq \sqrt{L}$ a.s.\ for each index $i$, we have $\E[\|Z\|_2] \lesssim \sqrt{nL \|\Sigma\|_2\log(\mathrm{rank}(\Sigma))} + L \log(\mathrm{rank}(\Sigma))$.
\end{lemma}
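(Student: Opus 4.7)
The plan is to follow the standard matrix Laplace transform / Lieb concavity approach (due to Ahlswede--Winter and sharpened by Tropp). First, for any $\theta > 0$, I would invoke the matrix Markov inequality $\P(\lambda_{\max}(Z) \ge t) \le e^{-\theta t} \cdot \E\,\mathrm{tr}\exp(\theta Z)$. The core step is to control the matrix MGF $\E\,\mathrm{tr}\exp(\theta Z)$: by Lieb's concavity theorem (applied inductively over the independent summands $S_i$), one obtains the subadditivity bound $\E\,\mathrm{tr}\exp(\theta Z) \le \mathrm{tr}\exp\bigl(\sum_i \log \E\exp(\theta S_i)\bigr)$. Combined with the operator inequality $\log \E\exp(\theta S_i) \preceq g(\theta)\,\E[S_i^2]$ valid whenever $\|S_i\|_2 \le L$, with $g(\theta) = \theta^2/(2(1-L\theta/3))$ (Bernstein-type function), and the assumption $\sum_i \E[S_i S_i^T] \preceq V$, this yields
\begin{equation*}
\E\,\mathrm{tr}\exp(\theta Z) \le \mathrm{tr}\exp\bigl(g(\theta) V\bigr).
\end{equation*}

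Second, I would reduce the ambient dimension in the trace to $r = \mathrm{rank}(V)$. Because $g(\theta)V$ is supported on the range of $V$, only $r$ eigenvalues of $\exp(g(\theta)V)$ exceed $1$, so $\mathrm{tr}\exp(g(\theta)V) \le r \exp(g(\theta)\|V\|_2) = r\exp(g(\theta)\nu)$. Optimizing the resulting tail bound
\begin{equation*}
\P(\lambda_{\max}(Z) \ge t) \le r \exp\!\left(-\frac{t^2/2}{\nu + Lt/3}\right)
\end{equation*}
over $\theta$, and applying the same bound to $-Z$ to handle $\|Z\|_2 = \max\{\lambda_{\max}(Z), \lambda_{\max}(-Z)\}$, yields the two-regime Bernstein tail. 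The expectation bound then follows from $\E\|Z\|_2 = \int_0^\infty \P(\|Z\|_2 \ge t)\,dt$, splitting the integral at the crossover $t_0 \asymp \sqrt{\nu\log r} + L\log r$ between the subgaussian and subexponential regimes; each piece contributes $O(\sqrt{\nu\log r})$ and $O(L\log r)$ respectively, giving $\E\|Z\|_2 \lesssim \sqrt{\nu\log r} + L\log r$.

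Third, for the specialization to $S_i = \xi_i x_i x_i^T$: since $\xi_i \in \{\pm 1\}$, the symmetry $\E[S_i] = 0$ is immediate, and $\|S_i\|_2 = \|x_i\|_2^2 \le L$ by the boundedness assumption. Moreover,
\begin{equation*}
\E[S_i S_i^T] = \E[\xi_i^2\, x_i x_i^T x_i x_i^T] = \E[\|x_i\|_2^2\, x_i x_i^T] \preceq L\,\E[x_i x_i^T] = L\Sigma,
\end{equation*}
so we may take $V = nL\Sigma$, giving $\nu = nL\|\Sigma\|_2$ and $r = \mathrm{rank}(V) = \mathrm{rank}(\Sigma)$. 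Plugging into the general bound gives $\E\|Z\|_2 \lesssim \sqrt{nL\|\Sigma\|_2 \log(\mathrm{rank}(\Sigma))} + L\log(\mathrm{rank}(\Sigma))$, as claimed.

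The main obstacle, and what distinguishes this from the scalar Bernstein inequality, is non-commutativity: the naive factorization $\E\exp(\theta\sum_i S_i) = \prod_i \E\exp(\theta S_i)$ fails for non-commuting random matrices. This is exactly the gap that Lieb's concavity theorem closes, and it is the only nontrivial step; the rest is Bernstein-style bookkeeping transported to the matrix setting via standard monotone operator calculus. Since this is a direct quotation of Corollary 7.3.2 of Tropp's monograph, in practice I would simply cite it and verify the specialization in detail.
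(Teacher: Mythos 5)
The paper offers no proof of this lemma---it is quoted verbatim as Corollary 7.3.2 of Tropp's monograph---so there is nothing to compare against except the citation itself. Your sketch is the standard Lieb-concavity/matrix-Laplace-transform argument that underlies Tropp's result, and your verification of the specialization ($\E[S_iS_i^T] = \E[\|x_i\|_2^2\, x_ix_i^T] \preceq L\Sigma$, hence $V = nL\Sigma$, $\nu = nL\|\Sigma\|_2$, $r = \mathrm{rank}(\Sigma)$) is exactly what needs checking and is correct. One small imprecision: $\trace\exp(g(\theta)V) = (d-r) + \sum_{\lambda_j(V)>0} e^{g(\theta)\lambda_j(V)}$, so the zero eigenvalues of $V$ each contribute $1$ rather than being absent; the clean way to discard them is to note that $\sum_i \E[S_iS_i^T] \preceq V$ forces each $S_i$ (hence $Z$) to be supported a.s.\ on the range of $V$, so the whole argument can be run in that $r$-dimensional subspace. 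This is the device Tropp uses, and with it your outline goes through.
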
 

We will also use the following results:

\begin{lemma}(Lemma 6.1.2 of Vershynin~\cite{Ver18})
\label{LemConvex}
Let $Y$ and $Z$ be independent random variables such that $\E(Z) = 0$. Then for every convex function $f$, one has
\begin{equation*}
\E(f(Y)) \le \E(f(Y+Z)).
\end{equation*}
\end{lemma}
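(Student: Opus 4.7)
The plan is to prove this via a conditioning argument combined with Jensen's inequality, which is the standard proof of this classical fact.

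First I would condition on $Y$. Because $Y$ and $Z$ are independent, the conditional distribution of $Z$ given $Y = y$ coincides with the unconditional distribution of $Z$, and in particular $\E[Z \mid Y = y] = \E[Z] = 0$ for almost every $y$. Consequently, $\E[Y + Z \mid Y = y] = y + 0 = y$.

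Next I would apply Jensen's inequality to the convex function $f$ with respect to the conditional distribution of $Y + Z$ given $Y = y$. This yields
\begin{equation*}
\E[f(Y+Z) \mid Y = y] \;\ge\; f\bigl(\E[Y+Z \mid Y = y]\bigr) \;=\; f(y).
\end{equation*}
In other words, $\E[f(Y+Z) \mid Y] \ge f(Y)$ almost surely. Taking expectations of both sides and using the tower property gives $\E[f(Y+Z)] = \E\bigl[\E[f(Y+Z) \mid Y]\bigr] \ge \E[f(Y)]$, which is the desired inequality.

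There is no real obstacle here; the only subtlety is integrability, namely ensuring that $\E f(Y)$ and $\E f(Y+Z)$ are well-defined (possibly $+\infty$). One can handle this by standard truncation: replace $f$ by $f \wedge N$ (which remains convex after, say, applying the argument to the convex function $x \mapsto \max(f(x), -N)$ bounded below, or by first assuming $f$ is bounded and then passing to the general case via monotone convergence on $f^+$ and dominated convergence on $f^-$). Under the mild assumption that either $\E|f(Y)| < \infty$ or $f$ is bounded below, the inequality then extends to the stated form.
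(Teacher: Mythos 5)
Your proof is correct and is essentially the same conditioning-plus-Jensen argument used in Vershynin's Lemma 6.1.2, which the paper cites without reproving. The integrability remark is a sensible precaution but not needed for the applications in this paper, where $f$ is the absolute value and all quantities are finite.
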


\begin{lemma}
\label{PropSymmetricQuantile}
Let $W$ and $Z$ be two independent symmetric random variables.
Let $Y:= W + Z$.
Then for any $r \geq 0$, we have $\P(|Z| \geq r) \le 2\P(|Y| \geq r)$.
\end{lemma}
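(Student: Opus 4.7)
The plan is to argue by conditioning on $Z$ and exploiting the symmetry of $W$. Since $W$ is symmetric, we have $\P(W \geq 0) \geq 1/2$ and $\P(W \leq 0) \geq 1/2$. Therefore, whenever $|z| \geq r$ (with $r \geq 0$), the deterministic shift by $z$ pushes at least half the mass of $W$ past the threshold $\pm r$: if $z \geq r$, then $\{W \geq 0\} \subseteq \{W + z \geq r\}$, and if $z \leq -r$, then $\{W \leq 0\} \subseteq \{W + z \leq -r\}$. Either way, $\P(|W + z| \geq r) \geq 1/2$.

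Armed with this pointwise bound, I would condition on $Z$ and integrate only over the event $\{|Z| \geq r\}$:
\begin{equation*}
\P(|Y| \geq r) = \E\bigl[\P(|W + Z| \geq r \mid Z)\bigr] \geq \E\bigl[\P(|W + Z| \geq r \mid Z)\,\1(|Z| \geq r)\bigr] \geq \tfrac{1}{2}\,\P(|Z| \geq r),
\end{equation*}
where the final step uses independence of $W$ and $Z$ together with the pointwise lower bound above. Rearranging yields the claim.

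There is no real obstacle here; the only mild subtlety is handling the case where $W$ or $Z$ may place positive mass at zero, which is why I would phrase symmetry as equality in distribution of $W$ and $-W$ (giving the $\geq 1/2$ bounds on $\{W \geq 0\}$ and $\{W \leq 0\}$ via complementation), rather than as strict inequalities. Symmetry of $Z$ is in fact not needed for the argument; only symmetry of $W$ is used.
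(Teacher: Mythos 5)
Your proof is correct. It rests on the same basic event inclusion as the paper's argument — namely that $\{Z \geq r, W \geq 0\}$ and $\{Z \leq -r, W \leq 0\}$ are both contained in $\{|Y| \geq r\}$ — but the two proofs place the factor of $1/2$ in different places, and consequently use different hypotheses. You condition on $Z$ and use the symmetry of $W$ to get the pointwise bound $\P(|W+z|\geq r)\geq 1/2$ for $|z|\geq r$; as you correctly observe, this makes the symmetry of $Z$ unnecessary. The paper instead keeps $W$ generic, bounding $\P(W\geq 0)+\P(W\leq 0)\geq 1$ (which needs no symmetry of $W$ at all), and uses the symmetry of $Z$ to identify $\P(Z\geq r)$ with $\frac{1}{2}\P(|Z|\geq r)$. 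So each version shows that one of the two symmetry hypotheses is dispensable — just not the same one. In the application (Lemma~\ref{LemEstGamma}, where $Z$ is the symmetrized noise $z'$ and $W=(x')^T\beta_1$), both variables are symmetric anyway, so either relaxation suffices. Your handling of possible point mass at zero via $W\stackrel{d}{=}-W$ is also the right way to phrase it.
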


\begin{proof}

Note that
\begin{equation*}
\{Z \geq r, W \geq 0\} \cup \{Z \leq -r, W \leq 0\} \subseteq \{|Y| \geq r\}.
\end{equation*}
Thus, by the independence of $W$ and $Z$ and the symmetry of $Z$, we have
\begin{align*}
\P(|Y| \geq r) &\geq \P(Z \geq r, W \geq 0) + \P (Z \leq -r, W \leq 0) \\
			&= \P(Z\geq r) \P(W \geq 0) + \P(Z\leq -r) \P(W \leq 0) \\
			&= \P(Z \geq r) \big(\P(W \geq 0) + \P(W \leq 0)\big) \\
			& \geq  \P(Z \geq r) \\
			& = \frac{1}{2}\P(|Z| \geq r),
\end{align*}
completing the proof.			
\end{proof}

We also recall the following result on convex functions from Sun et al.~\cite{SunZF20}:
\begin{lemma}
\label{LemConvexEta}
Let $\cL(\beta): \real^p \rightarrow \real$ be a convex function and let $\beta_1 \in \R^p$.
For some $\eta \in (0,1]$ and $\beta_2 \in \R^p$, let $\beta_\eta = \beta_1 + \eta(\beta_2 - \beta_1)$. Then we have
\begin{align*}
\langle \nabla \cL(\beta_\eta) - \nabla \cL(\beta_1), \beta_{\eta} - \beta_1 \rangle \leq \eta \langle \nabla \cL(\beta_2) - \nabla \cL(\beta_1), \beta_2 - \beta_1  \rangle.
\end{align*}
\end{lemma}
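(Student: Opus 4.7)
The plan is to reduce this two-point, multivariate inequality to a one-dimensional fact about the derivative of a convex function restricted to the line through $\beta_1$ and $\beta_2$. The key observation is that $\beta_\eta - \beta_1 = \eta(\beta_2 - \beta_1)$, so the directions appearing in both inner products are parallel; this allows me to factor out $\eta$ from the left-hand side and reduce the claim to a monotonicity statement along a single line.

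First, I would define the scalar function $g : [0,1] \to \mathbb{R}$ by $g(t) = \cL(\beta_1 + t(\beta_2 - \beta_1))$. Since $\cL$ is convex on $\mathbb{R}^p$ and $g$ is its restriction to a line, $g$ is convex on $[0,1]$, and by the chain rule $g'(t) = \langle \nabla \cL(\beta_1 + t(\beta_2 - \beta_1)), \beta_2 - \beta_1 \rangle$. Using $\beta_\eta - \beta_1 = \eta(\beta_2 - \beta_1)$, the left-hand side of the target inequality becomes $\eta \langle \nabla \cL(\beta_\eta) - \nabla \cL(\beta_1), \beta_2 - \beta_1 \rangle = \eta (g'(\eta) - g'(0))$, while the right-hand side equals $\eta (g'(1) - g'(0))$. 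So the claim reduces to $g'(\eta) \leq g'(1)$, which is immediate from convexity of $g$ (its derivative is nondecreasing) and $\eta \leq 1$.

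Alternatively, and without introducing $g$, I would invoke monotonicity of the gradient of a convex function applied to the pair $(\beta_2, \beta_\eta)$: $\langle \nabla \cL(\beta_2) - \nabla \cL(\beta_\eta), \beta_2 - \beta_\eta \rangle \geq 0$. Since $\beta_2 - \beta_\eta = (1-\eta)(\beta_2 - \beta_1)$, this yields $(1-\eta)\langle \nabla \cL(\beta_2) - \nabla \cL(\beta_\eta), \beta_2 - \beta_1 \rangle \geq 0$. For $\eta < 1$ this gives $\langle \nabla \cL(\beta_\eta), \beta_2 - \beta_1 \rangle \leq \langle \nabla \cL(\beta_2), \beta_2 - \beta_1 \rangle$; multiplying through by $\eta$ and subtracting $\eta\langle \nabla \cL(\beta_1), \beta_2 - \beta_1 \rangle$ from both sides produces exactly the claimed inequality. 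The $\eta = 1$ case is trivial since $\beta_\eta = \beta_2$ and both sides coincide.

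There is really no obstacle here: everything is a direct consequence of convexity, and the only potential pitfall is keeping track of the factor of $\eta$ correctly when expanding $\beta_\eta - \beta_1$. If $\cL$ were merely convex but not differentiable, one would phrase the same argument in terms of subgradients using monotonicity of the subdifferential; under the differentiability assumption stated in the lemma, the one-line gradient-monotonicity argument above suffices.
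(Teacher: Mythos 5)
Your proof is correct. The paper itself does not prove this lemma---it simply recalls it from Sun et al.~\cite{SunZF20}---so there is no in-paper argument to compare against, but both of your routes are valid and standard: the reduction to the scalar restriction $g(t)=\cL(\beta_1+t(\beta_2-\beta_1))$ correctly turns the claim into the monotonicity $g'(\eta)\le g'(1)$, and the second argument via gradient monotonicity applied to the pair $(\beta_2,\beta_\eta)$, using $\beta_2-\beta_\eta=(1-\eta)(\beta_2-\beta_1)$, recovers the stated inequality exactly (with the $\eta=1$ case holding trivially as an equality).
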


We will use the following standard properties regarding convexity and strong convexity~\cite{Nes04,BoydVand04}:
\begin{lemma}
\label{PropStrongCvx}
For a convex set $\cX \subseteq \R^n$, let $f$ be a continuously differentiable function $f: \cX \to \R$. Then the following statements hold:
\begin{enumerate}
  \item If $f$ is $\alpha$-strongly convex and continuously differentiable, then for any two points $x,y \in \cX$, we have
\begin{align*}
\langle \nabla f(y) - \nabla f(x), y -x \rangle \geq \alpha\|y-x\|_2^2.
\end{align*}
\item If $f$ is twice continuously differentiable, then $\nabla^2 f \succeq \alpha I$.
\item If $f$ is $\alpha_1$-strongly convex and $g$ is $\alpha_2$-strongly convex, then $f+g$ is $(\alpha_1 + \alpha_2)$-strongly convex.
\end{enumerate}
\end{lemma}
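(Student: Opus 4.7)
The plan is to prove each of the three parts in sequence, since each is a standard consequence of the definition of strong convexity (Definition~\ref{DefStrongCvx}), and the later parts build naturally on the first. None of the three should present a genuine obstacle; the main care needed is in part (2), where a limiting argument must be made rigorous using continuity of the Hessian.

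For part (1), I would apply Definition~\ref{DefStrongCvx} twice, once with the pair $(x,y)$ and once with the pair swapped to $(y,x)$. This yields
\begin{align*}
f(y) &\geq f(x) + \langle \nabla f(x), y-x\rangle + \tfrac{\alpha}{2}\|y-x\|_2^2, \\
f(x) &\geq f(y) + \langle \nabla f(y), x-y\rangle + \tfrac{\alpha}{2}\|x-y\|_2^2.
\end{align*}
Adding the two inequalities causes the $f(x)$ and $f(y)$ terms to cancel, leaving $0 \geq \langle \nabla f(x) - \nabla f(y), y-x\rangle + \alpha \|y-x\|_2^2$, which rearranges to the desired monotonicity bound on $\langle \nabla f(y) - \nabla f(x), y-x\rangle$.

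For part (2), assuming $f$ is also $\alpha$-strongly convex (implicit from the context), I would fix any $x$ in the interior of $\cX$ and any unit vector $v \in \R^n$, and set $y = x + tv$ for $t>0$ small enough that $y \in \cX$. By part (1),
\begin{equation*}
\langle \nabla f(x+tv) - \nabla f(x), \, tv \rangle \;\geq\; \alpha t^2.
\end{equation*}
Since $f$ is twice continuously differentiable, a first-order Taylor expansion gives $\nabla f(x+tv) - \nabla f(x) = t \nabla^2 f(x) v + o(t)$. Substituting, dividing by $t^2$, and letting $t \downarrow 0$ yields $v^T \nabla^2 f(x) v \geq \alpha$. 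Since $v$ was arbitrary, $\nabla^2 f(x) \succeq \alpha I$. The only subtlety here is handling the remainder term uniformly in $v$, but this follows because $\nabla^2 f$ is continuous and $v$ ranges over the compact unit sphere; this is the step I would treat most carefully.

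For part (3), I would simply add the two defining inequalities of strong convexity. Writing down
\begin{align*}
f(y) &\geq f(x) + \langle \nabla f(x), y-x\rangle + \tfrac{\alpha_1}{2}\|y-x\|_2^2, \\
g(y) &\geq g(x) + \langle \nabla g(x), y-x\rangle + \tfrac{\alpha_2}{2}\|y-x\|_2^2,
\end{align*}
and summing produces exactly the definition of $(\alpha_1+\alpha_2)$-strong convexity for $f+g$, using linearity of the gradient. This step is purely algebraic, with no obstacle at all. Overall, the only nontrivial content in the lemma is the limiting argument in part (2); the rest amounts to bookkeeping directly from Definition~\ref{DefStrongCvx}.
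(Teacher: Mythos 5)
Your proof is correct; the paper itself offers no proof of Lemma~\ref{PropStrongCvx}, simply citing standard references (Nesterov; Boyd and Vandenberghe), and your three arguments — adding the two defining inequalities for part (1), the Taylor-expansion limit along $y = x + tv$ for part (2), and summing the definitions for part (3) — are exactly the standard textbook derivations those references contain.
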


\section{Lower bounds for OLS and multivariate sample mean}

In this appendix, we derive a lower bound on the $\ell_2$-error of the OLS estimator by first proving a lower bound on the estimation error of the empirical mean.

\subsection{Lower bound for mean estimation}

We prove the following result regarding the estimation error of the sample mean. This result generalizes an analogous univariate result of Catoni~\cite[Proposition 6.2]{Cat12}.

\begin{proposition}
\label{PropMultiMean}
For any variance of $ \sigma^2 > 0$, dimension $p$, sample size $n$, and probability $\tau \le \frac{1}{4}$, there exists a multivariate distribution with mean $\mu \in \R^p$ and covariance $\sigma^2 I$ such that the sample mean $\widehat{\mu}$ on $n$ i.i.d.\ samples satisfies the bound
\begin{align*}
\left\| \widehat{\mu} - \mu\right\|_2^2 = \Omega\left(\frac{p\sigma^2}{n \tau}\right),
\end{align*}
with probability at least $\tau$. Moreover, the distribution of the random variable $\mu + ZX$ satisfies the bound, where $X$ is uniform on $\{-1,1\}^p$ and $Z$ is a univariate random variable supported on $\left\{-\sigma\sqrt{\frac{n}{2\tau}}, 0, \sigma\sqrt{\frac{n}{2\tau}}\right\},$ with
\begin{equation*}
\P\left(Z = -\sigma \sqrt{\frac{n}{2\tau}}\right) = \P\left(Z = \sigma \sqrt{\frac{n}{2\tau}}\right) = \frac{\tau}{n},
\end{equation*}
and $X$ and $Z$ are independent.
\end{proposition}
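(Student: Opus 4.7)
The plan is to directly verify that the explicitly described distribution works, since the statement hands us the construction. I would organize the proof around three checks carried out in sequence.

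First, I would confirm that $\mu + ZX$ has mean $\mu$ and covariance $\sigma^2 I$. Since $Z$ and $X$ are independent and $\E X = 0$, the mean is immediate; the covariance factors as $\E[Z^2]\,\E[XX^T]$, and one computes $\E[Z^2] = 2\cdot(\tau/n)\cdot\sigma^2 n/(2\tau) = \sigma^2$ while $\E[XX^T] = I$ because the entries of $X$ are i.i.d.\ Rademacher.

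Second, I would isolate the bad event on which the sample mean is provably far from the truth. Let $N := |\{i : Z_i \neq 0\}|$. On the event $\{N = 1\}$, exactly one sample is perturbed, and $\widehat{\mu} - \mu = Z_k X_k / n$ for the unique nonzero index $k$. Since $|Z_k|$ takes the deterministic value $\sigma\sqrt{n/(2\tau)}$ and $\|X_k\|_2^2 = p$, the squared error equals $p\sigma^2/(2\tau n)$ exactly, which already matches the target rate. Thus it suffices to show $\P(N = 1) \geq \tau$.

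Third, since $N \sim \mathrm{Bin}(n, 2\tau/n)$, the standard formula gives $\P(N = 1) = 2\tau(1 - 2\tau/n)^{n-1}$. Applying Bernoulli's inequality, $(1 - 2\tau/n)^{n-1} \geq (1 - 2\tau/n)^{n} \geq 1 - 2\tau \geq 1/2$ under the assumption $\tau \leq 1/4$, which yields $\P(N = 1) \geq \tau$ and completes the argument.

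There is no substantial obstacle in this proof: the construction is tailored so that each sample equals $\mu$ with overwhelming probability but, when perturbed, contributes a vector of norm $\sigma\sqrt{np/(2\tau)}$, which is already a factor of $n$ times the target error scale. The only quantitative subtlety is in confirming that exactly one perturbed sample appears with probability at least $\tau$, which is precisely what the hypothesis $\tau \leq 1/4$ is calibrated for via Bernoulli's inequality. (Essentially the same construction restricted to $p = 1$ recovers Catoni's univariate lower bound.)
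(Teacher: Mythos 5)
Your proof is correct and follows essentially the same route as the paper: both isolate the event that exactly one $Z_i$ is nonzero, compute the resulting squared error $p\sigma^2/(2\tau n)$ exactly, and lower-bound the probability of that event by $\tau$ via Bernoulli's inequality using $\tau \le 1/4$. Your phrasing via the binomial count $N$ is marginally cleaner but not a different argument.
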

\begin{proof}

Without loss of generality, we will assume that $\mu = 0$.
Let $\epsilon =  \frac{1}{\sqrt{2n \tau}}$,
so
\begin{equation*}
\P(Z = - \sigma n \epsilon) = \P(Z = \sigma n \epsilon) = \frac{1}{2 n^2 \epsilon^2}
\end{equation*}
and $\P(Z = 0) = 1 - \frac{1}{n^2 \epsilon^2} $. Note that $\cov(ZX) = \E(Z^2) I = \sigma^2 I$.

Let $(X_1,\ldots,X_n)$ and $(Z_1,\ldots,Z_n)$ be independent pairs of $n$ i.i.d.\ random samples drawn from the distributions of $X$ and $Z$, respectively.
Let $W_i := Z_iX_i$, so the $W_i$'s are i.i.d.\ and $\widehat{\mu} = \frac{1}{n} \sum_{i=1}^n W_i$. Now note that for all $i$, we have $\|X_i\|_2= \sqrt{p}$. Hence, we can write
\begin{align*}
\P\left(\left\|\widehat{\mu} - \mu\right\|_2\geq \sigma \sqrt{p} \epsilon\right) & = \P\left(\left\|\frac{1}{n}\sum_{i=1}^n W_i\right\|_2\geq \sigma \sqrt{p} \epsilon\right) \\
& \geq \P\left(\exists i: \|W_i\|_2  \geq \sigma n \sqrt{p} \epsilon \text{ and } \forall j \neq i, \|W_j\|_2 = 0\right) \\
&=\P\left(\exists i: \|X_i\|_2|Z_i|  \geq \sigma n \sqrt{p} \epsilon \text{ and } \forall j \neq i, \|Z_jX_j\|_2 = 0\right)\\
&=\P\left(\exists i: |Z_i|  \geq \sigma n \epsilon \text{ and } \forall j \neq i, Z_j = 0 \right)\\
&= n  \cdot \frac{1}{n^2 \epsilon^2} \left( 1 - \frac{1}{n^2 \epsilon^2}\right)^{n-1} \\
& \geq \frac{1}{n \epsilon^2} \left( 1 - \frac{1}{n^2 \epsilon^2}\right)^{n}.
\end{align*}
We now simplify the last term using two simple observations: (i) $(1 + x)^ r \geq 1 + r x$, for $x \geq -1 $ and $r \geq 1$; and (ii) $ \frac{1}{n \epsilon^2} = 2\tau \leq \frac{1}{2}$:
\begin{align*}
\frac{1}{n \epsilon^2} \left( 1 - \frac{1}{n^2 \epsilon^2}\right)^{n} \geq \frac{1}{n \epsilon^2} \left( 1 - \frac{1}{n \epsilon^2}\right) \geq \frac{1}{2 n \epsilon^2} = \tau.
\end{align*}
Thus, we conclude that
\begin{align*}
\|\widehat{\mu} - \mu\|_2 \geq \sigma \sqrt{d } \epsilon = \sigma \sqrt{\frac{p}{2 n \tau}},
\end{align*}
with probability at least $\tau$.
\end{proof}

\subsection{Lower bound for OLS}
\label{AppOLS}

In this section, we state a lower bound for the OLS estimator using reductions to the sample mean.
We consider the following linear model:
\begin{align*}
y_i = x_i^T \beta^* + z_i, \qquad 1 \le i \le n,
\end{align*}
where $x_i$ and $z_i$ are independent. We also assume that $\E(z_i^2) = \sigma^2$.

\begin{proposition}(Lower bound for OLS for multivariate distributions)
\label{PropLowOLSMulti}
For every dimension $p$, sample size $n = \Omega(p)$, and probability $\tau \le \frac{1}{4}$ such that $\frac{\log(1/ \tau)}{n} = O(1)$, there exist covariate and error distributions satisfying Assumptions~\ref{AsCov}~and~\ref{AsNoise}, such that the OLS estimator $\widehat{\beta}_{\text{OLS}}$ satisfies the bound
\begin{align*}
\|\widehat{\beta}_{\text{OLS}} - \beta^*\|_2^2 = \Omega\left(\frac{p\sigma^2}{n \tau}\right),
\end{align*}
with probability at least $\frac{\tau}{2}$. Moreover, the bound is satisfied when the distribution of the covariates is uniform on $\{-1,1\}^p$, and the distribution of the noise is defined as in Proposition~\ref{PropMultiMean}.
\end{proposition}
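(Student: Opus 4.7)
The plan is to reduce the OLS lower bound directly to the mean-estimation lower bound of Proposition~\ref{PropMultiMean}. Take $x_i$ uniform on $\{-1,1\}^p$ (so $\E x_i = 0$, $\E x_i x_i^T = I$, and $(4,2)$-hypercontractivity holds because weighted Rademacher sums have bounded fourth moments), and independently take $z_i$ from the $Z$-distribution of Proposition~\ref{PropMultiMean} (so $\E z_i = 0$ and $\E z_i^2 = \sigma^2$). These choices satisfy Assumptions~\ref{AsCov} and \ref{AsNoise}. Using the closed form $\widehat{\beta}_{\mathrm{OLS}} - \beta^* = (X^TX)^{-1} X^T z$, the basic inequality
\begin{equation*}
\|\widehat{\beta}_{\mathrm{OLS}} - \beta^*\|_2 \geq \frac{\|X^T z\|_2}{\lambda_{\max}(X^T X)}
\end{equation*}
converts the problem into (i) a lower bound on $\|X^T z\|_2$ coming from heavy tails of $z$, and (ii) an upper bound on $\lambda_{\max}(X^T X)$ coming from concentration for a Rademacher matrix.

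For (i), I would revisit the construction in Proposition~\ref{PropMultiMean}: the high-error event $E_2$ there is ``exactly one index $i^*$ has $z_{i^*}\ne 0$, with $|z_{i^*}|\geq \sigma\sqrt{n/(2\tau)}$.'' This event depends only on $\{z_i\}$ and satisfies $\P(E_2)\geq \tau$. Crucially, on $E_2$ we have $X^T z = z_{i^*} x_{i^*}$, and since $x_{i^*}\in\{-1,1\}^p$ deterministically has $\|x_{i^*}\|_2 = \sqrt{p}$, we obtain
\begin{equation*}
\|X^T z\|_2 = |z_{i^*}|\sqrt{p} \geq \sigma\sqrt{np/(2\tau)}.
\end{equation*}
(It is the degenerate structure of $E_2$ that lets us avoid any cancellation when projecting $z$ through $X^T$.)

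For (ii), I would introduce the event $E_1 = \{\lambda_{\max}(X^T X)\leq C_1 n\}$ for an absolute constant $C_1$. Because each row $x_i$ is a Rademacher vector, hence sub-Gaussian with constant norm and $\E x_i x_i^T = I$, standard sample-covariance concentration (e.g., the Bernstein-type bound together with an $\epsilon$-net argument on $\cS^{p-1}$, cf.\ Vershynin~\cite{Ver18}) gives $\|n^{-1}X^TX - I\|_2 \leq C\sqrt{p/n} + t$ with probability at least $1 - \exp(-cnt^2)$. Taking $t$ to be a small absolute constant and using $n = \Omega(p)$ together with $\log(1/\tau)/n = O(1)$ yields $\P(E_1)\geq 1 - \tau/2$.

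Combining via a union bound (no independence required, although it also holds),
\begin{equation*}
\P(E_1\cap E_2) \geq \P(E_2) - \P(E_1^c) \geq \tau - \tau/2 = \tau/2,
\end{equation*}
and on $E_1\cap E_2$ the displayed inequality gives $\|\widehat{\beta}_{\mathrm{OLS}} - \beta^*\|_2 \geq \sigma\sqrt{np/(2\tau)}/(C_1 n) = \Omega(\sigma\sqrt{p/(n\tau)})$, whose square is the claimed bound. The main obstacle is step (ii): one must verify the operator-norm concentration in the $n = \Omega(p)$ regime \emph{without} picking up an extra $\log p$ factor that standard matrix Bernstein would contribute, which is precisely why the sub-Gaussian (bounded-entry) structure of Rademacher rows is used instead of a generic matrix concentration inequality. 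Step (i) is essentially bookkeeping once one observes that Proposition~\ref{PropMultiMean}'s bad event is an event about $z$ alone and that $\|x_{i^*}\|_2 = \sqrt{p}$ is deterministic.
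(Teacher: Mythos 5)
Your proof is correct and follows essentially the same route as the paper's: both reduce the OLS error to the heavy-tailed sample mean $\frac{1}{n}\sum_i x_i z_i$ constructed in Proposition~\ref{PropLowOLSMulti} via Proposition~\ref{PropMultiMean}, and control the design matrix via sub-Gaussian covariance concentration in the $n = \Omega(p)$ regime (avoiding the $\log p$ loss of matrix Bernstein, exactly as you note). The only cosmetic difference is that the paper invokes the conclusion of Proposition~\ref{PropMultiMean} as a black box, writing $\widehat{\beta}_{\mathrm{OLS}} - \beta^* = \Sigma_n^{-1}\widehat{W}$ with $\widehat{W} = \frac{1}{n}\sum_i x_i z_i$ and using $\|\Sigma_n^{-1} - I\|_2 \le 0.1$, whereas you re-open its proof to exhibit the single-spike event for $z$ explicitly and divide by $\lambda_{\max}(X^TX)$; both yield the same bound with the same probability accounting.
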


\begin{proof}
Suppose the covariates and noise are sampled according to the stated distributions; we will show that the lower bound holds. Let the corresponding sampled points be denoted by $\{(x_i,y_i)\}_{i=1}^n$.

Note that the distribution of the covariates is $O(1)$-sub-Gaussian; i.e., for any unit vector $v$, we have $\|v^Tx\|_{\psi_2} = O(1)$. Thus, Assumption~\ref{AsCov} holds. Furthermore, the covariance matrix of the covariates has exponential concentration near the true covariance $I$, so if we denote $\Sigma_n = \frac{1}{n}\sum_{i=1}^n x_ix_i^T$ and define the event
\begin{align*}
\cE_1 := \left\{x_1,\ldots,x_n : \|\Sigma_n^{-1} - I\|_2 \leq 0.1 \right\},
\end{align*}
then $\P(\cE_1) \geq 1 - \exp(-cn)$ when $n = \Omega(p)$ (cf.\ Exercise 4.7.3 of Vershynin~\cite{Ver18}).

Define $\widehat{W} := \frac{1}{n}\sum_{i=1}^n x_iz_i $, and note that the OLS estimator satisfies $\widehat{\beta} - \beta^* = \Sigma_n^{-1} \widehat{W}$. Thus,
\begin{equation*}
\|\widehat{\beta}_{OLS} - \beta^*\|_2 \ge \|\widehat{W}\|_2 - \|(\Sigma_n^{-1} - I) \widehat{W}\|_2 \ge \|\widehat{W}\|_2 - \|\Sigma_n^{-1} - I\|_2 \|\widehat{W}\|_2.
\end{equation*}
Let $\cE_2$ be the event
\begin{align*}
\cE_2 := \left\{ \|\widehat{W}\|_2 = \Omega\left( \sqrt{\frac{p \sigma^2}{ n \tau}}\right)\right\}.
\end{align*}
Then on the event $\cE_1 \cap \cE_2$, we have
\begin{align*}
\|\widehat{\beta}_{OLS} - \beta^*\|_2 \geq 0.9 \|\widehat{W}\|_2 = \Omega\left( \sqrt{\frac{p \sigma^2}{ n \tau}} \right).
\end{align*}
Finally, note that $\P(\cE_2) \geq \tau$ by Proposition~\ref{PropMultiMean}, so $\P(\cE_1 \cap \cE_2) \geq \tau - \exp(- cn)  \geq \frac{\tau}{2}$, and the desired result follows.
\end{proof}

\section{Results regarding stability}
\label{AppStability}

In this appendix, we state and prove several results stemming from our notions of stability.

{ 
\begin{proposition}
\label{PropStabSimplified}
Let $S = \{x_1,\dots,x_n\}$ be a set of $n$ i.i.d. points in $\R^p$ from a distribution $P$ with mean $0$ and covariance $\Sigma$.
Suppose the following holds:
\begin{enumerate}
 	\item $\kappa_l I \preceq \Sigma \preceq \kappa_u I$, where $\kappa_l\in (0,1]$ and $\kappa_u \geq 1$ are constants.
 	\item The distribution $P$ satisfies $(4,2)$-hypercontractivity with parameter $\sigma_{x,4}$.
 \end{enumerate}
Let $\epsilon < c^*$, where $c^*$ is a small enough constant depending on $\sigma_{x,4}$ and $\frac{\kappa_l}{\kappa_u}$.
Suppose $n \gtrsim \frac{\kappa_u^2}{\kappa_l^2}\cdot \frac{(p \log p) \sigma_{x,4}^2}{\sqrt{\epsilon}} + \frac{\kappa_u}{\kappa_l}\cdot \frac{p}{\epsilon}$.
Then with probability at least $1 - O(\exp(- \Omega(n \epsilon)))$,
for every subset $S' \subseteq S$ such that $|S'| \ge (1 - \epsilon)n$, we have $\lambda_{\min}\left(\frac{1}{n} \sum_{i \in S'} x_ix_i^T\right) \geq 0.8\kappa_l$.
\end{proposition}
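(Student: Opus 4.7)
The plan is to reduce the problem to an application of Theorem~\ref{ThmStabHighProb} via whitening, then use the monotonicity of $\lambda_{\min}$ under passage to a subset of PSD summands. First I would whiten the data by defining $y_i := \Sigma^{-1/2} x_i$, so that $\E y_i y_i^T = I$. The $(4,2)$-hypercontractivity passes to the $y_i$'s with the same constant $\sigma_{x,4}$: for a unit vector $v$, setting $u := \Sigma^{-1/2} v$ gives $\E(v^T y_i)^k = \E(u^T x_i)^k$ and $\E(v^T y_i)^2 = u^T \Sigma u = \E(u^T x_i)^2$, so the $L^4/L^2$ ratio of one-dimensional marginals is preserved. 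The task then reduces to producing a uniform lower bound of approximately $1-\eta$ on $\lambda_{\min}\bigl(\frac{1}{n}\sum_{i \in S'} y_i y_i^T\bigr)$, because $\frac{1}{n}\sum_{i \in S'} x_i x_i^T = \Sigma^{1/2}\bigl(\frac{1}{n}\sum_{i \in S'} y_i y_i^T\bigr)\Sigma^{1/2}$ recovers the factor $\kappa_l = \lambda_{\min}(\Sigma)$.

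Next, I would invoke Theorem~\ref{ThmStabHighProb} with $k=4$, choosing its perturbation parameter to be a small multiple of $\epsilon$ and $\tau := \exp(-c' n \epsilon)$ so that its effective parameter is $\epsilon' = \Theta(\epsilon)$. Under the hypothesized sample complexity, this yields, with probability at least $1-\exp(-\Omega(n\epsilon))$, a (random) subset $S^* \subseteq [n]$ with $|S^*| \ge (1-\epsilon')n$ such that $\{y_i : i \in S^*\}$ is $(C_1\epsilon',\delta)$-stable and $\delta^2/\epsilon' \le \eta$ for a small constant $\eta \le 0.1$ fixed in advance.

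For the core argument, given any $S' \subseteq [n]$ with $|S'| \ge (1-\epsilon) n$, set $S'' := S' \cap S^*$. Since $|S^* \setminus S''| \le |S \setminus S'| \le \epsilon n$, we have $|S''|/|S^*| \ge 1 - \epsilon n/|S^*| \ge 1 - 2\epsilon$, which is within $C_1 \epsilon'$ of $1$ by choice of constants, so stability of $S^*$ applies to $S''$ and yields
\begin{equation*}
\left\|\frac{1}{|S''|}\sum_{i \in S''} y_i y_i^T - I \right\|_2 \le \eta.
\end{equation*}
Since $\sum_{i \in S'} x_i x_i^T \succeq \sum_{i \in S''} x_i x_i^T = \Sigma^{1/2}\bigl(\sum_{i \in S''} y_i y_i^T\bigr)\Sigma^{1/2}$, PSD monotonicity of $\lambda_{\min}$ gives
\begin{equation*}
\lambda_{\min}\left(\frac{1}{n}\sum_{i \in S'} x_i x_i^T\right) \ge \frac{|S''|}{n}\,\kappa_l\,(1-\eta) \ge (1-\epsilon-\epsilon')(1-\eta)\,\kappa_l,
\end{equation*}
and the right-hand side exceeds $0.8\,\kappa_l$ once $c^*$ is small enough and $\eta \le 0.1$ (for instance, $\eta = 0.1$ and $\epsilon+\epsilon' \le 1/9$).

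The main obstacle is verifying that the quantitative sample complexity in the statement is sufficient to force $\delta^2/\epsilon' \le \eta$. Theorem~\ref{ThmStabHighProb} gives $\delta = O\bigl(\sqrt{p\log p/n} + \sigma_{x,4}(\epsilon')^{3/4}\bigr)$, so $\delta^2/\epsilon' = O\bigl(p\log p/(n\epsilon) + \sigma_{x,4}^2\sqrt{\epsilon}\bigr)$; the first piece translates to the $n \gtrsim p/\epsilon$ contribution while the second enforces the absolute constant constraint $\sqrt{\epsilon} \lesssim 1/\sigma_{x,4}^2$. The delicate part is recovering the stated $\kappa_u/\kappa_l$ prefactors and the $p\log p/\sqrt{\epsilon}$ term in the proposition: one must track how $\|\Sigma\|_2 \le \kappa_u$ enters the tolerance required so that the conjugated deviation $\Sigma^{1/2}(\hat\Sigma_{S''} - I)\Sigma^{1/2}$ remains below $0.1\,\kappa_l$, and must combine the stability error with the absolute constant restriction on $\epsilon$ to produce a single unified sample complexity statement of the form displayed.
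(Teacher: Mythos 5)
Your route is genuinely different from the paper's: the paper proves this proposition directly by a truncated empirical-process argument (truncate $(x_i^Tv)^2$ at a level $Q^2$ chosen via Lemma~\ref{LemTruncLinHigherMoment}, then control $\sup_v \sum_i g((x_i^Tv)^2)-\E g((x_i^Tv)^2)$ with symmetrization, the matrix Bernstein inequality, and Talagrand's inequality), whereas you reduce to Theorem~\ref{ThmStabHighProb} by whitening and then intersecting the arbitrary subset $S'$ with the stable subset $S^*$. The skeleton of your reduction is sound: hypercontractivity is indeed preserved under whitening, the intersection $S''=S'\cap S^*$ has relative size at least $1-C_1\epsilon'$ inside $S^*$ (so stability applies to it), and PSD monotonicity of $\lambda_{\min}$ together with $\lambda_{\min}(\Sigma^{1/2}A\Sigma^{1/2})\ge\kappa_l\,\lambda_{\min}(A)$ closes the argument. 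In fact your closing worry about tracking $\kappa_u$ is unnecessary: since you only need a \emph{lower} bound on $\lambda_{\min}$, the conjugation by $\Sigma^{1/2}$ costs only the factor $\kappa_l$ and $\kappa_u$ never enters, so whitening actually yields a cleaner condition-number dependence than the paper's direct argument.

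The genuine shortfall is quantitative, and it is exactly the point you flag but do not resolve. To get $\delta^2/\epsilon'\le\eta$ out of Theorem~\ref{ThmStabHighProb} you must absorb the term $p\log p/(n\epsilon)$, which forces $n\gtrsim p\log p/\epsilon$ --- not ``$n\gtrsim p/\epsilon$'' as you write. This is strictly stronger than the stated hypothesis $n\gtrsim \sigma_{x,4}^2\,p\log p/\sqrt{\epsilon}+p/\epsilon$ whenever $\epsilon\ll 1/\log^2 p$, so your argument establishes the proposition only under a larger sample size. The obstruction is intrinsic to this route: stability controls the \emph{two-sided} spectral deviation of every large subset at scale $\delta^2/\epsilon$, whereas the proposition needs only a one-sided lower bound; the paper exploits this by capping the quadratic at $Q^2\asymp \sigma_{x,4}^2\kappa_u\epsilon^{-1/2}+p\kappa_u/(n\epsilon^2)$, so that deleting $\epsilon n$ points costs only $\epsilon Q^2 = O\bigl(\sigma_{x,4}^2\kappa_u\sqrt{\epsilon}+p\kappa_u/(n\epsilon)\bigr)$, which is where the $p\log p/\sqrt{\epsilon}$ and $p/\epsilon$ terms come from. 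Your version suffices for every downstream use in the paper (all of which take $\epsilon$ to be a constant), but it does not prove the sample complexity as stated.
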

\begin{proof}
The proof follows the same principle as the references~\cite{KM15,DiaKP20}. In particular, the proof is similar to Diakonikolas et al.~\cite[Lemma 4.3]{DiaKP20} who consider the case when $\kappa_l = \kappa_u = 1$. For completeness, we provide a full proof here for the general case.

Let $r\geq 2$ denote a large enough constant to be specified later. First, we only consider distributions which are supported on a ball of radius at most $r \sigma_{x,4} \sqrt{\kappa_u}  \epsilon^{-1/4}\sqrt{p}$. (This is because a standard argument shows that we can simply ignore the points that do not satisfy this condition, since $(\E\|X\|_2^4)^{1/4} \leq \sigma_{x,4} \sqrt{\kappa_u}  \epsilon^{-1/4}\sqrt{p}$ for $X \sim P$,
as outlined at the end of the proof.) We will allow $P$ to have a nonzero mean $\mu$, as long as $\|\mu\|_2 \leq \sigma_{x,4} \sqrt{\kappa_u} \epsilon^{-1/4}$.

We will now apply Lemma~\ref{LemTruncLinHigherMoment}, which establishes a bound for an $(1- \epsilon)$-fraction of points when projected along any unit vector.
 Let $Q=C\left(\sigma_{x,4}\sqrt{\kappa_u} \epsilon^{-1/4} + \frac{1}{\epsilon} \sqrt{\frac{p \kappa_u}{n}}\right) + \|\mu\|_2$, which is greater than the threshold from Lemma~\ref{LemTruncLinHigherMoment} applied to the recentered distribution $P$.
Using the bound on $\|\mu\|_2$, we have $Q \lesssim \left(\sigma_{x,4}\sqrt{\kappa_u} \epsilon^{-1/4} + \frac{1}{\epsilon} \sqrt{\frac{p \kappa_u}{n}}\right)$.
Let $\cE$ denote the event from Lemma~\ref{LemTruncLinHigherMoment}, stating that for any unit vector $v$, we have $\left|\{i:|x_i^Tv|\geq Q\}\right| \leq \epsilon n$.
By Lemma~\ref{LemTruncLinHigherMoment}, we know that $\P(\cE) \ge 1 - \exp(-c n \epsilon)$.

We will now assume that the event $\cE$ holds and incur an additional failure probability of $\exp(-cn \epsilon)$ by a union bound.
Define the function $f: \R_+ \to \R_+$, as follows:
\begin{align*}
f(x) = \begin{cases} x, & \text{ if } x \in [0,Q^2],\\
Q^2, & \text{ otherwise,}\end{cases},
\end{align*}
and let $g(x) = -f(x)$. For any $v \in \cS^{p-1}$, on the event $\cE$, we have the following bound:
\begin{align*}
\min_{S': |S'| \geq (1 - \epsilon)n} \sum_{i \in S'} (x_i^Tv)^2 &\geq  \sum_{i=1}^n f((x_i^Tv)^2) - \epsilon Q^2 n \\
 &= - \left(\sum_{i=1}^n g((x_i^Tv)^2) - \E g((x_i^Tv)^2)\right) + n\E f((x_i^Tv)^2) - \epsilon Q^2 n.
\end{align*}
Taking an infimum over $v \in \cS^{p-1}$, we then have
\begin{multline}
\label{EqnTeddy2}
\inf_{v \in \cS^{p-1}} \min_{S': |S'| \geq (1 - \epsilon)n} \sum_{i \in S'} (x_i^Tv)^2
\ge   - \epsilon Q^2 n  - \sup_{v \in \cS^{p-1}} \left(\sum_{i=1}^n g((x_i^Tv)^2) - \E g((x_i^Tv)^2)\right) \\
+ n\left(\inf_{v \in \cS^{p-1}} \E f((x_i^Tv)^2) \right).
\end{multline}
Now define the random variable
\begin{align*}
N := \sup_{v \in \cS^{p-1}} \sum_{i=1}^n g((x_i^Tv)^2) - \E g((x_i^Tv)^2).
\end{align*}
Let $\xi_1,\dots,\xi_n$ be $n$ i.i.d.\ Rademacher random variables. 
We first bound the expectation of $N$ using symmetrization and contraction of Rademacher averages~\cite{LedTal91,BouLM13}:
\begin{align*}
\E N & \leq 2 \E \sup_{v \in \cS^{p-1}} \left|\sum_{i=1}^n \xi_i g((x_i^Tv)^2) \right| \leq 4 \E \sup_{v \in \cS^{p-1}} \left|\sum_{i=1}^n \xi_i (x_i^Tv)^2\right| \\
& \le 4 \E \left(\left\|\sum_{i=1}^n \xi_i x_ix_i^T\right\|_2 \right)\\
&\lesssim \frac{r^2\sigma_{x,4}^2 \kappa_u p \log p}{ \sqrt{\epsilon} } + \sqrt{\frac{n r^2\sigma_{x,4}^2 \kappa_u^2 p\log p}{ \sqrt{\epsilon}}} ,
\end{align*}
 where the last step uses the matrix Bernstein inequality (Lemma~\ref{LemMatrixBernstein}) with $L = (r \sigma_{x,4} \sqrt{\kappa_u}\epsilon^{-1/4}\sqrt{p})^2$ and $\nu = nL \kappa_u$, because
 $\|x_i\|_2 \leq r \sigma_{x,4} \sqrt{\kappa_u}\epsilon^{-1/4}\sqrt{p}$ and $\E x_ix_i^T \preceq  \kappa_u I$.
 We now bound the following term (which is usually called the \textit{wimpy variance}~\cite{BouLM13}):
\begin{align*}
\sigma^2 := \sup_{v \in \cS^{p-1}} n\Var( g((x_i^Tv)^2)) \leq \sup_{v \in \cS^{p-1}} n \E ((x_i^Tv)^2)^2 \leq n \sigma_{x,4}^4 (v^T \Sigma v)^2 \leq n \sigma_{x,4}^4 \kappa_u^2.
\end{align*}
Using Talagrand's inequality for bounded empirical processes (cf.\ Lemma~\ref{ThmTalagrand}), we therefore have that with probability at least $ 1 - \exp(- n \epsilon)$,
\begin{align*}
\frac{N}{n} &\lesssim \frac{r^2\sigma_{x,4}^2 \kappa_u p\log p}{n \sqrt{\epsilon} } + \sqrt{\frac{r^2\sigma_{x,4}^2 \kappa_u^2 p \log p}{n \sqrt{\epsilon}	}} + \sigma_{x,4}^2 \kappa_u \sqrt{\epsilon} + \epsilon Q^2 \\
 &\lesssim \frac{r^2\sigma_{x,4}^2 \kappa_u p \log p}{n \sqrt{\epsilon}	} + \sqrt{\frac{r^2\sigma_{x,4}^2 \kappa_u^2 p \log p}{n \sqrt{\epsilon}	}} + \sigma_{x,4}^2\kappa_u \sqrt{\epsilon} + \sigma_{x,4}^2 \kappa_u \sqrt{\epsilon} + \frac{p \kappa_u}{\epsilon n}\\
 &\lesssim \frac{r^2\sigma_{x,4}^2 \kappa_u p \log p}{n \sqrt{\epsilon}	} + \sqrt{\frac{r^2\sigma_{x,4}^2 \kappa_u^2 p \log p}{n \sqrt{\epsilon}	}} + \sigma_{x,4}^2 \kappa_u \sqrt{\epsilon} + \frac{p \kappa_u}{\epsilon n},
\end{align*}
where we use the definition of $Q$.
By taking $\epsilon \lesssim \left(\frac{\kappa_l}{\kappa_u}\right)^2\left(\frac{1}{\sigma_{x,4}}\right)^4$ and $n \gtrsim \frac{\kappa_u^2}{\kappa_l^2} \cdot \frac{r^2 \sigma_{x,4}^2 (p \log p)}{\sqrt{\epsilon}} + \frac{\kappa_u}{\kappa_l} \cdot \frac{p}{\epsilon}$, we can make the expression above less than $0.05\kappa_l$.
These calculations also show that we can upper-bound $\epsilon Q^2$ by $0.05 \kappa_l$. Thus, we have the following:
\begin{align}
\label{EqnBear2}
\max\left\{\frac{N}{n}, \epsilon Q^2\right\} \leq 0.05 \kappa_l.
\end{align}
Finally, note that for any $v \in \cS^{p-1}$, the Cauchy-Schwarz inequality gives 
\begin{align*}
\E\left| f((x_i^Tv)^2) - (x_i^Tv)^2\right| &= \E \left((x_i^Tv)^2 \1\{(x_i^Tv)^2 > Q^2\}\right) \leq \sqrt{\E(x_i^Tv)^4} \sqrt{\P(|x_i^Tv| > Q)} \\
&\leq \frac{\E[|x_i^Tv|^4]}{Q^2} \lesssim \frac{\sigma_{x,4}^4 \kappa_u^2}{\kappa_u \sigma_{x,4}^2 \epsilon^{-1/2}} = \sqrt{\epsilon} \sigma_{x,4}^2 \kappa_u,
\end{align*}
implying that there exists a constant $c > 0$ such that
\begin{align*}
\E f( (x_i^Tv)^2) \geq \E (x_i^Tv)^2 - c\sigma_{x,4}^2 \sqrt{\epsilon} \kappa_u \ge \kappa_l - c\sigma_{x,4}^2 \sqrt{\epsilon} \kappa_u.
\end{align*}
Taking $\epsilon \lesssim \left(\frac{\kappa_l}{\kappa_u}\right)^2\left(\frac{1}{\sigma_{x,4}}\right)^4$, we have 
\begin{align}
\label{EqnDuck2}
\E f( (x_i^Tv)^2) \geq 0.95 \kappa_l.
\end{align}
Combining inequalities~\eqref{EqnTeddy2}, \eqref{EqnBear2}, and~\eqref{EqnDuck2}, we then obtain the bound
\begin{align*}
\frac{1}{n} \inf_{v \in \cS^{p-1}} \min_{S': |S'| \geq (1 - \epsilon)n} \sum_{i \in S'} (x_i^Tv)^2
&\geq \inf_{v\in \cS^{p-1}} \E f((x_i^Tv)^2) - \epsilon Q^2 - \frac{N}{n} \\
&\geq 0.95\kappa_l - 0.05 \kappa_l - 0.05 \kappa_l \geq 0.85 \kappa_l.
\end{align*}
This completes the proof.

\paragraph{Unbounded support:} We now outline a general argument for the case when the support of the distribution is unbounded.
Let $X \sim P$. By Jensen's inequality and $(4,2)$-hypercontractivity, we have
\begin{align*}
\E \|X\|_2^4 = p^2 \E \left[\left(\sum_{j=1}^p \frac{1}{p} X_j^2 \right)^2\right] \leq p^2 \E \left[\sum_{j=1}^p\frac{1}{p}\left(X_j^2 \right)^2\right] = p\E \left[\sum_{j=1}^p X_j^4 \right] \leq \sigma_{x,4}^4 p^2\kappa_u^2,
\end{align*}
since for each $j$, we have $\E[X_j^4] = \E[(e_j^TX)^4] \leq \sigma_{x,4}^4 \|\Sigma\|_2^2$, where $e_j$ is the canonical basis vector. Applying Markov's inequality, we then obtain
\begin{align*}
\P\{\|X\|_2 > r  \sigma_{x,4} \sqrt{\kappa_u} \epsilon^{-1/4} \sqrt{p} \} \leq \frac{\E \|X\|_2^4}{r^4  \sigma_{x,4}^4 \kappa_u^2 \epsilon^{-1} p^2} \leq \frac{\epsilon}{r^4},
\end{align*}
where $r \ge 2$ is the constant to be specified below. Let $\cE_r = \{x: \|x\|_2 \leq r \sigma_{x,4} \sqrt{\kappa_u} \epsilon^{-1/4} \sqrt{p}\}$.
Applying a Chernoff bound, we see that with probability at least $1 - \exp(- c n \epsilon)$, at most $\frac{n \epsilon}{2}$ points lie outside $\cE_r$, where we take $r$ to be a sufficiently large constant.
Let $P_r$ be the distribution of $P$ conditioned on $\cE_r$.
Simply ignoring the points that lie outside $\cE_r$, we will only focus on points that come from the distribution $P_r$ and incur an additional failure probability of $\exp(-cn \epsilon)$.

Let $y_1,\dots,y_m$ be $m$ i.i.d.\ points from $P_r$, where $m \geq n\left(1 - \frac{\epsilon}{2}\right)$. It suffices to show that any subset of $\{y_1,\dots,y_m\}$ of size at least $\left(1- \frac{\epsilon}{2}\right)m$ satisfies the desired conclusion.
This is exactly what was considered in the first part of the proof, up to constant factors; thus, it remains to show that the distribution $P_r$ satisfies $(4,2)$-hypercontractivity and has an appropriately bounded second moment matrix.

Let $Z_r \sim P_r$ and $X \sim P$. For any $v \in \cS^{p-1}$, we have $\E(v^TZ)^2 \leq \E (v^TX)^2$.
We now look at the lower bound:
\begin{align*}
\P(X \in \cE_r) \E[(v^TZ)^2]  &=  \E \left[(v^TX)^2 \1_{X \in \cE_r}\right]\\
  & = \E (v^TX)^2 - \E[(v^TX)^2\1_{X \in \cE_r^c}]\\
	&\geq \E (v^TX)^2 - \sqrt{\E[(v^TX)^4]} \sqrt{\P(X \not\in\cE_r)}\\
	&\geq \E (v^TX)^2 - \sigma_{x,4}^2 \E (v^TX)^2 \sqrt{\epsilon r^{-4}}\\
	&\geq \E[(v^TX)^2] (1 - \sigma_{x,4}^2 \sqrt{\epsilon} r^{-2}).
\end{align*}
This shows that $\E (v^TZ)^2 \geq 0.99\kappa_l$, when $\epsilon\lesssim \kappa_l^2 r^4 \sigma_{x,4}^{-4}$.
It also shows that $P_r$ satisfies $(4,2)$-hypercontractivity, as follows:
\begin{align*}
\left(\E(v^TZ_r)^4\right)^{1/4} \leq \left(\E(v^TX)^4\right)^{1/4} \leq \sigma_{x,4} \left(\E(v^TX)^2\right)^{1/2} \leq \frac{\sigma_{x,4}}{\left(1 - \sigma_{x,4}^2 \sqrt{\epsilon}r^{-2}\right)^{1/2}} \left(\E(v^TZ)^2\right)^{1/2}.
\end{align*}
Thus, when $\epsilon \lesssim r^4\sigma_{x,4}^{-4}$, we see that $P_r$ satisfies ($4,2$)-hypercontractivity with $\sigma_{x,4}' \leq 2 \sigma_{x,4}$.  
Finally, we note that $P_r$ might not be centered, but the means of $P_r$ and $P$ differ by at most $\sigma_{x,4} \sqrt{\kappa_u} \epsilon^{3/4}$ in the Euclidean norm: for any unit vector $v \in \cS^{p-1}$, we have
\begin{align*}
|\E[v^TZ]| & \leq |2\P(X \in \cE_r) \E[v^TZ]| \\
& =  2\left|\E \left[v^TX\1_{X \in \cE_r}\right]\right|\\
  & = 2\left|\E [v^TX] - \E[(v^TX)\1_{X \in \cE_r^c}]\right|\\
  & = 2\left|\E[(v^TX)\1_{X \in \cE_r^c}]\right|\\
	&\leq 2\left(\E[(v^TX)^4]\right)^{1/4} \left(\P(X \not\in\cE_r)\right)^{3/4}\\
	&\leq 2\sigma_{x,4} \sqrt{\kappa_u} \epsilon^{3/4} r^{-3},
\end{align*}
using the facts that $\P\{X \in \cE_r\} \geq \frac{1}{2}$ and $\P\{X \not\in \cE_r\} \leq \frac{\epsilon}{r^4}$.
The proof now follows from the bounded support setting considered above, which allows the norm of the mean to be as large as $\sigma_{x,4} \sqrt{\kappa_u}\epsilon^{-1/4}$.
\end{proof}
}
\begin{proposition}
\label{PropStabSimpleV2}
Consider the setting of Theorem~\ref{ThmStabHighProb} with $k=4$.
Let $\epsilon < c^*$, where $c^*$ is a small enough constant.
Let $C$ be any large constant.
Suppose $n = \Omega\left(\frac{p \log p}{\epsilon}\right)$.
Then for any $\tau = O(\exp( - \Omega(n \epsilon)))$, 
with probability at least $1 - \tau$,
there exists a set $S_1 \subseteq S$ such that
\begin{itemize}
\item[(i)] $|S_1| \geq (1 - \epsilon)n$,
\item[(ii)] $S_1$ is $ (\epsilon_1, \delta_1)$-stable, where $\epsilon_1 = C\epsilon$ and $\delta_1 = O\left( \sqrt{\frac{p \log p}{n}} + \sigma_{x,4} \epsilon^{3/4} + \sigma_{x,4} \sqrt{\frac{\log(1/\tau)}{n}}\right)$, and
\item[(iii)] $\frac{\delta_1^2}{\epsilon_1} < 0.01$.
\end{itemize}

Moreover, let $T$ be an $\epsilon'$-corrupted set version of $S$, where $\epsilon' \le \epsilon$. Let $T_1$ be the output of the filter algorithm with input $T$ and $\epsilon$. Then with probability at least $1 - 2 \tau$, the set $T_1$ satisfies
\begin{itemize}
\item[(i)] $|T_1| \geq (1 - c_1 \epsilon)n$,
\item[(ii)] $T_1$ is $ (\epsilon_2, \delta_2)$-stable, where $\epsilon_2 = c_2C \epsilon$ and $\delta = O\left(\sqrt{\frac{p \log p}{n}} + \sigma_{x,4} \epsilon^{3/4} + \sigma_{x,4} \sqrt{\frac{\log(1/\tau)}{n}}\right)$, and
\item[(iii)] $\frac{\delta_2^2}{\epsilon_2} < 0.05$.
\end{itemize}
\end{proposition}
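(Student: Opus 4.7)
The plan is to derive both statements by direct application of Theorem~\ref{ThmStabHighProb} (existence of a stable subset for i.i.d.\ hypercontractive data) and Theorem~\ref{ThmStability} (correctness of the filter algorithm), with careful tuning of the constants so that the quantitative conditions $\delta_1^2/\epsilon_1 < 0.01$ and $\delta_2^2/\epsilon_2 < 0.05$ can be enforced. No new probabilistic arguments beyond what is already packaged in these two theorems should be required.

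For the first statement, I would invoke Theorem~\ref{ThmStabHighProb} with $k=4$ and $\sigma_k = \sigma_{x,4}$, applied to parameters $(\tilde\epsilon, \tau)$ where $\tilde\epsilon$ is a small fixed fraction of $\epsilon$. Reading $\tau = O(\exp(-\Omega(n\epsilon)))$ to mean that $\log(1/\tau)/n$ is at most a sufficiently small constant multiple of $\epsilon$, the effective contamination level $C(\tilde\epsilon + \log(1/\tau)/n)$ appearing in that theorem is at most $\epsilon$, yielding a subset $S_1 \subseteq S$ with $|S_1| \geq (1-\epsilon)|S|$ that is $(C\epsilon, \delta_1)$-stable with
\begin{equation*}
\delta_1 = O\!\left(\sqrt{\tfrac{p\log p}{n}} + \sigma_{x,4}\,\epsilon^{3/4} + \sigma_{x,4}\,\sqrt{\tfrac{\log(1/\tau)}{n}}\right),
\end{equation*}
matching the claim. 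To verify $\delta_1^2/\epsilon_1 < 0.01$, I square $\delta_1$, divide by $\epsilon_1 = C\epsilon$, and separately control three terms: $O(p\log p/(n\epsilon))$ is made small by $n = \Omega(p\log p /\epsilon)$ with a large hidden constant; $O(\sigma_{x,4}^2\sqrt{\epsilon})$ by taking $\epsilon < c^*$ small enough depending on $\sigma_{x,4}$; and $O(\sigma_{x,4}^2 \log(1/\tau)/(n\epsilon))$ by tightening the implicit constant in the bound on $\tau$.

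For the second statement, I would chain Part~1 with Theorem~\ref{ThmStability}. On the $(1-\tau)$-probability event supplied by Part~1, the original set $S$ contains an $(C\epsilon, \delta_1)$-stable subset $S_1$ of size at least $(1-\epsilon)|S|$; since $T$ is an $\epsilon'$-corrupted version of $S$ with $\epsilon' \leq \epsilon$, it is also $\epsilon$-corrupted, so the hypothesis of Theorem~\ref{ThmStability} is met. Running the filter with inputs $(T, \epsilon)$ then produces, on an additional event of probability at least $1 - O(\exp(-\Omega(n\epsilon))) \geq 1 - \tau$, a set $T_1$ with $|T_1| \geq (1-c_1\epsilon)|T|$ that is $(c_2 C \epsilon, c_3 \delta_1)$-stable; a union bound over the two failure events yields the claimed $1 - 2\tau$. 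Since $\delta_2^2/\epsilon_2 = (c_3^2/c_2)\,\delta_1^2/\epsilon_1$, the condition $\delta_2^2/\epsilon_2 < 0.05$ follows by slightly sharpening the Part~1 bound from $0.01$ to $\min\{0.01,\, 0.05\, c_2/c_3^2\}$ through the same constant choices used above.

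I expect the only nontrivial element of the proof to be bookkeeping to thread constants through two successive applications of the cited theorems: (a) the input $\tilde\epsilon$ to Theorem~\ref{ThmStabHighProb} must leave slack for the $C$ factor in $\epsilon_1 = C\epsilon$; (b) the implicit constant in $\tau = O(\exp(-\Omega(n\epsilon)))$ must be small enough that $\log(1/\tau)/n$ contributes only a small multiple of $\epsilon$ to both $\epsilon_1$ and $\delta_1^2/\epsilon_1$; and (c) the filter's multiplicative constants $c_1, c_2, c_3$ in Theorem~\ref{ThmStability} must be absorbed into the marginally tightened Part~1 bound. Once these constants are chosen compatibly, the two conclusions follow essentially by read-off.
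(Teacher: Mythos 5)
Your proposal is correct and follows essentially the same route as the paper's proof: apply Theorem~\ref{ThmStabHighProb} with an inner parameter tuned so the theorem's effective contamination level matches $\epsilon$, verify $\delta_1^2/\epsilon_1 < 0.01$ by separately controlling the three terms via the sample-size assumption, the smallness of $\epsilon$, and the constant in the bound on $\tau$, and then chain with Theorem~\ref{ThmStability} plus a union bound, absorbing the filter's constants by sharpening the Part~1 ratio bound. The constant bookkeeping you flag is exactly what the paper's argument consists of.
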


\begin{proof}
We will show that these statements are consequences of Theorems~\ref{ThmStability} and~\ref{ThmStabHighProb}.

Fix the constant $C$, the desired premultiplier in the stability results. 
Let $\epsilon_3  > 0$ be a value to be decided later, and let $\tau$ be such that $\frac{\log(1/\tau)}{n} \leq c_1 \epsilon_3$.
Suppose $\epsilon_3$ is such that $\epsilon := C_1\left(\epsilon_3 + \frac{\log(1/\tau)}{n}\right)$ is the parameter in Theorem~\ref{ThmStabHighProb}.
Applying Theorem~\ref{ThmStabHighProb}, we see that with probability $1 - \tau$,
there exists a $(C \epsilon, \delta_1)$-stable set $S' \subseteq S$, with $|S'| \geq (1 - \epsilon) |S|$ and $\delta_1 = O\left(\sqrt{\frac{p \log p}{n}}  + \sigma_{x,4} \epsilon_3^{3/4} + \sigma_{x,4} \sqrt{\frac{\log(1/\tau)}{n}}\right)$, where the premultiplier depends on $C$.

Note that
\begin{align*}
\frac{\delta_1^2}{ \epsilon_1} &\lesssim \frac{p \log p }{n \epsilon} + \sigma_{x,4}^2 \epsilon_3^{1/2} + \sigma_{x,4}^2 \frac{\log(1/\tau)}{n \epsilon} \\
&\lesssim \frac{p \log p}{n \epsilon} + \sigma_{x,4}^2 \sqrt{\epsilon} + \sigma_{x,4}^2\frac{c_1}{ C_1}.
\end{align*}
The last expression can be made less than $0.01$ by choosing $n = \Omega\left(\frac{p \log p}{\epsilon}\right)$, restricting $\epsilon$ (and thus $\epsilon_3$) to be less than a  small enough constant $c^*$, and choosing $c_1$ to be small enough.
The last condition yields that the failure probability can be made as small as $\exp(- \Omega(n \epsilon))$.
This completes the proof of the first statement.
Moreover, the bound $0.01$ was arbitrary and can be made as small as required under qualitatively similar constraints.

For the second part, we assume that the constant $C$ is large enough for Theorem~\ref{ThmStability} to succeed.
By the first part, we know that with probability at least $1 - \exp(- \Omega(n \epsilon))$, there exist $S_1 \subseteq S$ such that $|S_1| \geq (1 - \epsilon)|S|$ and $S_1$ is $(C \epsilon, \delta_1)$-stable.
Theorem~\ref{ThmStability} then implies that with probability at least $1 - O(\exp(- \Omega(n \epsilon)))$, the output of the filter algorithm $T_1$ satisfies
$|T_1| \geq (1 - c_1 \epsilon) n$ and is $(\epsilon_2, \delta_2)$-stable, where $\epsilon_2 = c_2 C \epsilon$ and $\delta_2 = c_3 \delta_1$.
It remains to check that $\frac{\delta_2^2}{\epsilon_2} < 0.05$. Note that $\frac{\delta_2^2}{\epsilon_2} = \frac{c_3^2}{c_2 C} \cdot \frac{\delta^2}{\epsilon}$.
Since $c_3$, $c_2$ and $C$ are constants, we can make $\frac{\delta_2^2}{\epsilon_2} < 0.05$ by taking $\frac{\delta_1^2}{\epsilon} < 0.05 \cdot \frac{c_2C}{c_3^2}$ in the first part.
\end{proof}

\begin{proposition}
\label{PropStabSqError}
Let $ \{x_1,\dots,x_n\}$ be an $(\epsilon,\delta)$-stable set with respect to $\mu$ and $\sigma^2$.
Then for any unit vector $v$ and any $S' \subseteq [n]$ such that $|S'| \leq \epsilon n$, we have
\begin{align}
\frac{1}{n}\sum_{i \in S'}  ((x_i - \mu)^Tv)^2 \leq  \frac{3 \sigma^2\delta^2}{\epsilon}.
\label{EqStabSqError}
\end{align}
\end{proposition}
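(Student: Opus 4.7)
The plan is to decompose the sum over the small subset $S'$ as the total sum minus the sum over the complement $S'' := [n] \setminus S'$, and then apply the spectral-norm part of the stability hypothesis twice: once with the full index set $[n]$ (trivially of size $\geq (1-\epsilon)n$) and once with $S''$, which has size $n - |S'| \geq (1-\epsilon)n$ precisely because $|S'| \leq \epsilon n$. Testing the centered empirical second-moment operator against the fixed unit vector $v$ turns the operator-norm bound in Definition~\ref{DefStab}(2) into a two-sided scalar inequality, giving the upper bound
\[
\frac{1}{n}\sum_{i=1}^n ((x_i-\mu)^T v)^2 \leq \sigma^2 + \frac{\sigma^2\delta^2}{\epsilon}
\]
together with the matching lower bound $\frac{1}{|S''|}\sum_{i \in S''}((x_i-\mu)^T v)^2 \geq \sigma^2 - \frac{\sigma^2\delta^2}{\epsilon}$.

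Subtracting the (unscaled) lower bound on $S''$ from the (unscaled) upper bound on $[n]$ yields
\[
\sum_{i \in S'} ((x_i-\mu)^T v)^2 \;\leq\; |S'|\,\sigma^2 + (n + |S''|)\,\frac{\sigma^2\delta^2}{\epsilon}.
\]
Dividing by $n$ and using $|S'| \leq \epsilon n$ and $|S''| \leq n$ gives $\frac{1}{n}\sum_{i \in S'} ((x_i-\mu)^T v)^2 \leq \epsilon\sigma^2 + 2\sigma^2\delta^2/\epsilon$. The final step is to absorb the $\epsilon\sigma^2$ term into $\sigma^2\delta^2/\epsilon$: by the standing assumption $\epsilon \leq \delta$ built into Definition~\ref{DefStab}, we have $\epsilon^2 \leq \delta^2$, hence $\epsilon \leq \delta^2/\epsilon$, so $\epsilon\sigma^2 \leq \sigma^2\delta^2/\epsilon$. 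Combining this with the remaining $2\sigma^2\delta^2/\epsilon$ produces exactly $3\sigma^2\delta^2/\epsilon$, as claimed.

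There is no real obstacle here; the proposition is essentially unpacking the definition of stability. The only points requiring any care are (i) using the spectral-norm bound in both directions (upper and lower) after projecting onto $v$, and (ii) noticing that the constraint $\epsilon \leq \delta$ from Definition~\ref{DefStab} is precisely the slack needed to kill the residual $\epsilon\sigma^2$ term at the end.
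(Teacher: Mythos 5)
Your proof is correct and follows essentially the same route as the paper's: decompose the sum over $S'$ as the total sum minus the sum over the complement, apply the spectral part of stability (projected onto $v$) as an upper bound on $[n]$ and a lower bound on $[n]\setminus S'$, and absorb the leftover $\epsilon\sigma^2$ term using $\epsilon \le \delta$. The only difference is cosmetic bookkeeping of $|S''|$ versus the paper's $(1-\epsilon)$ factor.
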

\begin{proof}
Without loss of generality, we assume that $\mu =0$ and $\sigma^2 = 1$.
By the stability assumption, we have the inequality
\begin{align*}
\frac{1}{n}\sum_{i \in [n]}  (x_i^Tv)^2 \leq  1 + \frac{\delta^2}{\epsilon}.
\end{align*}
Furthermore, using the lower bound on eigenvalues over the set $[n]\setminus S'$, we have
\begin{align*}
\frac{1}{|[n]\setminus S'|}\sum_{i \in [n]\setminus S'}  (x_i^Tv)^2 \geq  1 - \frac{\delta^2}{\epsilon}.
\end{align*}
Combining the inequalities, we obtain
\begin{align*}
\frac{1}{n}\sum_{i \in S'}  (x_i^Tv)^2  &= \frac{1}{n}\sum_{i \in [n]}  (x_i^Tv)^2 - \frac{|[n]\setminus S'|}{n}  \frac{1}{|[n]\setminus S'|}\sum_{i \in [n]\setminus S'}  (x_i^Tv)^2\\
&\leq \left(1 + \frac{\delta^2}{\epsilon}\right) - (1- \epsilon)\left(1 - \frac{\delta^2}{\epsilon}\right) \\
&= \frac{2 \delta^2}{\epsilon} + \epsilon - \delta^2 \leq \frac{3 \delta^2}{\epsilon},
\end{align*}
where we use the fact that $\epsilon \leq \delta$.
\end{proof}

\begin{proposition}
\label{PropStabL1Error}
Let $ \{x_1,\dots,x_n\}$ be an $(\epsilon,\delta)$-stable set with respect to $\mu$ and $\sigma^2$.
Then for any unit vector $v $ and any $S' \subseteq [n]$ such that $|S'| \leq \epsilon n$, we have
\begin{align}
\frac{1}{n}\sum_{i \in S'} {|(x_i - \mu)^Tv|} \leq  2 \sigma \delta.
\label{EqStabL1Error}
\end{align}
\end{proposition}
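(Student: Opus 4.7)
The plan is to reduce the $\ell_1$ bound to the $\ell_2$ bound already established in Proposition~\ref{PropStabSqError} via a single application of the Cauchy--Schwarz inequality. Without loss of generality I assume $\mu = 0$ and $\sigma^2 = 1$, since the general statement follows by translation and scaling.

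First I apply Cauchy--Schwarz to the sum indexed over $S'$, treating the summand $|x_i^T v|$ as a product of $1$ and $|x_i^T v|$:
\begin{equation*}
\sum_{i \in S'} |x_i^T v| \;\leq\; \sqrt{|S'|} \, \sqrt{\sum_{i \in S'} (x_i^T v)^2}.
\end{equation*}
Dividing by $n$ and using the constraint $|S'| \leq \epsilon n$, the factor in front becomes $\sqrt{|S'|}/n \leq \sqrt{\epsilon/n}$.

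Next I invoke Proposition~\ref{PropStabSqError}, which gives $\sum_{i \in S'} (x_i^T v)^2 \leq 3 n \delta^2 / \epsilon$ under $(\epsilon,\delta)$-stability. Plugging this in yields
\begin{equation*}
\frac{1}{n}\sum_{i \in S'} |x_i^T v| \;\leq\; \sqrt{\frac{\epsilon}{n}} \cdot \sqrt{\frac{3n \delta^2}{\epsilon}} \;=\; \sqrt{3}\,\delta \;\leq\; 2\delta.
\end{equation*}
Restoring $\mu$ and $\sigma$ gives the claimed bound $2\sigma\delta$.

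There is no real obstacle here; the entire content is packaged inside Proposition~\ref{PropStabSqError}, and Cauchy--Schwarz is the natural way to convert the $\ell_2$-mass bound on a small subset into an $\ell_1$-mass bound. The only care needed is tracking the factor $\sqrt{3}$ and verifying it is at most $2$, which justifies the clean constant stated in the proposition.
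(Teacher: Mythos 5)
Your proof is correct and follows essentially the same route as the paper: both reduce to Proposition~\ref{PropStabSqError} and apply the Cauchy--Schwarz inequality, differing only in minor algebraic packaging (the paper loosens the constant from $3$ to $4$ to land exactly on $2\delta$, while you keep $\sqrt{3}\,\delta \le 2\delta$). No issues.
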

\begin{proof}
Without loss of generality, we assume that $\mu = 0$ and $\sigma^2=1$.
By Proposition~\ref{PropStabSqError}, we have
\begin{align*}
\frac{1}{n} \sum_{i \in S'} (x_i^Tv)^2 \leq \frac{4\delta^2}{\epsilon}.
\end{align*}
Applying the Cauchy-Schwarz inequality, we then have
\begin{align*}
\frac{1}{|S'|}\sum_{i \in S'}  |x_i^Tv| \leq \sqrt{\frac{1}{|S'|}\sum_{i \in S'}  |x_i^Tv|^2 } \leq \sqrt{\frac{n}{|S'|} \frac{4 \delta^2 }{\epsilon} }.
\end{align*}
Hence, we obtain
\begin{align*}
\frac{1}{n}\sum_{i \in S'}  |x_i^Tv| = \frac{|S'|}{n} \frac{1}{|S'|}\sum_{i \in S'}  |x_i^Tv| \leq \frac{|S'|}{n}  \sqrt{\frac{n}{|S'|} \frac{4 \delta^2 }{\epsilon} } =   \sqrt{\frac{|S'|}{n} \frac{4\delta^2}{\epsilon}   } \leq 2\delta.
\end{align*}
\end{proof}

\begin{proposition}
\label{PropStabL1Mean}
Let $ \{x_1,\dots,x_n\}$ be an $(\epsilon,\delta)$-stable set with respect to $\mu$ and $\sigma^2$.
Let $a_1,\dots,a_n$ be scalars and suppose $\max_{1 \le i \le n} |a_i| \leq a$.
Then for any $S' \subseteq [n]$ such that $|S'| \leq \epsilon n$, we have
\begin{align}
\left\|\frac{1}{n} \sum_{i \in S'} {a_i (x_i - \mu)}\right\|_2 \leq  2a \sigma \delta.
\label{EqL1Const}
\end{align}
\end{proposition}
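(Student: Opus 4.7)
The plan is to reduce this to the scalar $\ell_1$-bound in Proposition~\ref{PropStabL1Error} by passing through the variational characterization of the Euclidean norm. Specifically, for any vector $w \in \R^p$ we have $\|w\|_2 = \sup_{v \in \cS^{p-1}} v^T w$, so it suffices to bound $v^T \left(\frac{1}{n}\sum_{i \in S'} a_i(x_i - \mu)\right)$ uniformly in $v \in \cS^{p-1}$.

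For a fixed unit vector $v$, pull the inner product inside the sum and apply the triangle inequality together with the uniform bound $|a_i| \le a$ to get
\begin{align*}
v^T \left(\frac{1}{n}\sum_{i \in S'} a_i(x_i - \mu)\right)
&= \frac{1}{n}\sum_{i \in S'} a_i \, v^T(x_i - \mu) \\
&\le \frac{1}{n}\sum_{i \in S'} |a_i| \, |v^T(x_i - \mu)|
\le \frac{a}{n}\sum_{i \in S'} |v^T(x_i - \mu)|.
\end{align*}
Since $|S'| \le \epsilon n$, Proposition~\ref{PropStabL1Error} applied to the stable set $\{x_1,\dots,x_n\}$ yields $\frac{1}{n}\sum_{i \in S'} |v^T(x_i - \mu)| \le 2 \sigma \delta$. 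Combining these bounds gives $v^T \left(\frac{1}{n}\sum_{i \in S'} a_i(x_i - \mu)\right) \le 2 a \sigma \delta$, and taking the supremum over $v \in \cS^{p-1}$ completes the proof.

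There is essentially no obstacle here: the bound on scalar projections in Proposition~\ref{PropStabL1Error} is exactly the right tool, and the uniform control of the $|a_i|$'s lets the scalar coefficients factor out cleanly. The only mild subtlety is remembering to dualize to the unit sphere rather than attempting to bound the norm coordinatewise, which would lose dimension factors. Note also that the result holds \emph{without} requiring $\sum a_i = 0$ or any mean-zero assumption on the $a_i$'s, because the stability bound in Proposition~\ref{PropStabL1Error} already controls the $\ell_1$-mass of the recentered points on any small index subset.
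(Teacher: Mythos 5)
Your proof is correct and follows exactly the same route as the paper's: dualize the norm over unit vectors, factor out the uniform bound $|a_i| \le a$, and invoke Proposition~\ref{PropStabL1Error} on the resulting $\ell_1$-sum of projections. No issues.
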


\begin{proof}
Without loss of generality, we assume that $\mu = 0$ and $\sigma^2 = 1$.
We have
\begin{align}
\left\|\frac{1}{n} \sum_{i \in S'} {a_i x_i}\right\|_2 = \frac{1}{n} \sup_{v \in \cS^{p-1}} \sum_{i \in S'} a_i x_i^T v \le \frac{1}{n} \sup_{v \in \cS^{p-1}}
\sum_{i \in S'} {|a_i||x_i^Tv|} \leq \frac{a}{n} \sup_{v \in \cS^{p-1}}
\sum_{i \in S'} {|x_i^Tv|} \leq 2a\delta,
\end{align}
where the last step uses Proposition~\ref{PropStabL1Error}.
\end{proof}

\section{Huber regression}
\label{AppHuber}

In this appendix, we provide additional proof details for the results in Section~\ref{SecHuber}.

\subsection{Estimation of $\gamma$}

In this section, we prove that the sample-splitting procedure outlined in Section~\ref{SecHuberGeneral} succeeds with high probability. We use the result of Theorem~\ref{ThmLAD}, as well as the following lemma, where we denote $\epsilon = c^*$ for notational brevity.
\begin{lemma}
\label{LemEstGamma}
Let $S= \{(x_1,y_1), \dots, (x_{2n},y_{2n})\}_{i=1}^{2n}$ be i.i.d. points from the linear model $y_i = x_i ^T \beta^* + z_i$, where the covariates are centered and isotropic, and the noise is independent of the covariates and satisfies $\E |z_i| = \kappa < \infty$.
Let $\widehat{\beta}_0$ be an estimator independent of $S$ such that $\|\widehat{\beta}_0 - \beta^*\|_2 = O(\kappa)$.
Then the sample-splitting estimator $\widehat{\gamma}$ with $\epsilon = c^*$ satisfies
\begin{itemize}
\item[(i)] $\P\left(|Z_1 - Z_2| \geq \frac{\widehat{\gamma}}{\sqrt{2}}\right) < \epsilon$, and
\item[(ii)] $|\widehat{\gamma}| = O\left(\frac{\kappa}{\epsilon}\right)$,
\end{itemize}
with probability at least $1 - 2\exp(- \Omega(n \epsilon^2))$.
\end{lemma}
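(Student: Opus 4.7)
The plan is to express the residual as $w_i' = z_i' - (x_i')^T \Delta$ where $\Delta := \widehat{\beta}_0 - \beta^*$ satisfies $\|\Delta\|_2 = O(\kappa)$ by hypothesis, and to exploit two independent sources of symmetry. Specifically, because $x_i'$ is a rescaled difference of two i.i.d.\ covariates, $x_i'$ is symmetric; hence $(x_i')^T\Delta$ is symmetric. Similarly, $z_i' = (z_i - z_{n+i})/\sqrt{2}$ is symmetric. Moreover, conditional on $\widehat{\beta}_0$ (which I may do since $\widehat{\beta}_0$ is independent of $S$), the random variables $z_i'$ and $(x_i')^T\Delta$ are independent.

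The first key step is to pass from the observable residual to the true noise. Apply Lemma~\ref{PropSymmetricQuantile} with $Y = w_i'$, $Z = z_i'$, and $W = -(x_i')^T\Delta$ to deduce
\begin{equation*}
\P(|z_i'| \ge r) \le 2\, \P(|w_i'| \ge r), \qquad \forall r \ge 0.
\end{equation*}
Since $z_i'$ is distributed as $(Z_1-Z_2)/\sqrt{2}$, any upper bound $\P(|w_i'| \ge \widehat\gamma/2) \le c^*/2$ immediately yields $\P(|Z_1 - Z_2| \ge \widehat\gamma/\sqrt{2}) \le c^*$, which is (i).

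To turn the empirical quantile $\widehat\gamma/2$ into a bound on $\P(|w_i'| \ge \widehat\gamma/2)$, I would let $Q_p$ denote the population upper $p$-quantile of $|w_i'|$ and apply Hoeffding's inequality at the two \emph{deterministic} thresholds $Q_{c^*/2}$ (for part (i)) and $M := 8C\kappa/c^*$ (for part (ii)), where $C$ is the absolute constant from $\E|w_i'| \le \E|z_i'| + \E|(x_i')^T\Delta| \le \sqrt{2}\kappa + \|\Delta\|_2 \le C\kappa$ (using isotropy of $x_i'$ via Cauchy--Schwarz). At threshold $Q_{c^*/2}$, Hoeffding gives that with probability $1 - \exp(-\Omega(n c^{*2}))$ the empirical tail fraction is at least $c^*/2 - c^*/4 = c^*/4$, forcing $\widehat\gamma/2 \ge Q_{c^*/2}$ and hence $\P(|w_i'| \ge \widehat\gamma/2) \le c^*/2$; this gives (i). At threshold $M$, Markov's inequality gives $\P(|w_i'| \ge M) \le c^*/8$, and a second Hoeffding bound shows the empirical fraction of $|w_i'| \ge M$ is at most $c^*/4$ with probability $1 - \exp(-\Omega(n c^{*2}))$, forcing $\widehat\gamma/2 \le M = O(\kappa/c^*)$ and giving (ii). A union bound over the two Hoeffding events yields the stated failure probability $2\exp(-\Omega(n\epsilon^2))$.

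The main subtle point will be handling the direction of the quantile comparison cleanly: $\widehat\gamma/2$ is itself random, so the Hoeffding bounds must be invoked at fixed deterministic thresholds and then converted into bounds on $\widehat\gamma/2$ using monotonicity of the empirical CDF. A minor technicality is that the isotropy/finite-first-moment hypotheses of the lemma are weaker than Assumption~\ref{AsCov}, so I must avoid invoking hypercontractivity and use only $\E\|x_i'\|_2 \le \sqrt{\E\|x_i'\|_2^2}$ combined with Cauchy--Schwarz when bounding $\E|(x_i')^T\Delta|$; everything else follows from the symmetry and Markov/Hoeffding steps above.
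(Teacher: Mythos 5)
Your proposal is correct and follows essentially the same route as the paper: symmetrize, bound $\E|w'| \lesssim \kappa$ via isotropy and Cauchy--Schwarz, control the empirical quantile of the residuals, and transfer from $w'$ back to $z'$ via Lemma~\ref{PropSymmetricQuantile} using the symmetry and conditional independence of $z_i'$ and $(x_i')^T\Delta$. The only cosmetic difference is that the paper invokes the Dvoretzky--Kiefer--Wolfowitz inequality for uniform control of the empirical CDF (which handles the quantile-direction bookkeeping you flag in one stroke), whereas you use two pointwise Hoeffding bounds at deterministic thresholds together with Markov's inequality for part (ii); both yield the same $1-2\exp(-\Omega(n\epsilon^2))$ guarantee.
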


\begin{proof}
Let $\beta_1 = \beta^* - \widehat{\beta}_0$. Note that conditioned on $\widehat{\beta}_0$, the pairs $\{(x'_i,w_i')\}_{i=1}^{\lfloor n/2 \rfloor}$ are i.i.d.\ draws from the linear model
\begin{equation}
\label{EqnLinModelShift}
w_i' = (x_i')^T \beta_1 + z_i',
\end{equation}
where $z_i' \stackrel{d}{=} \frac{z_1 - z_2}{\sqrt{2}}$ is the symmetrized version of the error variables.

Let $x'$, $w'$, and $z'$ denote generic random variables with the same distributions as $x_i'$, $w_i'$, and $z'_i$, respectively. Note that $x'$ is centered and isotropic, and $z'$ is symmetric with $\E |z'| \leq \sqrt{2} \kappa$.
By the triangle inequality, we therefore have
\begin{equation*}
\E |w'| \leq \E|(x')^T \beta_1| + \E |z'| \le \sqrt{\E\left((x')^T \beta_1\right)^2} + \E|z'| \leq \|\beta_1\|_2 + \sqrt{2} \kappa = O(\kappa),
\end{equation*}
using the fact that $x'$ is isotropic and $\|\beta_1\|_2 = O(\kappa)$ by assumption.

Now let $F_n$ denote the empirical cdf of the $|w'_i|$'s, so $F_n(t) = \frac{1}{n}\sum_{i=1}^n \1(|w'_i| \le t)$.
Define the event
\begin{align*}
\cE := \left\{ \sup_{t \in \R}|F_n(t) - \P(|w'| \leq t)| \leq \frac{\epsilon}{8}\right\}.
\end{align*}
By the Dvoretzky-Kiefer-Wolfowitz inequality~\cite{Mas90}, we know that $\P(\cE) \geq 1 - 2\exp( -n \epsilon^2/32)$.
Note that by definition, we have $\frac{\widehat{\gamma}}{2} = \inf\left\{t : F_n(t) \geq 1 - \frac{\epsilon}{4} \right\}$. On the event $\cE$, we therefore have
\begin{equation}
\label{EqnWprime}
\P\left(|w'| \ge \frac{\widehat{\gamma}}{2}\right) \leq \frac{3\epsilon}{8}.
\end{equation}
Furthermore, since both $z'$ and $(x')^T \beta_1$ are symmetric random variables, Lemma~\ref{PropSymmetricQuantile} applied to the linear model~\eqref{EqnLinModelShift} gives us
\begin{equation*}
\P\left(|z'| \geq \frac{\widehat{\gamma}}{2}\right) \leq 2\P\left(|w'| \geq \frac{\widehat{\gamma}}{2}\right) \leq \frac{3\epsilon}{4} < \epsilon,
\end{equation*}
which is part (i).

We now show that $|\widehat{\gamma}| \le \frac{8 \E |w'| }{\epsilon}$ on the event $\cE$.
Suppose the contrary.
By Markov's inequality, we would have
\begin{equation*}
\P\left(|w'| \geq \frac{\widehat{\gamma}}{2}\right)  \leq \P\left(|w'| \geq \frac{4\E |w'|}{\epsilon}\right) \leq \frac{\epsilon}{4},
\end{equation*}
which contradicts inequality~\eqref{EqnWprime}. Therefore, we must have $\widehat{\gamma} = O\left(\frac{\E |w'|}{\epsilon}\right) = O\left(\frac{\kappa}{\epsilon}\right)$, as wanted.
\end{proof}

\subsection{Proof of Theorem~\ref{ThmAdvHuberReg}}
\label{AppThmAdvHuberReg}

In the course of this proof, we will need to refer to set functions that take a finite set as the argument and return a value in $\real$. The sets we consider will be of the form $S = \{(u_1, v_1), (u_2, v_2), \dots, (u_n, v_n)\}$, where $u_i \in \real^p$, $v_i \in \real$, and $n \geq 1$. The set functions will be of the following form:
\begin{align*}
F(S) := \sum_{ i = 1}^n f(u_i, v_i),
\end{align*}
for some $f: \real^p \times \real \to \real$. For ease of notation, we will use the following convention:
\begin{align*}
F(S) = \sum_{(x, y) \in S} f(x, y).
\end{align*}
This simplifies notation by avoiding explicit indexing of the elements in the sets being considered. For example, if $S' \subseteq S$, we may express $F(S') = \sum_{(x, y) \in S'} f(x, y)$. 

For ease of presentation, we also redefine the algorithm with different notation, as reflected in Algorithm~\ref{AlgHubAdvCont}.

\begin{algorithm}[h]  
  \caption{Huber Regression - Adversarial Corruption} 
    \label{AlgHubAdvCont}  
  \begin{algorithmic}[1]  
    \Statex  
    \Function{Huber\_Regression\_With\_Filtering}{$T = \{x'_i,y'_i: i \in [2n] \}, \gamma, \tilde{\epsilon} $}  
        \For{$i \gets 1$ to $n$}                    
        \State $(\tilde{x}_i,\tilde{y}_i)$ $\gets$ $\left(\frac{x_{i}'-x_{n+i}'}{\sqrt{2}}, \frac{y_i' - y_{n+i}'}{\sqrt{2}}\right)$
    \EndFor
\State $T_1 \gets \{  (\tilde{x}_i, \tilde{y}_i )\}_{i=1}^n$
            \State $T_2 \gets $ FilteredCovariates$(T_1,\epsilon_1')$%
        \State $\widehat{\beta} \gets $ HuberRegression$(T_2,\gamma)$
    \State \Return $\widehat{\beta}$
    \EndFunction  
  \end{algorithmic}  
\end{algorithm}

We state the following technical lemma, which is proved in Appendix~\ref{AppLemHubAdv}:

\begin{lemma}
\label{LemHubAdv}
Under the setting of Theorem~\ref{ThmAdvHuberReg}, 
with probability at least $ 1 - 2\tau$, we have the following statements:

\begin{itemize}
\item[(i)] The filtered set of covariates $T_2$ satisfies weak stability with parameters $\epsilon_1 = \Omega(1),L= \Omega(1)$, and $U= O(1)$.

\item[(ii)]  The gradient of the loss function satisfies $\|\nabla \cL _ \gamma(\beta^*)\|_2 \lesssim \gamma\left( \sqrt{\frac{p \log p}{n}} +  \epsilon^{1 - 1/k} +   \sqrt{\frac{\log(1 / \tau)}{n}}\right)$.

\item[(iii)] For  $r \gtrsim \frac{\epsilon_1\gamma}{\sqrt{U}}$, $\gamma \gtrsim  \frac{\sigma}{\sqrt{\epsilon_1}}$, and $\frac{\log(1/ \tau)}{n} \lesssim \epsilon_1 $,
the function $\cL_\gamma$ is $L$-strongly convex in a ball of radius $r$ around $\beta^*$.
\end{itemize}
 \end{lemma}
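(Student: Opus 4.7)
The overall plan is to adapt the proof structure of Theorem~\ref{ThmDetHuberReg} to the adversarial setting. The pair-differencing reduction makes $T_1$ (up to a factor $2$ in the corruption level) an $\epsilon$-corrupted version of a clean i.i.d.\ set $S_{\text{pair}}$ whose covariates are centered, isotropic, and $(k,2)$-hypercontractive, and whose symmetrized noise $\tilde{z}_i = (z_i - z_{n+i})/\sqrt{2}$ is symmetric and independent of the covariates. Throughout, I will separate ``clean'' indices (those in $S_{\text{pair}}$) from ``adversarial'' indices and exploit the fact that the filter inspects only covariates.

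For part (i), I would invoke Proposition~\ref{PropStabSimpleV2} directly with parameter $\epsilon_0 = \Theta(\epsilon + \log(1/\tau)/n)$: with probability at least $1-\tau$, there exists a subset $S^{*} \subseteq S_{\text{pair}}$ that is $(C\epsilon_0, \delta)$-strongly stable with $\delta^2/\epsilon_0 < 0.05$, and the filter output $T_2$ is also strongly stable with comparable parameters. Applying Lemma~\ref{LemStrongWeak} to $T_2$ then yields weak stability with $\epsilon_1 = \Omega(1)$, $L = \Omega(1)$, and $U = O(1)$.

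For part (ii), I would write the gradient as
\begin{equation*}
\nabla \cL_\gamma(\beta^*) = -\frac{1}{|T_2|}\sum_{i \in T_2} \psi_\gamma(\tilde{z}_i)\tilde{x}_i,
\end{equation*}
and decompose via the triangle inequality into a sum over $S^{*}$ and a sum over $T_2 \triangle S^{*}$. For the first piece, since $S^{*} \subseteq S_{\text{pair}}$, the noises $\tilde{z}_i$ are symmetric and independent of the covariates; conditioning on the covariates (which determine $S^{*}$) makes $\psi_\gamma(\tilde{z}_i)$ bounded by $\gamma$, zero-mean, and conditionally independent. A sub-Gaussian projection argument along the lines of Lemma~\ref{LemmaGradNorm}, combined with the bound $\sum_{i \in S^{*}}(v^T\tilde{x}_i)^2 \lesssim U n$ from strong stability, gives $O(\gamma\sqrt{n}(\sqrt{p} + \sqrt{\log(1/\tau)}))$. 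For the second piece, $|T_2 \triangle S^{*}| = O(\epsilon_0 n)$, and applying Proposition~\ref{PropStabL1Mean} with bounded coefficients $\psi_\gamma(\tilde{z}_i) \in [-\gamma,\gamma]$, using the strong stability of $T_2$ on $T_2 \setminus S^{*}$ and of $S^{*}$ on $S^{*} \setminus T_2$, bounds this by $O(\gamma\delta n)$. Dividing by $|T_2| \asymp n$ and substituting the value of $\delta$ from Proposition~\ref{PropStabSimpleV2} yields the stated rate.

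For part (iii), I would follow the template of Lemma~\ref{LemmaHessLower}: the strong convexity parameter of $\cL_\gamma$ on the ball of radius $r$ about $\beta^*$ is at least $\lambda_{\min}(H_n(\beta))$, where $H_n(\beta) = \frac{1}{|T_2|}\sum_{i \in T_2}\tilde{x}_i\tilde{x}_i^T\1(|\tilde{y}_i - \tilde{x}_i^T\beta|<\gamma)$, and weak stability of $T_2$ forces $\lambda_{\min}(H_n(\beta)) \ge L$ as soon as the fraction of failing indicators is at most $\epsilon_1$. The triangle-inequality split $\1(|\tilde{x}_i^T(\beta-\beta^*) + \tilde{z}_i|\ge\gamma) \le \1(|\tilde{x}_i^T(\beta-\beta^*)|\ge\gamma/2) + \1(|\tilde{z}_i|\ge\gamma/2)$ controls this count: the first term is uniformly $O(r\sqrt{U}/\gamma)$ by Cauchy--Schwarz and weak stability, while the second is bounded by $|T_2 \setminus S_{\text{pair}}| \le 2\epsilon n$ on the adversarial points and by $O(\P(|\tilde{z}|\ge\gamma/2) + \log(1/\tau)/n)$ on the clean points via a Chernoff bound (Lemma~\ref{ThmChernoff}). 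The assumption $\gamma \gtrsim \sigma/\sqrt{\epsilon_1}$ together with Markov's inequality gives $\P(|\tilde{z}|\ge\gamma/2) \lesssim \epsilon_1$, so choosing $r \lesssim \epsilon_1\gamma/\sqrt{U}$ closes the argument. The main obstacle is in part (ii): cleanly exploiting that the filter inspects only the covariates, so that the noise on $T_2 \cap S_{\text{pair}}$ remains conditionally independent and symmetric, while simultaneously controlling the symmetric-difference terms via stability requires careful bookkeeping to ensure the $O(\gamma\delta)$ contribution does not dominate the sub-Gaussian $O(\gamma(\sqrt{p/n} + \sqrt{\log(1/\tau)/n}))$ contribution.
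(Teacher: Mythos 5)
Your parts (ii) and (iii) follow essentially the same route as the paper: decompose the gradient over the stable clean subset and the symmetric difference, use conditional symmetry/independence of the differenced noise given the covariates for the sub-Gaussian piece, control the symmetric-difference piece via Proposition~\ref{PropStabL1Mean}, and in (iii) split the count of large residuals into adversarial indices (bounded by cardinality) and clean indices (bounded by enlarging the sum to \emph{all} clean differenced points so that a Chernoff bound applies to genuinely i.i.d.\ indicators). Those two parts are sound as sketched.

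Part (i), however, has a genuine gap. You invoke strong stability of $T_2$ at level $C\epsilon_0$ with $\epsilon_0 = \Theta\left(\epsilon + \frac{\log(1/\tau)}{n}\right)$ and then apply Lemma~\ref{LemStrongWeak} to conclude weak stability with $\epsilon_1 = \Omega(1)$. But Lemma~\ref{LemStrongWeak} converts $(\epsilon,\delta)$-strong stability into $(\epsilon, L, U)$-weak stability \emph{at the same removal level} $\epsilon$: the lower eigenvalue bound is only guaranteed for subsets missing at most a $C\epsilon_0$-fraction of points, and $C\epsilon_0$ can be $o(1)$ (small $\epsilon$, large $n$). Nothing in the stability machinery upgrades the lower bound to survive removal of a \emph{constant} fraction of points, and this upgrade is exactly what is needed downstream: part (iii) requires $\epsilon_1 = \Omega(1)$ so that the radius of strong convexity is $\Omega(\gamma)$ and so that $\P(|\tilde z| \ge \gamma/2) + 2\epsilon \lesssim \epsilon_1$ with $\gamma \gtrsim \sigma/\sqrt{\epsilon_1}$, and the final error bound inherited from the template of Theorem~\ref{ThmDetHuberReg} needs $n = \Omega\bigl(U^2(p + \log(1/\tau))/(L^2\epsilon_1^2)\bigr)$, which collapses to $n = \Omega(p\log p)$ only when $\epsilon_1 = \Omega(1)$. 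The paper closes this gap with a separate ingredient, Proposition~\ref{PropStabSimplified} (a Koltchinskii--Mendelson small-ball/lower-isometry argument), which shows that with high probability \emph{every} subset of the clean differenced covariates of size $(1-\epsilon_5)n$, for a constant $\epsilon_5$, has $\lambda_{\min}\left(\frac{1}{n}\sum x_ix_i^T\right) \ge 0.8$; one then intersects an arbitrary constant-fraction subset of $T_2$ with the clean set (losing only $O(\epsilon n)$ points) to transfer the lower bound, while the upper bound $U = O(1)$ does come from the strong stability of $T_2$ as you say. Your proposal is missing this small-ball step, and without it the claimed $\epsilon_1 = \Omega(1)$ does not follow.
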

 
Note that we can then  follow the proof of Theorem~\ref{ThmDetHuberReg} exactly, where we replace Lemmas~\ref{LemmaGradNorm} and~\ref{LemmaHessLower} with statements (ii) and (iii) of Lemma~\ref{LemHubAdv} and impose the condition that $\epsilon$ is less than a small enough constant.

\subsection{Proof of Lemma~\ref{LemHubAdv}}
\label{AppLemHubAdv}

\paragraph{\textbf{Proof of (i):}}

Recall that $T_1$ is a set of cardinality $n$, where we subtract pairs of points in the corrupted data set (and rescale by $\sqrt{2}$).
Analogously, we define the set $S_1$, where we perform pairwise subtraction on the uncorrupted data set $S$. It can be shown that $T_1$ is an (at most) $2 \epsilon$-corrupted version of set $S_1$, and $S_1$ is a set of $n$ i.i.d.\ data points from a linear model, where (i) the covariates are drawn from a centered isotropic distribution with
$k^{\text{th}}$ moment bounded by $c\sigma_{x,k}$; and (ii) the additive noise is zero-mean, symmetric, independent of the covariates, and of variance $\sigma^2$ (see Theorem 3.3 in Diakonikolas et al.~\cite{DiaKP20}).

By Theorem~\ref{ThmStabHighProb}, we know that with probability $1 - \tau$, there exists a set $S_2 \subseteq S_1$ such that $|S_2| \geq (1 -  \epsilon_1')n$ and $S_2$ is $(\epsilon_2	, \delta_2)$-stable, where $\epsilon_2 = C\epsilon_1'$ and $\delta_2 \lesssim \sqrt{\frac{p \log p}{n}} + \sigma_{x,k}  {\epsilon_1}	^{1 - 1/k} + \sigma_{x,4} \sqrt{\frac{\log(1/ \tau)}{n}}$.
Here, we take $\epsilon_1 = \frac{p\log p}{n} + 2\epsilon$ and define 
 $ \epsilon_1'	 = C\left( \epsilon_1 + \frac{\log(1 / \tau)}{n}\right)$, and note that $ \epsilon_1, \epsilon_1' = O(1)$ by our assumptions.
Recall that $T_2$ is the output of the filter algorithm on the set $T_1$ with parameter $\epsilon_1' \ge 2 \epsilon$.
Since $T_1$ is an (at most) $2\epsilon$-corrupted version of $S_1$, the existence of the stable set $S_2$, in conjunction with Theorem~\ref{ThmStability}, implies that with probability $1 - \tau$: (i) $T_2$ has cardinality at least $(1 - c_2  \epsilon_1'	)n$, and (ii) $T_2$ is $(\epsilon_3,\delta_3)$-stable,
where $\epsilon_3 = c_2 \epsilon_2	$ and $\delta_3 = c_4\delta_2$.

Moreover, by Proposition~\ref{PropStabSimplified}, we know that for $\epsilon_5$ a small enough constant, with probability at least $1 - O(\exp(- \Omega(n \epsilon_5) ))$,
every $S_3 \subseteq S_1$ with cardinality at least $(1 - \epsilon_5)n$ satisfies the inequality $\lambda_{\min}\left(\frac{1}{n} \sum_{(x,y) \in S_3} xx^T\right) \geq 0.8$.
Since the amount of corruption is sufficiently small, we will be able to translate this guarantee to the filtered set $T_2$.

We now choose $\epsilon_5 \le 1$ to be a small enough constant and choose $\epsilon_1'$ sufficiently small (note that the latter is possible for a small enough choice of $\epsilon$ and large enough choice of $n$),
so that the following are satisfied simultaneously:
\begin{enumerate}
	\item Both $\frac{\delta_2^2}{\epsilon_2} = O(1)$ and $\frac{\delta_3^2}{\epsilon_3} = O(1)$: note that
\begin{align*}
\frac{\delta_2^2}{ \epsilon_2} \lesssim \frac{ \frac{p \log p }{n} + \sigma_{x,k}^2 \epsilon_1^{2 - 2/k} + \sigma_{x,4}^2 \frac{\log(1 / \tau)}{n} }{ \epsilon_1 + \frac{\log(1/ \tau)}{n}} \lesssim 1.
\end{align*}

\item The cardinality of $S_2$ satisfies $|S_2| \geq (1 - \epsilon_1') n \geq  \left(1 - \frac{\epsilon_5}{20}\right)n \geq \frac{n}{2}$.
	\item The cardinality of $T_2$ satisfies $|T_2| \geq (1 - c_2 \epsilon_1 ')n \geq \left(1 - \frac{\epsilon_5}{20}\right)n \geq \frac{n}{2}$.

	\item The inequality $4 \epsilon < 4\epsilon_1' \leq \frac{\epsilon_5}{10}$ holds.
\end{enumerate}

We now show that the covariates in $T_2$ satisfy weak stability with $\epsilon_6 = \frac{\epsilon_5}{3} = \Omega(1)$,  $L = \Omega(1) $, and $U = O(1)$.
Suppose $T_2' \subseteq T_2$ is such that $|T_2'| \geq (1 - \epsilon_6) |T_2| $. Then
\begin{align*}
\frac{1}{|T_2|}\lambda_{\min}\left(\sum_{(x,y) \in T_2'}xx^T\right)  \le \frac{1}{|T_2|}\lambda_{\min}\left(\sum_{(x,y) \in T_2}xx^T\right) \leq 1 + \frac{\delta_3^2}{\epsilon_3} = O(1),
\end{align*}
using the $(\epsilon_3, \delta_3)$-stability of $T_2$, giving the upper bound $U = O(1)$. To obtain the lower bound, note that
\begin{align*}
|T_2' \cap S_1| &\geq |T_2'| - |T_2 \triangle S_1| \\
&\geq |T_2|\left(1 -  \frac{\epsilon_5}{3}\right)  - 2 \epsilon n \\
&\geq n\left(1 - \frac{\epsilon_5}{3}\right)\left(1 - \frac{\epsilon_5}{20}\right) - \frac{ \epsilon_5n}{20} \geq (1 - \epsilon_5)n.
\end{align*}
Therefore, $T_2'\cap S_1$ is a subset of $S_1$ with cardinality at least $(1 - \epsilon_5)n$,  %
and we conclude that
\begin{align*}
\frac{1}{|T_2|}\lambda_{\min}\left(\sum_{(x,y) \in T_2'}xx^T\right) \geq \frac{1}{n}\lambda_{\min}\left(\sum_{(x,y) \in T_2'\cap S_1}xx^T\right) \geq 0.8.
\end{align*}
This gives the desired lower bound $L = \Omega(1)$.

\paragraph{\textbf{Proof of (ii):}}

Using the same strategy as in previous step, we can show that weak stability also holds on $S_2$ with parameters $\epsilon_6$,  $L = \Omega(1) $, and $U = O(1)$.
We will use this fact to prove concentration results analogous to Lemmas~\ref{LemmaGradNorm} and~\ref{LemmaHessLower}.

In fact, in the proof of Lemma~\ref{LemmaGradNorm}, the only property of the covariates that we leveraged was the fact that they satisfy weak stability with $U = O(1)$. Thus, we can analogously argue that
\begin{align}
\label{EqAdvGradNorm}
 \left\|\frac{1}{|S_2|}\sum_{(x,y) \in S_2} \nabla \ell_\gamma(y - x^T \beta^*) \right\|_2 \lesssim \gamma\left(\sqrt{\frac{p}{n}} + \sqrt{\frac{\log(1 / \tau)}{n}}\right),
 \end{align}
with probability at least $1-\tau$.

We will now translate this result back to $T_2$ using the fact that both $S_2$ and $T_2$ are stable. Let $\cL_\gamma$ denote the Huber loss function with parameter $\gamma$ applied to the set $T_2$:
\begin{equation*}
\cL_\gamma(\beta) = \frac{1}{|T_2|} \sum_{(x,y) \in T_2} \ell_\gamma(y - x^T \beta).
\end{equation*}

Using the triangle inequality together with the bound~\eqref{EqAdvGradNorm} and the notation $z = y - x^T \beta^*$, we then obtain
\begin{align*}
 \|\nabla \cL_\gamma( \beta^*) \|_2 &= 
 \left\|\frac{1}{|T_2|}\sum_{(x,y) \in T_2} x \psi_\gamma(z) \right\|_2 
 \\
 &\leq  \left\|\frac{1}{|T_2|}\sum_{(x,y) \in S_2} x \psi_\gamma(z) \right\|_2 + \left\|\frac{1}{|T_2|}\sum_{(x,y) \in S_2 \setminus T_2 } x \psi_\gamma(z) \right\|_2 +  \left\|\frac{1}{|T_2|}\sum_{(x,y) \in T_2 \setminus S_2 } x \psi_\gamma(z) \right\|_2 \\ 
 &\lesssim \left\|\frac{1}{|S_2|}\sum_{(x,y) \in S_2} x \psi_\gamma(z)\right\|_2 + \left\|\frac{1}{|S_2|}\sum_{(x,y) \in S_2 \setminus T_2 } x \psi_\gamma(z) \right\|_2 
 + \left\|\frac{1}{|T_2|}\sum_{(x,y) \in T_2 \setminus S_2} x \psi_\gamma(z) \right\|_2 \\
&\lesssim \gamma \left( \sqrt{\frac{p}{n}} + \sqrt{\frac{\log(1 / \tau)}{n}} +  \delta_2 + \delta_3\right),
\end{align*}
with probability at least $1-\tau$, where the last step uses Proposition~\ref{PropStabL1Mean} and the stability of $S_2$ and $T_2$.
Using the bounds on $\delta_2$ and $\delta_3$ completes the proof.

\paragraph{\textbf{Proof of (iii):}}

We have shown that with probability at least $1-2\tau$, the sets $S_2$ and $T_2$ both satisfy weak stability with $\epsilon_6$, $L = \Omega(1)$, and $U = O(1)$; in addition, statements (1)--(4) hold in the proof of part (i) above. We denote this high-probability event by $\cE$, and show that under the additional assumptions, the desired strong convexity statement holds on the event $\cE$.

By the same argument used in the proof of Lemma~\ref{LemmaHessLower}, we know that on event $\cE$, if $r, \gamma$, and $\tau$ satisfy the inequality
\begin{align*}
\frac{ r \sqrt{U}}{\gamma} + \frac{ \sigma^2	}{\gamma^2} + \frac{\log (1 / \tau)}{n} \lesssim \epsilon_6,
\end{align*}
then
\begin{align}
\sup_{\beta: \|\beta- \beta^*\|_2 \leq r} \frac{1}{|S_2|} \sum_{(x,y) \in S_2}  \1\left(|y - x^T \beta| \geq \gamma\right) \leq \frac{\epsilon_6}{10}.
\label{EqAdvHuberCvx}
\end{align}
Crucially, we use the fact that conditioned on the event $\cE$ (which is entirely defined in terms of the covariates), the noise random variables $\{z_i = y_i - x_i^T \beta^*: (x_i, y_i) \in S_2\}$ remain i.i.d.

Now let $W := \sup_{\beta: \|\beta- \beta^*\|_2 \leq r} \frac{1}{|T_2|} \sum_{(x,y) \in T_2} \sum_{i=1}^n \1\left(|y - x^T \beta| \geq \gamma\right)$. 
Note that
\begin{align}
\label{EqnWbound}
W &\leq \frac{|T_2 \setminus S_2 |}{|T_2|} +   \sup_{\beta: \|\beta- \beta^*\|_2 \leq r} \frac{1}{|T_2|} \sum_{(x,y) \in S_2}  \1\left(|y - x^T \beta| \geq \gamma\right).
\end{align}
On the event $\cE$, we can bound the first term by
\begin{align*}
	\frac{|T_2 \setminus S_2|}{|T_2|}
	  \leq \frac{|T_1 \setminus S_2|}{n/2} \le \frac{2}{n} \left( |T_1 \setminus S_1| + |S_1 \setminus S_2| \right) \leq \frac{2}{n} \left( 2\epsilon n + \frac{\epsilon_6 n}{20} \right) \leq \frac{2 \epsilon_6}{5},
	\end{align*}
where the third inequality uses the fact that $|S_2| \geq (1 - \epsilon_5/20)n$, and the last inequality uses the bound $4 \epsilon \leq \frac{\epsilon_5}{10} = \frac{3\epsilon_6}{10}$.
The second term of inequality~\eqref{EqnWbound} can be bounded by
\begin{align*}
\sup_{\beta: \|\beta- \beta^*\|_2\leq r} \frac{1}{|T_2|} \sum_{(x,y) \in S_2}  \1\left(|y - x^T \beta| \geq \gamma\right) & = \frac{|S_2|}{|T_2|} \cdot \sup_{\beta: \|\beta- \beta^*\|_2\leq r} \frac{1}{|S_2|} \sum_{(x,y) \in S_2}  \1\left(|y - x^T \beta| \geq \gamma\right) \\
& \le \frac{n}{n/2} \cdot \frac{\epsilon_6}{10} = \frac{\epsilon_6}{5},
\end{align*}
using inequality~\eqref{EqAdvHuberCvx}. Thus,
\begin{align*}
W &\leq \frac{2\epsilon_6}{5} + \frac{\epsilon_6}{5} < \epsilon_6.
\end{align*}

Now define the matrix
\begin{align*}
H_n(\beta) := \frac{1}{|T_2|} \sum_{(x,y) \in T_2} xx^T \1\left( |y - x^T \beta| < \gamma\right).
\end{align*}
It follows that the strong convexity parameter of $\cL_ \gamma(\beta)$ is at least $\lambda_{\min}(H_n)$. Using the fact that $T_2$ satisfies weak stability and $W \le \epsilon_6$, we conclude that on the event $\cE$, we have $\lambda_{\min}(H_n(\beta)) \ge L$ for any $\beta$ such that $\|\beta- \beta^*\|_2 \leq r$, as wanted.

\subsection{Proof of Theorem~\ref{ThmAdvHubRegUnknownCov}}
\label{AppHuberUnkCov}

We will show that conditions analogous to the ones stated in Lemma~\ref{LemHubAdv} hold in this setting. As the proof is very similar to the proof in Section~\ref{AppLemHubAdv}, we only highlight several arguments which need to be adapted. We use the same notation defined in the previous section.

\paragraph{Condition (i):} Since the distribution of the covariates has a bounded covariance,
Theorem~\ref{ThmStabHighProbCovariance} implies that, with probability at least $1- \tau$, the set $S_2$ is $(\epsilon_2, \delta_2)$-stable, where $\delta_2 \lesssim \sqrt{\frac{p \log p}{n}} + \sqrt{\epsilon} + \sqrt{\frac{\log(1/\tau)}{n}}$. Recall that we needed $\frac{\delta_2^2}{\epsilon_2} = O(1)$. This is still satisfied, since $n \gtrsim p\log p$ and $\epsilon + \frac{\log(1/\tau)}{n} < c$, for a sufficiently small positive constant $c$.

It remains to establish ($\epsilon,L,U)$-weak stability of $T_2$ with $\epsilon= \Omega(1)$, $L= \Omega(1)$, and $U = O(1)$.
Similar to the proof of Lemma~\ref{LemHubAdv}, the lower bounds on $\epsilon$ and $L$ follow from the properties of $S_2$ which hold by the small ball property of the covariates, as shown in Proposition~\ref{PropStabSimplified}.

\paragraph{Condition (ii):} As shown in the proof of Lemma~\ref{LemHubAdv}, the norm of the gradient is bounded as $\|\nabla \cL_\gamma(\beta^*)\|_2 \lesssim \gamma\left(\sqrt{\frac{p}{n}} + \sqrt{\frac{\log(1/\tau)}{n}}\right) + \delta_2 + \delta_3$.
Since $\delta_3 = O(\delta_2)$, the bound on $\delta_2$ established in the previous paragraph suffices. 

\paragraph{Condition (iii):} This is exactly same as before, because we only used weak stability of the sets $S_2$ and $T_2$ to show this result.

\section{Least trimmed squares}
\label{AppLTS}

In this appendix, we provide additional proof details for the results in Section~\ref{SecLTS}.

\subsection{Proof of Lemma~\ref{LemAltMin}}
\label{AppLTSBhatia}

In this appendix, we reproduce the proof of the convergence guarantee for alternating minimization from Bhatia et al.~\cite{BhaJKK17}.

We begin by introducing some additional notation: For a vector $a \in \R^n$ and a set $S \subseteq [n]$, we will use $a_S$ to denote the vector $q \in \R^n$ such that (i) for $i \in S$, $q_i = v_i$; and (ii) for $i \not\in S$, $q_i = 0$. 
Similarly, for a matrix $A \in \R^{n \times p}$ and a set $S \subseteq [n]$, we will use $A_{S}$ to denote the matrix $Q \in \R^{n \times p}$ such that (i) for $i \in S$, the $i^{\text{th}}$ row of $Q$ is the same as the $i^{\text{th}}$ row of $A$; and (ii) for $i \not \in S$, all entries in the $i^{\text{th}}$ row of $Q$ are $0$.

\begin{lemma}
\label{ClaimLTSHT}
Suppose $a \in \R^n$. Let $b = \HT_{r}(a)$, let $S_1 = \text{supp}(b)$, and let $S \subseteq [n] $ be such that $S_1 \subseteq S$.
Then for any $r$-sparse vector $c$, we have $\|b - a_S\|_2 \leq \|c - a_S \|_2$.
\end{lemma}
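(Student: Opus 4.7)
The plan is to expand both squared norms componentwise by splitting $[n]$ according to how the indices meet $S$, $S_1$, and $T := \mathrm{supp}(c)$. Since $S_1 \subseteq S$ and $b$ agrees with $a$ on $S_1$ and vanishes off $S_1$, a direct computation gives
\begin{equation*}
\|b - a_S\|_2^2 \;=\; \sum_{i \in S \setminus S_1} a_i^2,
\end{equation*}
whereas for the $r$-sparse vector $c$ supported on $T$,
\begin{equation*}
\|c - a_S\|_2^2 \;=\; \sum_{i \in T \cap S} (c_i - a_i)^2 \;+\; \sum_{i \in T \setminus S} c_i^2 \;+\; \sum_{i \in S \setminus T} a_i^2.
\end{equation*}
Dropping the first two nonnegative sums, it suffices to show $\sum_{i \in S \setminus S_1} a_i^2 \le \sum_{i \in S \setminus T} a_i^2$.

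The key structural observation will be that although $S_1$ is defined as the top-$r$ indices of $a$ on all of $[n]$, the assumption $S_1 \subseteq S$ forces $S_1$ to also be a top-$r$ choice \emph{within} $S$: for any $i \in S_1$ and $j \in S \setminus S_1$ we have $j \notin S_1$, hence $|a_j| \le |a_i|$ by the definition of $\HT_r$. Consequently $S \setminus S_1$ is a minimizer of $U \mapsto \sum_{i \in U} a_i^2$ among subsets $U \subseteq S$ of size $|S|-r$.

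Since $c$ is $r$-sparse we have $|T| \le r$, so $|S \setminus T| \ge |S| - r = |S \setminus S_1|$. By the minimization property above, any subset of $S$ of cardinality at least $|S|-r$ has squared-magnitude sum at least $\sum_{i \in S \setminus S_1} a_i^2$ (if strictly larger, discard elements to reduce to the extremal case), yielding $\sum_{i \in S \setminus T} a_i^2 \ge \sum_{i \in S \setminus S_1} a_i^2$ and finishing the argument. There is no real obstacle here; the only subtlety to be careful about is the tie-breaking convention in the definition of $\HT_r$, but since the inequality uses only $|a_j| \le |a_i|$ for $j \notin S_1$, $i \in S_1$, ties do not affect the bound.
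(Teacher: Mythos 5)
Your proof is correct and follows essentially the same route as the paper's: both drop the nonnegative contributions outside $S$ and then observe that, since $S_1 \subseteq S$ consists of the $r$ largest-magnitude coordinates of $a$, the residual sum $\sum_{i \in S \setminus S_1} a_i^2$ is the minimum of $\sum_{i \in S}(c_i - a_i)^2$ over $r$-sparse $c$. Your version merely spells out the componentwise decomposition and the top-$r$-within-$S$ argument that the paper dismisses with ``it is not hard to see,'' which is fine.
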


\begin{proof}
Without loss of generality, let $a$ be such that $|a_1|\geq |a_2|\geq \cdots \ge |a_n|$.
Then $S_1 = [r]$.
Note that for any vector $c$, we have
\begin{align*}
\|c - a_S \|_2^2 &\geq \|c_S - a_S \|_2^2 = \sum_{i\in S}(c_i - a_i)^2 .
\end{align*}
It is not hard to see that the right-hand expression is minimized over $r$-sparse vectors when $c_i = a_i$ for $i \in S_1$ and $c_i = 0$ for $i \in S \setminus S_1$. This yields the expression $\|b - a_S\|_2^2$, completing the proof.
\end{proof}

Using the notation from Bhatia et al.~\cite{BhaJKK17}, let $X \in \R^{d \times n}$ denote the matrix of covariates, let $Y \in \R^n$ denote the vector of responses, and let $Z:= Y - X^T \beta^*$. (Note that the matrix $X$ is now defined to be the transpose of the design matrix that we denote by $X$ elsewhere in the paper.)
Recall that the model is $Y = X^T \beta^* + w + b^*$, where the idea is that $w$ has small entries and is nearly orthogonal to $X$, whereas $b^*$ is $m$-sparse.

Recall that $b^j$ was defined iteratively in the algorithm, and further define
\begin{align*}
\lambda^j & := (XX^T)^{-1}X(b^j - b^*), \\
g & := (I-P_X)w.
\end{align*}

Note that the update step can be written as follows:
\begin{align*}
b^{j+1} =  \HT_m\left(P_Xb^j + (I-P_X)(X^T\beta^* + w + b^*)\right) = \HT_m(b^* + X^T \lambda^j + g),
\end{align*}
using the fact that $X^T = P_XX^T$.
Denote $I_{j} := \text{supp}(b^j) \cup \text{supp}(b^*)$.
Applying Lemma~\ref{ClaimLTSHT} with $a = b^* + X^T\lambda^j + g$ and $S = I_{j+1}$, we have
  \begin{align*}
\|b^{j+1} - (b^* + X^T\lambda^j + g)_{I_{j+1}}\|_2 &\leq \|b^* - (b^* + X^T\lambda^j + g)_{I_{j+1}}\|_2 \\
&= \|b^* - b^* - X_{I_{j+1}}^T\lambda^j -g_{I_{j+1}}\|_2 = \|X_{I_{j+1}}^T\lambda^j + g_{I_{j+1}}\|_2,
\end{align*}
where we use the fact that $\text{supp}(b^*) \subseteq I_{j+1}$.
By the triangle inequality, we then have
\begin{align*}
\|b^{j+1} - b^*\|_2 &\leq \|b^{j+1} - b^*- X_{I_{j+1}}^T \lambda^j - g_{I_{j+1}} \|_2 + \|X_{I_{j+1}}^T \lambda^j + g_{I_{j+1}}\|_2 \\
&\leq 2 \|X_{I_{j+1}}^T \lambda^j + g_{I_{j+1}}\|_2 \leq 2 \|X_{I_{j+1}}^T \lambda^j\|_2 + 2\|g_{I_{j+1}}\|_2.
\end{align*}
We bound each of the latter two terms separately.
For the first term, we use the definition of $\lambda^j$ and the eigenvalue bounds on the covariates to write the following:
\begin{align*}
\|X_{I_{j+1}}^T \lambda^j\|_2 = \|X_{I_{j+1}}^T (XX^T)^{-1}X_{I_{j+1}} (b^j - b^*)\|_2 \leq  \frac{ \Lambda_{2m} }{\lambda_n} \|b^j - b^*\|_2.
\end{align*}

We now focus on the second term. By the triangle inequality, we have
\begin{align*}
\|g_{I_{j+1}}\|_2 &= \|W_{I_{j+1}} - X^T_{I_{j+1}}(XX^T)^{-1} X W\|_2 \\
&\leq \|W_{I_{j+1}}\|_2 + \|X^T_{I_{j+1}} (XX^T)^{-1}X W\|_2 \\
& \leq G + \frac{H}{\sqrt{\lambda_n}},
\end{align*}
using the fact that $W_{I_{j+1}}$ is at most $2m$-sparse and the bound
\begin{align*}
\|X^T_{I_{j+1}} (XX^T)^{-1}X W\|_2 \leq \frac{\sqrt{\Lambda_{2m}} H}{\lambda_n} \leq \frac{H}{\sqrt{\lambda_n}}.
\end{align*}
Combining the inequalities yields the bound
\begin{align}
\label{EqConvOfB}
\|b^{j+1} - b^*\|_2 \leq  \frac{2 \Lambda_{2m}}{ \lambda_n} \|b^j - b^*\|_2 + e_0 \le \frac{1}{2} \|b^j - b^*\|_2 + e_0,
\end{align}
where $e_0 := 2G + 2\frac{H}{\sqrt{\lambda_n}}$ and we have used the assumption that $\frac{2 \Lambda_{2m}}{ \lambda_n} \leq \frac{1}{2}$. Iterating the bound, we see that $\|b^j - b^* \| \leq 3e_0$ whenever $j \geq \log_2\left(\frac{\|b^0 - b^*\|_2}{e_0}\right)$.

To bound the final error between $\beta^j$ and $\beta^*$, we note that $\beta^j - \beta^* = (XX^T)^{-1}X(W + b^* - b^j)$. Using the definitions of $G$ and $H$, we have
\begin{align*}
\|\beta^j - \beta^*\|_2 &= \|(XX^T)^{-1}X(W + b^* - b^j)\|_2 \leq \frac{\|  X ( W +  (b^* - b^j))\|_2}{\lambda_n} \\
&\leq \frac{\|  X W\|_2 +    \|X(b^* - b^j)\|_2}{\lambda_n} 
\lesssim \left(\frac{H +   \sqrt{\Lambda_n}\left(G + \frac{H}{\sqrt{\lambda_n}}\right) }{\lambda_n} \right)\\
&\lesssim \frac{H + G\sqrt{\Lambda_n}}{\lambda_n},
\end{align*}
completing the proof.

\subsection{Proof of Theorem~\ref{ThmLTS}}
\label{AppLTSProb}

We will use the notation $z_i := y_i - x_i^T \beta^*$ and $z_i' := y_i' - (x_i')^T \beta^*$.

Let $m = C_1\left(p \log p + \epsilon n + \log\left(\frac{1}{\tau}\right)\right)$, for a large enough constant $C_1>6$ to be chosen later.
We will now apply Proposition~\ref{PropStabSimpleV2} with $\epsilon_1 =  \frac{C_2 m}{n}$, for a constant $C_2 \geq 1$ to be decided later.
In order for Proposition~\ref{PropStabSimpleV2} to be applicable, we need $\epsilon_1 < c^*$ and $n = \Omega\left(\frac{p \log p}{\epsilon_1}\right)$: For any $C_2$, the latter condition can be satisfied by choosing $C_1$ sufficiently large, and then the former condition can be satisfied by restricting $\epsilon, \frac{\log(1/ \tau)}{n}$, and $\frac{p \log p}{n}$ to be less than sufficiently small constants.
Let $T_1 \subseteq T$ be the set of data points corresponding to covariates which survive the filter algorithm, and let $n_1 := |T_1|$. Proposition~\ref{PropStabSimpleV2} guarantees that
with probability at least $1-2\tau$, we have
\begin{itemize}
	\item  $|T_1| \geq (1 - c_1\epsilon_1)n \geq \frac{n}{2}$,
	\item the covariates of the points in $T_1$ are $(\epsilon_2, \delta_2)$-stable, where
 \begin{equation*}
 \delta_2 = O\left(\sqrt{\frac{p \log p}{n}} + \sigma_{x,4} \epsilon_1^{3/4} + \sigma_4 \sqrt{\frac{\log(1/ \tau)}{n}}\right)
 \end{equation*}
 and $\epsilon_2 = \Theta (\epsilon_1)$, and
	\item  $\frac{\delta_2^2}{\epsilon_2} < 0.05$.
 \end{itemize}

We will now choose $C_2$ sufficiently large such that  $\epsilon_2 n = \Theta( \epsilon_1 n) = \Theta( C_2 m) > 4m$.
From here on, we will also assume that $\epsilon$, $\frac{\log(1/\tau)}{n}$, and $\frac{p \log p}{n}$ are bounded such that $4m \leq n$.

We now show that the SSC and SSS parameters of the covariates in $T_1$ are well-behaved, so that Lemma~\ref{LemAltMin} applies. We will apply the lemma to the model
\begin{equation}
\label{EqnLinCorrupt}
y_i' = (x_i')^T \beta^* + w_i + b_i^*, \qquad 1 \le i \le n_1,
\end{equation}
where for a set $T_2 \subseteq T_1$ to be defined later, we define the vector $w \in \real^{n_1}$ according to
\begin{equation*}
w_i :=
\begin{cases}
z_i, & \text{if } (x_i, y_i) \in T_2, \\
0, & \text{otherwise},
\end{cases}
\end{equation*}
and then simply define $b^* := y_i' - (x_i')^T \beta^* - w$.
Let the SSC and SSS parameters of $T_1$ be denoted by $\{\lambda_k\}$ and  $\{\Lambda_k\}$, respectively.
Note that
\begin{equation*}
\Lambda_{2m} \leq \Lambda_{ \epsilon_2 n/ 2} \leq \Lambda_{\epsilon_2 n_1} \leq \frac{3 n_1 \delta_2^2}{\epsilon_2} \leq 0.15n_1,
\end{equation*}
where we have used Proposition~\ref{PropStabSqError} in the third inequality.
By the $(\epsilon_2,\delta_2)$-stability of $T_1$, we have $\lambda_{n_1} \geq n_1\left(1 - \frac{\delta_2^2}{\epsilon_2}\right) \geq 0.9n_1$.
Therefore, $\frac{\Lambda_{2m}}{\lambda_{n_1}} \leq \frac{1}{4}$. Since
\begin{equation*}
\Lambda_{n_1} \le n_1\left(1+\frac{\delta_2^2}{\epsilon_2}\right) \le 1.05n_1,
\end{equation*}
we also have $\Lambda_{n_1} = O(\lambda_{n_1})$. Thus, the eigenvalue conditions of Lemma~\ref{LemAltMin} are indeed satisfied.

We now turn to the definition of $T_2$ and show that with this definition, $b^*$ is $m$-sparse. Let $S_2 \subseteq S$ be the set of $n - \frac{m}{4}$ uncontaminated data points with the smallest values of $|z_i|$. Let $F$ be the cumulative distribution function of $|z_i|$ and let $F^{-1}$ be its generalized inverse, i.e., $F^{-1}(p) = \inf_{t} \P(|z| \leq t) \geq p $.
Note that by a Chernoff bound, we have
\begin{align}
\label{EqnResid}
\left|\left\{i \in [n]: |z_i| > F^{-1}\left(1 - \frac{m}{8n}\right)\right\}\right| \leq \frac{m}{4},
\end{align}
with probability at least $1 - \exp(- \Omega(m))$.
Let $S_2' := S_2 \cap T$ denote the corresponding set of data points that are preserved after corruption.

Next, let $q_i := x_iz_i$, for $1 \le i \le n$, and note that the $q_i$'s are i.i.d.\  random variables with mean zero and covariance $\sigma^2I$. Applying Theorem~\ref{ThmStabHighProb} with $\epsilon_3 =  \frac{m}{3n}$ on the set $S' := \{q_1,\dots,q_n\}$, we see that, with probability except $O(\exp(- \Omega(m)))$,  there exists a set $S_3 \subseteq S'$ such that (i) $|S_3| \geq (1 - \epsilon_3)n$, and (ii) $S_3$ is $(C_4\epsilon_3, \delta_3)$-stable with respect to $\sigma^2$, where $C_4 = c_1 C_2 + 1$  and $\delta_3 = O\left(\sqrt{\frac{p \log p}{n}} + \sigma \sqrt{\frac{m}{n}}\right)$. Let $S_3' := \{(x_i, y_i): x_i z_i \in S_3\} \cap T$ denote the corresponding set of $(x,y)$ pairs that are also preserved after corruption.

Finally, we define the set
\begin{equation*}
T_2 := T_1\cap S_2' \cap S_3'.
\end{equation*}
Note that
\begin{equation*}
|T_1\setminus T_2| \leq \left(|S \setminus S_2| + |T \setminus S| \right) + \left(|S' \setminus S_3| + |T \setminus S| \right) \leq 2\epsilon n + \frac{m}{4} +  \frac{m}{3} \leq m,
\end{equation*}
where we use the fact that $m \geq 6 \epsilon n$ (since $C_1 > 3$). Thus, the vector $b^* \in \real^{n_1}$ is indeed $m$-sparse, and Lemma~\ref{LemAltMin} implies an error bound of order $\frac{G}{\sqrt{n_1}}  + \frac{H}{n_1}  = O\left(\frac{G}{\sqrt{n}} + \frac{H}{n}\right)$.
It remains to control the parameters $G$ and $H$.

Recall that with high probability, inequality~\eqref{EqnResid} holds, in which case the nonzero entries of $w_i$ have magnitude at most $F^{-1}\left(1 - \frac{m}{8n}\right)$.
Thus, we have
\begin{equation*}
 \sup_{S': |S'| \leq 2m} \sqrt{\sum_{i \in S'} w_i^2} \le \sqrt{2m}F^{-1}\left(1 - \frac{m}{8n}\right) \lesssim \sqrt{m} \left(\frac{m}{n}\right)^{- 1/k'},
\end{equation*}
where the second inequality follows from the $(k')^{\text{th}}$ moment condition on $z_i$. 
Thus, we may take $G = O\left(\sqrt{m} \left(\frac{m}{n}\right)^{- 1/k'}\right)$.

Turning to $H$, note that with high probability, we have
\begin{align*}
\frac{|T_2|}{|S_3|} \geq  \frac{|T_1| - m}{n} \geq 1 - c_1 \epsilon_1 - \frac{m}{n} = 1 - ( c_1 C_2 + 1)\frac{m}{n} = 1 - C_4\epsilon_3. 
\end{align*}
Hence, the $(C_4\epsilon_3, \delta_3)$-stability of $S_3$ implies that 
\begin{align*}
 \left\|\sum_{i = 1}^{n_1} x_i'w_i \right\|_2 = \left\|\sum_{(x,y) \in T_2} x_i z_i \right\|_2 \leq |T_2| \sigma \delta_3 \leq n \sigma \delta_3,
\end{align*}
where we employ the notation used in the proof of Theorem~\ref{ThmAdvHuberReg} in the second expression. Therefore, $H \leq n\sigma \delta_3$.

Altogether, we arrive at the error bound
\begin{align*}
\|\widehat{\beta} - \beta^*\|_2 \leq \frac{G}{\sqrt{n}} + \frac{H}{n} \lesssim \sigma \left( \delta_3  + \sigma_{z,k'} \left(\frac{m}{n}\right)^{\frac{1}{2} - \frac{1}{k'}}\right) \lesssim \sigma \sigma_{z,k'}\left(  \frac{p \log p  }{n }  + \epsilon +  \frac{\log(1 / \tau)}{n} \right)^{\frac{1}{2} - \frac{1}{k'}},
\end{align*}
where we use the value of $m$ and the fact that $ \delta_3  \lesssim \sigma_{z,k'}\left(\frac{m}{n}\right)^{1/2 - 1/k'}$.
Moreover, the probability of error is at most $O(\exp(- \Omega(m)))$.
Lastly, we choose $C_1$ large enough so that the error probability is at most $O(\tau)$.

Finally, we bound the number of iterations of the alternating minimization algorithm required to guarantee the desired accuracy bound. In light of Remark~\ref{RemAltMin}, it suffices to obtain a high-probability upper bound on $\|b^*\|_2$ that can be computed from the data.
Recall the notation $S = (X,y)$ and $T = (X', y')$ for the i.i.d.\ and corrupted data sets, respectively, and recall that $T_1 \subseteq T$ denotes the filtered data set. Abusing notation slightly, we write the model~\eqref{EqnLinCorrupt} in matrix/vector form as $y'_{T_1} = X'_{T_1} \beta^* + w_{T_1} + b^*_{T_1}$. We claim that
\begin{equation}
\label{EqnBstarBd}
\|b^*_{T_1}\|_2 = O\left(\|y'\|_2(1 + \|X'\|_2)\right),
\end{equation}
with probability at least $1 - O(\exp(-\Omega(n)))$.

Recall that by construction, either $b^*_i = 0$ or $w_i = 0$ for each $i$ in the model~\eqref{EqnLinCorrupt}. Thus, by the triangle inequality, we have
\begin{align*}
\|b^*_{T_1}\|_2 \leq \|y'_{T_1}\|_2 + \|X'_{T_1} \beta^*\|_2 \leq \|y'\|_2 + \|X' \beta^*\|_2 \le \|y'\|_2 + \|X'\|_2 \|\beta^*\|_2.
\end{align*}
We now use concentration properties of the i.i.d.\ points in $S$ to obtain a data-driven upper bound on $\|\beta^*\|_2$. Note that $\E(y_i^2) = \|\beta^*\|_2^2 + \sigma^2$.
Furthermore, by Lemma~\ref{LemConvex} and the convexity of the absolute value function, we have
\begin{align*}
\E |y_i| = \E |x_i^T \beta^* + z_i| \geq  \max \{\E | x_i^T \beta^* |, \E |z_i|\}.
\end{align*}
Furthermore, we can lower-bound both $\E | x_i^T \beta^* |$ and $\E |z_i|$ using Proposition~\ref{PropHolder} and Assumption~\ref{AsCov}:
\begin{align*}
\E |x_i^T \beta^*| & \geq \frac{\|\beta^*\|_2}{\sigma_{x,4}^2}, \\
\E |z_i| & \geq \frac{\sigma}{\sigma_{z,4}^2},
\end{align*}
using the assumption that $(\E |z_i|^{4})^{1/4} \leq \sigma_{z,4}\sigma$ by $(4,2)$-hypercontractivity.

By the Paley-Zygmund inequality (e.g., see Exercise 2.4 of Boucheron et al.~\cite{BouLM13}), we have
\begin{align*}
\P\left( |y_i| \geq \frac{\E |y_i|}{2} \right) & \geq \frac{(\E |y_i|)^2}{4\E y_i^2} \\
& \geq \frac{\max\left\{\frac{ \|\beta^*\|_2^2}{\sigma_{x,4}^4}, \frac{\sigma^2}{\sigma_{z,4}^4}\right\}}{4(\|\beta^*\|_2^2 + \sigma^2)} \\
& \geq \frac{1}{\max\{\sigma_{x,4}^4, \sigma_{z,4}^4\}} \cdot \frac{\frac{1}{2}\left(\|\beta^*\|_2^2 + \sigma^2\right)}{4(\|\beta^*\|_2^2 + \sigma^2)} \\
& = \frac{1}{8 \max(\sigma_{x,4}^4, \sigma_{z,4}^4)}.
\end{align*}
Thus,
\begin{equation*}
\P\left( |y_i| \geq \frac{\|\beta^*\|_{2}}{2\sigma_{x,4}^2}\right) \geq \frac{1}{8 \max\{\sigma_{x,4}^4, \sigma_{z,4}^4\}}.
\end{equation*}
Let $\gamma = 16\max\{\sigma_{x,4}^4, \sigma_{z,4}^4\}$, which is assumed to be $O(1)$.
Let $W$ be the  $\lceil \left(1 - 1/ \gamma\right)n	\rceil^{\text{th}}$ largest $|y_i|$.
Then by a Chernoff bound, we have
\begin{align*}
\P \left( W  <  \frac{\|\beta^*\|_{2}}{2\sigma_{x,4}^2}\right) \leq \exp\left( -  \Omega\left(\frac{n}{\alpha}\right)\right).
\end{align*}
Finally, for $\epsilon < \frac{1}{2 \gamma}$, we have $\max_i | y'_i| \geq W $.
Therefore, with high probability,
\begin{equation*}
\|\beta^*\|_2 \leq 2 \sigma_{x,4}^2 \max_i |y_i'| = O(\|y'\|_2).
\end{equation*}
This completes the proof.

\section{Least absolute deviation }
\label{AppLAD}

In this appendix, we provide additional proof details for the results in Section~\ref{SecLAD}.

\subsection{Auxiliary results}

\begin{proposition}
\label{PropHolder}
Suppose $Z$ satisfies $\E Z^2 = 1$ and $ \E Z^4 < \infty$.
Then $ \E |Z| > 1/ \sqrt{\E |Z|^4}$.
\end{proposition}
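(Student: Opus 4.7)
The plan is to prove this as a direct application of Hölder's inequality, using a clever interpolation of $|Z|^{2}$ between $|Z|^{1}$ and $|Z|^{4}$. The idea is to write $|Z|^{2} = |Z|^{2/3}\cdot |Z|^{4/3}$ and then apply Hölder with conjugate exponents $p=3$ and $q=3/2$; the choice of $a=2/3$, $b=4/3$ is dictated by the constraints $a+b=2$ (so that the product equals $Z^{2}$) together with $ap=1$ and $bq=4$ (so that the two factors on the right-hand side become $\E|Z|$ and $\E Z^{4}$).

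Concretely, I would first verify (by a short calculation) that with $p=3$, $q=3/2$ one has $1/p+1/q = 1$, and then apply Hölder to obtain
\[
1 \;=\; \E Z^{2} \;=\; \E\!\left(|Z|^{2/3}\cdot |Z|^{4/3}\right) \;\leq\; \bigl(\E|Z|\bigr)^{2/3}\bigl(\E Z^{4}\bigr)^{1/3}.
\]
Cubing both sides gives $1 \leq (\E|Z|)^{2}\,\E Z^{4}$, and taking square roots yields the desired bound $\E|Z|\geq 1/\sqrt{\E Z^{4}}$.

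For the strict inequality, the main (and only) subtlety is the equality case of Hölder. Equality holds iff $|Z|^{2p/3}=|Z|^{2}$ and $|Z|^{4q/3}=|Z|^{2}$ are proportional almost surely, i.e., $|Z|$ is almost surely a positive constant; the constraint $\E Z^{2}=1$ then forces $|Z|\equiv 1$ a.s., in which case $\E|Z|=\E Z^{4}=1$ and the bound becomes an equality. Excluding this degenerate case (which is implicitly ruled out by the applications, since $Z$ is a nontrivial projection of a random covariate or a nondegenerate noise variable), the inequality is strict, as claimed.

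The ``hard part'' is really just spotting the right Hölder exponents; once the interpolation $|Z|^{2}=|Z|^{2/3}|Z|^{4/3}$ is in hand, the rest is a one-line computation. No concentration, no moment comparisons beyond Hölder, and no appeal to earlier lemmas are needed.
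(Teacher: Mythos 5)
Your proof is correct and takes essentially the same route as the paper: both apply H\"older's inequality with conjugate exponents $3$ and $3/2$ to the factorization $|Z|^2 = |Z|^{2/3}\cdot|Z|^{4/3}$, obtaining $1 = \E Z^2 \le (\E|Z|)^{2/3}(\E Z^4)^{1/3}$ and rearranging. Two cosmetic remarks: in your prose the exponent $3$ should be paired with the factor $|Z|^{4/3}$ and $3/2$ with $|Z|^{2/3}$ (your displayed inequality already does this correctly, whereas the stated constraints $ap=1$, $bq=4$ fail with your assignment), and your observation about the equality case is apt --- the inequality is in general only $\ge$ (e.g.\ for Rademacher $Z$), which is also all the paper's own proof establishes.
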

\begin{proof}
We apply H\"{o}lder's inequality, which states that
\begin{align*}
\E |XY| \leq (\E |X|^p)^{1/p} (\E |Y|^q )^{1/q},
\end{align*}
for $p \in (1 , \infty)$ and $q =  \frac{p}{p-1}$. Taking $X= Z^{4/3}$, $ Y =Z^{2/3}$, and $p = 3$, we have
\begin{align*}
1 = \E Z^2 \leq (\E (|Z|^{4/3})^3)^{1/3} (\E (|Z|^{2/3})^{3/2})^{2/3} = (\E |Z|^{4})^{1/3} (\E |Z|)^{2/3}.
\end{align*}
\end{proof}

\begin{lemma}
\label{LemTrimmedSumL1}
Let $X_1, \dots, X_n$ be i.i.d.\ nonnegative random variables and let $\epsilon \in (0,1)$.
Then with probability $1 - 2\exp(- c n \epsilon)$, the trimmed sum satisfies
\begin{equation*}
\sum_{i=1}^{(1 - \epsilon)n} X_{(i)} = O\left(\frac{n \E X_i}{\epsilon}\right),
\end{equation*}
where $\{X_{(i)}\}_{i=1}^n$ are order statistics.
\end{lemma}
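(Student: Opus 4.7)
The plan is to reduce the trimmed sum bound to counting the number of points that exceed a suitable truncation threshold, chosen so that Markov's inequality gives a probability of excess of at most $\epsilon/2$.

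First, I would set the truncation level $T := \frac{2\,\E X_i}{\epsilon}$. By Markov's inequality, $p := \P(X_i > T) \leq \epsilon/2$. Let $N := |\{i \in [n] : X_i > T\}|$; then $N$ is a sum of independent Bernoulli random variables with mean $np \leq n\epsilon/2$. Applying the Chernoff bound stated in Lemma~\ref{ThmChernoff} with $\kappa = \epsilon/(2p) \geq 1$, I obtain $N \leq 2\kappa \cdot np = n\epsilon$ with probability at least $1 - \exp(-cn\epsilon)$ for an absolute constant $c>0$.

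On this high-probability event, fewer than $\epsilon n$ of the $X_i$'s exceed $T$, so the $(1-\epsilon)n$ smallest order statistics are all bounded above by $T$. Therefore,
\[
\sum_{i=1}^{(1-\epsilon)n} X_{(i)} \;\leq\; (1-\epsilon)n \cdot T \;\leq\; \frac{2\,n\,\E X_i}{\epsilon},
\]
which is the desired $O(n\E X_i/\epsilon)$ bound. The $2\exp(-cn\epsilon)$ probability in the lemma statement absorbs a constant factor of slack (e.g., from the edge case $\E X_i = 0$, which is trivial since then $X_i = 0$ almost surely, or from using a slightly different truncation constant).

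The argument is essentially routine once the truncation trick is identified; there is no real obstacle. The only minor point to be careful about is ensuring that the application of Lemma~\ref{ThmChernoff} is valid, which requires the parameter $\kappa = \epsilon/(2p)$ to be at least $1$ — this follows immediately from $p \leq \epsilon/2$.
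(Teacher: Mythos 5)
Your proposal is correct and follows essentially the same route as the paper's proof: truncate at a threshold of order $\E X_i/\epsilon$ (you fix it directly via Markov's inequality, while the paper uses the quantile $F^{-1}(1-\epsilon/3)$ and bounds it by Markov at the end), control the number of exceedances with the Chernoff bound of Lemma~\ref{ThmChernoff}, and bound the trimmed sum by $n$ times the threshold. The only point worth noting is the degenerate case $p=0$ (where $\kappa=\epsilon/(2p)$ is undefined), but there the count $N$ is zero almost surely and the conclusion is immediate.
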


\begin{proof}
Let $F$ be the cdf of the $X_i$'s, and let $F^{-1}$ be its inverse, so $F^{-1}(1 - \epsilon) = \inf\{t: \P (X_i > t) \leq \epsilon   \}$ for $\epsilon \in [0,1]$.
Let $a := F^{-1}\left(1 - \frac{\epsilon}{3}\right)$ and define $Z_i = \min(X_i,a)$. Note that $\sum_{i=1}^n Z_i \le an$.

Now let $Y_i = \1\{X_i > a\}$ and define the event
\begin{align*}
\cE := \left\{ \sum_{i=1}^n Y_i < \epsilon n  \right\}.
\end{align*}
We have
\begin{equation*}
\E Y_i = \P (X_i > a) = \P\left(X_i > F^{-1}\left(1 - \frac{\epsilon}{3}\right)\right) \leq \frac{\epsilon}{3}.
\end{equation*}
Applying a Chernoff bound, we therefore have
\begin{align*}
\sum_{i=1}^n Y_i \leq \frac{2 \epsilon n}{3},
\end{align*}
with probability at least $1 - \exp(- c n \epsilon)$, implying that $\P(\cE) \ge 1 - \exp(-c_2n\epsilon)$.

Finally, note that on the event $\cE$, we have
\begin{align*}
\sum_{i=1}^{(1 - \epsilon)n} X_{(i)} \leq \sum_{i=1}^n Z_i \le an.
\end{align*}
Applying Markov's inequality, we have $\P\left( X_i \ge  \frac{4\E X_i}{\epsilon}\right) \leq \frac{\epsilon}{4} < \frac{\epsilon}{3}$.
Therefore, $a \leq \frac{4 \E X_i}{\epsilon}$, completing the proof.
\end{proof}

\begin{lemma}
\label{LemLowL1Stab}
 Suppose the covariates $x_1,\dots, x_n$ are sampled i.i.d.\ from a distribution satisfying Assumption~\ref{AsCov}.
With probability $1 - 2\exp(- c n \epsilon)$, we have that for any unit vector $v$ and any $S\subseteq [n]$ with $|S| \geq (1 - \epsilon)n$, the following holds:
\begin{align*}
\frac{1}{n}\sum_{i \in S}  |x_i^Tv|  \geq  \frac{1}{ \sigma_{x,4}^2} - O\left(\sqrt{\epsilon} + \sqrt{ \frac{p}{n}}\right).
\end{align*}
\end{lemma}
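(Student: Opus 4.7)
The plan is to combine a truncation trick with a uniform empirical process concentration over the unit sphere $\cS^{p-1}$. The reason a naive Cauchy--Schwarz argument fails is that under only fourth-moment assumptions, the empirical operator norm $\|\frac{1}{n}\sum_i x_i x_i^T\|_2$ need not be $O(1)$ with the desired exponentially small failure probability. Instead, define the truncated absolute value $\phi_M(t) := \min(|t|, M)$, which is bounded by $M$, $1$-Lipschitz, and satisfies $\phi_M(0) = 0$; for any $S \subseteq [n]$ with $|S| \geq (1-\epsilon)n$ and any $v \in \cS^{p-1}$, the pointwise bound $|x_i^T v| \geq \phi_M(x_i^T v)$ gives
\begin{align*}
\frac{1}{n}\sum_{i \in S} |x_i^T v| \;\geq\; \frac{1}{n}\sum_{i=1}^n \phi_M(x_i^T v) - \frac{1}{n}\sum_{i \notin S} \phi_M(x_i^T v) \;\geq\; \frac{1}{n}\sum_{i=1}^n \phi_M(x_i^T v) - \epsilon M.
\end{align*}
The uniformity over subsets $S$ is therefore absorbed into a single deterministic correction $\epsilon M$, and it remains to control the full-sample truncated sum uniformly over $v$.

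For the expectation, Proposition~\ref{PropHolder} applied to $x^T v$ (which has $\E (x^T v)^2 = 1$ by Assumption~\ref{AsCov} and $\E(x^T v)^4 \leq \sigma_{x,4}^4$ by hypercontractivity) yields $\E|x^T v| \geq 1/\sigma_{x,4}^2$, and a Markov-type estimate gives
\begin{align*}
0 \;\leq\; \E|x^T v| - \E\phi_M(x^T v) \;\leq\; \E|x^T v|\,\mathbf{1}\{|x^T v| > M\} \;\leq\; \frac{\E(x^T v)^4}{M^3} \;\leq\; \frac{\sigma_{x,4}^4}{M^3}.
\end{align*}

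For the uniform deviation, I would apply Talagrand's inequality (Lemma~\ref{ThmTalagrand}) to the centered empirical process indexed by $v \in \cS^{p-1}$. The envelope is $L = 2M$, and the wimpy variance satisfies $\sigma^2 \leq n \sup_v \E(x^T v)^2 = n$. The expected supremum is controlled by symmetrization followed by the contraction principle (using that $\phi_M$ is $1$-Lipschitz with $\phi_M(0)=0$), which reduces it to a linear supremum: $\E \sup_v |\sum_i \sigma_i \phi_M(x_i^T v)| \leq 2\,\E\|\sum_i \sigma_i x_i\|_2 \leq 2\sqrt{np}$ by Jensen. Talagrand's inequality then gives, with probability at least $1 - \tau$,
\begin{align*}
\sup_{v \in \cS^{p-1}} \left|\frac{1}{n}\sum_{i=1}^n \phi_M(x_i^T v) - \E\phi_M(x^T v)\right| \;\lesssim\; \sqrt{\frac{p}{n}} + \sqrt{\frac{\log(1/\tau)}{n}} + \frac{M \log(1/\tau)}{n}.
\end{align*}
Setting $\tau = 2\exp(-c n\epsilon)$ turns the last two terms into $\sqrt{\epsilon} + M\epsilon$.

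Combining all three displays gives
\begin{align*}
\frac{1}{n}\sum_{i \in S}|x_i^T v| \;\geq\; \frac{1}{\sigma_{x,4}^2} - \frac{\sigma_{x,4}^4}{M^3} - \epsilon M - O\!\left(\sqrt{\tfrac{p}{n}} + \sqrt{\epsilon} + M\epsilon\right),
\end{align*}
and choosing $M = \epsilon^{-1/4}$ makes $\sigma_{x,4}^4/M^3$, $\epsilon M$, and $M\epsilon$ all of order $\epsilon^{3/4} \leq \sqrt{\epsilon}$, which yields the claim. The main obstacle is the balance between the truncation level $M$ (which must grow with $1/\epsilon$ so that the truncation bias $M^{-3}$ is negligible) and the Bernstein-type term $M\epsilon$ in Talagrand's bound (which grows with $M$); the fourth-moment assumption is exactly strong enough to make this tradeoff close at rate $\sqrt{\epsilon}$. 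A naive McDiarmid bound would give $M\sqrt{\epsilon}$ in place of $M\epsilon$ and would not close at any rate, so using the Bernstein-type refinement via the wimpy variance is essential.
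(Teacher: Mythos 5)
Your proof is correct and follows essentially the same route as the paper's: truncate the linear functional, lower-bound the truncated expectation via Proposition~\ref{PropHolder} plus a moment bound on the truncation bias, and control the uniform deviation over the sphere by symmetrization, contraction, and Talagrand's inequality (Lemma~\ref{ThmTalagrand}) with the same $\sqrt{np}$ expected-supremum and wimpy-variance bounds. The only differences are cosmetic: the paper truncates at $Q \asymp \epsilon^{-1/2} + \epsilon^{-1}\sqrt{p/n}$ and bounds the truncation bias by Cauchy--Schwarz using only second moments (invoking Lemma~\ref{LemTruncLin} for the subset-removal step, which, as you observe, is in fact a deterministic correction of size $\epsilon \cdot (\text{truncation level})$), whereas you truncate at $M = \epsilon^{-1/4}$ and use the fourth moment, closing the same tradeoff at rate $\epsilon^{3/4} \leq \sqrt{\epsilon}$.
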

\begin{proof}
Let $Q$ be the threshold $C\left(\sqrt{\frac{1}{\epsilon}} + \frac{1}{\epsilon} \sqrt{\frac{p}{n}}\right)$ from Lemma~\ref{LemTruncLin}. Let $\cE$ denote the event from Lemma~\ref{LemTruncLin}, stating that for any unit vector $v$, we have $\left|\{i:|x_i^Tv|\geq Q\}\right| \leq \epsilon n$.
By the lemma, we know that $\P(\cE) \ge 1 - \exp(-c n \epsilon)$.

We will now assume that the event $\cE$ holds and incur an additional failure probability of $\exp(-cn \epsilon)$ by a union bound.
Define the function $f: \R_+ \to \R_+$, as follows:
\begin{align*}
f(x) = \begin{cases} x, & \text{ if } x \in [0,Q],\\
Q, & \text{ otherwise,}\end{cases},
\end{align*}
and let $g(x) = -f(x)$. For any $v \in \cS^{p-1}$, on the event $\cE$, we have the following bound:
\begin{align*}
\min_{S: |S| \geq (1 - \epsilon)n} \sum_{i \in S} |x_i^Tv| &\geq  \sum_{i=1}^n f(|x_i^Tv|) - \epsilon Q n \\
&= - \left(\sum_{i=1}^n g(|x_i^Tv|) - \E g(|x_i^Tv|)\right) + n\E f(|x_i^Tv|) - \epsilon Q n.
\end{align*}
Taking an infimum over $v$, we then have
\begin{equation}
\label{EqnTeddy}
\inf_{v \in \cS^{p-1}} \min_{S: |S| \geq (1 - \epsilon)n} \sum_{i \in S} |x_i^Tv|
\ge   - \epsilon Q n  - \sup_{v \in \cS^{p-1}} \left(\sum_{i=1}^n g(|x_i^Tv|) - \E g(|x_i^Tv|)\right) + n\left(\inf_{v \in \cS^{p-1}} \E f(|x_i^Tv|) \right).
\end{equation}
Now define the random variable
\begin{align*}
N := \sup_{v \in \cS^{p-1}} \sum_{i=1}^n g(|x_i^Tv|) - \E g(|x_i^Tv|).
\end{align*}
We first bound the expectation of $N$ using symmetrization and contraction of Rademacher averages~\cite{LedTal91,BouLM13}:
\begin{align*}
\E N & \leq 2 \E \sup_{v \in \cS^{p-1}} \left|\sum_{i=1}^n \xi_i g(|x_i^Tv|) \right| \leq 4 \E \sup_{v \in \cS^{p-1}} \left|\sum_{i=1}^n \xi_i x_i^Tv\right| \\
& \le 4 \E \left(\left\|\sum_{i=1}^n \xi_i x_i\right\|_2 \sup_{v \in \cS^{p-1}} \|v\|_2\right) \le 4 \sqrt{\E\left(\left\|\sum_{i=1}^n \xi_i x_i\right\|_2^2\right)} \\
& = 4 \sqrt{\E\left(\sum_{i=1}^n x_i^T x_i\right)} = 4 \sqrt{\sum_{i=1}^n \E\left(\trace(x_i^T x_i)\right)} = 4 \sqrt{\sum_{i=1}^n \E\left(\trace (x_i x_i^T)\right)} \\
& = 4 \sqrt{\sum_{i=1}^n\trace\left(\E\left(x_i x_i^T\right)\right)} \\
& = 4 \sqrt{pn},
\end{align*}
where the $\xi_i$'s are i.i.d.\ Rademacher random variables. We now bound the following term (which is usually called the \textit{wimpy variance}~\cite{BouLM13}):
\begin{align*}
\sigma^2 := \sup_v n\Var( g(|x_i^Tv|)) \leq \sup_v n \E |x_i^Tv|^2 = n.
\end{align*}
Using Talagrand's inequality for bounded empirical processes (cf.\ Lemma~\ref{ThmTalagrand}), we therefore have
\begin{equation}
\label{EqnBear}
N = O( \sqrt{pn} +  \sqrt{n}\sqrt{n \epsilon} + Q n \epsilon) = O( \sqrt{pn} + n \sqrt{\epsilon} + n \sqrt{\epsilon} + \sqrt{pn} ) = O(\sqrt{pn} + n \sqrt{\epsilon}),
\end{equation}
with probability at least $ 1 - \exp( - c'n \epsilon)$.

Finally, note that for any $v \in \cS^{p-1}$, the Cauchy-Schwarz inequality gives 
\begin{align*}
\E\left| f(|x_i^Tv|) - |x_i^Tv|\right| & \le \E \left(|x_i^Tv| \1\{|x_i^Tv| > Q\}\right) \\
& \leq \sqrt{\E (x_i^Tv)^2} \sqrt{\P (|x_i^Tv| \geq Q)} \leq \sqrt{\frac{\E (x_i^Tv)^2}{Q^2}} \\
& =  O(\sqrt{\epsilon}),
\end{align*}
where the last two steps use Markov's inequality and the fact that $Q = \Omega(1/ \sqrt{\epsilon})$.
Thus,
\begin{equation}
\label{EqnDuck}
\E f(|x_i^T v|) \geq \E|x_i^Tv| - O(\sqrt{\epsilon}) \ge \frac{1}{\sigma_{x,4}^2} - O(\sqrt{\epsilon}),
\end{equation}
where the second inequality follows from Proposition~\ref{PropHolder}.

Combining inequalities~\eqref{EqnTeddy}, \eqref{EqnBear}, and~\eqref{EqnDuck}, we obtain the bound
\begin{align*}
\frac{1}{n} \inf_{v \in \cS^{p-1}} \min_{S: |S| \geq (1 - \epsilon)n} \sum_{i \in S} |x_i^Tv|
&\geq \inf_v \E f(|x_i^Tv|) - \epsilon Q - \frac{N}{n} \\
&\geq \frac{1}{\sigma_{x,4}^2} - O\left(\sqrt{\frac{p}{n}} + \sqrt{\epsilon}\right).
\end{align*}
This completes the proof.
\end{proof}

\subsection{Proof of Lemma~\ref{LemLADL1Stab}}
\label{AppLADL1Stab}

We follow the proof strategy from Koltchinskii and Mendelson~\cite{KM15} and Diakonikolas et al.~\cite{DiaKP20}.

Let $T_1 = \{(x_i', y_i')\}_{i=1}^{n_1}$ be the output of the filter algorithm with inputs $T$ and $\epsilon'$, where $\epsilon_1 < \epsilon'$.
By Proposition~\ref{PropStabSimpleV2},
with probability at least $1 - 2\exp( - n\epsilon' )$, the set $T_1$ is $(\epsilon_2, \delta_2)$-stable, where $\epsilon_2 = \Theta(\epsilon')$ and $\delta_2 = O\left(\sqrt{\frac{p \log p}{n}} + \sqrt{\epsilon'}\right)$, and $T_1$ has cardinality $n_1 \ge (1 - c_1 \epsilon')n$. Furthermore, we choose $\epsilon_1$ and $\epsilon'$ sufficiently small to guarantee that $n_1 \ge \frac{n}{2}$.
Therefore, for any $T' \subseteq T_1$ such that $|T'| \leq \epsilon_2 |T_1|$, Proposition~\ref{PropStabL1Error} states that for all unit vectors $v$,
\begin{align}
\label{EqnmBd}
\frac{1}{n_1} \sum_{x_i' \in T'} |v^Tx_i'| \leq 2 \delta_2.
\end{align}

Let $T_2 \subseteq T_1$ be a set such that $|T_2| \geq (1 - \epsilon_2)|T_1|$.

 Since $|T_1| = n_1 \geq \frac{n}{2}$, we have
\begin{align}
\label{EqnSetSize}
\nonumber
|T_2 \cap S| &= |S| - | S\setminus T| - |T\setminus T_1 | - | T_1 \setminus  T_2| \geq n -  \epsilon_1n - c_1 \epsilon ' n - \epsilon_2 n_1  \geq (1 - \epsilon_1 - c_1 \epsilon' -  \epsilon_2)n \\
&\geq (1 -  c_3 \epsilon')n,
\end{align}
where $c_3$ is a constant, using the facts that $\epsilon_1 < \epsilon'$ and $\epsilon_2 = \Theta(\epsilon')$.

Now suppose $n = \Omega (p \sigma_{x,4}^4)$ and $\epsilon_0 = O\left(\frac{1}{\sigma_{x,4}^4}\right)$. By Lemma~\ref{LemLowL1Stab}, we know that, with probability at least $1 - \exp(- \Omega(n\epsilon_0))$,
we have
\begin{align}
\frac{1}{n} \sum_{i \in S'} |x_i^Tv| \geq \frac{1}{2 \sigma_{x,4}^2},
\label{EqLowBoundL1}
\end{align}
for any $S' \subseteq [n]$ such that $|S'| \geq (1 - \epsilon_0) n$ and any $v \in \cS^{p-1}$. Hence, if $c_3 \epsilon' \le \epsilon_0$, inequalities~\eqref{EqnSetSize} and~\eqref{EqLowBoundL1} together imply that
\begin{align}
\label{EqnMBd}
\frac{1}{|T_1|} \sum_{x_i' \in T_2} |v^Tx_i'| \geq \frac{1}{n} \sum_{x_i \in T_2 \cap S} |v^Tx_i| \geq \frac{1}{2 \sigma_{x,4}^2}. 
\end{align}

From inequalities~\eqref{EqnmBd} and~\eqref{EqnMBd}, we conclude that $T_1$ satisfies $\left(\epsilon_2, m= 2 \delta_2, M = \frac{1}{2 \sigma_{x,4}^2}, \ell_1\right)$-stability with the desired probability. Note that if we choose $n = \Omega(p \log p)$ large enough
and $\epsilon'$ to be a sufficiently small constant, we can guarantee that $c_2\epsilon_2 \le \epsilon_0$ and $\delta_2$ is sufficiently small, so $2m \le M$.

\section{Postprocessing}

In this appendix, we provide additional proof details for the results in Section~\ref{SecPP}.
We will use the following result from Diakonikolas et al.~\cite{DiaKP20}, which gives a result corresponding to Theorem~\ref{ThmStabHighProb} when the distribution only has a finite variance:
\begin{theorem}(Diakonikolas et al.~\cite{DiaKP20})
\label{ThmStabHighProbCovariance}
Let $S$ be a set of $n$ i.i.d.\ points from a distribution in $\R^p$ with mean $\mu$ and covariance $\Sigma \preceq \sigma^2 I$ for some $\sigma\geq 0$.
Let $\epsilon$ and $\tau$ be such that $\epsilon' = C\left(\epsilon + \frac{\log(1/\tau)}{n}\right) = O(1)$, for a large enough constant $C$. 
Then with probability at least $1 - \tau$,  there exists a subset $S' \subseteq S$ such that $|S'| \geq (1 - \epsilon')|S|$ and $S'$ is $(C_1\epsilon',\delta)$-stable with respect to $\mu$ and $\sigma^2$, where $C_1 > 2$ is any large constant and $\delta = O\left( \sqrt{\frac{p\log p}{n}} + \sqrt{\epsilon}+ \sqrt{\frac{\log(1/ \tau)}{n}}\right)$, with prefactor depending on $C_1$.
\end{theorem}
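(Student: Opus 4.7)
}

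The plan is to reduce to Theorem~\ref{PropPost} via a covering argument over the possible values of $\widehat{\beta}_1$. For any fixed $\beta \in \R^p$, let $w_i(\beta) := \beta + (y_i - x_i^T\beta)x_i = \beta^* + (I - x_ix_i^T)(\beta^* - \beta) + x_i z_i$, so that $\E[w_i(\beta)] = \beta^*$ whenever $\beta$ is viewed as deterministic. Write $S_1(\beta) := \{w_1(\beta), \dots, w_n(\beta)\}$. Observe that $T_1$ (as defined in the theorem) is an $\epsilon$-corrupted version of $S_1(\widehat{\beta}_1)$, so once we establish that $S_1(\widehat{\beta}_1)$ contains a large stable subset with high probability, Theorem~\ref{ThmStability} and Remark~\ref{RemStableMean} will yield the conclusion.

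The first main step is to build a deterministic $\eta$-net $\mathcal{N}$ for the Euclidean ball $B(\beta^*, C\sigma)$ of radius $O(\sigma)$, chosen with $\eta = \sigma/(np)^c$ for a suitable constant $c$, so that $\log|\mathcal{N}| = O(p\log(pn))$. For every fixed $\beta \in \mathcal{N}$, the vectors $\{w_i(\beta)\}_{i=1}^n$ are i.i.d., and the exact calculation in~\eqref{EqnCovW} in the proof of Theorem~\ref{PropPost} shows $\|\Sigma_{w_i(\beta)}\|_2 \lesssim \sigma^2$ (using $\|\beta - \beta^*\|_2 \lesssim \sigma$ and $(4,2)$-hypercontractivity). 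Applying Theorem~\ref{ThmStabHighProbCovariance} to $S_1(\beta)$ with failure probability $\tau/|\mathcal{N}|$ and the prescribed $\epsilon' = \Theta(\epsilon + \log(1/\tau)/n + p\log(pn)/n)$, I obtain a subset $S'(\beta) \subseteq S_1(\beta)$ of cardinality at least $(1 - \epsilon')n$ that is $(C_1\epsilon', \delta)$-stable with respect to $\beta^*$ and $O(\sigma^2)$, where
\[
\delta \;\lesssim\; \sigma\left( \sqrt{\tfrac{p\log(pn)}{n}} + \sqrt{\epsilon} + \sqrt{\tfrac{\log(1/\tau)}{n}}\right).
\]
A union bound over $\mathcal{N}$ then ensures this holds \emph{simultaneously} for every $\beta \in \mathcal{N}$, with probability at least $1 - \tau$.

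The second main step is the Lipschitz transfer from the net to the actual (data-dependent) $\widehat{\beta}_1$. For the closest $\beta \in \mathcal{N}$ to $\widehat{\beta}_1$, the identity
\[
w_i(\widehat{\beta}_1) - w_i(\beta) = (I - x_ix_i^T)(\widehat{\beta}_1 - \beta)
\]
gives $\|w_i(\widehat{\beta}_1) - w_i(\beta)\|_2 \le \eta(1 + \|x_i\|_2^2)$. I need to control $M := \max_i\|x_i\|_2^2$; using $\E\|x_i\|_2^4 \lesssim \sigma_{x,4}^4 p^2$ (from hypercontractivity and Jensen) together with Markov and a union bound over $i \in [n]$, I obtain $M \lesssim p\sqrt{n/\tau}$ on an event of probability $1 - \tau$. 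Since $\eta$ is polynomially small, the per-point perturbation is at most $\sigma/(np)^{c-O(1)}$, which, when inserted into the two defining inequalities of $(\epsilon, \delta)$-stability, only changes the mean of any large subset by $o(\sigma\delta)$ and the second-moment matrix by $o(\sigma^2\delta^2/\epsilon)$ in spectral norm. Consequently, the stable subset $S'(\beta)$ lifts (by keeping the same index set) to a stable subset $S'(\widehat{\beta}_1) \subseteq S_1(\widehat{\beta}_1)$ with essentially the same parameters.

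With $S_1(\widehat{\beta}_1)$ now known to contain a $(O(\epsilon'), O(\delta))$-stable subset of size $\ge (1-\epsilon')n$, applying the filter algorithm of Theorem~\ref{ThmStability} to the $\epsilon$-corrupted set $T_1$ returns a set $T'$ whose empirical mean lies within $O(\sigma\delta)$ of $\beta^*$, yielding the stated bound on $\|\widehat{\beta} - \beta^*\|_2$ with total probability at least $1 - 2\tau$ (one $\tau$ for the net union bound and the filter, one for the control of $M$). The main obstacle will be the transfer step: I must choose $\eta$ small enough that the spectral perturbation of the (possibly restricted) empirical covariance of $S_1(\widehat{\beta}_1)$ relative to that of $S_1(\beta)$ is negligible compared to $\sigma^2\delta^2/\epsilon$, while keeping $\log(1/\eta) = O(\log(pn))$ so the covering cost matches the $\sqrt{p\log(pn)/n}$ rate in the theorem; handling the heavy-tailed fluctuation of $\|x_i\|_2^2$ is what forces the extra logarithmic factor as compared with Theorem~\ref{PropPost}.
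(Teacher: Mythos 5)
Your proposal does not prove the statement it was assigned. The statement in question is Theorem~\ref{ThmStabHighProbCovariance}, the stability-of-a-large-subset result for distributions with bounded covariance, which the paper imports verbatim from Diakonikolas et al.\ and does not prove. Your write-up is instead a proof of Theorem~\ref{PropRobMeanMain}, and it explicitly \emph{invokes} Theorem~\ref{ThmStabHighProbCovariance} as a black box (``Applying Theorem~\ref{ThmStabHighProbCovariance} to $S_1(\beta)$ with failure probability $\tau/|\mathcal{N}|$\dots''). Relative to the assigned statement this is circular: you assume exactly what you were asked to establish. A proof of Theorem~\ref{ThmStabHighProbCovariance} itself would have to work directly with the i.i.d.\ sample --- e.g., via truncation, a Chernoff bound on the number of points with large projections (in the spirit of Lemma~\ref{LemTruncLin}), and a Talagrand/matrix-Bernstein control of the restricted second-moment matrix, as in the arguments of Appendix~\ref{AppStability} --- none of which appears in your proposal.

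As an aside, if your text is judged as a proof of Theorem~\ref{PropRobMeanMain}, it follows the paper's covering strategy closely, but your handling of the net-to-data transfer has a quantitative flaw. You control $M := \max_i \|x_i\|_2^2$ by Markov plus a union bound over $i \in [n]$, obtaining $M \lesssim p\sqrt{n/\tau}$; since you then need $\eta \lesssim \sigma\delta/(1+M)$, the covering cost $\log|\mathcal{N}| \approx p\log(1/\eta)$ picks up a $\log(1/\tau)$ term, which for small $\tau$ (e.g.\ $\tau = e^{-cn}$) inflates $\log|\mathcal{N}|$ to order $pn$ and destroys the claimed $\sqrt{p\log(pn)/n}$ rate. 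The paper avoids this by \emph{discarding} the at most $O(\epsilon' n)$ points with $\|x_i\|_2 > \sqrt{p/\epsilon'}$ or $|z_i| > \sigma/\sqrt{\epsilon'}$ (the set $S_2$ and Lemma~\ref{ClaimPPCont}), so the per-point norm bound on the surviving set is polynomial in $n$ and $p$ uniformly in $\tau$, and the perturbation budget $\eta = r\sqrt{\epsilon'}/f(p/\epsilon')$ keeps $\log(1/\eta) = O(\log(pn))$.
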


\subsection{Proof of Theorem~\ref{PropRobMeanMain}}
\label{AppPP}

Our approach differs from the proof of Theorem~\ref{PropPost} in that the vectors in the set 
\begin{equation*}
S_1 = \left\{\widehat{\beta}_1+ (y_i - x_i^T \widehat{\beta}_1)x_i: (x_i,y_i) \in S\right\}
\end{equation*}
may no longer be i.i.d.\ when we condition on the initial estimator $\widehat{\beta}_1$. Thus, we cannot directly apply Theorem~\ref{ThmStabSubGaussian} to obtain an error bound. On the other hand, recall from Remark~\ref{RemStableMean} that if we can show the existence of a sufficiently large stable subset of the set $S_1$, Theorem~\ref{ThmStability} implies a corresponding error bound.

For any fixed $v \in \real^p$, define the random variables
\begin{align*}
W^v_i  := v + (y_i- x_i^Tv)x_i = v + x_ix_i^T(\beta^* - v) + z_ix_i, \qquad \forall 1 \le i \le n,
\end{align*}
and define the multiset $S_v: = \{W^v_1,\dots,W^v_n\}$. Note that each set $S_v$ consists of $n$ i.i.d.\ data points, so that stability properties can be obtained easily; the additional challenge is that we need to show the existence of a stable subset for all $v \in \real^p$ simultaneously, so that we can apply the result when $v = \widehat{\beta}_1$. To this end, we will use a covering argument. Let $r = \Theta(\sigma)$ be such that $\|\widehat{\beta}_1 - \beta^*\|_2 \leq r$, and define the set $ \cT: = \{v: \|\beta^*- v\|_2 \leq r\}$.
We now define $\cC_\eta \subseteq \cT$ to be an $\eta$-cover of $\cT$, i.e., for every $v \in \cT$, there exists $v' \in \cC_ \eta$ such that $\|v - v'\|_2 \leq \eta$. Note that for $\eta \leq r$, we can choose $\cC_\eta$ such that $\log(|\cC_\eta|) \leq  p \log\left(\frac{3r}{\eta}\right)$ (cf. Corollary 4.2.13 of Vershynin~\cite{Ver18}).
	
For any $v \in \cC_\eta$, we have $\E W^v_i = \beta^*$.
Let $\Delta = v - \beta^*$.
As in inequality~\eqref{EqnCovW} in the proof of Theorem~\ref{PropPost}, we can argue that $\|\text{Cov}(W_v)\|_2 \le C_0 \sigma^2$.

Applying Theorem~\ref{ThmStabHighProbCovariance} with parameters $\tau' = \tau \exp(-C_1 p\log (pn)) $ and $\epsilon' = \Theta\left(\epsilon + \frac{\log(1/ \tau')}{n}\right) = \Theta\left(\epsilon + \frac{\log(1/ \tau)}{n} + \frac{p \log (pn)}{n} \right)$, for a large constant $C_1 > 0$ to be defined later, we see that with probability at least $1 - \tau'$, there exists a set $S_v' \subseteq S_v$ such that $|S_v'| \geq (1 - \epsilon')n$ and $S_v'$ is $(C\epsilon', \delta)$-stable with respect to $\beta^*$  
 and $\sigma_*^2 := C_0 \sigma^2$, where $\delta := \Theta\left( \sqrt{\frac{p\log (pn)}{n}} + \sqrt{\epsilon} + \sqrt{\frac{\log(1/ \tau)}{n}}\right)$.

Suppose a stable set exists for every element of $C_\eta$ (we will bound the error probability later). Now consider an arbitrary $v' \in \real^p$, and let $v \in C_\eta$ be such that $\|v' - v\|_2 \leq \eta$. We know that there exists a set $S_v' \subseteq S_v$ which is $(C \epsilon', \delta)$-stable with respect to $\beta^*$ and $\sigma_*^2$; we will show how to obtain a stable set $S_{v'}' \subseteq S_{v'}$ using $S_{v}'$. Note that $S_v'$ corresponds to a set of indices which we define as $T_v \subseteq [n]$, so $S'_v = \{W_i^{v}\}_{i \in T_v}$.

Define the set
\begin{align*}
S_2 := \left\{(x_i, y_i): \|x_i\|_2 \le \sqrt{\frac{p}{\epsilon'}} \text{ and } |y_i - x_i^T \beta^*| \le \frac{\sigma}{\sqrt{\epsilon'}}\right\}.
\end{align*}
By a Chernoff bound, we can argue that with probability at least $1 - \exp(- cn \epsilon') = 1 - O(\tau)$, we have $|S_2| \geq (1 - 4\epsilon')n$. Indeed, define the indicator variables $E_i = 1\{(x_i, y_i) \in S_2\}$. Then
\begin{align*}
\E(E_i) & = \P\left(\|x_i\|_2 \le \sqrt{\frac{p}{\epsilon'}} \text{ and } |z_i| \le \frac{\sigma}{\sqrt{\epsilon'}}\right) \ge 1 - \P\left(\|x_i\|_2^2 \ge \frac{p}{\epsilon'}\right) - \P\left(z_i^2 \ge \frac{\sigma^2}{\epsilon'}\right) \\
& \ge 1 - \frac{\E(\|x_i\|_2^2)}{d/\epsilon'} - \frac{\E(z_i^2)}{\sigma^2/\epsilon'} = 1 - 2\epsilon',
\end{align*}
using Markov's inequality. Applying the multiplicative Chernoff bound in Lemma~\ref{ThmChernoff} to the random variables $(1-E_i)$, we then obtain
\begin{equation*}
\P\left(|S_2| \ge (1-4\epsilon')n\right) \ge \P\left(\frac{1}{n}\sum_{i=1}^n (1-E_i) \le 4\epsilon'\right) \ge 1- \exp(-cn\epsilon'),
\end{equation*}
as claimed. We also define the set of indices $T_0 \subseteq [n]$ such that $S_2 = \{(x_i, y_i)\}_{i \in T_0}$.

Now let $T_{v'} := T_v \cap T_0$ and consider the set $S'_{v'} := \{W_i^{v'}\}_{i \in T_{v'}}$, which we will show is stable with high probability. Note that $|T_{v'}'| \geq (1 - 5\epsilon')n$.
We have the following lemma, proved in Appendix~\ref{AppClaimPP}:

\begin{lemma}
\label{ClaimPPCont}
Suppose $S'_v$ is $(C\epsilon', \delta)$-stable with respect to $\beta^*$ and $\sigma_*^2$ such that $|S_v'| \ge (1-\epsilon')n$, and suppose $|S_2| \ge (1-4\epsilon')n$. Suppose $\|v - v'\|_2 \le \eta$ and $\eta = \frac{r \sqrt{\epsilon'}}{f(d/\epsilon')}$, where $f$ is an appropriately defined second-degree polynomial. Then
$S_{v'}'$ is $(C \epsilon'/2, \delta')$-stable with respect to $\beta^*$ and $\sigma_*^2$, where $\delta' = \Theta\left( \sqrt{\frac{p\log (pn)}{n}} + \sqrt{\epsilon} + \sqrt{\frac{\log(1/ \tau)}{n}}\right)$.
\end{lemma}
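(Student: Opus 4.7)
The plan is to establish the $(C\epsilon'/2, \delta')$-stability of $S'_{v'}$ by a pointwise perturbation argument around $S'_v$, using the extra truncation $T_0$ to keep the perturbation size under control. The starting identity is
\[
W^{v'}_i - W^v_i = (v' - v) + x_i^T(v - v') x_i = (I - x_i x_i^T)(v' - v),
\]
so $\|W^{v'}_i - W^v_i\|_2 \le (1 + \|x_i\|_2^2)\eta$ unconditionally. Restricting to $i \in T_0$, where by construction $\|x_i\|_2^2 \le p/\epsilon'$, this improves to the uniform pointwise bound $\|W^{v'}_i - W^v_i\|_2 \le \eta(1 + p/\epsilon')$; on the same set I also have the scalar bound $|z_i| \le \sigma/\sqrt{\epsilon'}$, which will be needed later.

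Next I would show that any large subset of $S'_{v'}$ lifts to a large-enough subset of $S'_v$ to invoke its stability. Let $U \subseteq S'_{v'}$ with $|U| \ge (1-C\epsilon'/2)|S'_{v'}|$, index it as $U_{v'} \subseteq T_{v'}$, and set $U_v := \{W^v_i : i \in U_{v'}\} \subseteq S'_v$. Using $|T_{v'}| \ge (1-5\epsilon')n$ and $|S'_v| \le n$, a short arithmetic check gives $|U_v| \ge (1-C\epsilon')|S'_v|$ provided $C$ is taken large enough (e.g.\ $C \ge 10$) and $\epsilon'$ is sufficiently small. The $(C\epsilon', \delta)$-stability of $S'_v$ with respect to $\beta^*$ and $\sigma_*^2$ then yields
\[
\left\|\tfrac{1}{|U_v|} \sum_{W \in U_v} (W - \beta^*)\right\|_2 \le \sigma_* \delta, \quad \left\|\tfrac{1}{|U_v|} \sum_{W \in U_v} (W-\beta^*)(W-\beta^*)^T - \sigma_*^2 I\right\|_2 \le \sigma_*^2 \delta^2/\epsilon'.
\]

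Finally I would transfer these two bounds from $U_v$ to $U$. The mean bound follows immediately from the triangle inequality, picking up at most $\eta(1 + p/\epsilon')$ from the pointwise perturbation, which is smaller than $\sigma_*\delta$ for the prescribed $\eta$. For the covariance, write $Y_i := W^v_i - \beta^*$ and $\Delta_i := W^{v'}_i - W^v_i$ and expand $(Y_i + \Delta_i)(Y_i + \Delta_i)^T = Y_iY_i^T + Y_i \Delta_i^T + \Delta_i Y_i^T + \Delta_i \Delta_i^T$. On $T_0$, expanding $Y_i = (v - \beta^*) + x_i x_i^T(\beta^* - v) + z_i x_i$ and using $\|v-\beta^*\|_2 \le r$ gives $\|Y_i\|_2 \le r(1 + p/\epsilon') + \sigma\sqrt{p}/\epsilon'$, while $\|\Delta_i\|_2 \le \eta(1 + p/\epsilon')$; hence the cross and quadratic corrections are controlled by a degree-two polynomial in $p/\epsilon'$ times $\eta$ (respectively $\eta^2$). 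Choosing $f$ to be a suitable quadratic, so that $\eta \cdot (p/\epsilon')^2 = O(r\sqrt{\epsilon'})$, makes all correction terms of order at most $\sigma^2 \delta^2/\epsilon'$, and one obtains $\delta' = \Theta(\delta)$ as required. The main obstacle is precisely this bookkeeping for the covariance: the $Y_i$'s grow polynomially in $p/\epsilon'$ on $T_0$ (through both $x_i x_i^T v$ and $z_i x_i$), so the quadratic tuning of $f$ is calibrated exactly to cancel against the $(p/\epsilon')^2$ blow-up in the cross term, leaving residuals that are absorbable by $\sigma_*^2 \delta^2/\epsilon'$ because $\delta^2 = \Omega(\epsilon')$.
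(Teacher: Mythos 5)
Your proposal is correct and follows essentially the same route as the paper's proof: restrict to the truncated index set $T_0$ to get pointwise control on $\|x_i\|_2$ and $|z_i|$, check that large subsets of $S'_{v'}$ correspond to $(1-C\epsilon')$-fraction subsets of $S'_v$ (with $C \ge 10$), invoke the stability of $S'_v$, and absorb the perturbation terms by calibrating the quadratic $f$ against $r = \Theta(\sigma_*)$ and $\delta = \Omega(\sqrt{\epsilon'})$. The only cosmetic difference is that you group the perturbation as a single term $\Delta_i = (I - x_i x_i^T)(v'-v)$, whereas the paper splits it into $b_i + c$ before expanding the outer product; the bookkeeping is the same.
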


Finally, we use a union bound to control the failure probability. Combining the error probability for the Chernoff bound for $S_2$ with the error probabilities for the elements of $C_\eta$, we see that the overall probability of error is bounded by
\begin{align*}
\exp(-cn\epsilon') + \tau' |\cC_ \eta| & \le \exp(-cn\epsilon') + \tau'\exp\left( p \log\left(\frac{3r }{ \eta}\right)\right) \\
& =  \exp(-cn\epsilon') + \tau \exp\left(-C_1 p\log(pn) + p \log\left(\frac{3f(d/\epsilon')}{\sqrt{\epsilon'}}\right) \right) \\
& \le \exp(-cn\epsilon') + \tau \exp\left(-C_1 p \log(pn) + c_1 p \log n + \frac{p}{2} \log \left(\frac{1}{\epsilon'}\right)\right) \\
& \le \exp(-cn\epsilon') + \tau \exp\left(-C_1 p \log(pn) + c_1 p \log n + c_2 p\log n\right),
\end{align*}
using the choice of $\eta$ in Lemma~\ref{ClaimPPCont} and the fact that $\epsilon' = \Omega\left(\frac{p}{n}\right)$ in the last two inequalities. The final expression can be made smaller than $2\tau$ for a sufficiently large choice of $C_1$, completing the proof.

\subsection{Proof of Lemma~\ref{ClaimPPCont}}
\label{AppClaimPP}

Consider any set $T' \subseteq T_{v'}$ such that $|T'| \ge \left(1- \frac{C\epsilon'}{2}\right) |T'_{v'}|$, and define $\Delta := \beta^* - v$ and $\Delta' := \beta^* - v '$, so $\Delta' - \Delta = v - v'$. Using the triangle inequality, we write
\begin{align}
\label{EqnMilk}
\left\|\frac{1}{|T'|} \sum_{i \in T'} W_i^{v'} - \beta^*\right\|_2 & = \left\|\frac{1}{|T'|} \sum_{i \in T'} v' +  x_ix_i^T (\beta^* - v')  + x_iz_i - \beta^* \right\|_2 \notag \\
& = \left\|\frac{1}{|T'|} \sum_{i \in T'} x_ix_i^T \Delta'  + x_iz_i - \Delta' \right\|_2 \notag \\
& \leq \left\|\frac{1}{|T'|} \sum_{i \in T'} x_ix_i^T\Delta + x_iz_i - \Delta \right\|_2 + \left\|\frac{1}{|T'|} \sum_{i \in T'}x_ix_i^T(\Delta' - \Delta)\right\|_2 + \|\Delta' - \Delta\|_2 \notag \\
& \le \left\|\frac{1}{|T'|} \sum_{i \in T'} x_ix_i^T\Delta + x_iz_i - \Delta \right\|_2 + \frac{d\eta}{\epsilon'} + \eta,
\end{align}
where we have used the facts that $\|x_i\|_2 \le \sqrt{\frac{p}{\epsilon'}}$ for $i \in T_0$ and $\|\Delta' - \Delta\|_2 \le \eta$ in the last line. Furthermore, note that the first term on the right-hand side of inequality~\eqref{EqnMilk}, which can be written as $\left\|\frac{1}{|T'|} \sum_{i \in T'} W_i^{v} - \beta^*\right\|_2$, can be upper-bounded by $\sigma_* \delta$ using the stability of the set $T_v$, since $T' \subseteq T_v$ and
\begin{equation*}
|T'| \ge \left(1 - \frac{C\epsilon'}{2}\right) |T'_{v'}| \ge \left(1 - \frac{C\epsilon'}{2}\right) (1-5\epsilon')n \ge (1-C\epsilon) |T_v|,
\end{equation*}
if $C \ge 10$. Thus, we conclude that
\begin{equation*}
\left\|\frac{1}{|T'|} \sum_{i \in T'} W_i^{v'} - \beta^*\right\|_2 \le 2 \sigma_*\delta,
\end{equation*}
by choosing $\eta \le \frac{\sigma_* \delta}{1 + d/\epsilon'}$. Note that since $r = \Theta(\sigma_*)$ and $\delta = \Omega(\sqrt{\epsilon'})$, this may be accomplished with the choice
\begin{equation}
\label{EqnEta1}
\eta = O\left(\frac{r \sqrt{\epsilon'}}{1+d/\epsilon'}\right).
\end{equation}

We also need to establish a spectral norm bound on the second moment matrix. Denoting
\begin{align*}
a_i & := x_ix_i^T \Delta + x_iz_i  - \Delta, \\
b_i & := x_ix_i^T (\Delta' - \Delta), \\
c & := \Delta- \Delta',
\end{align*}
we see that
\begin{align}
\label{EqnCookies}
& \left\|\frac{1}{|T'|} \sum_{i \in T'} \left(W_i^{v'} - \beta^*\right) \left(W_i^{v'} - \beta^*\right)^T - \sigma_*^2I\right\|_2 \notag \\
& = \left\|\frac{1}{|T'|} \sum_{i \in T'} (x_ix_i^T \Delta' + x_iz_i - \Delta')(x_ix_i^T \Delta' + x_iz_i - \Delta') - \sigma_*^2I \right\|_2 \notag \\
&=\left\|\frac{1}{|T'|} \sum_{i \in T'} (a_i + b_i + c)(a_i + b_i + c)^T - \sigma_*^2 I\right\|_2 \notag \\
&\leq \left\|\frac{1}{|T'|} \sum_{i \in T'} a_ia_i^T - \sigma_*^2I\right\|_2 +  \left\|\frac{1}{|T'|} \sum_{i \in T'} b_ib_i^T\right\|_2 +  \left\|\frac{1}{|T'|} \sum_{i \in T'} cc^T\right\|_2 \notag \\
& \qquad + 2\left\|\frac{1}{|T'|} \sum_{i \in T'} a_ib_i^T\right\|_2  + 2\left\|\frac{1}{|T'|} \sum_{i \in T'} a_ic^T\right\|_2 +  \left\|\frac{1}{|T'|} \sum_{i \in T'} b_ic^T\right\|_2.
\end{align}
By the stability of $T_v$, we have
\begin{align*}
\left\|\frac{1}{|T'|} \sum_{i \in T'} a_ia_i^T - \sigma_*^2 I\right\|_2 \leq \frac{\sigma_*^2 \delta^2}{C\epsilon'}.
\end{align*}
Further note that
\begin{align*}
\|a_i\|_2 & \le \frac{d \eta}{\epsilon'} + \sqrt{\frac{p}{\epsilon'}} \cdot \frac{\sigma}{\sqrt{\epsilon'}} + \eta, \\
\|b_i\|_2 & \le \frac{d\eta}{\epsilon'}, \\
\|c\|_2 & \le \eta.
\end{align*}
Thus, the right-hand expression in inequality~\eqref{EqnCookies} may be upper-bounded by
\begin{align*}
& \frac{\sigma_*^2 \delta^2}{C\epsilon'} + \frac{p^2 \eta^2}{(\epsilon')^2} + \eta^2 + 2\left(\frac{d\eta}{\epsilon'} + \eta\right) \left(\frac{d \eta}{\epsilon'} + \sqrt{\frac{p}{\epsilon'}} \cdot \frac{\sigma}{\sqrt{\epsilon'}} + \eta\right) + \frac{2d\eta^2}{\epsilon'} \\
& \le \frac{\sigma_*^2 \delta^2}{C\epsilon'} + \eta\left(\frac{p^2r}{(\epsilon')^2} + r + 2\left(\frac{p}{\epsilon'} + 1\right) \left(\frac{dr}{\epsilon'} + \frac{\sigma \sqrt{p}}{\epsilon'} + r\right) + \frac{2dr}{\epsilon'}\right) \\
& \le \frac{\sigma_{*}^2 \delta^2}{C\epsilon'/2},
\end{align*}
by choosing
\begin{equation}
\label{EqnEta2}
\eta = O\left(\frac{r}{p^2/(\epsilon')^2 + 1 + 2(d/\epsilon' + 1)(2d/\epsilon' + 1) + 2d/\epsilon'}\right),
\end{equation}
using the facts that $r = \Theta(\sigma_*)$ and $\delta = \Omega(\sqrt{\epsilon'})$.

Therefore, we see that defining $f$ appropriately and taking $\eta = \frac{r\sqrt{\epsilon'}}{f(d/\epsilon')}$ satisfies conditions~\eqref{EqnEta1} and~\eqref{EqnEta2} simultaneously, completing the proof.

\section{Additional simulations}
\label{AppSims}

We include additional experiment details in this section. Figure~\ref{fig:hub_app} shows how the choice of the tuning parameter $\gamma$ in the Huber loss affects the resulting error.
We note that Huber regression with filtering is quite robust to the choice of $\gamma$.  
\begin{figure}[!ht]
 \centering
    \begin{minipage}{0.8\textwidth}
        \centering
		\includegraphics[width=\textwidth]{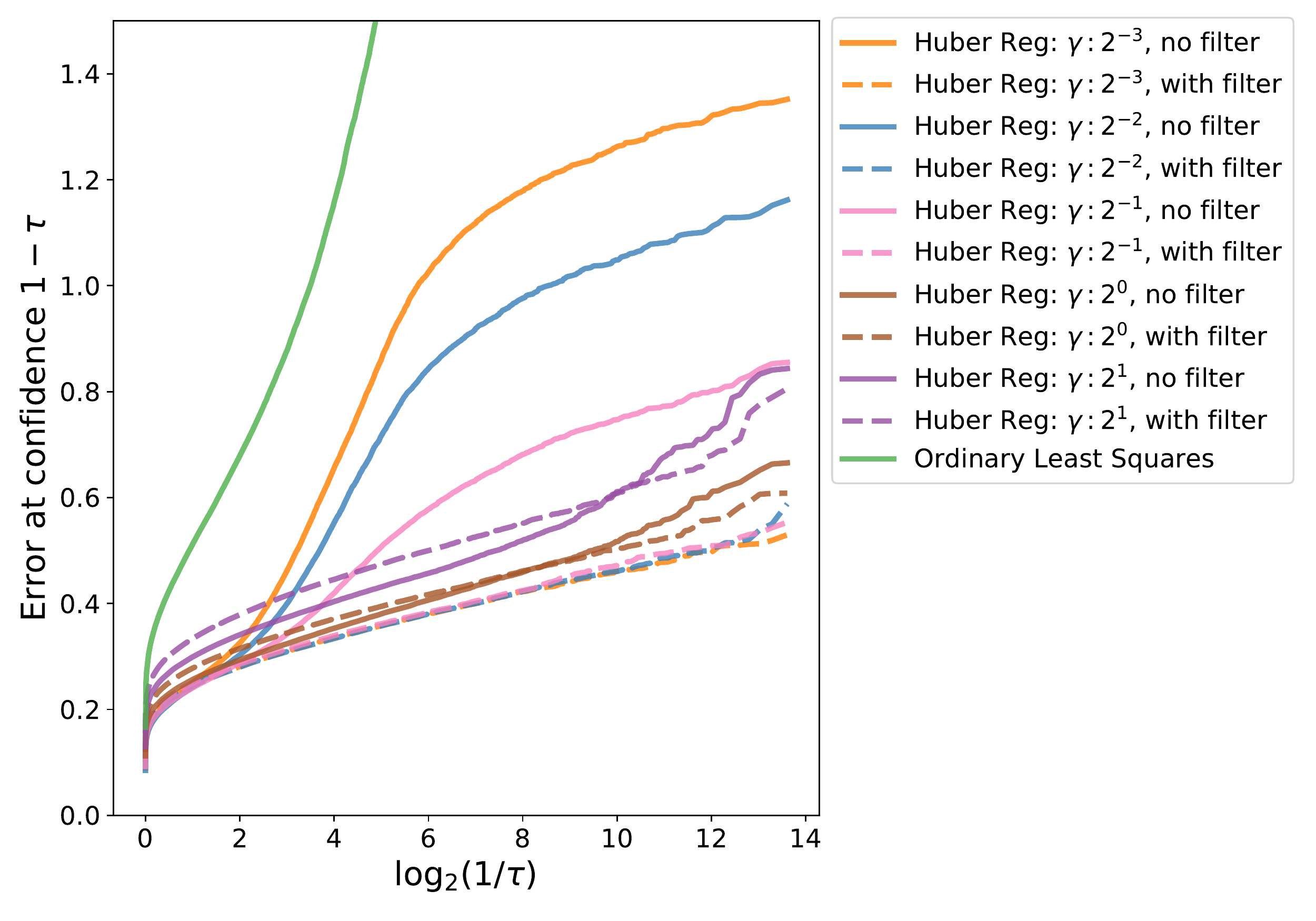}
    \end{minipage}%
\caption{Plot showing the effect of covariate filtering on Huber regression $(n=200, p = 40)$ for different values of $\gamma$.
The error is measured in terms of $\ell_2$-error, i.e., $\|\widehat{\beta} - \beta^*\|_2$.
Solid lines corresponds to ``vanilla'' version of the estimators (no filtering step), and dashed lines correspond to filtered versions, where the filtering step removes $10$ points out of $200$ points. We note that the performance of Huber regression with filtering is not greatly affected by the choice of $\gamma$.  
 }
\label{fig:hub_app}
 \end{figure}

Figure~\ref{fig:lts_app} shows how the choice of the thresholding parameter $m$ in LTS affects the resulting error.
We note that the LTS with filtering is also quite robust to the choice of $m$.  
\begin{figure}[!ht]
 \centering
    \begin{minipage}{0.8\textwidth}
        \centering
		\includegraphics[width=\textwidth]{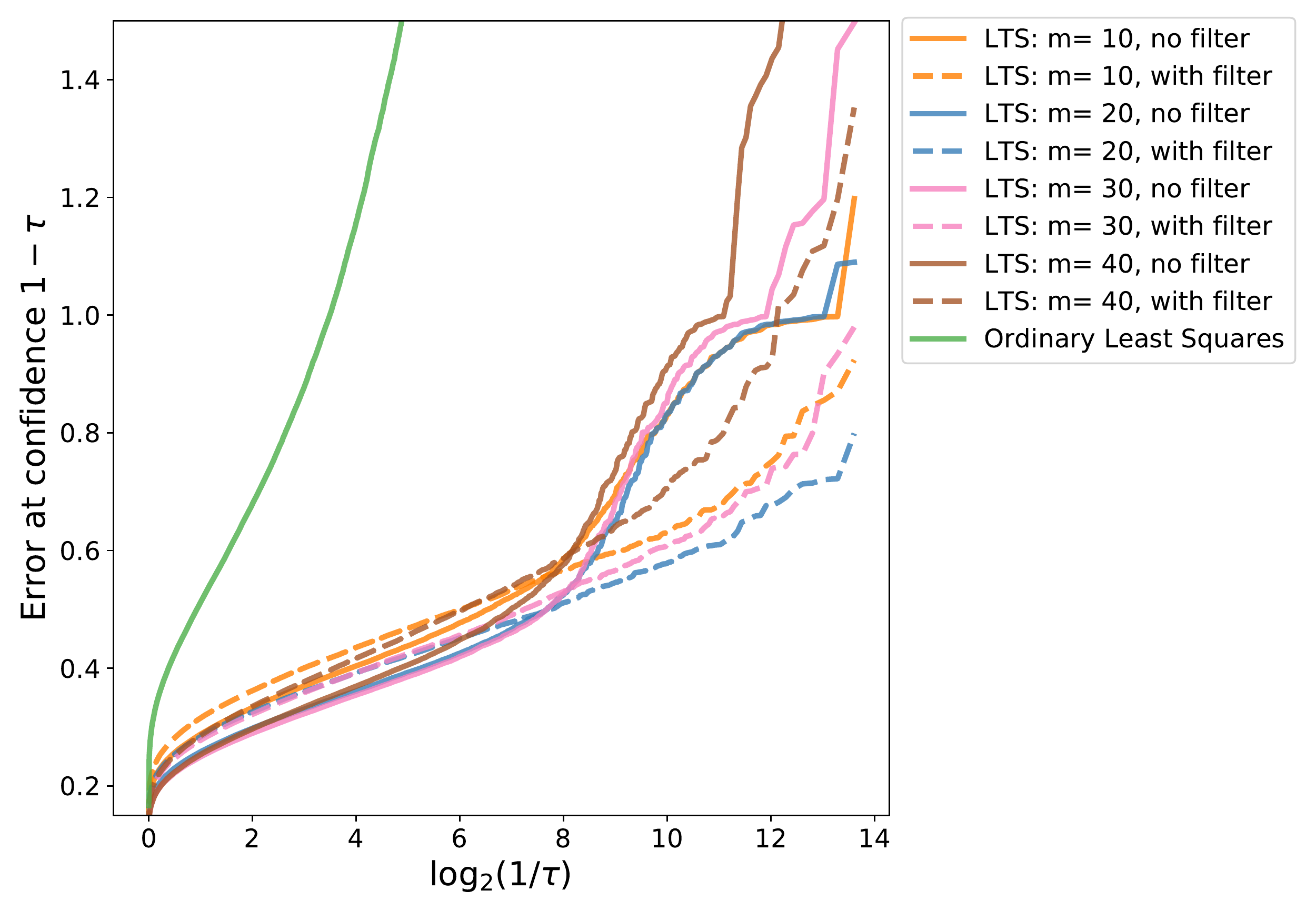}
    \end{minipage}%
\caption{Plot showing the effect of covariate filtering on LTS regression $(n=200, p = 40)$ for different values of $m$.
The error is measured in terms of $\ell_2$-error, i.e., $\|\widehat{\beta} - \beta^*\|_2$.
Solid lines corresponds to ``vanilla'' version of the estimators (no filtering step), and dashed lines correspond to filtered versions, where the filtering step removes $10$ points out of $200$ points. We note that the performance of LTS with filtering is not greatly affected by the choice of $m$.  
 }
\label{fig:lts_app}
 \end{figure}
\end{document}